\DeclarePairedDelimiter\floor{\lfloor}{\rfloor}
\DeclareMathOperator*{\argmin}{arg\,min}
\renewcommand{\cite}{\citet}
\renewcommand{\d}{\,\mathrm{d}}
\newcommand{\E}{\mathbb{E}}    % expectation % probability  % seems redundant
\newcommand{\R}{\mathbb{R}}    % real numbers
\newcommand{\N}{\mathbb{N}}    % natural numbers
\newcommand{\Z}{\mathbb{Z}}
\newcommand{\F}{\mathcal{F}}   % functional class
\newcommand{\sF}{\mathscr{F}}
\theoremstyle{plain}
\newtheorem{theorem}{Theorem}
\newtheorem{corollary}{Corollary}
\newtheorem{lemma}{Lemma}
\newtheorem{proposition}{Proposition}
\theoremstyle{definition}
\newtheorem{definition}{Definition}
\newtheorem{example}{Example}
\newtheorem{assumption}{Assumption}
\theoremstyle{remark}
\newtheorem{remark}{Remark}
\newcommand{\ee}{\varepsilon}
\newcommand{\n}[1]{\left\lVert#1\right\rVert}
\newcommand{\nn}[1]{\left|#1\right|}
\newcommand{\A}{\mathcal{A}}
\newcommand{\B}{\mathcal{B}}
\newcommand{\Q}{\mathbb{Q}}
\newcommand{\mpd}{\mathrm{MPD}}
\def\d{\mathrm{d}}
\def\laweq{\buildrel \mathrm{law} \over =}
\def\lawis{\buildrel \mathrm{law} \over \sim}
\newcommand{\bone}{ {\mathbbm{1}} }
\renewcommand{\S}{\mathcal{S}}
\renewcommand{\P}{\mathbb{P}}
\renewcommand{\A}{\mathcal{A}}
\renewcommand{\ge}{\geqslant}
\renewcommand{\le}{\leqslant}
\renewcommand{\geq}{\geqslant}
\renewcommand{\leq}{\leqslant}
\renewcommand{\epsilon}{\varepsilon}
\renewcommand{\cdots}{\dots}
\newcommand{\ddd}{\overset{\d}{\Rightarrow}}
\pgfplotsset{compat=1.18}
\begin{document}
\begin{frontmatter}
\title{Empirical martingale projections via the adapted Wasserstein distance}

\runtitle{Empirical martingale projections via the adapted Wasserstein distance}
%\thankstext{T1}{A sample additional note to the title.}

\begin{aug}
%%%%%%%%%%%%%%%%%%%%%%%%%%%%%%%%%%%%%%%%%%%%%%%
%% Only one address is permitted per author. %%
%% Only division, organization and e-mail is %%
%% included in the address.                  %%
%% Additional information can be included in %%
%% the Acknowledgments section if necessary. %%
%% ORCID can be inserted by command:         %%
%% \orcid{0000-0000-0000-0000}               %%
%%%%%%%%%%%%%%%%%%%%%%%%%%%%%%%%%%%%%%%%%%%%%%%

\author[A]{\fnms{Jose}~\snm{Blanchet}\ead[label=e1]{jose.blanchet@stanford.edu}\orcid{0000-0000-0000-0000}},
\author[B]{\fnms{Johannes}~\snm{Wiesel}\ead[label=e2]{wiesel@cmu.edu}\orcid{0000-0000-0000-0000}},
\author[A]{\fnms{Erica}~\snm{Zhang}\ead[label=e3]{yz4232@stanford.edu}},
\and
\author[C]{\fnms{Zhenyuan}~\snm{Zhang}\ead[label=e4]{zzy@stanford.edu}}
%%%%%%%%%%%%%%%%%%%%%%%%%%%%%%%%%%%%%%%%%%%%%%
%% Addresses                                %%
%%%%%%%%%%%%%%%%%%%%%%%%%%%%%%%%%%%%%%%%%%%%%%
\address[A]{Department of Management Science and Engineering, Stanford University\printead[presep={,\ }]{e1,e3}}

\address[B]{Department of Mathematics, Carnegie Mellon University\printead[presep={,\ }]{e2}}
\address[C]{Department of Mathematics, Stanford University\printead[presep={,\ }]{e4}}
\end{aug}

\begin{abstract}
Given a collection of multidimensional pairs $\{(X_i,Y_i)\}_{1 \leq i\leq n}$, we study the problem of projecting the associated suitably smoothed empirical measure onto the space of martingale couplings (i.e., distributions satisfying $\E[Y|X]=X$) using the adapted Wasserstein distance. We call the resulting distance the \textit{smoothed empirical martingale projection distance} (SE-MPD), for which we obtain an explicit characterization. We also show that the space of martingale couplings remains invariant under the smoothing operation. We study the asymptotic limit of the SE-MPD, which converges at a parametric rate as the sample size increases, if the pairs are either i.i.d.~or satisfy appropriate mixing assumptions. Additional finite-sample results are also investigated. Using these results, we introduce a novel consistent martingale coupling hypothesis test, which we apply to test the existence of arbitrage opportunities in recently introduced neural network-based generative models for asset pricing calibration.
\end{abstract}

\begin{keyword}[class=MSC]
\kwd[Primary ]{49Q22}
%\kwd{62G10}
\kwd[; secondary ]{62G10}
\kwd{60G42}
\end{keyword}

\begin{keyword}
\kwd{Martingale projection} 
\kwd{adapted Wasserstein distance}
\kwd{martingale coupling hypothesis test}
\end{keyword}

\end{frontmatter}

\tableofcontents

% Open issues:
% \begin{itemize}
%     \item martingale test vs. martingale test
%     \item AW vs bi-causal distance
%     \item $\P_*$ vs $\P_0$
%     \item empirical MPD vs smoothed empirical MPD.
%     \item no-arbitrage vs non-arbitrage.
% \end{itemize}

% \textcolor{red}{JW: general rule: easy language! use present tense and not past tense, even if you talk about older papers. avoid could/would/should... }
\section{Introduction}
Consider a collection $\{(X_i,Y_i)\}_{1\leq i\leq n}$ of random pairs with values in $\R^d\times\R^d,\,d\geq 1$. We denote the associated empirical measure of this sample by $\P_n$. The fundamental goal of this paper is to study the following question:
\begin{tcolorbox}
\begin{center}
How far is $\P_n$ from the set of laws $\Q\lawis (X,Y)$ that satisfy the martingale condition $\E[Y|X]=X$?
\end{center}
\end{tcolorbox}
 It is natural to formulate this question as a projection problem. In order to do this we first need to overcome a series of modeling challenges, which we describe in the paragraphs below. Once these are addressed, we give a precise formulation of the projection problem and study it rigorously, including an asymptotic analysis as $n$ increases. Both the modeling and the technical methodology constitute the first portion of the main contributions of this paper. The second portion is driven by applications to statistical learning. In fact, the above question is of fundamental importance from an applied standpoint, because it forms the basis of a new consistent statistical test for martingale pairs. We apply this test in order to verify the no-arbitrage condition in asset pricing models and to test a (specific) Markov kernel.

The first modeling issue that arises is the choice of a projection distance. A natural choice to measure the distance between two distributions is the Wasserstein distance, which is given by
\begin{align}\label{eq:wass}
\mathcal{W}_\gamma(\P, \Q)^\gamma =\inf\, \E[|X-X'|_2^\gamma +|Y-Y'|_2^\gamma]
\end{align}
in our context.
Here $\gamma\ge 1$, $\P,\Q$ are two probability measures on $\R^d\times \R^d$, $|\cdot|_2$ is the Euclidean norm on $\R^d$ and the infimum is taken over joint distributions of $(X,Y,X',Y')$ with $(X,Y)\lawis\P$ and $(X',Y')\lawis\Q.$
The Wasserstein distance has gained popularity in recent years because of its versatility in a wide range of machine learning tasks; we refer to the monograph \citep{peyre2019computational} and the references therein for an overview. Such versatility may be aided by the fact that the Wasserstein distance embeds $\R^d\times \R^d$ and metrizes the weak topology, see e.g., \citep{villani2009optimal}. In order to focus the discussion of our contributions in the introduction on the main conceptual challenges, we implicitly assume that $\gamma=1$ and do not delve into the integrability assumptions imposed. We will be precise about these important considerations in the statement of our results and the discussion section at the end of this paper.

A key problem that arises when considering the Wasserstein distance is that the conditional expectation is not a continuous function in the weak topology. That is the case even in the setting of simple examples going back to \citep{veraguas2020fundamental}, see Appendix \ref{subsec:deferred_mpd}. For instance, if $X=0$ and $\P(Y=1)=\P(Y=-1)=1/2$, then $(X,Y)$ forms a martingale pair under $\P$ since $\E[Y|X]=X=0$ under $\P$. However, if $\Q(X=\epsilon,Y=1)=\Q(X=-\epsilon,Y=-1)=1/2$, then $|\E[Y|X]|=1$ under $\Q$ even though the Wasserstein distance between $\Q$ and $\P$ is less than $\epsilon$. In particular, it is possible to approximate models that describe martingale pairs by a sequence of probability measures for which the martingale property fails to hold by a strictly positive gap uniformly along the sequence. For this reason, we consider an enhancement to the Wasserstein distance which addresses these types of issues.

This enhancement is called the \emph{adapted Wasserstein distance} $\mathcal{AW}_\gamma$ and has been studied precisely to deal with situations of this type. Instead of considering all joint distributions of $(X,Y,X',Y')$ preserving the marginal laws $\P,\Q$ in \eqref{eq:wass}, $\mathcal{AW}_\gamma$ only considers those, for which additionally the probability distribution $\text{Law}(Y, Y'|X,X')$ on $\R^d\times \R^d$ has marginals $\text{Law}(Y|X)$ and $\text{Law}(Y'|X')$; joint distributions of this type are called \emph{adapted} or \emph{bi-causal} (see Definition \ref{def:bi-causal} below). We refer to \citep{backhoff2020adapted} for a well-written introduction and summary of contributions to the theory of adapted distances, with historical references and comparisons; an incomplete list is given by \citep{aldous1981weak,hoover1984adapted,  ruschendorf1985wasserstein, hellwig1996sequential,bion2019wasserstein, lassalle2013causal} and \citep{pflug2010version, pflug2012distance, backhoff2020adapted, acciaio2020causal, backhoff2020all, veraguas2020fundamental}.
In particular, under the adapted Wasserstein distance, martingale pairs can only be approximated by probability measures that are close to martingale couplings themselves. As a consequence, the discontinuity issue brought up earlier cannot occur. In fact, the adapted Wasserstein distance induces the coarsest topology that addresses this discontinuity, see \citep{backhoff2020all}. This motivates considering the projection distance to the space of martingale pairs using the adapted distance. However, computing this distance is a highly non-trivial task because the bi-causal constraints mentioned above are defined in terms of infinitely many conditional distribution constraints, and the space of martingale pairs is also defined in terms of infinitely many constraints. One of the main contributions of this work is to provide a closed-form expression for this projection distance in great generality.

The next modeling issue that we consider is the fact that the empirical measure $\P_n$ will typically appear far from being a martingale pair, simply because it encodes an empirical sample -- note that $Y$ given $X$ under $\P_n$ is almost surely deterministic if the samples are drawn i.i.d.~from a continuous distribution $\P_0$. This issue, as we shall show, is resolved by introducing a smoothing technique. In particular, we consider the law of $(X+\xi,Y+\xi)$, where $\xi$ has a suitable density and is independent of $(X,Y)$. We identify a family of densities for $\xi$ that does not change the nature of the problem. Precisely, we show a result of independent interest: if the characteristic function (i.e.,~the Fourier transform) of $\xi$ has no zeros in $\R^d$ (e.g.,~if $\xi$ is standard Gaussian) and all random variables involved have finite variance, then $(X,Y)$ forms a martingale pair if and only if $(X+\xi,Y+\xi)$ forms a martingale pair. Therefore, by projecting a smoothed version of $\P_n$ using $\mathcal{AW}_\gamma$,  we do not fundamentally change the nature of the estimation task.

Once these modeling issues have been addressed, we turn to the study of a precise mathematical formulation of our problem based on the \emph{smoothed empirical martingale projection distance} (SE-MPD): we minimize the adapted Wasserstein distance between the smoothed empirical measure and any martingale pair. We then answer the following technical questions:

\begin{itemize}
\item What is the rate of convergence of the SE-MPD if the pairs $\{(X_i,Y_i)\}_{ 1\le i \le n}$ are i.i.d.~samples from a distribution $\P_0$ or satisfy some mixing conditions? Does it converge at a parametric rate?
\item Can we compute the asymptotic statistics?
\item Can we use this projection approach to develop a hypothesis test for martingale pairs? How can we study the power of this projection test?
\end{itemize}

In this paper, we provide affirmative answers to all of these questions under suitable integrability conditions. In particular, in Theorem \ref{thm:limitd simple} (and Theorem \ref{thm:mixingconvergence} for the mixing case), we show that the SE-MPD converges at the parametric rate $O(n^{-1/2})$. Moreover, we characterize the asymptotic limiting distribution in terms of the integral of a powered norm of an $\R^d$-valued Gaussian random field. 

As expected in results that involve kernel smoothing, the asymptotic distribution of the SE-MPD depends on the choice of the kernel's bandwidth $\sigma$. As we noticed earlier, the empirical distribution $\P_n$ is far from being a martingale pair if $\sigma=0$, so it is interesting to ponder the role of the bandwidth parameter. In this paper, we give at least one insight into this issue and consider the asymptotic distribution of the SE-MPD for the case $\sigma \to \infty$. Intuitively, one expects that the distribution degenerates to zero, as the smoothing procedure adds a ``big (constant) martingale'' $(\xi, \xi)$ to the pair $(X,Y)$. Somewhat surprisingly, we show that this is not the case if $\P_0(X=Y)<1$. In other words, the smoothing effect does not seem to artificially hide that $\P_n$ appears to be within $O(n^{-1/2})$-distance from the space of martingale pairs. 
On the contrary, it turns out that the SE-MPD offers a natural way to characterize the non-martingality of a law $\P_0$. If $\xi$ is Gaussian for example, we can characterize the martingale property in terms of polynomial test functions; that is, $\E[(\E[Y|X] - X)X^k]=0$ for all $k\in \N$. If this expectation does not vanish for $k$ but it vanishes for $j<k$, this informs the specific choice $\sigma =o(n^{1/(2k)})$ of the bandwidth parameter, for which the SE-MPD blows up as $n$ increases. Conversely, if only $\P_n$ is observed, this phenomenon suggests a natural way for choosing $\sigma$ in order to maximize the power of the test: we select the bandwidth that maximizes the SE-MPD, see Section \ref{sec:implementation}. A more nuanced question, of course, involves the role of $\sigma$ and even the choice of the smoothing kernel for a fixed sample size $n$. While these are interesting questions and we discuss initial results in this direction in Section \ref{subsec:general_case}, we leave a complete investigation on these issues for future research.

Our asymptotic statistics of the SE-MPD can be used for non-parametric hypothesis testing of the martingale pair property. This property is related to (although different from) martingale testing, where one often considers a sequence of martingales. We refer to Section \ref{subsec:choice_of_sig} for a more detailed discussion of this issue. For now let us simply note, that there are various methodologies and approaches to test the martingale property in different settings. For instance, \citep{phillips2014testing} develops a consistent martingale test for a one-dimensional martingale difference sequence. The test in \citep{phillips2014testing} can be applied to test martingale pairs, but it is not consistent in multiple dimensions (namely, it may be possible to not reject a false null hypothesis of martingale pairs as sample size increases). The work \citep{chang2022testing} also develops a martingale difference test for high dimensional martingales which can be applied to martingale pairs as well, but it is also not a consistent test. 
On the contrary, the test that we propose is consistent for martingale pairs under assumptions complementary to \citep{phillips2014testing} and \citep{chang2022testing}. These assumptions (e.g.,~i.i.d.~or stationarity) are motivated by applications described in our empirical Section \ref{sec:experiment} in connection with, for example, policy evaluation in reinforcement learning, testing a Markov kernel, and testing the no-arbitrage hypothesis in generative models for financial markets. In Section \ref{sec:experiment} we also perform a power analysis of our proposed test and carry out extensive numerical experiments to confirm our findings empirically. 

We conclude this section with a short literature review.
Convergence rates for $\mathcal{W}_\gamma(\P_n, \P_0)$ under various assumptions on the sample have been extensively studied in the last years, see e.g., \citep{fournier2015rate, weed2019sharp} and the references therein. The bottom line is that the Wasserstein distance exhibits the \emph{curse of dimensionality}, i.e., typically $\mathcal{W}_\gamma(\P_n, \P_0)^\gamma \approx n^{-1/d}$ for high dimensions. Similar results were established in \citep{backhoff2022estimating,acciaio2022convergence, glanzer2018} for the adapted Wasserstein distance. In consequence, a direct bound for the empirical MPD using results of \citep{backhoff2022estimating,acciaio2022convergence, glanzer2018} could be obtained. This approach uses the triangle inequality and the fact that the bounds are relatively insensitive to $\P_0$ under suitable regularity. Further, this only yield rates of order $O(n^{-1/(2d)})$---showing that our $O(n^{-1/2})$-rates are a big improvement of currently known, directly applicable, techniques.

The idea of projecting the empirical measure $\P_n$ onto a linear manifold using Wasserstein geometry has been explored in various settings in the literature. In our case, the manifold is defined by the martingale constraint, which in particular consists of infinitely many linear constraints. The work of \citep{tameling2019empirical} considers the case in which $\P_0$ is countably supported and the linear manifold has finitely many constraints. Independently, motivated by problems in distributionally robust optimization and optimal regularization in a class of machine learning estimators (such as square root Lasso among others), \citep{blanchet2019robust} investigates generally supported $\P_0$ (under suitable moment constraints) and finitely many linear constraints. The paper \citep{si2021testing} considers the use of optimal transport projections in the Wasserstein geometry for testing algorithmic fairness; this is an interesting application setting that may benefit from the analysis that we provide in this paper. 

We emphasize that in all of these settings, the linear manifold onto which one projects is defined by finitely many constraints and involves the Wasserstein distance directly. In contrast, our projection problem involves a continuum of constraints both due to the martingale property and the bi-causal restrictions implied in the definition of the adapted Wasserstein distance. The only exception to the finitely many constraints setting is the work of \citep{si2020quantifying}, which studies a class of infinitely many linear constraints (again in the standard Wasserstein setting without causal constraints). However, this reference assumes that the support of the underlying distributions is compact, and it does not obtain the exact asymptotic distribution of the projection statistics.

Lastly, let us mention that $\mathcal{AW}_\gamma$-projections onto the set of martingale measures with fixed marginals are by now classical tools for the so-called martingale optimal transport (MOT) problems, i.e., optimal transport problems with a martingale constraint \emph{and} marginal constraints, see \citep{beiglbock2013model,galichon2014stochastic, beiglbock2016problem}. In particular, the series of works \citep{guo2019computational, backhoff2022stability,wiesel2023continuity, beiglbock2022approximation, beiglbock2023stability, jourdain2023extension} uses $\mathcal{AW}_\gamma$-projection arguments to show stability of the MOT problem for $d=1$. Probably most related to our closed-form expression for the MPD is \citep[Proposition 2.4]{wiesel2023continuity}, which gives a similar result for $\mathcal{AW}_1$-projections onto the space of martingales with fixed marginals. However, next to the additional marginal constraints in the MOT problem (which we do not impose in our work), the scope of these papers differs from ours: they solely offer probabilistic arguments; no statistics are investigated.

\subsection{Outline} The rest of the paper is organized as follows. After defining various notations that we will use throughout the paper, we proceed to give an overview of our main contributions in Section \ref{sec:main results}. They consist of three parts:
\begin{itemize}
\item Section \ref{sec:mpd}, in which we introduce the projection distance to the space of martingale pairs and compute this projection distance in closed form;

\item Section \ref{sec:asymp}, in which we present our results on asymptotic statistics of the martingale projection under i.i.d.~assumptions (and fixed dimensions) as well as suitable mixing conditions. 

\item Section \ref{subsec:choice_of_sig}, in which we discuss the application to the hypothesis testing problem for martingale pairs (we also refer to these as martingale couplings) and present a brief study on the impact of $\sigma$ for the power of our martingale hypothesis test. 

\end{itemize}

In Section \ref{sec:conclusion}, we discuss a few interesting questions arising from our main results and present preliminary results to stimulate appetite for future research. In Section \ref{sec:experiment}, we provide various experimental studies with respect to the power analysis of the martingale pair test as well as its applications. In Appendix \ref{sec:mthd_dev}, we walk through the detailed technical developments of the theoretical results presented in Sections \ref{sec:main results} and \ref{sec:conclusion}. 
In the rest of the appendices, we discuss further applications, as well as deferred plots and algorithms.

\subsection{Notation} Let $\P_0$ be the distribution from which the data is drawn, and let $\P$ denote a generic probability measure. 
We denote the probability density and the probability measure of a smoothing random variable $\xi$ (introduced below) by $f_\xi$ and $\P_\xi$ respectively. Later in this paper, we will often make the specific choices \eqref{eq:fdensity} and \eqref{eq:f general}.

We write $\delta_x$ for the Dirac measure at $x$; $X\lawis\P$ if the random variable $X$ has distribution $\P$; $X\laweq Y$ if $X,Y$ have the same distribution; $\ddd$ for weak convergence. We also introduce the notation $\P\otimes \Q$ for the independent/product coupling of two probability measures $\P$ and $\Q$. We use $|\cdot|_2$ to denote the Euclidean norm, and more generally $|\cdot|_p$ to denote the $\ell^p$ norm of a vector. For $p>0$, let $L^p$ denote the collection of random variables $X$ with $\E[|X|_2^p]<\infty$. Similarly, we denote by $L^p(\R)$ the functions with finite $p$-moment wrt. the Lebesgue measure. We write $\n{X}_p=\E[|X|_2^p]^{1/p}$ for a random variable (or vector) $X\in L^p$ and $p>0$, and we use $\n{\cdot}$ for a generic norm. We write $\langle \cdot, \cdot \rangle$ for the scalar product on $\R^d\times \R^d$.  For two positive functions $f,g$, we write $f\asymp g$ if $f/C\leq  g\leq C f$ for some constant $C>0$.

 Throughout we fix a (standard) probability space $(\Omega, \F, \mu)$, on which all random variables are defined. If not specified otherwise, we take the expectation $\E[\cdot]$ with respect to $\mu$. For an Euclidean space $\mathcal{X}$ we denote by $\mathcal{P}(\mathcal{X})$ the set of probability measures on $\mathcal{X}$.

 \section{Main contributions} \label{sec:main results}
As described in the Introduction, our main contributions are the derivation of an appropriate projection distance between any distribution of the pair $(X,Y)$ and the space of martingale pairs, its asymptotic statistics, and the investigation of a consistent hypothesis test for the martingale pair property. We discuss the main results and provide a detailed technical development in Appendix \ref{sec:mthd_dev}.

\subsection{The empirical martingale projection distance}\label{sec:mpd}

In this subsection, we study the martingale projection distance (MPD) (see Definition \ref{def:MPD} below) and derive a closed-form expression for it. We then define the smoothed empirical MPD (SE-MPD), which will be used in our martingale pair test.

\subsubsection{Introducing the martingale projection distance}\label{subsec:intro_MPD}
Given $\P,\Q\in\mathcal P(\R^d\times\R^d)$ and $\gamma \geq 1$, we define $\mathcal{W}_\gamma$ via  
\begin{align*}
\mathcal{W}_\gamma(\P, \Q)^\gamma :=\inf\big\{\E\big[\nn{Y-Y'}_2^\gamma+\nn{X-X'}_2^\gamma\big]:~&\pi\in \mathcal{P}((\R^d)^4), (X,Y,X',Y')\lawis\pi,\\
&(X,Y) \lawis \P, (X',Y') \lawis \Q \big\}.
\end{align*}

As briefly discussed in the Introduction, this distance is not well suited to distinguish martingale laws from non-martingale laws (see Example \ref{ex:2} in Appendix \ref{subsec:deferred_mpd} below). While the marginal processes are adapted to their natural filtration, the couplings $\pi$ in the definition of $\mathcal{W}_\gamma$ need not be. This motivates the following definition:

\begin{definition}[see e.g.,~Lemma 2.2 of \citep{bartl2021wasserstein}]\label{def:bi-causal}
The probability measure $\pi\sim (X,Y,X',Y')$ is a \emph{bi-causal} coupling of the probability measures $\P$ and $\Q$ if
\begin{itemize}
    \item $\pi$ is a coupling of $\P$ and $\Q$, i.e., $(X,Y)\lawis \P$, $(X',Y')\lawis \Q,$
    \item $\text{Law}(Y|X,X')=\text{Law}(Y|X)$ \quad \emph{(causality from $\P$ to $\Q$)},
    \item $\text{Law}(Y'|X,X')=\text{Law}(Y'|X')$\quad \emph{(causality from $\Q$ to $\P$)}.
\end{itemize}
\end{definition}

\begin{definition} \label{def:bi-causal-wass}
  For two probability measures $\P, \Q$ on $\R^d\times \R^d$, we define the \emph{adapted, nested} or \emph{bi-causal Wasserstein distance}\footnote{These terms are used interchangeably in the literature.}  $\mathcal{AW}_\gamma$ as
\begin{align*}
\begin{split}
\mathcal{AW}_\gamma(\P, \Q)^\gamma :=\inf\big\{\E\big[\nn{Y-Y'}_2^\gamma+&\nn{X-X'}_2^\gamma\big]:~\pi\in \mathcal{P}((\R^d)^4), (X,Y,X',Y')\lawis\pi,\\
&\pi \text{ is a bi-causal coupling of $\P$ and $\Q$}\big\}.
\end{split}
\end{align*} 
\end{definition}

We will see in Example \ref{ex:3} in Appendix \ref{subsec:deferred_mpd} below that applying the adapted Wasserstein distance $\mathcal{A}\mathcal{W}_\gamma$ solves the problem mentioned in the Introduction, that the conditional
expectation is not a continuous function in the weak topology. We can now define the central object of this paper.

\begin{definition}\label{def:MPD}
 Given a probability measure $\P$ and $\gamma\geq 1$, we define the \emph{martingale projection distance} of $\P$ with exponent $\gamma$ as
\begin{align}
\mathrm{MPD}(\P,\gamma): =\inf\{ \mathcal{AW}_\gamma(\P, \Q)^\gamma: \E[Y'|X']=X' \text{ for } (X',Y')\lawis \Q\}.
\label{eq:causalcond2}
\end{align}
\end{definition}

In particular, we find the following explicit characterization of the MPD:
\begin{theorem}[Computing the martingale projection distance]\label{prop:martingale}
Let $(X,Y)\lawis \P$ and suppose that $(X,Y)\in L^\gamma$, i.e., $\E[|(X,Y)|^\gamma_2]<\infty$. Then
\begin{align}\label{eq:n4}
\mathrm{MPD}(\P,\gamma)=2^{1-\gamma}\E\left[\nn{X-\E[Y|X]}_2^\gamma\right].    
\end{align}
\end{theorem}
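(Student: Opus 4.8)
The plan is to prove \eqref{eq:n4} by establishing matching upper and lower bounds. For the upper bound, I would exhibit a good bi-causal coupling between $\P$ and a well-chosen martingale measure $\Q$. The natural candidate is the measure $\Q$ under which we "correct" $Y$ to its conditional mean: let $M := \E[Y|X]$ and take $(X', Y') := (X, X + (Y - M))$, or more symmetrically, push mass from $X$ and $Y$ toward their midpoint-type correction. Concretely, set $\Q = \mathrm{Law}(X, Y - \E[Y|X] + X)$ — wait, this has the wrong conditional law. Instead, the cleanest choice is to split the transport equally: move $X \mapsto X - \tfrac12(X - M)\cdot(\text{something})$; the key observation is that to make $\E[Y'|X']=X'$ one can take $X' = X + \tfrac12(M-X)$-type shifts. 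Actually the right construction: define $\Q$ via $(X',Y') = (X + \tfrac12(\E[Y|X]-X),\, Y - \tfrac12(\E[Y|X]-X))$ conditionally on $X$. One checks $\E[Y'|X'] = \E[Y|X] - \tfrac12(\E[Y|X]-X) = X + \tfrac12(\E[Y|X]-X) = X'$, so $\Q$ is a martingale measure, and the obvious coupling (identity on the underlying $(X,Y)$) is bi-causal since $X'$ is a function of $X$ and $\mathrm{Law}(Y'|X,X')=\mathrm{Law}(Y'|X)$. The cost is $\E[\|\tfrac12(\E[Y|X]-X)\|_2^\gamma + \|\tfrac12(\E[Y|X]-X)\|_2^\gamma] = 2\cdot 2^{-\gamma}\E[\|X-\E[Y|X]\|_2^\gamma] = 2^{1-\gamma}\E[\|X-\E[Y|X]\|_2^\gamma]$, giving the upper bound.

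For the lower bound, let $\Q$ be any martingale measure and $\pi$ any bi-causal coupling of $\P$ and $\Q$. The idea is to use the bi-causality and the martingale property of $\Q$ to produce a pointwise (in $X$) inequality after conditioning. Conditioning on $X$, causality from $\P$ to $\Q$ gives $\mathrm{Law}(Y|X,X')=\mathrm{Law}(Y|X)$, so $\E[Y \mid X, X'] = \E[Y\mid X]$. Combined with the martingale property $\E[Y'|X']=X'$ and causality from $\Q$ to $\P$ ($\E[Y'|X,X']=\E[Y'|X']=X'$), we get $\E[Y - Y' \mid X, X'] = \E[Y|X] - X'$ and also $X - X'$ is $(X,X')$-measurable. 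Then by conditional Jensen on the convex function $z\mapsto \|z\|_2^\gamma$ and the elementary inequality $\|a\|_2^\gamma + \|b\|_2^\gamma \ge 2^{1-\gamma}\|a-b\|_2^\gamma$ (convexity of $\|\cdot\|_2^\gamma$, or power-mean), I would bound
\begin{align*}
\E\big[\|Y-Y'\|_2^\gamma + \|X-X'\|_2^\gamma \,\big|\, X,X'\big] &\ge \|\E[Y-Y'\mid X,X']\|_2^\gamma + \|X-X'\|_2^\gamma \\
&= \|\E[Y|X] - X'\|_2^\gamma + \|X - X'\|_2^\gamma \\
&\ge 2^{1-\gamma}\big\|\E[Y|X] - X\big\|_2^\gamma.
\end{align*}
Taking expectations yields $\mathcal{AW}_\gamma(\P,\Q)^\gamma \ge 2^{1-\gamma}\E[\|X-\E[Y|X]\|_2^\gamma]$ for every martingale $\Q$ and every bi-causal $\pi$, hence the lower bound on the MPD.

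The main obstacle I anticipate is getting the conditional-expectation manipulations in the lower bound fully rigorous: one must be careful that $\E[Y\mid X, X']$ under $\pi$ genuinely equals $\E[Y\mid X]$ (this is exactly the content of causality from $\P$ to $\Q$, Definition \ref{def:bi-causal}), and similarly that $\E[Y'\mid X,X'] = X'$ requires both causality from $\Q$ to $\P$ \emph{and} the martingale property of $\Q$. The integrability assumption $(X,Y)\in L^\gamma$ ensures all the conditional expectations and the quantity $\E[\|X-\E[Y|X]\|_2^\gamma]$ are finite (using conditional Jensen, $\E[\|\E[Y|X]\|_2^\gamma]\le \E[\|Y\|_2^\gamma]<\infty$), so the bounds are not vacuous and the infimum in \eqref{eq:causalcond2} is over a nonempty set (the correcting $\Q$ above is in $L^\gamma$). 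A minor point to verify is the elementary inequality $\|a\|^\gamma + \|b\|^\gamma \ge 2^{1-\gamma}\|a-b\|^\gamma$, which follows from convexity of $t\mapsto t^\gamma$ on $[0,\infty)$ applied to $\tfrac12\|a\| + \tfrac12\|b\| \ge \tfrac12\|a-b\|$ together with $\gamma\ge 1$; equality propagates through the construction in the upper bound, confirming the constant $2^{1-\gamma}$ is sharp.
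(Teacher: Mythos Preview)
Your lower bound argument is correct and in fact slightly cleaner than the paper's (which conditions on $X$ alone and runs a longer chain of Jensen inequalities). The gap is in the upper bound. You claim the coupling with $X'=\tfrac12(X+\E[Y|X])$ and $Y'=Y-\tfrac12(\E[Y|X]-X)$ is bi-causal ``since $X'$ is a function of $X$ and $\mathrm{Law}(Y'|X,X')=\mathrm{Law}(Y'|X)$''. But causality from $\Q$ to $\P$ (Definition~\ref{def:bi-causal}) demands $\mathrm{Law}(Y'|X,X')=\mathrm{Law}(Y'|X')$, not $=\mathrm{Law}(Y'|X)$. Since $X'$ is $\sigma(X)$-measurable but $X$ is in general \emph{not} $\sigma(X')$-measurable (the map $X\mapsto \tfrac12(X+\E[Y|X])$ need not be injective), these are different conditions and your coupling can fail to be bi-causal. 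Concretely: let $X$ be uniform on $\{0,2\}$ with $Y=2$ a.s.\ on $\{X=0\}$ and $Y$ uniform on $\{-1,1\}$ on $\{X=2\}$; then $X'\equiv 1$, yet $\mathrm{Law}(Y'|X=0)=\delta_1$ while $\mathrm{Law}(Y'|X=2)$ is uniform on $\{0,2\}$.

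The paper repairs exactly this via Lemma~\ref{lem:bartl}: it perturbs $X'$ to $X^\delta:=\phi_\delta(X')+\psi_\delta(X)$, where $\psi_\delta:\R^d\to(0,\delta)^d$ is a Borel isomorphism, so that $\sigma(X^\delta)=\sigma(X)$ and bi-causality becomes automatic; the cost increases by at most $d\delta$ and one sends $\delta\downarrow 0$. Equivalently (Proposition~\ref{prop:causal}, Corollary~\ref{coro:simple}), the paper first shows that $\mathrm{MPD}$ coincides with the relaxed quantity $\widetilde{\mathrm{MPD}}$ in which only $\E[Y'|X,X']=\E[Y'|X']=X'$ is imposed---and your construction \emph{does} attain that relaxed infimum.
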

We refer to Example \ref{ex:causality} in Appendix \ref{subsec:deferred_mpd} for an explicit calculation of the MPD using Theorem \ref{prop:martingale}, which also emphasizes that the causality constraint is essential.

\begin{remark}
     For the case $d=1,$ the expression $\E_{\P}[|\E_{\P}[Y|X]-X|]$ appears in \citep[Definition 2.1]{guo2019computational} as a relaxation of the martingale constraint in discrete-time martingale optimal transport problems. However, no connection to the adapted Wasserstein distance is made. The work \citep{acciaio2022quantitative} considers an $\mathcal{AW}_\infty$-relaxation of the martingale constraint, leading to the inequality $|\E_{\P}[Y|X]-X|\le \epsilon$ for $\epsilon>0$, see \citep[Definition 2.8]{acciaio2022quantitative}. In comparison, we only consider the case of finite $\gamma$ in our work, and do not make any connections to no-arbitrage conditions.
\end{remark}

\subsubsection{The smoothed empirical martingale projection distance (SE-MPD)}\label{subsection:smoothed_MPD}

Consider a sequence of samples $\{(X_i,Y_i)\}_{i\in\N}$.
Having found a general closed-form expression for $\mathrm{MPD}(\P,\gamma)$ in Theorem \ref{prop:martingale} for a general $\P\in\mathcal P(\R^d\times\R^d)$, it is natural to look for a martingale pair test for the plugin estimator $\mathrm{MPD}({\P}_n,\gamma)$, where $${\P}_n:=\frac{1}{n}\sum_{i=1}^n \delta_{(X_i,Y_i)}$$ is the empirical measure associated to $\{(X_i,Y_i)\}_{1\le i\le n}$. However, if the samples are drawn i.i.d.~from distribution $\P_0$ under which $X$ is atomless and $\E_{\P_0}[\nn{(X,Y)}_2^\gamma]<\infty$, then we obtain from Theorem \ref{prop:martingale} and the strong law of large numbers, that $\mu$-a.s.
\begin{align*}
\lim_{n\to \infty} \mathrm{MPD}({\P}_n, \gamma)=\lim_{n\to \infty} \frac{2^{1-\gamma}}{n}\sum_{i=1}^n\nn{X_i-Y_i}_2^\gamma =  2^{1-\gamma}\E_{\P_0}[\nn{X-Y}_2^\gamma], 
\end{align*}
which is strictly greater than zero in general, even if $\P_0$ is a martingale law. In particular, $\mathrm{MPD}({\P}_n,\gamma)$ is \emph{not} a consistent estimator of $\mathrm{MPD}(\P_0,\gamma)$. To overcome this difficulty, we apply the following smoothing technique.

\begin{definition}\label{def:smooth}
Fix a law $\P_\xi$ of the random variable $\xi$. For any $\P\in\mathcal P(\R^d\times\R^d)$, we define the \emph{smoothed law} $\P^{*\xi}$ as 
\begin{align*}
\P^{*\xi}:=\text{Law}((X+\xi, Y+\xi)), \qquad (X,Y,\xi)\lawis \P\otimes\P_\xi.
\end{align*}
We define the \emph{smoothed MPD} as 
\begin{align*}
\mathrm{MPD}^{*\xi}(\P, \gamma) : =\inf\{ \mathcal{AW}_\gamma(\P^{*\xi}, \Q)^\gamma: \E[Y'|X']=X' \text{ for } (X',Y')\lawis \Q\}.
\end{align*}
\end{definition}

In other words, the smoothed MPD is the MPD \eqref{eq:causalcond2} applied to the smoothed law $\P^{*\xi}.$
Theorem \ref{prop:martingale} then leads to the formula
$$\mathrm{MPD}^{*\xi}(\P, \gamma)=2^{1-\gamma}\E\left[\nn{X-\E[Y|X+\xi]}_2^\gamma\right],$$
where $(X,Y)\lawis \P$ is independent of $\xi$.

At this point it might not be obvious to the reader, how the martingale property of $(X,Y)\lawis \P$ is affected by the smoothing via $\P_\xi$ as stated in Definition \ref{def:smooth}. In other words: for which $\P_\xi$ do we have
\begin{align*}
\mathrm{MPD}^{*\xi}(\P, \gamma)=0\quad \Longleftrightarrow\quad \mathrm{MPD}(\P, \gamma)=0?
\end{align*}
This motivates the following definition:

\begin{definition}\label{def:martingality-preserving}
 We say that the law $\P_\xi$ is \emph{martingality-preserving} if the following holds: for any law $\P$ on $\R^d\times\R^d$  and $(X,Y,\xi)\lawis\P\otimes\P_\xi$, 
\begin{align*}
(X,Y)\mathrm{\ is\ a\ martingale}\quad \Longleftrightarrow\quad (X+\xi,Y+\xi)\mathrm{\ is\ a\ martingale.}
\end{align*}
\end{definition}

It turns out that not every law $\P_\xi$ is martingality-preserving (see Example \ref{ex:smoothing}). Nevertheless, under mild assumptions on $\P_\xi$, the martingale property is actually invariant under smoothing.

\begin{proposition}\label{prop:martingale2}
Assume $$(X,Y, \xi)\lawis \P\otimes\P_\xi,$$ $\E[|(X,Y)|_2]<\infty$, and $\E[\nn{\xi}_2]<\infty$. Assume furthermore that $\P_\xi$ has a density $f_\xi$ and that the characteristic function $t\mapsto \E[e^{i\langle t, \xi\rangle}]$ has no real zero. Then $\P_\xi$ is martingality-preserving.
\end{proposition}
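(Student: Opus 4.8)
The plan is to reduce both martingale properties to statements about the conditional expectation of the single random variable $Z:=Y-X$, which lies in $L^1$ since $\E[\n{(X,Y)}_2]<\infty$. Writing $\varphi(t):=\E[e^{i\langle t,\xi\rangle}]$ for the characteristic function of $\xi$, one checks immediately that $(X,Y)$ is a martingale iff $\E[Z\mid X]=0$, whereas $(X+\xi,Y+\xi)$ is a martingale iff $\E[(Y+\xi)-(X+\xi)\mid X+\xi]=0$, i.e. iff $\E[Z\mid X+\xi]=0$ (the hypothesis $\E\n{\xi}_2<\infty$ ensures $X+\xi,Y+\xi\in L^1$, so all conditional expectations are well-defined). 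Hence the claim reduces to the equivalence $\E[Z\mid X]=0\iff\E[Z\mid X+\xi]=0$.

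For the forward implication I would fix an arbitrary bounded measurable $g\colon\R^d\to\R$, condition on $\xi$, and use the independence of $\xi$ and $(X,Y)$ together with Fubini (legitimate because $\E|Z\,g(X+\xi)|\le\n{g}_\infty\E\n{Z}_2<\infty$) to write $\E[Z\,g(X+\xi)]=\int_{\R^d}\E[Z\,g(X+s)]\,f_\xi(s)\,\d s$; for each fixed $s$ the random variable $g(X+s)$ is $\sigma(X)$-measurable, so $\E[Z\,g(X+s)]=\E[\E[Z\mid X]\,g(X+s)]=0$, and therefore $\E[Z\,g(X+\xi)]=0$ for every bounded $g$, giving $\E[Z\mid X+\xi]=0$. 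Note this direction uses only independence, not the non-vanishing of $\varphi$.

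The reverse implication is where the hypothesis on $\varphi$ is essential, and it is the substance of the proof. Assuming $\E[Z\mid X+\xi]=0$, I would test against the bounded functions $u\mapsto e^{i\langle t,u\rangle}$ to obtain $\E[Z\,e^{i\langle t,X+\xi\rangle}]=0$ for every $t\in\R^d$; since $\xi$ is independent of $(X,Z)$ and $Z\,e^{i\langle t,X\rangle}\in L^1$, this factorizes as $\E[Z\,e^{i\langle t,X\rangle}]\,\varphi(t)=0$, and dividing by $\varphi(t)\neq0$ gives $\E[Z\,e^{i\langle t,X\rangle}]=0$ for all $t$. Finally, for each coordinate $k\in\{1,\dots,d\}$ I would introduce the finite signed measure $\nu_k$ on $\R^d$ defined by $\nu_k(A):=\E[Z_k\,\one_{\{X\in A\}}]$ (of total variation $\le\E\n{Z}_2<\infty$), note that its Fourier transform $\widehat{\nu_k}(t)=\E[Z_k\,e^{i\langle t,X\rangle}]$ vanishes identically, and invoke uniqueness of the Fourier transform of finite signed measures to conclude $\nu_k\equiv0$, i.e. $\E[Z_k\mid X]=0$ a.s.; as $k$ is arbitrary, $\E[Z\mid X]=0$.

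The main obstacle is concentrated entirely in this ``deconvolution'' step: the passage from $\E[Z\,e^{i\langle t,X\rangle}]\varphi(t)=0$ to $\E[Z\,e^{i\langle t,X\rangle}]=0$ is precisely what forces the hypothesis that $\varphi$ has no real zero, and Example \ref{ex:smoothing} shows it cannot be dropped; after that, the conclusion is a routine appeal to injectivity of the Fourier transform on finite measures. The remaining work — verifying that $\nu_k$ is a genuine signed measure, the Fubini and $L^1$ bookkeeping, and the reduction at the start — is standard and I would not dwell on it.
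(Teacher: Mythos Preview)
Your proof is correct and is in fact a cleaner argument than the paper's. Both proofs make the same initial reduction to $\E[Z\mid X]=0\iff\E[Z\mid X+\xi]=0$ with $Z=Y-X$, and the forward implication is handled identically. For the reverse implication, however, the paper works with the cumulative functions $n(a):=\E[Z\,\mathds{1}_{\{X\le a\}}]$ and $m(a):=\E[Z\,\mathds{1}_{\{X+\xi\le a\}}]$, observes that $m$ is a convolution of the bounded function $n$ against the $L^1$ density $f_\xi$, and then invokes Wiener's Tauberian theorem (non-vanishing Fourier transform of $f_\xi$ implies the translates of $f_\xi$ span $L^1$) to deduce $n\equiv 0$ from $m\equiv 0$.

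Your route---testing against complex exponentials, factorizing $\E[Z\,e^{i\langle t,X+\xi\rangle}]=\E[Z\,e^{i\langle t,X\rangle}]\varphi(t)$, dividing by $\varphi(t)\neq 0$, and appealing to injectivity of the Fourier transform on finite signed measures---uses a strictly weaker tool than Wiener's Tauberian theorem and is more direct. It also makes transparent that the density assumption on $\P_\xi$ is not actually needed for the argument: only the non-vanishing of $\varphi$ and the independence structure are used. The paper's version, by contrast, builds in the density explicitly through the convolution formula. Both approaches are standard Fourier deconvolution arguments at heart; yours just chooses the more economical formulation.
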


There are many smoothing measures $\P_\xi$ that satisfy the assumptions of Proposition \ref{prop:martingale2}.

\begin{example}\label{ex:main}
The assumptions of Proposition \ref{prop:martingale2} are satisfied for the density
\begin{align}
   f_{\xi,\rho}(x)=C_\rho(|x|_2+1)^{-\rho},\label{eq:fdensity}
\end{align}
where $\rho> d+1$ and $C_\rho^{-1}=\int (|x|_2+1)^{-\rho}\,\d x$.
\end{example}

We refer to Appendix \ref{subsec:deferred_mpd} for more examples of martingality-preserving laws, such as infinitely divisible distributions (Example \ref{ex:1}) and the Student's $t$ distribution (Example \ref{ex:smoothex}). For the rest of the paper, we will mostly work with \eqref{eq:fdensity} for simplicity.  We expect that a similar analysis works for the Student's $t$-distribution as well since it exhibits a similar tail behavior.

\sloppy Recall that our aim is to find an estimator of $\mathrm{MPD}(\P_0, \gamma)$ given i.i.d.~samples $\{(X_ n,Y_n): n\in\N\}$ drawn from a probability measure $\P_0$. While the plugin estimator $\mathrm{MPD}({\P}_n, \gamma)$ was unsuitable, we instead consider the following:

\begin{definition}
We call $\mathrm{MPD}^{*\xi}({\P}_n, \gamma)$
the \emph{smoothed empirical martingale projection distance} ($\mathrm{SE}$-$\mathrm{MPD}$) of $\P_n$ with exponent $\gamma$ and smoothing kernel $\xi$.
\end{definition}

In other words, instead of considering the raw empirical measure ${\P}_n$, we take its smoothed counterpart ${\P}_n^{*\xi}$, which is obtained by a convolution of the density $\P_\xi$ with the empirical measure. This is a classical procedure in statistics. In fact, a similar idea was used in \citep{glanzer2018} to construct an empirical measure that converges to $\P_0$ in $\mathcal{AW}_\gamma$. Furthermore, Proposition \ref{prop:martingale} states that there is no information about the martingale property lost when using the SE-MPD for estimation of $\mathrm{MPD}(\P, \gamma)$.
We emphasize that $\P_\xi$ is chosen by the statistician. 

We will show in the following that $\mathrm{MPD}^{*\xi}({\P}_n, \gamma)$ is a consistent estimator of $\mathrm{MPD}(\P_0, \gamma)$. In fact, under mild assumptions, $\mathrm{MPD}^{*\xi}({\P}_n, \gamma)$ has a parametric rate. To the best of our knowledge, this is the first martingale pair test statistic that breaks the curse of dimensionality. 

\subsection{Asymptotic distribution of the SE-MPD}\label{sec:asymp}

 We suppose throughout this section that $\P_0$ is a martingale measure. Our main findings can be summarized as follows:
\begin{tcolorbox}
\begin{center}
Under suitable conditions on $\P_0$ with regard to integrability and weak dependence, $\sqrt{n}\mathrm{MPD}^{*\xi}({\P}_n, \gamma)$ converges weakly to some explicit nontrivial random variable.
\end{center}
\end{tcolorbox}
We now summarize the main results that provide rigorous support for this message.
\begin{itemize}
    \item The simplest result of such form is Proposition \ref{prop:general_xi2} below, where a wide family of smoothing kernels are allowed. However, it is restricted to the i.i.d.~case with  $\gamma=1$, and the moment conditions are not optimal. The proof employs classical results on empirical processes.
    \item The more interesting Theorem \ref{thm:limitd simple} allows for a general choice of $\gamma\ge 1$ and less stringent moment assumptions in the i.i.d.~case. On the other hand, we will restrict to a special family of smoothing kernels that are heavy-tailed. Our proof builds on finite-sample estimates of empirical processes.
    \item When the data are not i.i.d.~but sufficiently mixing, Theorem \ref{thm:mixingconvergence} provides the desired convergence given sufficiently many moments in the case $\gamma=1$, with the same class of smoothing kernels as in Theorem \ref{thm:limitd simple}.
\end{itemize}

\subsubsection{The i.i.d.~case}\label{subsection:iid}

Our main result of this subsection is the following: 

\begin{theorem}\label{thm:limitd simple}
Let $\gamma\geq 1$ and for $\rho>\gamma+d$, consider the density   $f_{\xi,\rho}$ from \eqref{eq:fdensity}. There exists $C(\rho,d,\gamma)>0$ such that if the $\R^d\times\R^d$-valued martingale coupling $(X,Y)\in L^{C(\rho,d,\gamma)}$, then $f_{\xi,\rho}$ is a martingality-preserving law and we have the convergence in distribution 
\begin{align*}
n^{\gamma/2}\mpd^{*\xi}({\P}_n,\gamma)\ddd 2^{1-\gamma}\int_{\R^d}\frac{\nn{G_x}_2^\gamma}{\E[f_{\xi,\rho}(x-X)]^{\gamma-1}}\,\d x, \qquad n\to \infty,
\end{align*}
where  $\{G_x\}$ is a centered $\R^d$-valued Gaussian random field with covariance 
\begin{align*}
    \E[G_xG_y^\top]=\E[(Y-X) f_{\xi,\rho}(x-X)f_{\xi,\rho}(y-X)(Y-X)^\top],~x,y\in\R^d.
\end{align*}
In particular, the sequence $\{n^{\gamma/2}\mpd^{*\xi}({\P}_n,\gamma): n\in \N\}$ is tight.
\end{theorem}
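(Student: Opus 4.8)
The plan is to reduce the SE-MPD to an explicit integral functional of a centred empirical process and then obtain its distributional limit by combining a functional central limit theorem on compact balls with a uniform tail estimate. \emph{Step 1 (closed form).} Since $\rho>\gamma+d$ the density \eqref{eq:fdensity} has $\xi\in L^\gamma$, so ${\P}_n^{*\xi}\in L^\gamma$ and Theorem~\ref{prop:martingale} gives $\mpd^{*\xi}({\P}_n,\gamma)=2^{1-\gamma}\E_{{\P}_n^{*\xi}}\big[\n{X-\E[Y\mid X]}_2^\gamma\big]$; moreover $\rho>d+1$ and the characteristic function of \eqref{eq:fdensity} has no real zero, so by Example~\ref{ex:main}/Proposition~\ref{prop:martingale2} the law $f_\xi$ is martingality-preserving, as the statement claims. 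Under ${\P}_n^{*\xi}$ one has $(X,Y)=(X_I+\xi,Y_I+\xi)$ with $I$ uniform on $\{1,\dots,n\}$ and $\xi$ independent with density $f_\xi$, hence $Y-X=Y_I-X_I$, the $X$-marginal of ${\P}_n^{*\xi}$ has density $\bar\mu_n(x):=\frac1n\sum_i f_\xi(x-X_i)$, and $\P(I=i\mid X=x)=f_\xi(x-X_i)/(n\bar\mu_n(x))$. Therefore $\E_{{\P}_n^{*\xi}}[Y-X\mid X=x]=\bar\nu_n(x)/\bar\mu_n(x)$ with $\bar\nu_n(x):=\frac1n\sum_i f_\xi(x-X_i)(Y_i-X_i)$, so that
\begin{align*}
n^{\gamma/2}\mpd^{*\xi}({\P}_n,\gamma)=2^{1-\gamma}\int_{\R^d}\frac{\n{G_x^{(n)}}_2^\gamma}{\bar\mu_n(x)^{\gamma-1}}\,\d x,\qquad G_x^{(n)}:=\frac1{\sqrt n}\sum_{i=1}^n f_\xi(x-X_i)(Y_i-X_i).
\end{align*}
Because $(X,Y)$ is a martingale coupling, $\E[f_\xi(x-X)(Y-X)]=\E[f_\xi(x-X)\E[Y-X\mid X]]=0$ for all $x$, so $G^{(n)}$ is a centred i.i.d.\ empirical process with exactly the covariance structure stated for $\{G_x\}$.

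\emph{Step 2 (convergence on balls).} Fix $R>0$ and set $B_R=\{x:\n{x}_2\le R\}$. I would show $G^{(n)}\ddd G$ in $C(B_R;\R^d)$: finite-dimensional convergence is the multivariate CLT (valid as $f_\xi$ is bounded and $Y-X\in L^2$), and tightness follows from a Kolmogorov--Chentsov estimate on the increments $G_x^{(n)}-G_y^{(n)}$ via a Rosenthal inequality, using that $x\mapsto f_\xi(x-z)$ is globally Lipschitz uniformly in $z$. Simultaneously a Glivenko--Cantelli argument for $\{f_\xi(\cdot-x):x\in B_R\}$ gives $\sup_{x\in B_R}|\bar\mu_n(x)-\mu(x)|\to0$ a.s., where $\mu(x):=\E[f_\xi(x-X)]>0$ is continuous, hence bounded below on $B_R$. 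Since $\bar\mu_n$ converges to the deterministic $\mu$, the pair $(G^{(n)},\bar\mu_n)$ converges jointly, and since $(h,m)\mapsto\int_{B_R}\n{h(x)}_2^\gamma m(x)^{-(\gamma-1)}\,\d x$ is continuous on $C(B_R;\R^d)\times\{m\in C(B_R):\inf_{B_R}m>0\}$, the continuous mapping theorem gives $\int_{B_R}\n{G_x^{(n)}}_2^\gamma\bar\mu_n(x)^{-(\gamma-1)}\,\d x\ddd\int_{B_R}\n{G_x}_2^\gamma\mu(x)^{-(\gamma-1)}\,\d x$.

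\emph{Step 3 (tail control — the main obstacle).} The crux is to show, for every $\epsilon>0$,
\begin{align*}
\lim_{R\to\infty}\limsup_{n\to\infty}\P\left(\int_{\n{x}_2>R}\frac{\n{G_x^{(n)}}_2^\gamma}{\bar\mu_n(x)^{\gamma-1}}\,\d x>\epsilon\right)=0,
\end{align*}
together with the (easier) analogue for the Gaussian limit. I would estimate the expected tail integral by splitting on $\{\bar\mu_n(x)\ge\mu(x)/2\}$ and its complement. On the former the integrand is $\le2^{\gamma-1}\n{G_x^{(n)}}_2^\gamma/\mu(x)^{\gamma-1}$, and a Rosenthal/Marcinkiewicz--Zygmund bound for the centred sum yields $\E[\n{G_x^{(n)}}_2^\gamma]\lesssim\E[f_\xi(x-X)^2\n{Y-X}_2^2]^{\gamma/2}+n^{1-\gamma/2}\E[f_\xi(x-X)^\gamma\n{Y-X}_2^\gamma]$; since \eqref{eq:fdensity} decays like $\n{x}_2^{-\rho}$ while $\mu(x)\gtrsim\n{x}_2^{-\rho}$, with enough moments of $(X,Y)$ both contributions are $O(\n{x}_2^{-\rho})$, which is integrable over $\{\n{x}_2>R\}$ (recall $\rho>d$) uniformly in $n$ and small as $R\to\infty$. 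On the complement I would use the crude convexity bound
\begin{align*}
\frac{\n{G_x^{(n)}}_2^\gamma}{\bar\mu_n(x)^{\gamma-1}}\le n^{\gamma/2-1}\sum_{i=1}^n f_\xi(x-X_i)\n{Y_i-X_i}_2^\gamma
\end{align*}
(Jensen applied to the weights $f_\xi(x-X_i)/\sum_j f_\xi(x-X_j)$), combined with a high-moment concentration estimate for $\P(\bar\mu_n(x)<\mu(x)/2)$ and Cauchy--Schwarz, to render this contribution negligible. The delicate point is the bookkeeping: one must pin down how many moments of $(X,Y)$ are needed so that every tail integral converges and the rare-event term vanishes, and this is precisely what fixes the exponent $C(\rho,d,\gamma)$.

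\emph{Step 4 (conclusion).} Steps~2 and~3, together with the standard approximation criterion for weak convergence (distributional convergence on each $B_R$ plus uniformly negligible tails), yield $n^{\gamma/2}\mpd^{*\xi}({\P}_n,\gamma)\ddd2^{1-\gamma}\int_{\R^d}\n{G_x}_2^\gamma\,\E[f_\xi(x-X)]^{-(\gamma-1)}\,\d x$. The limit is a.s.\ finite by the Gaussian moment estimate of Step~3, so the asserted tightness of $\{n^{\gamma/2}\mpd^{*\xi}({\P}_n,\gamma)\}_{n\in\N}$ is immediate.
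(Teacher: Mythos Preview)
Your scaffold---closed form via Theorem~\ref{prop:martingale}, Donsker convergence on compacts, tail negligibility, then glue---is exactly the paper's route (Lemmas~\ref{lemma:convergeK} and~\ref{lemma:limsup}). Step~1 is correct, and Step~2 is essentially Lemma~\ref{lemma:convergeK}. On the ``good event'' part of Step~3 your pointwise Rosenthal bound $\E\big[\n{G_x^{(n)}}_2^\gamma\big]\lesssim\E[f_\xi(x-X)^2\n{Y-X}_2^2]^{\gamma/2}$ together with $\mu(x)\asymp\n{x}_2^{-\rho}$ is fine and in fact slightly simpler than the paper, which instead chains over unit cubes $I_k$ (Lemma~\ref{lemma:Esup when gamma=1} via Talagrand, boosted to $\gamma$-moments by a Lederer--van~de~Geer inequality).

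The genuine gap is your treatment of the complement $\{\bar\mu_n(x)<\mu(x)/2\}$ when $\gamma>1$. Any high-moment or Bernstein concentration for $\bar\mu_n(x)$ gives at best $\P(\bar\mu_n(x)<\mu(x)/2)\le\exp(-cn\mu(x))$, and since $\mu(x)\asymp\n{x}_2^{-\rho}$ this yields no decay once $\n{x}_2\gtrsim n^{1/\rho}$. Your crude Jensen bound costs a factor $n^{\gamma/2}$, so after Cauchy--Schwarz the bad-event contribution near the scale $\n{x}_2\asymp n^{1/\rho}$ is of order $n^{\gamma/2}\cdot n^{(d-\rho)/\rho}=n^{\gamma/2-1+d/\rho}$: this diverges for every $\rho$ when $\gamma\ge2$, and for $1<\gamma<2$ it forces $\rho>2d/(2-\gamma)$, strictly stronger than the hypothesis $\rho>\gamma+d$. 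H\"older with different exponents does not help, because the obstruction is that the pointwise bad event simply is not rare in the far tail.

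The paper's remedy (proof of Lemma~\ref{lemma:limsup}) is to replace your $x$-dependent event by a single sample-only event
\[
A_{K_1}=\bigcap_{n\ge1}\Big\{|\{i\le n:\n{X_i}_2\le K_1/2\}|>n/2\Big\}.
\]
On $A_{K_1}$, whenever $\n{x}_2>K_1$ at least half the summands satisfy $\n{x-X_i}_2\le\n{x}_2+K_1/2\le\tfrac32\n{x}_2$, hence $f_\xi(x-X_i)\gtrsim\n{x}_2^{-\rho}$, and therefore $\bar\mu_n(x)\ge C^{-1}\n{x}_2^{-\rho}$ \emph{uniformly over all $\n{x}_2>K_1$ and all $n$}. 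Meanwhile $\mu(A_{K_1}^c)\to0$ as $K_1\to\infty$ by a direct Hoeffding/Borel--Cantelli argument (Lemma~\ref{lemma:binomial}). This converts the random denominator into a deterministic $\n{x}_2^{-\rho(\gamma-1)}$ on an event of probability $1-\epsilon$, after which your good-event numerator bound (or the paper's cube-wise version) finishes the tail estimate. Once you swap in this uniform event, your Steps~3--4 go through and the moment bookkeeping produces exactly the exponents in Theorem~\ref{thm:limitd}.
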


Theorem \ref{thm:limitd simple} is a simplified version of its general version, Theorem \ref{thm:limitd}, which can be found in Appendix \ref{sec:mthd_dev}. Theorem \ref{thm:limitd simple} focuses on the densities from  \eqref{eq:fdensity}, while a standard scaling argument shows that densities of the form \begin{align}\label{eq:f general}
f_{\xi,\rho,\sigma}(x)=\sigma^{-d}C_\rho\left(\frac{\nn{x}_2}{\sigma}+1\right)^{-\rho}
\end{align}
also work. That is, $f_{\xi ,\rho,\sigma}$ is the density of $\sigma\xi$, where $\xi$ has density $f_{\xi,\rho}$. In particular, $f_{\xi,\rho,1}=f_{\xi,\rho}$.

A natural question is how the limit distribution of the rescaled MPD shown above depends on $f_{\xi,\rho}$ and $\sigma$. This question is in general quite hard to answer, and we focus on the most fundamental yet important case $\gamma=1$, where $(X,Y)\in L^{2(d+1)+\delta}$ for some $\delta>0$ (we refer to Section \ref{subsec:choice_of_sig} for discussions on the choice of $\sigma$ and Section \ref{subsec:general_case} for the case of general $f_\xi$). In the following, we fix $\rho>d+1$. Recall  \eqref{eq:f general}, which implies that 
\begin{align}
    f_{\xi,\rho,\sigma}(x)\asymp\begin{cases}\sigma^{-d}&\text{ for }\nn{x}_2\leq \sigma,\\
\sigma^{\rho-d}f_{\xi,\rho}(x)&\text{ for }\nn{x}_2> \sigma,
\end{cases}\label{eq:sigma}
\end{align}
where the constants may depend on the dimension $d$. 
Taking $\gamma=1$, Theorem \ref{thm:limitd simple} yields that
\begin{align}
\sqrt{n}\,\mathrm{MPD}^{*\xi}({\P}_n, 1)\ddd \int\nn{G_x}_2\,\d x.\label{eq:limit1}
\end{align}
where $\{G_x\}$ is a centered Gaussian process (depending on $\sigma$) with covariance
\begin{align}\label{eq:covariance}
    \E[G_xG_y^\top]=\E[(Y-X)f_{\xi,\rho,\sigma}(x-X)f_{\xi,\rho,\sigma}(y-X)(Y-X)^\top],~x,y\in\R^d.
\end{align}

\begin{theorem}[Expectation of the limit distribution]\label{thm:expectation}Suppose that the martingale coupling $(X,Y)$ is non-degenerate, i.e., $\P_0(X=Y)<1$. Then
    $$\E\left[\int\nn{G_x}_2\,\d x\right]\asymp 1\text{ as }\sigma\to\infty.$$
\end{theorem}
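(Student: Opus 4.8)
The plan is to analyze $\E\left[\int_{\R^d}\n{G_x}_2\,\d x\right]$ by first understanding the Gaussian field $\{G_x\}$ through its covariance structure \eqref{eq:covariance}. Write $Z:=Y-X$, which is non-degenerate by hypothesis ($\P_0(Z=0)<1$), and note that for fixed $x$, $G_x$ is a centered Gaussian vector in $\R^d$ with covariance matrix $\Sigma_\sigma(x):=\E[Z Z^\top f_{\xi,\sigma}(x-X)^2]$. Since $\E[\n{G_x}_2]\asymp \sqrt{\mathrm{tr}\,\Sigma_\sigma(x)}$ up to dimensional constants (the Gaussian norm is comparable to the square root of the trace of the covariance, via e.g. $\E\n{G_x}_2 \geq c_d \sqrt{\E\n{G_x}_2^2}$ by hypercontractivity/Gaussian concentration, and $\leq \sqrt{\E\n{G_x}_2^2}$ by Jensen), I would reduce the problem to showing
\begin{align*}
\int_{\R^d}\sqrt{\E\big[\n{Z}_2^2\, f_{\xi,\sigma}(x-X)^2\big]}\,\d x \asymp 1\quad\text{as }\sigma\to\infty.
\end{align*}
The subtlety is that the expectation is inside the square root, so this is not simply $\int \E[\cdots]$; I would need to control this integral from both sides.

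For the upper bound, one can use $\sqrt{\E[\n{Z}_2^2 f_{\xi,\sigma}(x-X)^2]} \le \sqrt{\E[\n{Z}_2^2 f_{\xi,\sigma}(x-X)] \cdot \esssup_x f_{\xi,\sigma}(x-X)}$, but $\esssup f_{\xi,\sigma}\asymp \sigma^{-d}$ from \eqref{eq:sigma}, and $\int f_{\xi,\sigma}(x-X)\,\d x=1$, giving a bound $\lesssim \sigma^{-d/2}\cdot(\text{something that grows})$ — so that crude split is too lossy. Instead I would split the integral over $x$ according to \eqref{eq:sigma}: on the region where the relevant mass of $X$ sits within distance $O(\sigma)$ of $x$, $f_{\xi,\sigma}$ behaves like $\sigma^{-d}$, and on the far region it decays polynomially. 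The key point for the upper bound is that the map $x\mapsto \sqrt{\E[\n{Z}_2^2 f_{\xi,\sigma}(x-X)^2]}$ is supported essentially on a ball of radius $\asymp\sigma$ (plus polynomial tails), has sup-norm $\asymp \sigma^{-d}\sqrt{\E[\n Z_2^2\,\cdot\,]}$-type quantities, and the $L^1$ norm of a function that is $\asymp \sigma^{-d}$ on a ball of volume $\asymp\sigma^{d}$ is $\asymp 1$ — but one must be careful because the square root of an expectation can be as large as $\sigma^{-d}$ times a square root rather than the expectation itself, potentially yielding $\sigma^{-d}\cdot\sigma^{d}\cdot(\text{const})$ which is indeed $\asymp 1$. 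Concretely, by Jensen on the probability measure $f_{\xi,\sigma}(x-X)\,\d x/\!\!\int f_{\xi,\sigma}$ (which has total mass $1$), one gets $\int \sqrt{g(x)}\,\mathrm{w}(x)\,\d x \le \sqrt{\int g(x)\mathrm{w}(x)^2\,\d x}\cdot$ (normalization), and tracking the $\sigma$-powers carefully using \eqref{eq:sigma} should pin down the upper bound $\lesssim 1$.

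For the lower bound — which I expect to be the main obstacle — the issue is that $\E[\n Z_2^2 f_{\xi,\sigma}(x-X)^2]$ could conceivably be dominated by a small-probability event where $f_{\xi,\sigma}$ is atypically large, making the square root small relative to what a ``typical'' contribution would give. The clean way around this is to lower bound using a truncated/favorable event: fix $R>0$ and $\epsilon>0$ such that $p:=\P_0(\n X_2\le R,\ \n Z_2\ge\epsilon)>0$, which exists by non-degeneracy. For $x$ in the annular region $R\le\n x_2\le 2R$ — no wait, better: for $x$ with $\n x_2 \asymp \sigma$ large, and using that for $\n X_2 \le R \ll \sigma$ we have $f_{\xi,\sigma}(x-X)\asymp \sigma^{\rho-d}f_\xi(x)\asymp\sigma^{\rho-d}\n x_2^{-\rho}$ uniformly in such $X$. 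Then $\E[\n Z_2^2 f_{\xi,\sigma}(x-X)^2]\ge \epsilon^2\cdot p\cdot(c\sigma^{\rho-d}\n x_2^{-\rho})^2$, so $\sqrt{\E[\cdots]}\gtrsim \epsilon\sqrt p\,\sigma^{\rho-d}\n x_2^{-\rho}$, and integrating over $\{2\sigma\le\n x_2\le A\sigma\}$ for a large constant $A$ gives $\gtrsim \epsilon\sqrt p\,\sigma^{\rho-d}\cdot\sigma^{d-\rho}\asymp 1$ (the integral $\int_{2\sigma}^{A\sigma}r^{-\rho}r^{d-1}\,\d r \asymp \sigma^{d-\rho}$ since $\rho>d$). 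Thus both bounds close and give $\asymp 1$. I would also need to confirm the first reduction — that $\E\n{G_x}_2\asymp\sqrt{\mathrm{tr}\,\Sigma_\sigma(x)}$ with constants depending only on $d$ — which is standard for Gaussian vectors and uniform over all covariance matrices, and to verify measurability/integrability so that Fubini applies when interchanging $\E$ and $\int\d x$; the $L^{2(d+1)+\delta}$ moment assumption on $(X,Y)$ together with $\rho>d+1$ should comfortably supply the needed integrability, as it already does in Theorem \ref{thm:limitd simple}.
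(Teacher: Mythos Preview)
Your approach coincides with the paper's: both reduce to controlling $\int_{\R^d}\sqrt{\mathrm{tr}\,\Sigma_\sigma(x)}\,\d x$ via the elementary Gaussian fact $\E\|G_x\|_2\asymp\sqrt{\mathrm{tr}\,\Sigma_\sigma(x)}$ (the paper phrases this through the largest eigenvalue $\lambda_1(x)$, bounding it below by $\tfrac1d\,\mathrm{tr}$ and above via Gershgorin, which is equivalent up to dimensional constants). Your lower bound---restricting to a favorable event $\{\|X\|_2\le R,\ \|Z\|_2\ge\epsilon\}$ and integrating over an annulus $\{2\sigma\le\|x\|_2\le A\sigma\}$---is exactly the paper's argument.

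The upper bound sketch is where your proposal has a gap. The ``Jensen on the probability measure $f_{\xi,\sigma}(x-X)\,\d x$'' idea does not work as written: that measure is random (it depends on $X$), and the inequality you allude to is not a Jensen inequality in any useful direction here; drop it. Your region-splitting instinct is correct, but you have only argued the core region $\|x\|_2\lesssim\sigma$ (where indeed $\sqrt{\E[\|Z\|_2^2 f_{\xi,\sigma}^2]}\lesssim\sigma^{-d}$ and the volume is $\asymp\sigma^d$). The tail $\|x\|_2>2\sigma$ requires real work that you have not supplied: one must further split according to whether $\|x-X\|_2<\sigma$ (forcing $\|X\|_2>\|x\|_2-\sigma$, a rare event controlled via Markov and the $L^{2p}$ moment of $X$) or $\|x-X\|_2>\sigma$ (where $f_{\xi,\sigma}(x-X)\lesssim\sigma^{\rho-d}\|x-X\|_2^{-\rho}$). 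The paper handles both pieces with H\"older's inequality against $\|Z\|_{2p}$ and $\P_0(\|X\|_2>\|x\|_2-\sigma)^{1/q}$, arriving at an integrand $\lesssim\sigma^{-d}\|x\|_2^{-(p-1)}+\sigma^{\rho-d}\|x\|_2^{-\rho}$ whose integral over $\{\|x\|_2>2\sigma\}$ is $O(1)$ since $p>1$ and $\rho>d$. Without this step the upper bound is not established; ``tracking the $\sigma$-powers'' alone does not handle the contribution from atypically large $\|X\|_2$.
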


\begin{remark}\label{remark:generalsmooth}
So far in our analysis, $f_\xi$ lies in a fixed parametric class \eqref{eq:f general} with quite heavy tails. However, for $\gamma = 1$, restrictions in choosing $f_\xi$ can be relaxed. For example, we will see in Section \ref{subsec:general_case} that taking $f_\xi$ the Gaussian density still works, provided that the pair $(X,Y)$ is sufficiently integrable.
\end{remark}

\subsubsection{Stationary \texorpdfstring{$\alpha$}{}-mixing sequences: asymptotic distribution for \texorpdfstring{$\gamma=1$}{}}\label{subsection:alpha_mix}

A natural question is whether our theory can also handle dependent data while relaxing the i.i.d.~assumption. This is e.g., motivated by the Markov chain transition kernel testing problem detailed in Appendix \ref{sec:markov}. A prevalent setting of analyzing dependent data is where the data form a stationary mixing sequence. Among the classic mixing conditions for stationary processes, $\alpha$-mixing appears the least restrictive while still being technically tractable \citep{bradley2005basic}.

For two $\sigma$-algebras $\A,\B$, we define their $\alpha$-mixing coefficient
$$\alpha(\A,\B):=\sup_{S\in\A,T\in\B}|\P(S\cap T)-\P(S)\P(T)|.$$
\sloppy Given a stationary sequence $\{X_i\}_{i\in\N}$, we say it is  $\alpha$-mixing with coefficients $\{\alpha_n\}_{n\in\N}$ if $ \sup_{i\in\N}\alpha(\sigma(X_1,\dots,X_i),\sigma(X_{i+n},\dots))\le \alpha_n$ for any $n\in\N$; see \citep{rio2017asymptotic}. For $\lambda >2$ we define the quantity
$$A_{\alpha,\lambda }:=\sqrt{\int_0^1\alpha^{-1}(u)u^{-2/\lambda }\,\d u}\in\R\cup\{\infty\},$$where $\alpha^{-1}(u)=\sup\{n\in\N:\alpha_n\geq u\}$.
For example, we have $A_{\alpha,\lambda }<\infty$ if there exists $\kappa>1$ such that $\kappa^{-1}+2\lambda ^{-1}<1$ and $\alpha_n=O(n^{-\kappa})$.
This holds, for instance, for a stationary Gaussian sequence whose spectral density $f(\lambda)$ can be represented by $|P(e^{i\lambda})|^2g(\lambda)$, where $P$ is a polynomial, $g$ is $\lceil \kappa\rceil$-times differentiable and $g\geq m>0$ for some $m$; see \citep[Theorem 5]{ibragimov1970spectrum}.

We consider the most fundamental case $\gamma=1$ and recall the martingality-preserving density $f_{\xi,\rho,\sigma}(x)$ from \eqref{eq:f general}, where we now assume $\rho>2d^2$. We expect that a similar parametric analysis can be carried out for the case $\gamma>1 1$. Nevertheless, as in the i.i.d.~setting, $\gamma=1$ yields the largest class of feasible martingale couplings $(X,Y)$ (i.e., the moment condition being weakest; see Remark \ref{remark:momentchoice}). For simplicity of our presentation, we do not pursue this direction in detail.

\begin{theorem}[The limit distribution for the $\alpha$-mixing case]\label{thm:mixingconvergence}
    Let  $\lambda >2$ be such that $\{(X_i,Y_i)\}_{i\in\N}$ forms a stationary $\alpha$-mixing sequence with coefficients $\{\alpha_n\}_{n\in\N}$ satisfying $A_{\alpha,\lambda }<\infty$ and $\rho>2d^2$. Suppose also that all moments of $(X,Y)$ exist. Consider a   smoothing kernel $\xi$ with density given by \eqref{eq:fdensity}.  Then we have the convergence in distribution
    \begin{align*}
\sqrt{n}\,\mpd^{*\xi}({\P}_n,1)\ddd \int{\nn{G_x}_2}\,\d x, \qquad n\to \infty,
\end{align*} where $\{G_x\}$ is a centered Gaussian process with covariance given by \eqref{eq:gx}. In particular, the sequence $\{\sqrt{n}\,\mpd^{*\xi}({\P}_n,1)\}_{n\in\N}$ is tight.
\end{theorem}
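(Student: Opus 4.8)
The plan is to reduce the $\alpha$-mixing statement to the same functional–analytic skeleton that underlies Theorem~\ref{thm:limitd simple} in the i.i.d.\ case, replacing the empirical–process input by its weakly dependent counterpart. First I would invoke Theorem~\ref{prop:martingale} to write $\mpd^{*\xi}(\P_n,1) = \frac12\,\E_{\P_n^{*\xi}}\big[\n{X-\E[Y\mid X]}_2\big]$; since $\P_n^{*\xi}$ is a mixture of the smoothing kernel translated to the sample points, the conditional expectation under $\P_n^{*\xi}$ is an explicit Nadaraya--Watson–type ratio, and one obtains
\begin{align*}
\mpd^{*\xi}(\P_n,1) = \tfrac12\int_{\R^d}\Big\|\tfrac1n\sum_{i=1}^n (Y_i-X_i) f_\xi(x-X_i)\Big\|_2\,\frac{\d x}{\frac1n\sum_{i=1}^n f_\xi(x-X_i)}.
\end{align*}
Multiplying by $\sqrt n$, the numerator becomes $\big\|\sqrt n\,(\widehat m_n(x) - \E[(Y-X)f_\xi(x-X)])\big\|_2$ where $\widehat m_n(x) := \frac1n\sum_i (Y_i-X_i)f_\xi(x-X_i)$ and the centering term vanishes because $\P_0$ is a martingale law (so $\E[(Y-X)\mid X]=0$). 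Thus the whole problem is a CLT, in the space $C(\R^d)$ or a suitable weighted $L^1$ space, for the empirical process $\mathbb{G}_n(x) := \sqrt n\big(\widehat m_n(x)-\E[\widehat m_n(x)]\big)$ indexed by the translates $\{f_\xi(x-\cdot):x\in\R^d\}$, followed by an application of the continuous mapping theorem to the functional $F\mapsto \frac12\int \n{F(x)}_2\, g_\xi(x)^{-1}\,\d x$, where $g_\xi(x) = \E[f_\xi(x-X)]$; the denominator converges uniformly to $g_\xi$ by an ergodic/LLN argument for stationary mixing sequences, and one controls the ratio away from $0$ using the polynomial lower bound on $g_\xi$ coming from the heavy tails of \eqref{eq:fdensity}.

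Concretely I would carry out the following steps in order. (1) Record the closed form above and the martingale centering. (2) Establish a functional CLT for $\{\mathbb{G}_n(x)\}$: finite-dimensional convergence to the Gaussian field $\{G_x\}$ with covariance \eqref{eq:covariance}—here the limiting covariance is the full long-run variance, i.e.\ $\E[G_xG_y^\top] = \sum_{k\in\Z}\cov\big((Y_0-X_0)f_\xi(x-X_0),\,(Y_k-X_k)f_\xi(y-X_k)\big)$, which is exactly what \eqref{eq:gx} should encode—using a CLT for stationary $\alpha$-mixing sequences (e.g.\ Rio's theorem), whose applicability is guaranteed by $A_{\alpha,\lambda}<\infty$ together with the moment bound $\E[\n{Y-X}_2 f_\xi(x-X)]^\lambda<\infty$, which in turn follows from ``all moments of $(X,Y)$ exist'' and the boundedness of $f_\xi$. (3) Prove tightness of $\{\mathbb{G}_n\}$ in the relevant function space: this needs a modulus-of-continuity/chaining estimate for the increments $\mathbb{G}_n(x)-\mathbb{G}_n(x')$, for which I would bound $\E|\mathbb{G}_n(x)-\mathbb{G}_n(x')|^\lambda$ via a Rosenthal-type inequality for mixing sequences (Rio, Shao), using that $x\mapsto f_\xi(x-z)$ is Lipschitz uniformly in $z$ and decays polynomially, so the class $\{f_\xi(x-\cdot)\}$ has controlled bracketing numbers in the tail. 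The hypothesis $\rho>2d^2$ enters precisely here: it must be strong enough that the weighted tail integral $\int \E[\n{G_x}_2]\,\d x$ and the tail contributions to tightness are finite, which is a more demanding requirement than in the i.i.d.\ case because the long-run variance aggregates $\sim n$ covariance terms before mixing decay kicks in. (4) Combine finite-dimensional convergence, tightness, uniform convergence of the denominator, and the continuous mapping theorem to conclude $\sqrt n\,\mpd^{*\xi}(\P_n,1)\ddd \frac12\int\n{G_x}_2\,g_\xi(x)^{-1}\,\d x$; then absorb $g_\xi(x)^{-1}$ into the definition of the covariance by the same renormalization used to pass from Theorem~\ref{thm:limitd simple} to \eqref{eq:limit1}, giving the stated form $\int\n{G_x}_2\,\d x$. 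Tightness of $\{\sqrt n\,\mpd^{*\xi}(\P_n,1)\}$ in $\R$ is then immediate from convergence in distribution.

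I expect the main obstacle to be Step~(3), the uniform tightness of the empirical process in the weighted $L^1(\R^d)$ space under only $\alpha$-mixing. In the i.i.d.\ setting one has clean symmetrization and bracketing-entropy tools; under mixing one must instead patch together a Rosenthal/Doukhan-type moment inequality with a blocking argument, and carefully track how the polynomial tail weight $g_\xi(x)^{-1}\lesssim (\n x_2+1)^{\rho-d}$ interacts with the $\R^d$-integration at infinity—this is the real reason the moment condition jumps from $2(d+1)+\delta$ in the i.i.d.\ case to ``all moments exist'' and the tail parameter from $\rho>\gamma+d$ to $\rho>2d^2$. A secondary technical point is verifying that the Nadaraya--Watson denominator $\frac1n\sum_i f_\xi(x-X_i)$ stays bounded below uniformly over the (non-compact) range of $x$ that actually contributes to the integral; here I would split $\R^d$ into a large ball, on which a uniform LLN for mixing sequences gives $\frac1n\sum_i f_\xi(x-X_i)\to g_\xi(x) \ge c>0$, and its complement, on which both numerator and denominator are $O(\n x_2^{-\rho})$ and the ratio is controlled deterministically, so that the tail of the integral is negligible with high probability. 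Once these two estimates are in place, the rest is an assembly of standard pieces exactly parallel to the proof of Theorem~\ref{thm:limitd simple}.
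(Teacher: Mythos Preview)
Your closed-form representation in Step~(1) is incorrect for $\gamma=1$. From Theorem~\ref{prop:martingale} one has $\mpd^{*\xi}(\P_n,1)=2^{1-1}\E_{\P_n^{*\xi}}[\|X-\E[Y\mid X]\|_2]=\E_{\P_n^{*\xi}}[\|X-\E[Y\mid X]\|_2]$, with no factor $\tfrac12$; and when you pass to an integral over $\R^d$, the density $p_n(x)=\frac1n\sum_i f_\xi(x-X_i)$ of $X$ under $\P_n^{*\xi}$ cancels exactly against the Nadaraya--Watson denominator. Equation~\eqref{eq:En expression} in the paper specializes for $\gamma=1$ to
\[
\mpd^{*\xi}(\P_n,1)=\int_{\R^d}\Big\|\tfrac1n\sum_{i=1}^n(Y_i-X_i)f_\xi(x-X_i)\Big\|_2\,\d x,
\]
with \emph{no} denominator. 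Hence your entire ``secondary technical point'' about controlling $\frac1n\sum_i f_\xi(x-X_i)$ from below is superfluous here, and the ``renormalization'' you invoke in Step~(4) to absorb $g_\xi(x)^{-1}$ does not exist: for $\gamma=1$ the exponent on $\E[f_\xi(x-X)]$ in Theorem~\ref{thm:limitd simple} is already zero.

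Once this is fixed, your skeleton---finite-dimensional CLT on compacts, tail negligibility, continuous mapping---does coincide with the paper's proof. The paper follows the template of Theorem~\ref{thm:limitd}: on $[-K,K]^d$ it replaces the Donsker argument of Lemma~\ref{lemma:convergeK} by the invariance principle of Doukhan, Massart and Rio for $\alpha$-mixing sequences (valid under $A_{\alpha,\lambda}<\infty$), and on $\R^d\setminus[-K,K]^d$ it replaces the i.i.d.\ supremum bound of Lemma~\ref{lemma:Esup when gamma=1} by Lemma~\ref{lemma:numerator}, which rests on Hariz's uniform empirical-process bound for mixing sequences (Lemma~\ref{lemma:alpharate}) and yields $\E[\sup_{x\in I_k}\|\xi_n(x)\|_2]\le Cn^{-1/2}\|k\|_2^{-(d+\delta)}$; the role of $\rho>2d^2$ and ``all moments'' is precisely to make the parameter system~\eqref{eq:system of ineqs} feasible (Remark~\ref{remark:solution}). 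Your proposed route via Rosenthal-type increment bounds plus chaining is a legitimate alternative to this last step, but note that the paper's tool already packages the chaining via bracketing numbers under the mixing norm $\|\cdot\|_{2,X}$, so it is the more direct instrument. Your observation that the limiting covariance is the long-run variance $\sum_{k\in\Z}\cov(\cdot,\cdot)$ is exactly what the Doukhan--Massart--Rio principle produces.
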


\begin{remark}
The current form of Theorem \ref{thm:mixingconvergence} is stated for clarity but not generality: an optimal moment condition can be derived from Lemma \ref{lemma:numerator}
    in Appendix \ref{subsec:deferred_mpd} by solving the system of inequalities \eqref{eq:system of ineqs}. %
\end{remark}

\subsection{Martingale pair hypothesis test}\label{subsec:choice_of_sig}

As a direct application of our results, we introduce a novel martingale pair hypothesis test. Testing the martingale pair hypothesis is related to (but different from) testing if a sequence forms a martingale (which we call a martingale sequence hypothesis test in the following). For instance, if a sequence forms a martingale, then one can easily check by the tower property that consecutive pairs of random variables along the sequence form a martingale pair. 
In addition, a relatively simple extension from the martingale pair test to the martingale test is to project the couplings to the space $\E[Y|X]=AX$, where $A$ is a known matrix. In particular, by taking $A=[0,\dots,0,1]$, we can test  the hypothesis that $\E[X_{k}|X_{0},\dots,X_{k-1}]=X_{k-1}$ consistently for every fixed $k\in\N$. So, in principle, our test can be used to test the martingale sequence hypothesis.\footnote{Such a test involves i.i.d.~samples of the entire sequence. As will become clear below, existing martingale sequence hypothesis tests are often built on a single sample of the sequence.} 
However, the type of assumptions that we impose (e.g.,~i.i.d.~or stationarity) is better suited for applications such as testing a Markov kernel, certifying the no-arbitrage condition in generative finance models, or testing the efficiency of reinforcement learning policies. We will study these types of applications in detail in Section \ref{sec:apps}.

\subsubsection{A brief survey of existing approaches}\label{sec:survey}

One may consider the performance of well-known martingale sequence hypothesis tests in the context of testing the martingale pair hypothesis. A well-known martingale sequence hypothesis test has been developed in \citep{park2005test,phillips2014testing} for one-dimensional sequences, which can also be regarded as a generalized Kolmogorov--Smirnov test.\footnote{\citep{phillips2014testing} also consider Cram\'{e}r--von Mises tests, which we omit here for brevity.} We briefly describe (a slightly simplified version of) the approach of \citep{phillips2014testing} as follows: define the martingale null hypothesis
$$X_t=\theta X_{t-1}+u_t\quad\text{ with }\quad\theta=1, $$
where $\{u_t\}$ satisfies $\E[u_t|X_0,\dots,X_{t-1}]=0$ and certain uniform moment conditions. 
Next, define the least squares residual $\widehat{u}_{t}:=X_{t}-\widehat{\theta} X_{t-1}$, where 
$$\widehat{\theta}:=\frac{\sum_{t=1}^nX_{t-1}X_t}{\sum_{t=1}^nX_{t-1}^2}.$$
Let $\Delta X_{t}:=X_t-X_{t-1}$ and $\overline{\Delta X}:=(\sum_{t=1}^{n} \Delta X_{t})/n$.
Define
$$J_n(a):=\frac{\sum_{t=1}^{n} \Delta X_{t} \bone_{\{X_{t-1} \leq a\}}}{\left(\sum_{t=1}^{n} \widehat{u}_{t}^{2}\right)^{1 / 2}}$$
and its centered counterpart
$$J_n^*(a):=\frac{\sum_{t=1}^{n}\left(\Delta X_{t}-\overline{\Delta X}\right)  \bone_{\{X_{t-1} \leq a\}}}{\left(\sum_{t=1}^{n} \widehat{u}_{t}^{2}\right)^{1 / 2}}.$$
The quantities $J_{n}(a)$ and $J_{n}^{*}(a)$ are stochastic processes indexed by $a \in \mathbb{R}$, taking values in the space of RCLL functions. Define
$$\operatorname{GKS}_{n}  :=\sup _{a \in \mathbb{R}}\left|J_{n}(a)\right|\quad\text{ and }\quad \operatorname{GKS}_{n}^{*}  :=\sup _{a \in \mathbb{R}}\left|J_{n}^{*}(a)\right|.$$
Let $\{W(t)\}$ be a Brownian motion and $\{B(t)\}$ denote a Brownian bridge.
Under the null hypothesis, \citep[Theorem 3]{phillips2014testing} states that 
\begin{align}
    \operatorname{GKS}_{n}  \ddd\sup_{a\in\R}J(a)\quad\text{ and }\quad \operatorname{GKS}_{n}^{*}  \ddd\sup_{a\in\R}J^*(a),\label{eq:test}
\end{align}
where
$$J(a):=\int_{0}^{1} \bone_{\{W(s) \leq a\}} \d W(s)\quad\text{ and }\quad J^{*}(a)  :=\int_{0}^{1} \bone_{\{W(s) \leq a\}} \d B(s).$$
Based on \eqref{eq:test}, \citep{phillips2014testing} develop a consistent test against a wide class of nonlinear,
non-martingale processes including explosive AR(1) processes,
exponential autoregressive processes, etc. 

This approach naturally extends to a Kolmogorov--Smirnov-type statistic as follows: given i.i.d.~samples $\{(X_i,Y_i)\}_{1\leq i\leq n}$ sampled from $\P_0$, define
$$I_n(a):=\frac{\sum_{t=1}^n(Y_t-X_t)\bone_{\{X_t\leq a\}}}{(\sum_{t=1}^n(Y_t-X_t)^2)^{1/2}}.$$
Assuming suitable uniform moment conditions similarly to the above, under the null hypothesis (i.e., $\P_0$ is a martingale law), 
$$\sup_{a\in\R}|I_n(a)|\ddd \sup_{a\in\R}J(a).$$
Under the alternative hypothesis (i.e., $\P_0$ is not a martingale law), $\sup_{a\in\R}|I_n(a)|\to\infty$ in probability. This yields a consistent martingale pair test in one dimension.

However, this martingale pair test is inconsistent in $\mathbb{R}^d$. For example, take i.i.d.~standard Gaussian random variables $\xi_1,\xi_2$ and consider $X=(\xi_1,\xi_2)$ and $Y=(\xi_1+\xi_2,\xi_1+\xi_2)$. We use $X^j$ for $j=1,2$ to denote the $j$-th entry of the vector $X$ (similarly for $Y^j$). It follows that for a fixed $j$, the pair $(X^j,Y^j)$ forms a martingale pair. However, $(X,Y)$ is not a martingale in dimension $d=2$. In other words, the martingale property in separate dimensions does not guarantee joint martingality. Our martingale pair test solves this inconsistency issue. 
It is, however, important to note that in contrast to \citep{phillips2014testing}, we impose a mixing assumption on the sequence itself, and not on the martingale differences. While the approach we present can be adapted to martingale differences, our assumptions are motivated by the applications mentioned earlier and discussed in Section \ref{sec:apps}.

The work of \citep{chang2022testing} tests the martingale difference hypothesis for high-dimensional sequences. A $d$-dimensional time series $\{X_t\}_{t\in\N_0}$ is called a martingale difference sequence if $\E[X_t|X_0,\dots,X_{t-1}]=0\in\R^d$ for every $t\in\N_0$. In this case, $\E[\mathrm{vec}\{\phi(X_t)X_{t+j}^\top\}]=0$ for every measurable $\phi:\R^d\to\R^p$ and $j\in\N$ such that the expectation exists, where $\mathrm{vec}\{\cdot\}$ denotes the vectorization of a matrix. Consequently, given the observations from the sequence $\{X_t\}_{1\leq t\leq n}$, \citep{chang2022testing} proposes the following martingale difference sequence test: select a family $\phi$ of test functions, let
\begin{align}
    \gamma_j:=\frac{1}{n-j}\sum_{t=1}^{n-j}\mathrm{vec}\{\phi(X_t)X_{t+j}^\top\},\label{eq:gammaj}
\end{align}
and consider the null hypothesis that $\gamma_j=0\in\R^{dp}$ for all $j\geq 1$. This test is, however, not a consistent test since it only involves finitely many moments $\phi$. The univariate case $d=1$ is also considered in the earlier works \citep{dominguez2003testing,escanciano2009testing}, with the same idea of selecting a finite number of test functions $\phi$. We refer to the references therein and the introduction of \citep{phillips2014testing} for a more comprehensive survey. In our setting of the martingale pair test, a natural and similar approach is replacing \eqref{eq:gammaj} with $\E_n[\mathrm{vec}\{\phi(X)Y^\top\}]$, where $\E_n$ denotes the empirical distribution, but the test is not in general consistent as $d$ is finite. 

In the next subsection, we provide a detailed martingale pair hypothesis test that is consistent in an arbitrary dimension. 
% The SE-MPD can be directly used to test the null hypothesis that $(X,Y)$ is a martingale pair under $\P_0$, when $\{X_i: 1 \leq i\leq n\}$  is i.i.d. or satisfies stationarity and mixing conditions; for this, we simply consider $(X_i,Y_i)$ with $Y_i = X_{i+1}$. In Section \ref{sec:apps}, we discuss applications that motivate the assumptions that we impose in our results. These include testing the Markov property and the quality of reinforcement learning policies trained in a simulated environment, among others.

\subsubsection{Implementation and test properties}\label{sec:implementation}

In this section, we provide a concrete guide for the implementation of the test and study a range of test properties including Type I error coverage, consistency, and some power-related results.

To implement our results on martingale pair testing under the assumptions of Theorem~\ref{thm:mixingconvergence}, we propose the following three-step procedure for constructing a test with an asymptotic Type I error of 95\%:

\begin{itemize}

\item \textbf{Step 1:} Compute $n^{1/2}\mpd^{*\xi}({\P}_n,1)$ as a function of $\sigma$ and select $\sigma \geq 1$ in order to maximize $\mpd^{*\xi}({\P}_n,1)$. 

\item \textbf{Step 2:} Compute the 95\% quantile of the generalized chi-squared distribution $\int{\nn{G_x}_2}\,\d x$; this can be computed via Monte Carlo simulation. 

\item \textbf{Step 3:} Reject the hypothesis if $n^{1/2}\mpd^{*\xi}({\P}_n,1)$ is larger than the quantile computed in Step 2.

\end{itemize}

%\red{we may also mention that the (optimal) choice of $\sigma$ depends on the scale of the coupling and hence is indeterministic; the three-step approach finds a reasonable $\sigma$. 
%* If Null is true, size of $\sigma$ has no impact - Thm 3. 
%* If Null is not correct, then $\sigma$ incresing will pick this up, will be shown. in Prop 2.} 

Steps 2 and 3 follow naturally once $\sigma$ is selected in Step 1. The rationale behind Step 1 is to enhance the test's power; we will discuss this motivation precisely following Proposition~\ref{prop:2k}. Intuitively, the chosen $\sigma_n$ from Step 1 remains bounded within a compact set under both the null hypothesis and a broad class of alternative hypotheses. Under the null, the asymptotic Type I error rate—i.e., the probability of incorrectly rejecting the martingale pair hypothesis—is accurately controlled through the quantile of the limiting distribution $\int \nn{G_x}_2\,\d x$. This control follows directly from the uniform continuity (with respect to $\sigma$ on compact sets) of the distribution of $\int \nn{G_x}_2\,\d x$. Conversely, under alternative hypotheses, the test statistic in Step 3 increases, facilitating rejection. Indeed, as the sample size grows, the statistic diverges to infinity, thus ensuring consistency of the test.

\begin{corollary}
    Under the assumptions of Theorem \ref{thm:limitd} (which can be found in Appendix \ref{sec:thm5} below), if $\P(\E[Y | X] \neq X)>0$, we have $\mpd^{*\xi}(\P_n,1) \rightarrow \mpd^{*\xi}(\P_0,1) > 0$. In particular, the probability of rejecting the hypothesis converges to 1.
\end{corollary}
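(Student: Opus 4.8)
The plan is to reduce $\mpd^{*\xi}(\P,1)$ to an explicit Lebesgue integral valid for an arbitrary $\P\in\mathcal P(\R^d\times\R^d)$ with $(X,Y)\in L^1$, and then to combine a law of large numbers with a generalized dominated convergence argument. First I would apply Theorem \ref{prop:martingale} with $\gamma=1$ (note $2^{1-\gamma}=1$) to the smoothed law $\Q:=\P^{*\xi}$, which lies in $L^1$ because $(X,Y)\in L^1$ and $\E[\n{\xi}_2]<\infty$, obtaining $\mpd^{*\xi}(\P,1)=\E_{\P^{*\xi}}\big[\n{\tilde X-\E[\tilde Y\mid\tilde X]}_2\big]$ with $(\tilde X,\tilde Y)=(X+\xi,Y+\xi)$. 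Since $\tilde Y=\tilde X+(Y-X)$ and, under $\P^{*\xi}$, the Lebesgue density of $\tilde X$ is $x\mapsto\E_{\P}[f_\xi(x-X)]$ while the conditional law of $(X,Y)$ given $\tilde X=x$ is $\P$ reweighted by $f_\xi(x-X)/\E_{\P}[f_\xi(x-X)]$ (Bayes' rule, using $f_\xi>0$ everywhere), a short computation gives
\begin{equation*}
\mpd^{*\xi}(\P,1)=\int_{\R^d}\n{\E_{\P}\big[(X-Y)f_\xi(x-X)\big]}_2\,\d x .
\end{equation*}
Specializing to $\P=\P_n$ and $\P=\P_0$ yields $\mpd^{*\xi}(\P_n,1)=\int_{\R^d}\n{\tfrac1n\sum_{i=1}^n(X_i-Y_i)f_\xi(x-X_i)}_2\,\d x$ and $\mpd^{*\xi}(\P_0,1)=\int_{\R^d}\n{\E_{\P_0}[(X-Y)f_\xi(x-X)]}_2\,\d x$.

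Next I would show $\mpd^{*\xi}(\P_n,1)\to\mpd^{*\xi}(\P_0,1)$ $\mu$-a.s. For each fixed $x$ the map $(u,v)\mapsto(u-v)f_\xi(x-u)$ is $\P_0$-integrable ($f_\xi$ bounded, $(X,Y)\in L^1$), so the SLLN in the i.i.d.\ case --- resp.\ Birkhoff's ergodic theorem in the stationary $\alpha$-mixing case, since mixing implies ergodicity --- gives $\tfrac1n\sum_i(X_i-Y_i)f_\xi(x-X_i)\to\E_{\P_0}[(X-Y)f_\xi(x-X)]$, and a Tonelli argument on $\Omega\times\R^d$ upgrades this to: $\mu$-a.s., the integrands converge for Lebesgue-a.e.\ $x$. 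To interchange the limit with $\int\d x$ I would invoke Pratt's generalized dominated convergence theorem: the integrand is dominated by $h_n(x):=\tfrac1n\sum_i\n{X_i-Y_i}_2 f_\xi(x-X_i)\ge 0$, which converges a.e.\ to $h(x):=\E_{\P_0}[\n{X-Y}_2 f_\xi(x-X)]$, and --- crucially --- $\int_{\R^d}h_n\,\d x=\tfrac1n\sum_i\n{X_i-Y_i}_2\to\E_{\P_0}[\n{X-Y}_2]=\int_{\R^d}h\,\d x<\infty$ (Tonelli, since $\int f_\xi=1$). Pratt's lemma then delivers the claimed a.s.\ convergence.

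Then I would establish strict positivity and deduce the rejection claim. Under the assumptions of Theorem \ref{thm:limitd} (in particular $\rho>d+1$), $f_\xi$ is martingality-preserving by Proposition \ref{prop:martingale2}/Example \ref{ex:main}, so $\mpd^{*\xi}(\P_0,1)=0$ iff $\P_0^{*\xi}$ is a martingale law iff $\P_0$ is a martingale law iff $\E_{\P_0}[Y\mid X]=X$; since this fails by hypothesis, $c:=\mpd^{*\xi}(\P_0,1)>0$. (Equivalently one reads this off the closed form: the integral vanishes iff $\E_{\P_0}[(X-Y)f_\xi(x-X)]=0$ for a.e.\ $x$, which, as the characteristic function of $\xi$ has no real zero, forces $\E_{\P_0}[X-Y\mid X]=0$ --- exactly the argument behind Proposition \ref{prop:martingale2}.) Hence $\mpd^{*\xi}(\P_n,1)\ge c/2$ for all large $n$ $\mu$-a.s., so $\sqrt n\,\mpd^{*\xi}(\P_n,1)\to+\infty$ $\mu$-a.s.; since the rejection threshold in Step 3 is the fixed finite $95\%$ quantile $q$ of $\int\n{G_x}_2\,\d x$, we get $\P\big(\sqrt n\,\mpd^{*\xi}(\P_n,1)>q\big)\to1$. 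For the data-driven bandwidth of Step 1 one bounds the statistic at bandwidth $\sigma_n$ below by the statistic at bandwidth $1$, which converges to a positive constant, and uses that $\sigma\mapsto q(\sigma)$ stays bounded on $[1,\infty)$ --- bounded as $\sigma\to\infty$ by Theorem \ref{thm:expectation} with Markov's inequality, and continuous on compact sets.

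The main obstacle, I expect, is twofold. First is the measure-theoretic bookkeeping in the closed-form reduction --- the Bayes reweighting of $(X,Y)$ given $\tilde X$ and the Tonelli interchanges when $(X,Y)$ need not admit a density, plus verifying that Theorem \ref{prop:martingale} genuinely applies to $\P^{*\xi}$. Second, and more substantively, is the limit--integral interchange: the natural dominating function $h_n$ is itself random and varies with $n$, so ordinary dominated convergence is unavailable and one must combine Pratt's lemma with the Tonelli step that converts the fixed-$x$ almost-sure SLLN into almost-sure convergence of the integrated quantity. The remaining pieces --- positivity and the divergence of $\sqrt n\,\mpd^{*\xi}(\P_n,1)$ against a bounded critical value --- are routine.
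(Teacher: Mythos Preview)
The paper states this Corollary without proof, treating it as an immediate consequence of the preceding machinery. Your argument is correct and supplies the missing details. The closed-form integral representation you derive,
\[
\mpd^{*\xi}(\P,1)=\int_{\R^d}\n{\E_{\P}\big[(X-Y)f_\xi(x-X)\big]}_2\,\d x,
\]
is exactly the $\gamma=1$ specialization of the paper's identity \eqref{eq:En expression} (there written for $\P_n$, but the same computation via Theorem \ref{prop:martingale} and Lemma \ref{lemma:computation} goes through for any $\P$ with $(X,Y)\in L^1$). The substantive ingredient you add --- the Pratt/generalized-DCT step with the random dominating sequence $h_n(x)=\tfrac1n\sum_i\n{X_i-Y_i}_2 f_\xi(x-X_i)$, whose $\d x$-integrals converge $\mu$-a.s.\ by the SLLN and Tonelli --- is precisely what is needed to justify the interchange of limit and integral, and is not spelled out anywhere in the paper. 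The Fubini/Tonelli upgrade from ``for each $x$, $\mu$-a.s.'' to ``$\mu$-a.s., for Lebesgue-a.e.\ $x$'' is also handled correctly.

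Two minor remarks. First, the Corollary is stated only ``under the assumptions of Theorem \ref{thm:limitd}'', which is the i.i.d.\ setting; your aside about the $\alpha$-mixing case via Birkhoff is a harmless extra. Second, your discussion of the data-driven bandwidth (bounding the maximized statistic below by its value at $\sigma=1$, and bounding the quantile $q(\sigma)$ uniformly via Theorem \ref{thm:expectation} plus Markov) goes beyond what the bare Corollary asserts, but the reasoning is sound and matches the informal discussion surrounding the Corollary in Section \ref{sec:implementation}.
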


In Step 1 of our description above, we propose selecting $\sigma$ by maximizing the SE-MPD as a function of $\sigma$. We will study the behavior of such $\sigma = \sigma_n$ depending on how similar a non-martingale pair generating process is from a martingale, in a simple setting where $(X,Y)$ is bounded and takes values in $\R^2$. 
In what follows, for strictly positive random variables $\xi_n$ and constants $c_n$, we write $\xi_n\asymp c_n$ if both sequences $(\xi_n/c_n)_{n\in\N}$ and $(c_n/\xi_n)_{n\in\N}$ are tight.

\begin{proposition}\label{prop:2k}
 Fix $k\in\N$ and assume that $(X,Y)$ is bounded and takes values in $\R^2$. Consider the following hypothesis testing problem.
 \begin{align*}
     &\text{$H_0\mathrm{:}$ $(X,Y)$ is a martingale;}\\
     &\text{$H_1\mathrm{:}$ $\E[(Y-X)X^j] = 0$ for $j=0,1,\dots,k-1$ and $\E[(Y-X)X^k] \neq 0$.}
 \end{align*}
  Let $(\sigma_n)_{n\in\N}$ be a positive sequence in $\R$ satisfying $\sigma_n\to\infty$ and $\sigma_n=o(n^{1/(2k)})$, and let $f_{\xi, \rho, \sigma} $ defined in \eqref{eq:f general} be the smoothing kernel. Then under $H_0$, the sequence $\{\sqrt{n}\,\mpd^{*\xi}_{\sigma_n}(\P_n,1)\}_{n\in\N}$ is tight, and under $H_1$,
  \begin{align}
      {\sqrt{n}\,\mpd^{*\xi}_{\sigma_n}(\P_n,1)}\asymp \frac{\sqrt{n}}{(\sigma_n)^k}.\label{eq:alt}
  \end{align}
  On the other hand, if $n^{1/(2k)}=o(\sigma_n)$, the sequence $\{\sqrt{n}\,\mpd^{*\xi}_{\sigma_n}(\P_n,1)\}_{n\in\N}$ is tight under both hypotheses.
\end{proposition}

% To study this behavior, we start with the following observation.  If $(X,Y)\in L^{2p}$, then by Markov's inequality
%     $$\sum_{n\in\N}n\P(|X_i|>\sigma_n)\ll \sum_{n\in\N}n\sigma_n^{-2p}<\infty$$
% holds.
%  For $\sigma_n=n^{\frac{1}{p}+\delta}$ this implies  $X_1,\dots,X_n\in [-\sigma_n,\sigma_n]$ eventually almost surely by the Borel--Cantelli lemma.

% More generally, one can show the following: suppose that $(X,Y)\in L^{4k+4+\delta},~\E[(Y-X)X^j]=0$ for $j=0,1,\dots,k-1$ but $\E[(Y-X)X^{k}]\neq 0$, and that $n^{1/(2(k+1))}\ll\sigma_n\ll n^{1/(2k)}$. Then the term corresponding to $k$ in the sum of \eqref{eq:taylor:phi} dominates and converges to $\infty$ for $n\to \infty$, while the rest terms are of constant order. 

Under $H_1$, it is a consequence of Proposition \ref{prop:2k} that the maximizer of $\sigma$ in Step 1 corresponds to choices of $\sigma_n$ satisfying $\sigma_n=O(1)$. Indeed, otherwise taking $\sigma_n'=o(\sigma_n)$ with $\sigma_n'\neq O(1)$ results in an asymptotically larger MPD.\footnote{In fact, as long as $\P_0$ is not a martingale law, we can select $k$ arbitrarily large by a density argument.} Along with \eqref{eq:alt}, this implies that the Type II error is controlled as $n\to\infty$. 
On the other hand, under $H_0$, we expect that the maximizer of $\sigma_n\mapsto n^{1/2}\mpd^{*\xi}_{\sigma_n}({\P}_n,1)$ stabilizes as $n\to\infty$, in view of Theorem \ref{thm:limitd simple}. Moreover, Theorem \ref{thm:expectation} and Markov's inequality imply that the quantile in Step 2 will remain bounded even if $\sigma_n$ is large. 
Although not pursued in this paper, we expect that using a similar chaining argument as the proof of Lemma \ref{lemma:Esup when gamma=1}, one can prove that $\{\sup_{\sigma\geq 1}\sqrt{n}\,\mpd^{*\xi}_{\sigma}(\P_n,1)\}_{n\geq 1}$ is tight. In this way, we gain uniform control on the Type I error, a consequence of the above tightness and the boundedness of the quantile in Step 2.

In other words, we argue that $\sigma = O(1)$ leads to an asymptotically exact Type I error specified by the test (this is the point of choosing the quantile as indicated in Step 2). On the other hand, the hard instances of alternatives (i.e., data consistent with processes that are very similar to martingales, such as the setting of Proposition \ref{prop:2k}) lead to a selection according to Step 1 that is also close to $\sigma_n = O(1)$, as discussed in the previous paragraph. 
% For easy instances (i.e. small values of $k$ in the setting of Proposition \ref{prop:2k}), the statistic obtained in Step 1 according to our rule will correspond to a large number, and Theorem \ref{thm:expectation} implies that the quantile in Step 2 will remain bounded even if $\sigma_n$ is large 
Based on this intuition, we believe that our selection criterion for $\sigma$ is sensible. The statistical properties of this test (e.g., asymptotic efficiency) are interesting and will be studied in future work.

\section{Setting the stage for future research} \label{sec:conclusion}

The goal of this section is to stimulate the appetite and set the stage for future research questions of importance strongly connected with our main contributions. We divide this section into two subsections. First, we study finite-sample rates for the MPD, which are obviously interesting in their own right, but in particular may be helpful in further studying the martingale pair test that we introduce. We will conclude that an investigation of finite-sample rates involves the choice of the smoothing kernel. Thus, the second subsection revisits our statistical analysis in the context of general smoothing kernels that may not be of the form \eqref{eq:f general}.

\subsection{Finite-sample rates for \texorpdfstring{$\gamma=1$}{}}\label{subsec:finite_sample_rates}
In addition to large-sample asymptotic statistics, finite-sample asymptotics can also be developed. In this section, we provide preliminary results on the finite-sample asymptotics for the SE-MPD when $\gamma=1$. 

Assume that $\P_0$ is the law of a martingale pair. We apply classical tools from empirical process theory to derive an upper bound of $\E[ \mpd^{*\xi}(\P_n,1) ]$ in Proposition \ref{prop:finitesample} below. More concretely we show that $\E[\mpd^{*\xi}(\P_n, 1)]=O(n^{-1/2})$, i.e.,~the SE-MPD exhibits a parametric convergence rate.

Recall the density given by \eqref{eq:f general}. We make this choice mainly for technical reasons:  as will become clear from the proof (see Section \ref{sec:tech_dev_finite_and_general}), it is only important that $\P_\xi$ exhibits heavier tails than $\P_0$. The main result in this subsection is the following:

\begin{proposition}\label{prop:finitesample}
Suppose that $\rho>d+1$ and $(X,Y)\in L^{2p}$  for some $p>d+1$. Then there exists a universal constant $L>0$ such that
\begin{align}
&\hspace{0.5cm} \mathrm{MPD}^{*\xi}(\P_n,1) \nonumber\\
&\leq Ln^{-1/2}\sum_{j=1}^d\n{X^j-Y^j}_{2p}C_\rho\Bigg[ {d^{3/2}\rho  }\left(\E[\nn{X}_{2}^{2p}]^{1/(2q)}2^{p-1}\frac{\sigma^{-p}}{p-1-d}+2^{\rho+1}\frac{\sigma^{-1}}{\rho+1-d}\right)\nonumber\\
&\hspace{1cm}+ d\left(\E[\nn{X}_{2}^{2p}]^{1/(2q)}2^{p-1}\frac{\sigma^{-(p-1)}}{p-1-d}+2^{\rho}\frac{1}{\rho-d}\right)+ (\sqrt{d}\rho\sigma^{-(d+1)}+\sigma^{-d})(\sigma^d+1)\Bigg].\label{eq:finiterate}
\end{align}
\end{proposition}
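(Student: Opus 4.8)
## Proof Proposal for Proposition \ref{prop:finitesample}

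The plan is to combine the closed-form expression for the SE-MPD (Theorem \ref{prop:martingale}) with classical empirical-process/symmetrization bounds, exploiting the fact that the smoothing kernel $\xi$ has heavier tails than $\P_0$. The starting point is that, by Theorem \ref{prop:martingale} applied to the smoothed empirical measure,
\begin{align*}
\E[\mathrm{MPD}^{*\xi}(\P_n,1)] = \E\Big[\int_{\R^d}\big\|\E_{\P_n^{*\xi}}[Y\mid X=x]-x\big\|_2\,\d x\Big],
\end{align*}
and one rewrites $\E_{\P_n^{*\xi}}[Y\mid X=x]-x$ explicitly in terms of the data: since $\P_n^{*\xi}$ is a mixture of the shifted kernel densities centered at $(X_i,Y_i)$, the conditional expectation at $x$ becomes a ratio $\big(\sum_i (Y_i-X_i) f_\xi(x-X_i)\big)\big/\big(\sum_i f_\xi(x-X_i)\big)$ (after a cancellation using that the conditioning variable is $X+\xi$ and the same $\xi$ is added to $Y$). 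Multiplying through, the SE-MPD equals $\frac1n\int \big\|\sum_i (Y_i-X_i) f_\xi(x-X_i)\big\|_2 \big/ \bar f_n(x)\,\d x$ where $\bar f_n(x)=\frac1n\sum_i f_\xi(x-X_i)$ is the smoothed empirical density of the $X$-marginal, which concentrates around $\E[f_\xi(x-X)]$.

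Next I would center: since $\P_0$ is a martingale law, $\E[(Y-X)f_\xi(x-X)]=0$ pointwise in $x$ (this uses the martingale property and is the reason smoothing preserves martingality), so $\sum_i (Y_i-X_i)f_\xi(x-X_i)$ is a centered sum. The first main step is therefore a symmetrization/Rademacher bound: for each coordinate $j$,
\begin{align*}
\E\Big[\int_{\R^d}\Big|\frac1{\sqrt n}\sum_{i=1}^n (Y_i^j-X_i^j)f_\xi(x-X_i)\Big|\,\d x\Big]\le 2\,\E\Big[\int_{\R^d}\Big|\frac1{\sqrt n}\sum_{i=1}^n \varepsilon_i(Y_i^j-X_i^j)f_\xi(x-X_i)\Big|\,\d x\Big],
\end{align*}
and then, pulling the integral outside via Jensen/Fubini and applying the $L^1$–$L^2$ bound for Rademacher sums, this is $\lesssim \int_{\R^d}\big(\E[(Y^j-X^j)^2 f_\xi(x-X)^2]\big)^{1/2}\,\d x$. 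The $\sqrt n$ then gives the $n^{-1/2}$ rate. Bounding $\E[(Y^j-X^j)^2 f_\xi(x-X)^2]\le \|X^j-Y^j\|_{2p}^2\,\E[f_\xi(x-X)^{2q}]^{1/q}$ by Hölder (with $1/p+1/q=1$), followed by a dominated-convergence/explicit integration of the kernel $f_\xi=f_{\xi,\rho,\sigma}$ over $\R^d$ — split into $\|x\|_2\le\sigma$ where $f_\xi\asymp\sigma^{-d}$ and $\|x\|_2>\sigma$ where $f_\xi$ has polynomial tails per \eqref{eq:sigma} — produces the explicit $\sigma$-dependent constants with the $1/(p-1-d)$ and $1/(\rho+1-d)$ denominators. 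The remaining ``$(\sqrt d\rho\sigma^{-(d+1)}+\sigma^{-d})(\sigma^d+1)$'' term I expect comes from controlling the denominator $\bar f_n(x)$: one writes $1/\bar f_n(x)=1/\E[f_\xi(x-X)]+(\text{fluctuation})$, and bounds the contribution of the fluctuation using that $f_\xi$ is Lipschitz with constant $\asymp\rho\sigma^{-(d+1)}$ and bounded by $\asymp\sigma^{-d}$, together with $\E[f_\xi(x-X)]\gtrsim\sigma^{-d}(\sigma^d+1)^{-1}$ type lower bounds on the relevant region.

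The main obstacle is the denominator: $\E_{\P_n^{*\xi}}[Y\mid X=x]$ is a \emph{ratio} of empirical averages, and the integral $\int_{\R^d}$ runs over all of $\R^d$ including the tails where $\bar f_n(x)$ is small and its relative fluctuations are large. The heavy-tail choice \eqref{eq:f general} for $\P_\xi$ is exactly what tames this: because $f_\xi$ dominates the tails of $\P_0$, the ratio $f_\xi(x-X_i)/\E[f_\xi(x-X)]$ stays controlled even for large $\|x\|_2$, and the numerator is integrable against $\d x$ thanks to $\rho>\gamma+d$. Making the ratio estimate rigorous uniformly in $x$ — rather than just pointwise — is the delicate point; I would handle it by a deterministic bound $\|\E_{\P_n^{*\xi}}[Y\mid X=x]-x\|_2 \le \big\|\frac1n\sum_i(Y_i-X_i)f_\xi(x-X_i)\big\|_2/\E[f_\xi(x-X)] + (\text{remainder controlled by } \|\bar f_n(x)-\E[f_\xi(x-X)]\|\cdot(\ldots))$, take expectations, and then bound the remainder term's $\d x$-integral by the same Hölder-plus-explicit-integration scheme, which is where the last bracketed term in \eqref{eq:finiterate} originates. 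The moment hypothesis $(X,Y)\in L^{2p}$ with $p>d+1$ is precisely what is needed for all these kernel integrals to converge.
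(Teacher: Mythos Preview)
There is a genuine error in your formula for the SE-MPD. You write that
\[
\mathrm{MPD}^{*\xi}(\P_n,1)=\frac{1}{n}\int_{\R^d}\frac{\big\|\sum_i (Y_i-X_i)f_\xi(x-X_i)\big\|_2}{\bar f_n(x)}\,\d x,
\]
but this forgets the density weight. The outer expectation $\E_{\P_n^{*\xi}}[\,\cdot\,]$ integrates against the density $\bar f_n(x)=\frac{1}{n}\sum_i f_\xi(x-X_i)$ of $X$ under $\P_n^{*\xi}$, so the $\bar f_n(x)$ in the denominator of the ratio cancels exactly against this weight. For $\gamma=1$ the correct expression (this is \eqref{eq:En expression} in the paper) is simply
\[
\mathrm{MPD}^{*\xi}(\P_n,1)=\int_{\R^d}\big\|\xi_n(x)\big\|_2\,\d x,\qquad \xi_n(x)=\frac{1}{n}\sum_{i=1}^n (Y_i-X_i)f_\xi(x-X_i),
\]
with no denominator at all. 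Consequently your entire ``main obstacle'' paragraph is attacking a nonexistent problem, and your account of the last bracketed term in \eqref{eq:finiterate} as coming from fluctuations of $\bar f_n$ is wrong: that term comes from the near region $\|x\|_2\le\sigma$, where one uses the crude bounds $f_\xi\le C_\rho\sigma^{-d}$ and $\|\nabla f_\xi\|_2\le \rho C_\rho\sigma^{-(d+1)}$ together with the volume $\asymp\sigma^d$.

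Once this is corrected, your approach for the \emph{numerator} --- pointwise variance bound $\E[|\xi_n^j(x)|]\le n^{-1/2}\big(\E[(Y^j-X^j)^2f_\xi(x-X)^2]\big)^{1/2}$, then H\"older with exponents $(p,q)$ and explicit integration split at $\|x\|_2=\sigma$ --- is valid and in fact simpler than what the paper does. The paper instead partitions $\R^d$ into unit cubes $I_k$, bounds $\E[\sup_{x\in I_k}\|\xi_n(x)\|_2]$ via a generic-chaining/Talagrand bound (Lemma~\ref{lemma:Esup when gamma=1}), and sums over $k$; the $\sqrt{d}\rho$ factors in \eqref{eq:finiterate} come from the chaining over the Lipschitz class and would not appear in your pointwise argument. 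The paper's heavier route is chosen because the $\sup_{x\in I_k}$ bound is needed anyway for the limit theorem (Theorem~\ref{thm:limitd}), not because Proposition~\ref{prop:finitesample} requires it.
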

We remark that a finite-sample guarantee similar to Proposition \ref{prop:finitesample} is also achievable for the mixing case using the explicit bound \eqref{eq:mixingfinitesample} below in Lemma \ref{lemma:numerator} in Appendix \ref{sec:thm5} . 

The upper bound \eqref{eq:finiterate} is far from being tight in general, and there is certainly room for improvement. This can be achieved by changing the smoothing measure $\P_\xi$. In the next section, we provide an exemplary study relaxing for a broader class of $\P_\xi$ under simplified assumptions.  We leave a full analysis of the martingale pair test for different $\P_\xi$ for future research, and focus on $f_{\xi,\rho}(x)=C_\rho(|x|_2+1)^{-\rho}$ in the main body of this paper. 

\subsection{Towards a general selection of smoothing kernel}\label{subsec:general_case}
Let us define $$f_a^r(x,y):=(1\vee |a|_2^r)(y-x)f_\xi(a-x)$$ for some $r>d$ and set 
$$\mathcal{F}^r_j:= \{(f_a^r)_j: a\in \mathbb{R}^d\},\qquad j=1, \dots, d.$$
We also set $$\|\nabla^k_{x,y}f_\xi(x)\|_\infty:=\max_{i_1, \dots, i_k\in\{1,\dots, d\}} |\nabla^k_{x_{i_1}\dots x_{i_{k}}} f_\xi(x)|$$
for $k\in \N_0$ and $\beta:=\lceil 2d/(2-\delta) \rceil$ for $\delta\in  (0,1)$.

Throughout this subsection, we make the following standing assumption:

\begin{assumption}\label{ass:1}
The following are satisfied for some $\delta\in (0,1)$:
\begin{itemize}
\item There exists $D=D(\delta, d)$ such that 
\begin{align}\label{eq:derivative}
(1\vee |x|_2^r) \max_{0\le k\le \beta} \|\nabla_{x,y}^k f_\xi(x)\|_\infty \le D
\end{align} 
holds for all $x\in \mathbb{R}^d$.
\item $(X,Y)\in L^s$ for some $s>4(2d+1+r)$.
\end{itemize}
\end{assumption}

Our main result for this subsection is the following:
\begin{proposition}\label{prop:general_xi}
Under Assumption \ref{ass:1} there exists a constant $C=C(\beta, \delta, d,r, D)$ such that
\begin{align*}
    \sqrt{n}\,\E[ \mathrm{MPD}^{*\xi}(\P_n,1) ]\le C \int_0^{C\big(1+\E\big[|(X,Y)|_2^{2(r+1)}\big]\big)} \Big(\frac{1}{\epsilon}\Big)^{1-\delta/2}\,\d\epsilon.
\end{align*}
\end{proposition}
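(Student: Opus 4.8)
The plan is to turn the statement into a bracketing maximal inequality for a centred empirical process indexed (essentially) by the classes $\mathcal{F}^r_j$, the decisive input being that Assumption~\ref{ass:1} forces these classes to have $L^2(\P_0)$-bracketing entropy of a polynomial order strictly below $2$. First I would record the closed form of the statistic. Writing $(X,Y)\lawis\P_n^{*\xi}$ as $(X_I+\xi,Y_I+\xi)$ with $I$ uniform on $\{1,\dots,n\}$ and $\xi$ independent with density $f_\xi$, the $X$-marginal of $\P_n^{*\xi}$ has density $a\mapsto n^{-1}\sum_i f_\xi(a-X_i)$ and, since $Y-X=Y_I-X_I$, one gets $\E[Y\mid X=a]-a=\bigl(\sum_i(Y_i-X_i)f_\xi(a-X_i)\bigr)/\bigl(\sum_i f_\xi(a-X_i)\bigr)$. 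Feeding this into Theorem~\ref{prop:martingale} with $\gamma=1$ yields
\begin{align*}
\mathrm{MPD}^{*\xi}(\P_n,1)=\int_{\R^d}\Bigl\|\tfrac1n\sum_{i=1}^n(Y_i-X_i)f_\xi(a-X_i)\Bigr\|_2\,\d a .
\end{align*}

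Next I would reduce to an empirical process. Multiplying and dividing by $1\vee\|a\|_2^r$ and using $\|\cdot\|_2\le\sum_{j=1}^d|\cdot_j|$, the quantity $\sqrt n\,\mathrm{MPD}^{*\xi}(\P_n,1)$ is at most $\sum_{j=1}^d\int_{\R^d}(1\vee\|a\|_2^r)^{-1}\,\bigl|n^{-1/2}\sum_i (f_a^r)_j(X_i,Y_i)\bigr|\,\d a$. Because $\P_0$ is a martingale law, $\E_{\P_0}[(f_a^r)_j]=(1\vee\|a\|_2^r)\E[(\E[Y\mid X]-X)_jf_\xi(a-X)]=0$, so the inner average equals $\mathbb G_n (f_a^r)_j$ with $\mathbb G_n g:=n^{-1/2}\sum_i(g(X_i,Y_i)-\E_{\P_0}g)$. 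Optimising the sign of the integrand (a measurable-selection/Fubini argument) rewrites $\int(1\vee\|a\|_2^r)^{-1}|\mathbb G_n (f_a^r)_j|\,\d a$ as $\sup\{\mathbb G_n g:g\in\mathcal H_j\}$, where $\mathcal H_j$ is a class of products of $(y_j-x_j)$ with convolution-type averages $x\mapsto\int s(a)f_\xi(a-x)\,\d a$ over $\|s\|_\infty\le1$; here $r>d$ is used both to make $\int(1\vee\|a\|_2^r)^{-1}\,\d a<\infty$ and, together with the decay half of \eqref{eq:derivative}, to give $\mathcal H_j$ an envelope whose $L^2(\P_0)$-norm is bounded by a polynomial moment of $(X,Y)$ of degree $2(r+1)$. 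Taking expectations, $\sqrt n\,\E[\mathrm{MPD}^{*\xi}(\P_n,1)]\le\sum_{j=1}^d\E\|\mathbb G_n\|_{\mathcal H_j}$.

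It then remains to bound each $\E\|\mathbb G_n\|_{\mathcal H_j}$. The convolution structure of the members of $\mathcal H_j$ means that, by \eqref{eq:derivative}, their partial derivatives up to order $\beta$ are uniformly bounded by a constant $C_{d,r}D$; hence, after truncating to a box $\{\|(x,y)\|_2\le R_\epsilon\}$ (with $R_\epsilon$ a slowly growing power of $1/\epsilon$, the truncation error being killed by $(X,Y)\in L^s$), each such function is a uniformly $C^\beta$-bounded function on a $2d$-dimensional domain. The classical entropy estimate for smoothness classes then gives $\log N_{[]}(\epsilon,\mathcal H_j,L^2(\P_0))\lesssim\epsilon^{-2d/\beta}\le\epsilon^{-(2-\delta)}$, using $\beta=\lceil 2d/(2-\delta)\rceil\ge 2d/(2-\delta)$. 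Plugging this into a bracketing maximal inequality (e.g.\ van der Vaart--Wellner, Thm.~2.14.2), whose lower-order remainder terms are absorbed because the envelope of $\mathcal H_j$ has the moments guaranteed by $s>4(2d+1+r)$, yields $\E\|\mathbb G_n\|_{\mathcal H_j}\lesssim\int_0^{\|F_j\|_{\P_0,2}}\sqrt{1+\log N_{[]}(\epsilon,\mathcal H_j,L^2(\P_0))}\,\d\epsilon\lesssim\int_0^{C(1+\E[\|(X,Y)\|_2^{2(r+1)}])}\epsilon^{-(1-\delta/2)}\,\d\epsilon$; summing over $j$ and renaming constants gives Proposition~\ref{prop:general_xi}.

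\emph{Main obstacle.} The heart of the argument is the entropy step: one must balance the smoothness order $\beta$ against the truncation radius $R_\epsilon$ so that the resulting bracketing exponent stays at $2-\delta<2$ (this is exactly why $\beta$ is tied to $2d/(2-\delta)$, and why the Dudley integral converges at $0$, i.e.\ why the class is Donsker), while the generous moment budget $s>4(2d+1+r)$ must simultaneously control the truncation error and every remainder term in the finite-sample maximal inequality. Getting the entropy bound in a clean form for the truncated class, and verifying that the unbounded $y$-direction can be pushed entirely into the envelope via the weight $1\vee\|a\|_2^r$, is where the real work lies; the closed-form reduction, the passage to $\mathbb G_n$, and the invocation of the maximal inequality are then essentially bookkeeping.
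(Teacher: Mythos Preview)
Your proposal is correct in spirit and hits the same two structural pillars as the paper's proof: a bracketing-entropy bound of order $\epsilon^{-(2-\delta)}$ coming from $C^\beta$-smoothness (with $\beta=\lceil 2d/(2-\delta)\rceil$), followed by a bracketing maximal inequality whose entropy integral converges precisely because the exponent is $<2$. The closed-form reduction and the role of $r>d$ are also as in the paper.

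The one genuine organizational difference is your passage to the mixture class $\mathcal H_j$. The paper avoids this entirely: after Tonelli it simply bounds, for each fixed $a$, the integrand by $\sup_{f\in\mathcal F_j^r}|\mathbb G_n f|$, factors out $\int_{\R^d}(1\vee\|a\|_2^r)^{-1}\,\d a<\infty$, and applies the maximal inequality directly to the \emph{parametric} class $\mathcal F_j^r=\{(f^r_a)_j:a\in\R^d\}$. Your rewriting $\int (1\vee\|a\|_2^r)^{-1}|\mathbb G_n(f_a^r)_j|\,\d a=\sup_{g\in\mathcal H_j}\mathbb G_n g$ is valid, but it forces you to control the entropy of a strictly larger (infinite-dimensional) class, which you then have to re-embed into the same $C^\beta$-ball anyway. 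No gain, extra work.

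A second, smaller difference: your single truncation to $\{\|(x,y)\|_2\le R_\epsilon\}$ is a reasonable heuristic, but making the arithmetic close (balancing $R_\epsilon^{2d(1+1/\beta)}$ against the $L^2$-truncation error controlled by $(X,Y)\in L^s$) is delicate. The paper sidesteps this by invoking van der Vaart--Wellner Corollary~2.7.4, which packages the unbounded-domain issue as a weighted sum $\sum_j S_j^{2V/(V+2)}\mu(I_j)^{V/(V+2)}$ over unit cubes in annuli; the moment condition $s>4(2d+1+r)$ is exactly what makes that sum converge. If you pursue your route, you would effectively be reproving that corollary.

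In short: same engine, but you add an unnecessary convex-hull layer and a slightly rougher handling of the unbounded domain. Dropping $\mathcal H_j$ in favour of the pointwise-sup-then-integrate step, and citing Corollary~2.7.4 for the entropy of $\mathcal F_j^r$, gives the paper's cleaner argument.
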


\begin{remark}
In order to give a unified presentation of results in this subsection, we have refrained from optimizing the moment condition on $(X,Y)$ in Assumption \ref{ass:1}. In fact, as $$4(2d+1+r)>2(1+3d)>2(d+1),$$
this moment condition is more stringent than the moment condition we impose in Theorem \ref{thm:limitd} below for the case $\gamma=1$; see also Remark \ref{remark:momentchoice}. However, the choice of $f_{\xi,\rho}$ in Theorem \ref{thm:limitd} below is in a fixed parametric class with quite heavy tails, while Proposition \ref{prop:general_xi2} offers much greater flexibility in choosing $f_\xi$, e.g.,~one can choose $f_\xi$ to be the normal density or other kernels frequently used in density estimation.
\end{remark}

Once again, the finite-sample bound we developed in Proposition \ref{prop:general_xi} is likely not optimal. Our discussion of the case of the general $f_\xi$ offers a starting point for future research of this topic. Deriving finite-sample bounds that are also of practical use is left for further investigation. The approach leading to Proposition \ref{prop:general_xi} can be used directly to establish a particular case of Theorem \ref{thm:limitd} directly. We record this result as the next proposition. 

\begin{proposition}\label{prop:general_xi2}
Under Assumption \ref{ass:1}, it holds that
\begin{align}
\sqrt{n}\,\mathrm{MPD}^{*\xi}(\P_n,1)\ddd \int \nn{G_x}_2\,\d x,\label{eq:limitd2}
\end{align}
where $\{G_x\}$ is a centered $\R^d$-valued Gaussian random field with covariance 
\begin{align}\label{eq:gx}
    \E[G_xG_y^\top]=\E[(Y-X) f_\xi(x-X)f_\xi(y-X)(Y-X)^\top],~x,y\in\R^d.
\end{align}
\end{proposition}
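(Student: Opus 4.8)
The plan is to rewrite $\sqrt n\,\mpd^{*\xi}(\P_n,1)$ exactly as an integral over $\R^d$ of an $\R^d$-valued empirical process, to prove that this process converges weakly (in a weighted sup-norm space) to the Gaussian field $G$, and to conclude with the continuous mapping theorem. In spirit this is the distributional refinement of the proof of Proposition~\ref{prop:general_xi}: the same function classes $\mathcal{F}^r_j$ and the same envelope and entropy estimates enter, but one invokes a functional central limit theorem where Proposition~\ref{prop:general_xi} invokes a maximal inequality.

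\emph{Step 1 (closed form).} I would first apply Theorem~\ref{prop:martingale} with $\gamma=1$ to the smoothed empirical law $\P_n^{*\xi}$, which lies in $L^1$ since \eqref{eq:derivative} (with $k=0$) forces enough tail decay of $f_\xi$ that $\E[\n{\xi}_2]<\infty$. Writing $\P_n^{*\xi}=\text{Law}(X_I+\xi,Y_I+\xi)$ with $I$ uniform on $\{1,\dots,n\}$ and $\xi\sim f_\xi$ independent, the $X$-marginal has density $x\mapsto\tfrac1n\sum_i f_\xi(x-X_i)$, and $x-\E_{\P_n^{*\xi}}[Y\mid X=x]=\frac{\sum_i f_\xi(x-X_i)(X_i-Y_i)}{\sum_i f_\xi(x-X_i)}$, so the normalising denominators cancel and
\begin{align*}
\sqrt n\,\mpd^{*\xi}(\P_n,1)=\int_{\R^d}\n{G^n_x}_2\,\d x,\qquad G^n_x:=\tfrac1{\sqrt n}\sum_{i=1}^n (X_i-Y_i)\,f_\xi(x-X_i).
\end{align*}
Since $\P_0$ is a martingale coupling, $\E[(X-Y)\mid X]=0$, so for each fixed $x$ the summands defining $G^n_x$ are i.i.d.\ and centered with finite variance under Assumption~\ref{ass:1}.

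\emph{Step 2 (functional CLT -- the crux).} Componentwise, after the reweighting $H^n_x:=(1\vee\n{x}_2^r)G^n_x$, the process $H^n$ coincides (up to sign) with the empirical process $\sqrt n(\P_n-\P_0)$ indexed by the classes $\mathcal{F}^r_j$, $j=1,\dots,d$. The hypothesis $(X,Y)\in L^s$ with $s>4(2d+1+r)$ makes the envelope of each $\mathcal{F}^r_j$ square-integrable, and the derivative bound \eqref{eq:derivative} up to order $\beta$ controls the metric entropy of $\mathcal{F}^r_j$ well enough -- this is exactly the chaining estimate behind the bound in Proposition~\ref{prop:general_xi}. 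Hence each $\mathcal{F}^r_j$ is $\P_0$-Donsker, so $H^n$ converges weakly in $\ell^\infty(\R^d;\R^d)$ to a tight centered Gaussian field $H$; the finite-dimensional limits come from the multivariate CLT, and the cross terms vanish by the martingale property as in Step~1, which identifies $H_x=(1\vee\n{x}_2^r)G_x$ with $G$ having covariance \eqref{eq:gx}. I expect this to be the main obstacle: one must upgrade the pointwise CLT for $G^n_x$ to \emph{uniform} functional convergence over the \emph{unbounded} index set $x\in\R^d$, in a norm strong enough that integration over all of $\R^d$ is a continuous functional; this forces one to control the tails $\int_{\n{x}_2>R}\n{G^n_x}_2\,\d x$ uniformly in $n$, which is precisely where the weight $(1\vee\n{\cdot}_2^r)$ with $r>d$, the decay bound \eqref{eq:derivative}, and the strong moments of $(X,Y)$ in Assumption~\ref{ass:1} are needed.

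\emph{Step 3 (continuous mapping).} Since $r>d$, the weight $(1\vee\n{\cdot}_2^r)^{-1}$ is integrable on $\R^d$, so $\Phi(h):=\int_{\R^d}\n{h_x}_2(1\vee\n{x}_2^r)^{-1}\,\d x$ is Lipschitz, hence continuous, on $\ell^\infty(\R^d;\R^d)$, and $\Phi(H^n)=\sqrt n\,\mpd^{*\xi}(\P_n,1)$ by Step~1. The continuous mapping theorem for weak convergence in $\ell^\infty$ then yields $\sqrt n\,\mpd^{*\xi}(\P_n,1)\ddd\Phi(H)=\int_{\R^d}\n{G_x}_2\,\d x$, and almost-sure finiteness of the limit follows, for instance, from Fatou's lemma combined with the finite-sample bound of Proposition~\ref{prop:general_xi}. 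Once Step~2 is established, Steps~1 and 3 are routine.
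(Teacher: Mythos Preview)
Your proposal is correct and follows essentially the same route as the paper: rewrite $\sqrt{n}\,\mpd^{*\xi}(\P_n,1)$ as $\int_{\R^d}\n{G^n_x}_2\,\d x$, pass to the weighted process $H^n_x=(1\vee\n{x}_2^r)G^n_x$, show that the classes $\mathcal{F}^r_j$ are $\P_0$-Donsker, and conclude via the continuous mapping theorem applied to the $\ell^\infty$-continuous functional $h\mapsto\int_{\R^d}\n{h_x}_2(1\vee\n{x}_2^r)^{-1}\,\d x$. The only minor difference is in how the Donsker property is verified: you invoke the bracketing-entropy bound of Lemma~\ref{lem:finite_sample} (which indeed gives a finite bracketing integral since $(2-\delta)/2<1$), whereas the paper instead checks the partition-envelope criterion of \citep[Example~2.10.25]{van1996} in Lemma~\ref{lem:asymptotic}, which only needs $s>2(2d+1+r)$ rather than the full strength of Assumption~\ref{ass:1}.
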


\section{Experiments and Applications}\label{sec:experiment} 
In this section, we study the power of our martingality test based on simulated data and discuss a few applications, with an emphasis on testing no-arbitrage in
existing pricing models for financial derivatives. Implementations for this section are made publicly available at our \href{https://github.com/Ericavanee/Bicausal_Wasserstein_MtglProj}{GitHub Repository}. %This analysis is classical in the context of non-parametric tests.

\subsection{Power analysis} \label{subsec:power}
Recall the martingale pair hypothesis test from Section \ref{sec:implementation}. To analyze the power of our test, we begin by evaluating the asymptotic critical values $c_\alpha$  corresponding to a given significance level $\alpha$. This is done for dimensions $d = 1 $ and $d = 2 $, under the assumption $\gamma = 1$.
%\subsubsection{Asymptotic distribution}

To start, consider a coupling $(X,Y) \in \mathbb{R}^d \times \mathbb{R}^d$ defined as follows:
$$X \sim \mathcal{N}(0,I_d), \ \ Z \sim \mathcal{N}(0,I_d), \ \ Y = X+Z,$$ with $I_d$ denoting the $d\times d$ identity matrix and $X,Z$ independent. 
%\textcolor{red}{JW: Not enough  details! Recall how (7) depends on $(X,Y)$ and what the two steps in the simulation are, in particular what the replication size is.} 
We simulate the distribution of $\int_{\R^d}\nn{G_x}_2\d x$ in \eqref{eq:limit1} by first computing the covariance of $\{G_x\}$ in \eqref{eq:covariance} via numerical integration using $100$ samples from the coupling $(X,Y)$, then simulating the Gaussian process $\{G_x\}$ (on a discrete grid) 
using the covariance, and finally performing numerical integration to compute $\int_{\R^d} |G_x|_2 \d x$ approximating the integral with a sum over the integer grid of $[-50,50]^d$. The same experiment is repeated $1000$ times for $d=1,2$. 
Details of the simulation procedure are provided in Appendix \ref{appn}. 
Figures \ref{fig:hist_d1} and \ref{fig:hist_d2} 
present histograms of $\int_{\R^d} |G_x|_2 \d x$. 
%, based on $100$ observations from the martingale coupling $(X,Y)$ and a replication size of $1000$ over the grid of $[-50,50]^d$ for $d = 1,2$
We consider $f_{\xi, \rho, \sigma}$ with $\rho = 5$ and $\sigma = 1$ for all simulations.

\begin{figure}[H]
\begin{subfigure}{.5\textwidth}
  \centering
  \includegraphics[width=1\linewidth]{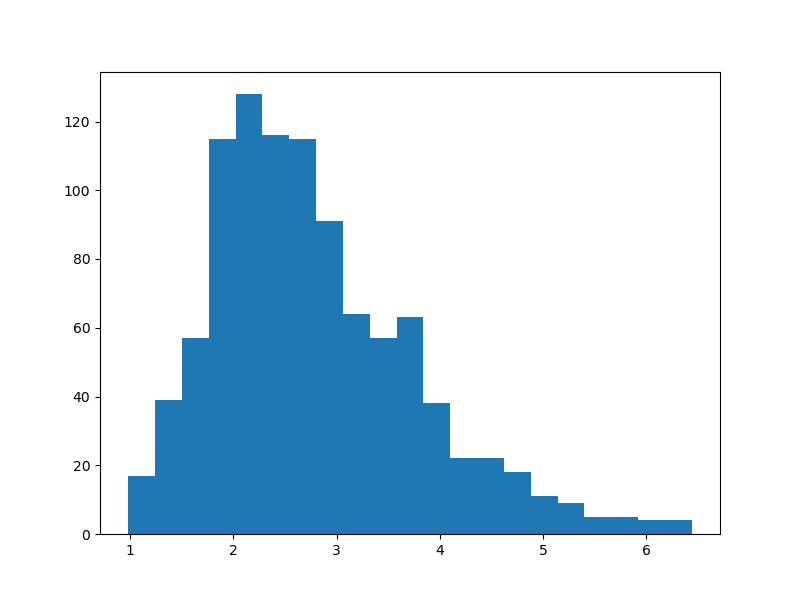}
\end{subfigure}%
\begin{subfigure}{.5\textwidth}
  \centering
  \includegraphics[width=1\linewidth]{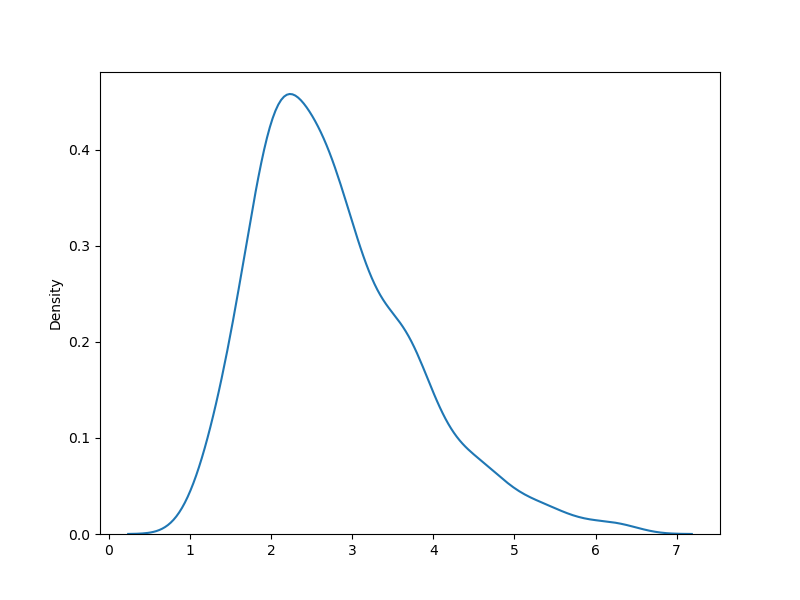}
\end{subfigure}
\caption{ Histogram (with 1000 samples) and approximating density of the asymptotic distribution $\int_{\R^d}\nn{G_x}_2\d x$ of the rescaled MPD for $d=1$, where the martingale coupling $(X,Y)$ is given by $X \sim \mathcal{N}(0,I_d), \ Z \sim \mathcal{N}(0,I_d),\ Y = X+Z$, and $X,Z$ independent.}
\label{fig:hist_d1}
\end{figure}

\begin{figure}[H]
\begin{subfigure}{.5\textwidth}
  \centering
  \includegraphics[width=1\linewidth]{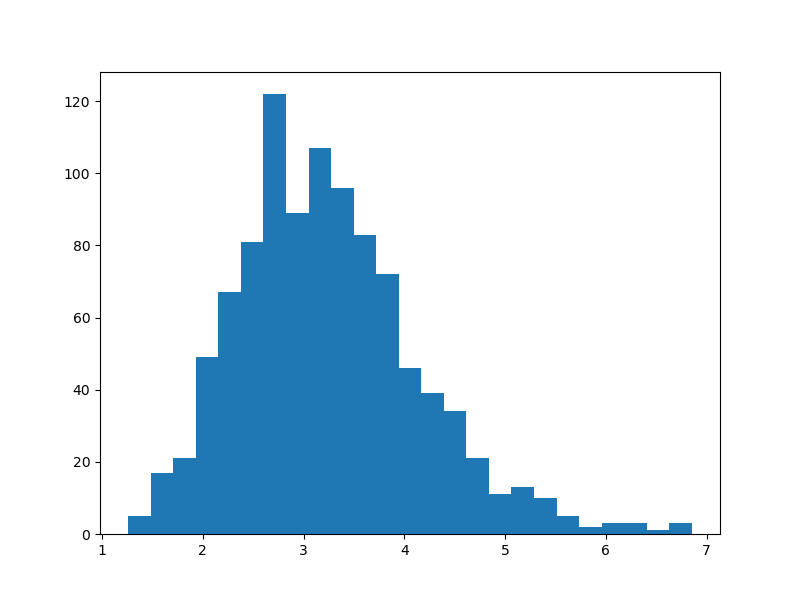}
\end{subfigure}%
\begin{subfigure}{.5\textwidth}
  \centering
  \includegraphics[width=1\linewidth]{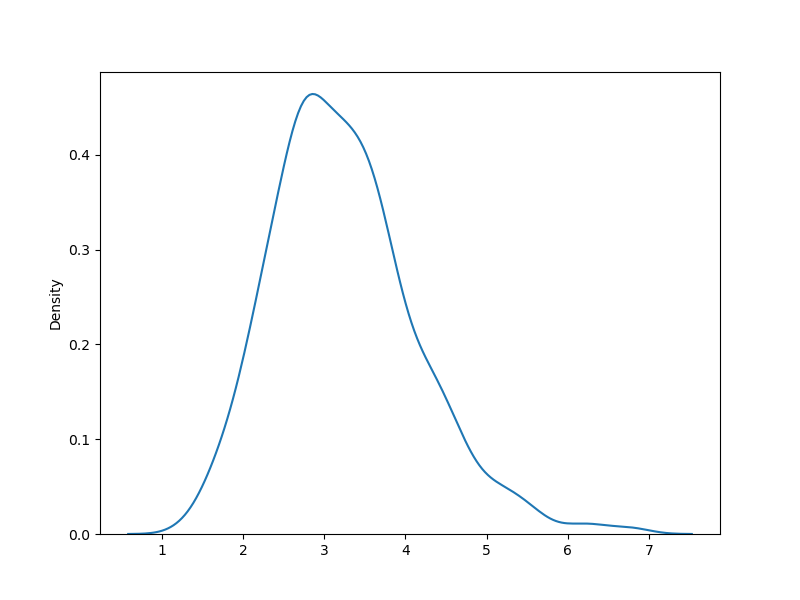}
\end{subfigure}
\caption{Histogram (with 1000 samples) and approximating density of the asymptotic distribution $\int_{\R^d}\nn{G_x}_2\d x$ of the rescaled MPD for $d=2$, where the martingale coupling $(X,Y)$ is given by $X \sim \mathcal{N}(0,I_d), \ Z \sim \mathcal{N}(0,I_d),\ Y = X+Z$, and $X,Z$ independent.}
\label{fig:hist_d2}
\end{figure}

Table \ref{table:critical_values} summarizes the asymptotic critical values of our martingality test statistics for various significance levels obtained from the simulation. These values will be used to perform hypothesis testing in the examples presented in Sections \ref{subsec:simulation} and \ref{sec:sde}.

\begin{table}[ht]
    \centering
    \begin{tabular}{ccc}
        \toprule
        Sig. Level & Dimension 1 & Dimension 2 \\
        \midrule
        0.99 & 1.84 & 1.604 \\
        0.95 & 1.485 & 1.964 \\
        0.90 & 1.717 & 2.199 \\
        0.10 & 4.102 & 4.419 \\
        0.05 & 4.705 & 4.840 \\
        0.01 & 5.740 & 5.835 \\
        \bottomrule
    \end{tabular}
    \caption{Critical Values for Different Significance Levels}
    \label{table:critical_values}
\end{table}

\subsubsection{Synthetic Examples}\label{subsec:simulation} To verify the consistency of our test, we conduct hypothesis testing on three synthetic examples, where the presence or absence of martingality is evident. Below, we describe the setup for our martingality hypothesis test.

Given two observed sequences of random variables $X$ and $Y$, let $H_0$ represent the hypothesis that $(X, Y)$ forms a martingale coupling, i.e., $\E[Y|X] = X$. For each test, we draw $n = 100$ samples from the specified distributions and perform $N = 100$ replications to estimate the asymptotic size of the test.

We consider the following test cases, using a significance level of $\alpha = 0.05$. For better clarity, test cases where the null hypothesis is likely to be accepted (i.e., the tested coupling is close to a martingale) are labeled as `NULL', while cases, where the null hypothesis is likely to be rejected (i.e., the tested coupling is \textit{not} close to a martingale), are labeled as `ALT':

\begin{enumerate}
\item Random Walk (NULL1): $X \sim \mathcal{N}(0,1)$, $Z \sim \mathcal{N}(0,1),~Y= X+Z$, where $X,Z$ are independent. 
\item Hermite Polynomials (ALT1): $X \sim \mathcal{N}(0,1)$, $Y = X+H_k(X)/\sqrt{k!}$, where $k = 1$.
\item Hermite Polynomials (NULL2): $X \sim \mathcal{N}(0,1)$, $Y = X+H_k(X)/\sqrt{k!}$, where $k = 20$.
\end{enumerate}

% check the rephrasing for the Hermite polynomial
Here, $H_k(x)$ denotes the Hermite polynomial of order $k$, defined as $$H_k(x) = (-1)^ke^{x^2/2}\frac{\d^k}{\d x^{k}}(e^{-x^2/2}).$$ We also note that the design of $Y$ in the Hermite polynomial examples (ALT1 and NULL2) is motivated by the following properties:
\begin{enumerate}[(i)]
\item If $Z$ is standard Gaussian, then $\E[H_k(Z) H_\ell(Z)] = 0$ for $k \neq \ell$.
\item For $(X, Y) = (Z, Z + H_k(Z))$, we have $\E[(Y - X) X^j] = 0$ for all $j = 0, \dots, k-1$, and $\E[(Y - X) X^k] \neq 0$.
\end{enumerate}
%\textcolor{red}{JW: I don't understand the next paragraph.}
These properties ensure that the coupling $(Z, Z + H_k(Z)/\sqrt{k!})$ is close to a martingale if the order $k$ of the Hermite polynomial $H_k$ is high (NULL2) and vice versa (ALT1). The $1/\sqrt{k!}$ normalization factor arises from the relation $\E[(Z+H_k(Z))^2]=1+\E[H_k(Z)^2]\asymp k!$.

The results of the hypothesis tests to synthetic examples (1), (2), and (3) are summarized in Table \ref{table:synthetic}, where $p$ represents the empirical rejection rate (i.e., the proportion of tests that reject the null hypothesis) and $\bar{T}$ denotes the mean of the test statistic, and we fix $\rho = 5, \sigma = 1$. With a significance level at $\alpha = 0.05$ and $N=100$ tests conducted, we see in Table \ref{table:synthetic} that the martingale test always accepts NULL1 and rejects ALT1, and accepts NULL2, which is very close to a martingale, 99\% of the time, as we expected.

\begin{table}[H]
\centering
\begin{tabular}{l S[table-format=1.2] S[table-format=3.0] S[table-format=3.0] S[table-format=1.2] S[table-format=1.3]}
\toprule
      & {$\alpha$} & {$N$}    & {$n$}   & {$p$} & {$\bar{T}$}                    \\
\midrule
NULL1 & 0.05     & 100 & 100 &   0.00  &    1.643                 \\
ALT1 & 0.05     & 100 & 100 &   1.00   &   7.318                   \\
NULL2 & 0.05     & 100 & 100 &   0.01   &  1.804 \\
\bottomrule
\end{tabular}
\caption{Testing the martingale property of the synthetic examples%. With a significance level of $\alpha=0.05$, the null hypothesis is always accepted for (NULL1), always rejected for (ALT), and accepted in 99 out of 100 tests for (NULL2).
}
\label{table:synthetic}
\end{table}

\subsubsection{The impact of \texorpdfstring{$\sigma$}{}} We recall that our martingality test relies on a centered Gaussian process $\{G_x\}$ which depends on $\sigma$ as described in \eqref{eq:covariance}.
This parameter $\sigma$ plays an important role in our test. Varying values of $\sigma$ can have the following effects: 
\begin{enumerate}[(i)]
    \item martingales are smoothed to different degrees;
    \item martingale projection error is reduced at different rates;
    \item the associated critical values change.
\end{enumerate}
We will proceed to present examples that demonstrate these three impacts of $\sigma$. 

To see the smoothing effect of $\sigma$, consider the martingale coupling $(B_1,B_2)$ where $\{B_t\}_{t\geq 0}$ is a Brownian motion. Figure \ref{fig:sig_smooth} illustrates the choice of $\sigma = 0.01$, $\sigma = 0.1$, and $\sigma = 1$ with $100$ samples. One can immediately observe the trend that the bigger the value of $\sigma$, the more prominent the effect of smoothing on the martingale coupling.

\begin{figure}[H]
\centering
\begin{subfigure}[b]{.3\textwidth}
  \centering
  \includegraphics[width=\linewidth]{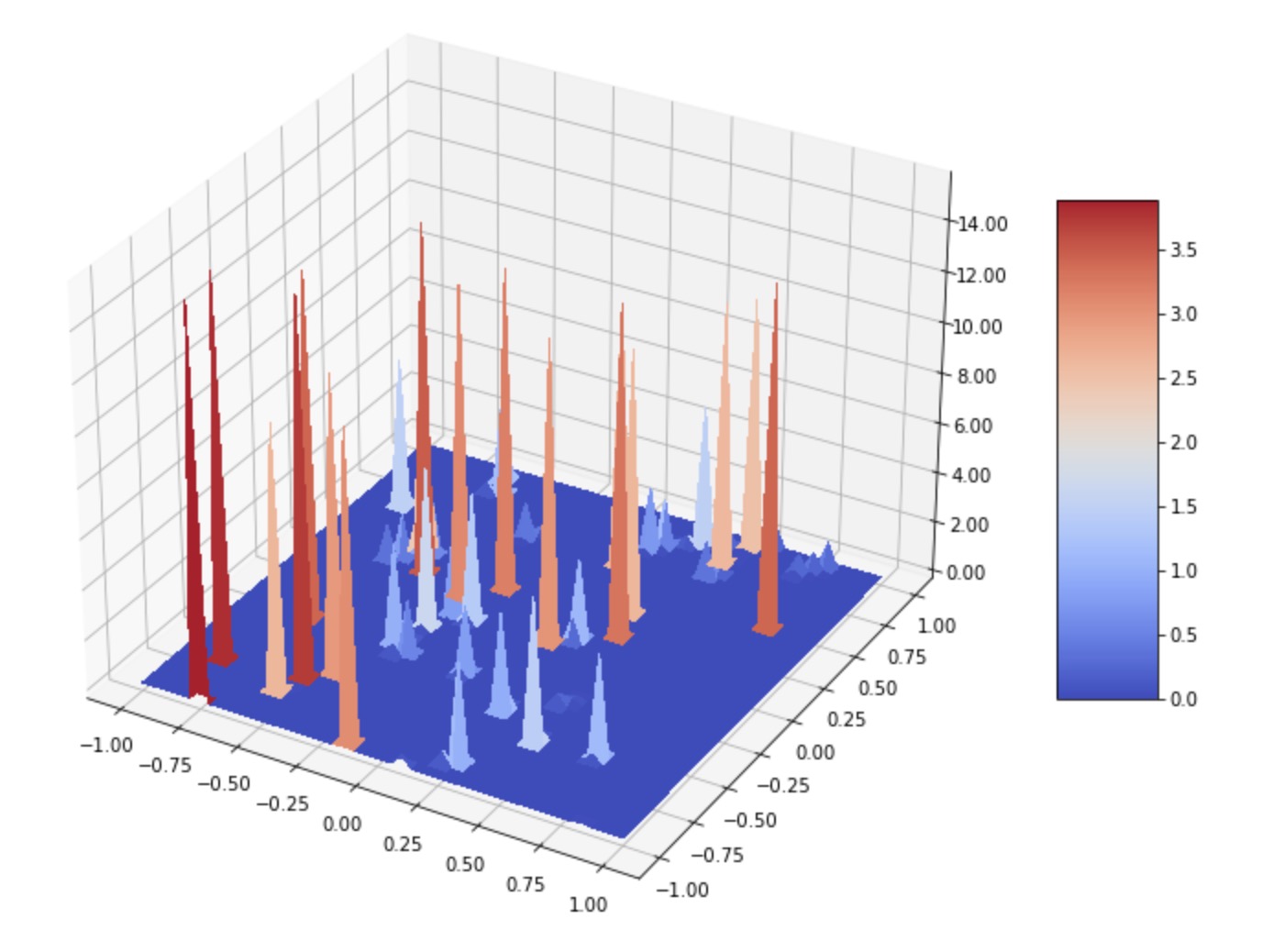}
  \caption{$\sigma = 0.01$}
\end{subfigure}%
\hfill
\begin{subfigure}[b]{.3\textwidth}
  \centering
  \includegraphics[width=\linewidth]{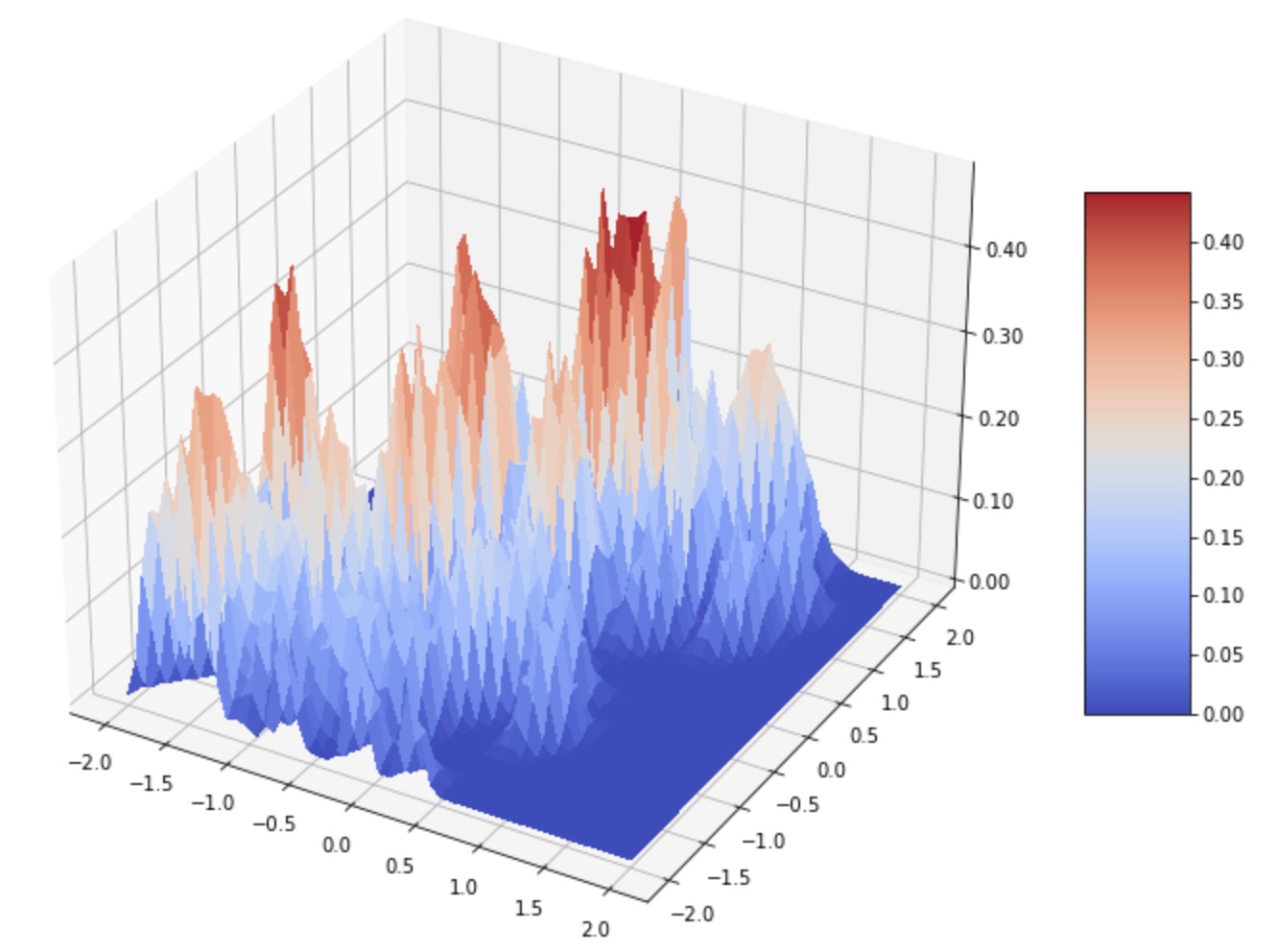}
  \caption{$\sigma = 0.1$}
\end{subfigure}
\hfill
\begin{subfigure}[b]{.32\textwidth}
  \centering
  \includegraphics[width=\linewidth]{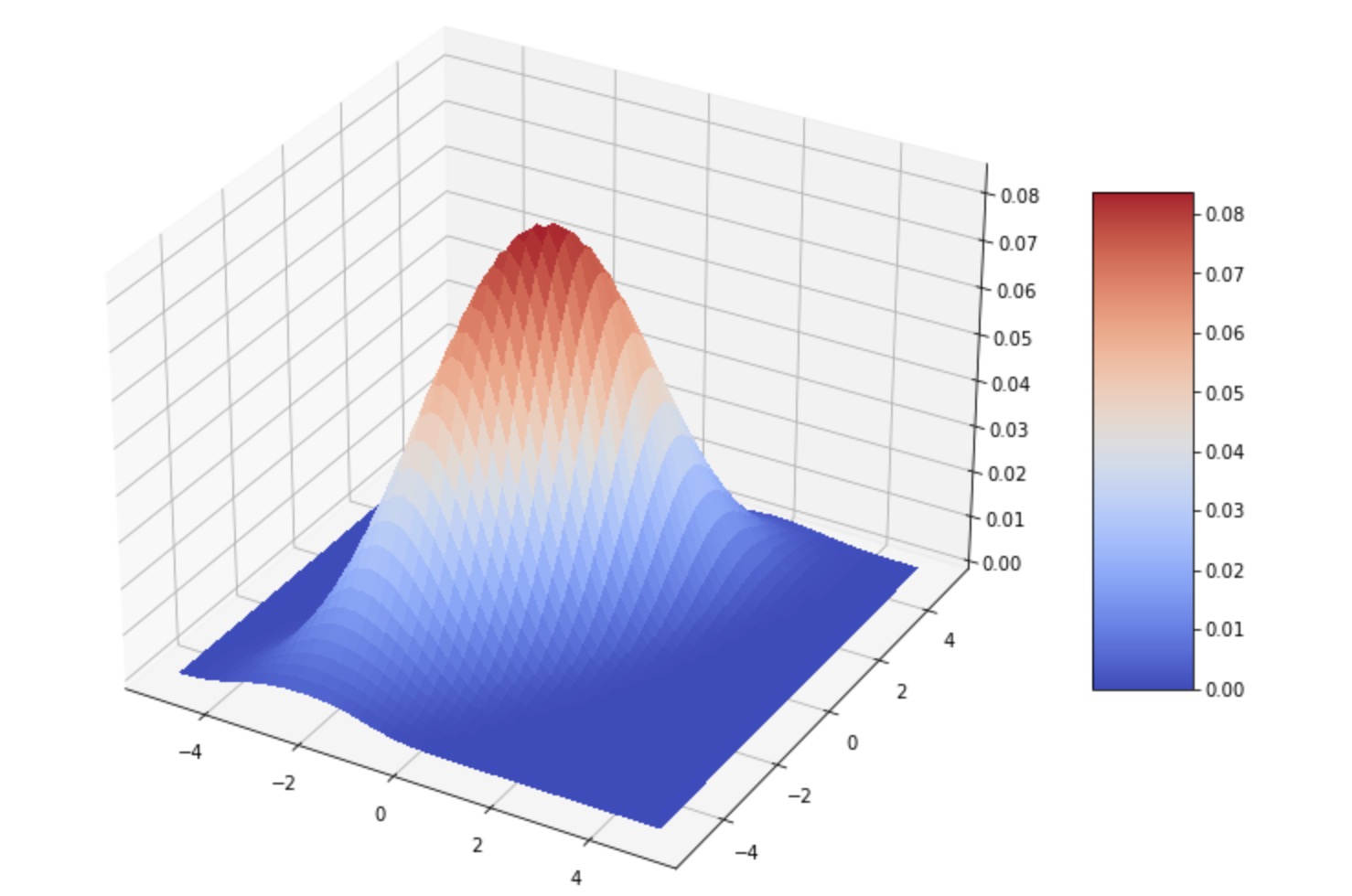}
  \caption{$\sigma = 1$}
\end{subfigure}
\caption{Probability densities of smoothed empirical measures with various $\sigma$. A large $\sigma$ leads to a smooth density, whereas spikes emerge for small $\sigma$. }
\label{fig:sig_smooth}
\end{figure}

We demonstrate effect (ii) using the same simple example of the martingale coupling $(B_1, B_2)$ described earlier. Each plot in Figure \ref{fig:error_reduction} illustrates the relationship between the number of Monte Carlo simulations (scaled by a factor of 100, shown on the $x$-axis) and the resulting martingale projection error (on the $y$-axis) for different values of $\sigma$. As shown, larger values of $\sigma$ lead to a faster reduction in the martingale projection error.

\begin{figure}[H]
\centering
  \includegraphics[width=1.1\linewidth]{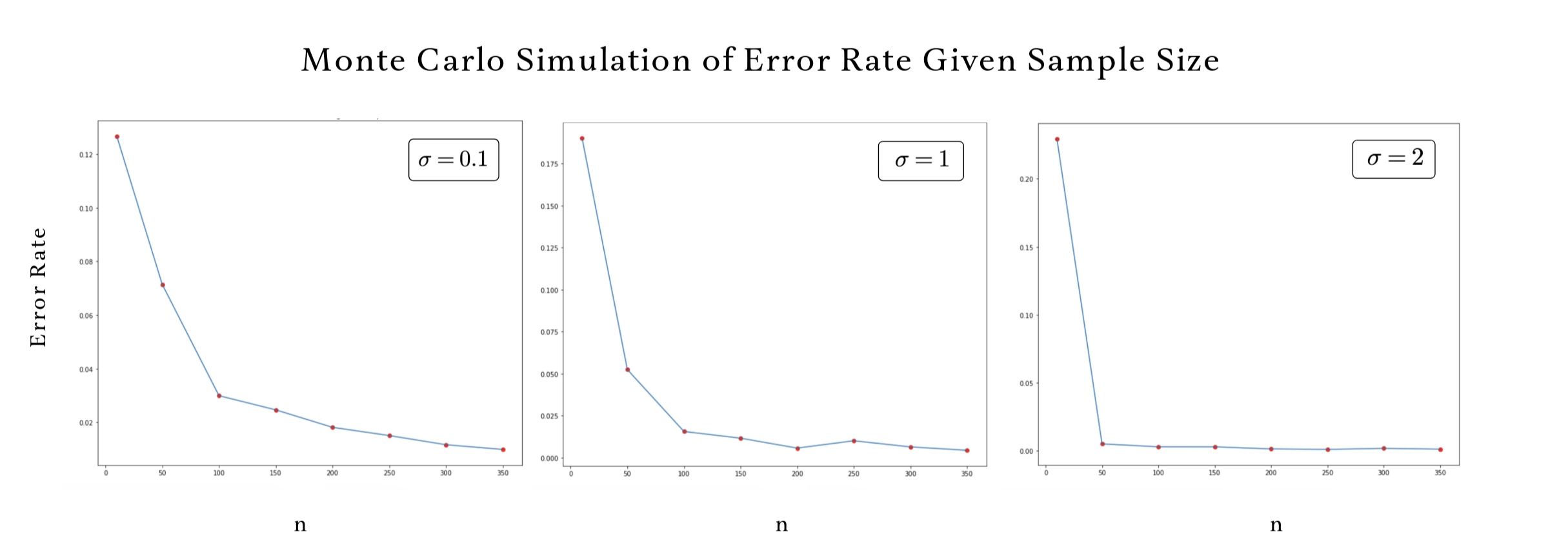}
\caption{Effects of $\sigma$ in reducing projection error}
\label{fig:error_reduction}
\end{figure}

Effect (iii) is supported by Theorem \ref{thm:expectation}, which states that, for a fixed significance level $\alpha$, the mean of the Gaussian random field integral decreases to order 1 as $\sigma$ increases. Consequently, the associated critical value $c_\alpha$ is also reduced. The augmented table below presents the simulated asymptotic critical values for $d = 1$ over 1,000 replications, with $\rho = 5$ and $\sigma = 0.01, 1, 100$, using Hermite polynomial couplings (as formulated in synthetic examples (ALT1) and (NULL2) in Section \ref{subsec:simulation}) of varying degree $k$.

\begin{table}[H] \centering \begin{tabular}{S[table-format=2.0] S[table-format=1.2] S[table-format=3.2] S[table-format=2.3] S[table-format=3.0] S[table-format=3.0] S[table-format=1.3] S[table-format=1.2]} \toprule {$k$} & {$\alpha$} & {$\sigma$} & {$c_\alpha$} & {$N$} & {$n$} & {$\bar{T}$} & {$p$} \\ \midrule 1 & 0.05 & 0.01 & 15.006 & 100 & 100 & 10.000 & 0.00 \\ 1 & 0.05 & 1.00 & 4.705 & 100 & 100 & 7.318 & 1.00 \\ 1 & 0.05 & 100.00 & 1.980 & 100 & 100 & 10.000 & 1.00 \\ 5 & 0.05 & 0.01 & 15.006 & 100 & 100 & 3.781 & 0.00 \\ 5 & 0.05 & 1.00 & 4.705 & 100 & 100 & 3.910 & 0.10 \\ 5 & 0.05 & 100.00 & 1.980 & 100 & 100 & 3.982 & 1.00 \\ 10 & 0.05 & 0.01 & 15.006 & 100 & 100 & 2.778 & 0.00 \\ 10 & 0.05 & 1.00 & 4.705 & 100 & 100 & 2.733 & 0.00 \\ 10 & 0.05 & 100.00 & 1.980 & 100 & 100 & 2.639 & 0.98 \\ 15 & 0.05 & 0.01 & 15.006 & 100 & 100 & 2.106 & 0.00 \\ 15 & 0.05 & 1.00 & 4.705 & 100 & 100 & 2.183 & 0.00 \\ 15 & 0.05 & 100.00 & 1.980 & 100 & 100 & 2.078 & 0.45 \\ 20 & 0.05 & 0.01 & 15.006 & 100 & 100 & 1.813 & 0.00 \\ 20 & 0.05 & 1.00 & 4.705 & 100 & 100 & 1.804 & 0.01 \\ 20 & 0.05 & 100.00 & 1.980 & 100 & 100 & 1.747 & 0.08 \\ \bottomrule \end{tabular}
\caption{Simulation results for Hermite couplings with $\rho = 5$ and various $\sigma$}
\label{tab:my-table}
\end{table}

Observe that the choice of $\sigma$ significantly affects the power of the test. Generally, smaller values of $\sigma$ result in a more ``lenient'' test, increasing the chance of committing a Type I error. In contrast, larger values of $\sigma$ make the test ``stricter'', thus increasing the chance of committing a Type II error. This trade-off is illustrated in Figure \ref{fig:sigma_power}, which shows the empirical rejection rate, computed over 10,000 trials, plotted against different values of $\sigma$ for the non-martingale example using Hermite polynomial couplings with $k = 5$.

\begin{figure}[H]
\centering
\begin{subfigure}[b]{.5\textwidth}
  \centering
  \includegraphics[width=\linewidth]{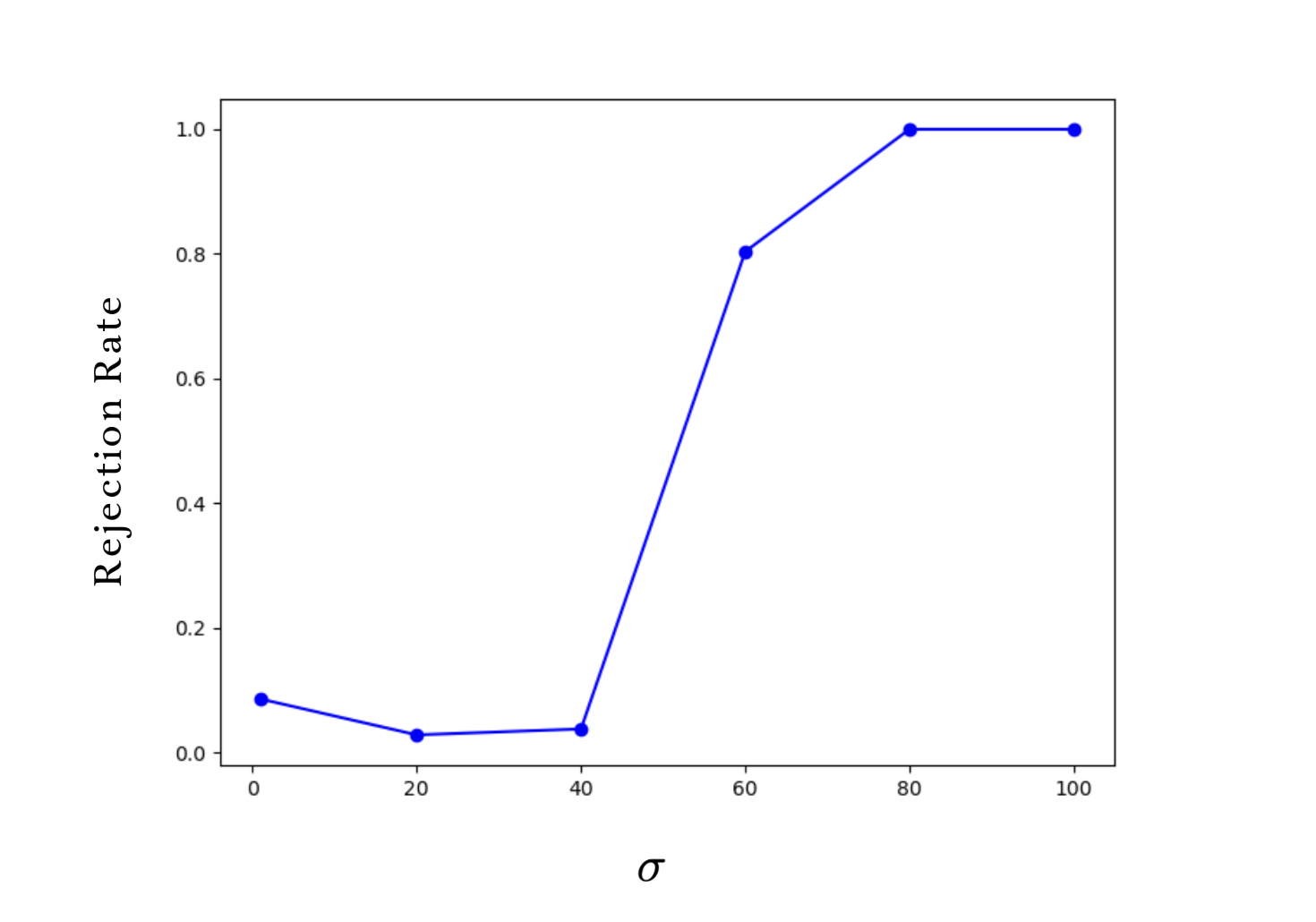}
  \caption{rejection rate vs $\sigma$}
\end{subfigure}%
\hfill
\begin{subfigure}[b]{.5\textwidth}
  \centering
  \includegraphics[width=\linewidth]{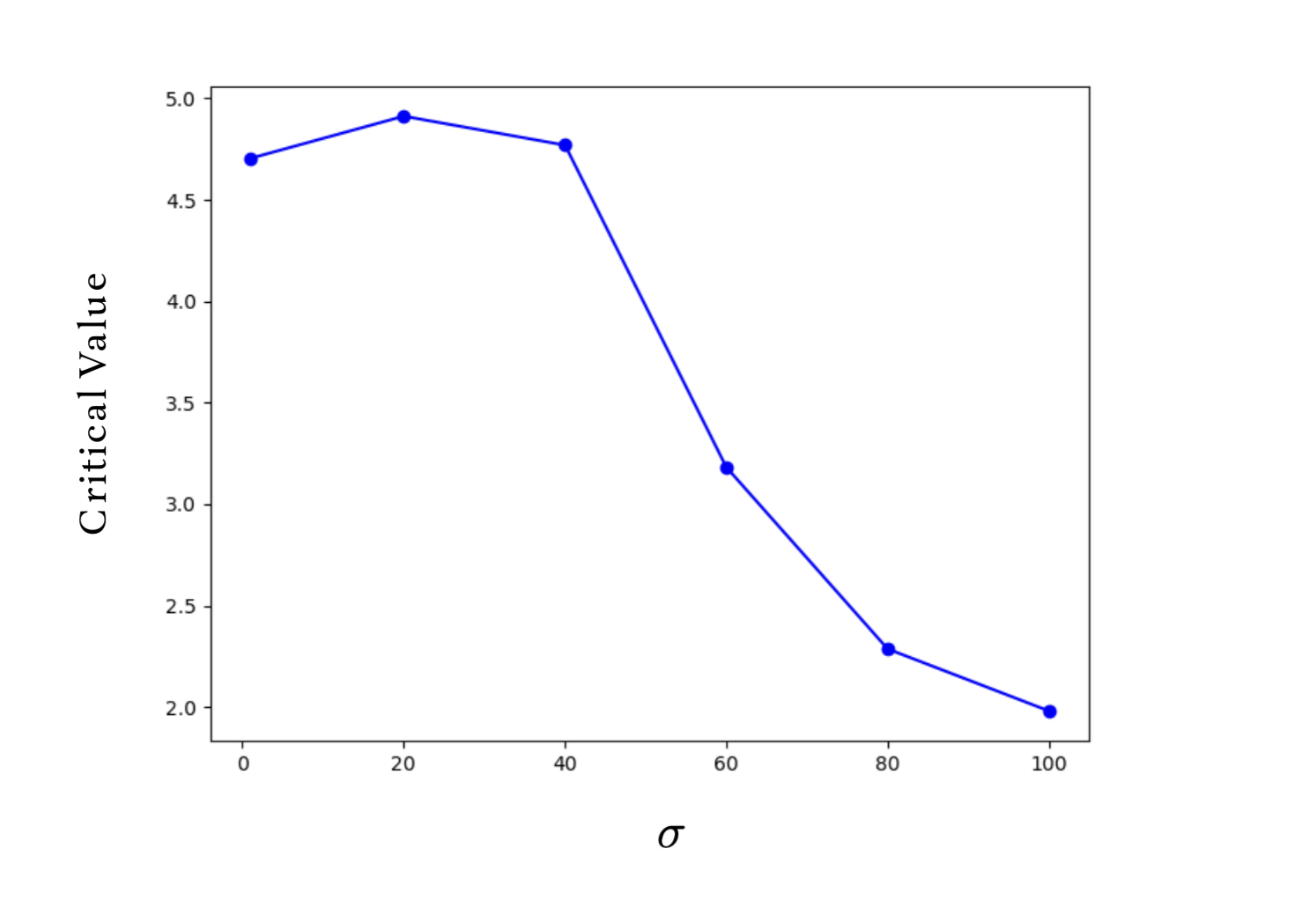}
  \caption{critical value vs $\sigma$}
\end{subfigure}
\caption{Effects of $\sigma$ on the power of test. Plot (a) shows the rejection rate (the proportion of tests that reject the null) generally increases in $\sigma$; plot (b) shows that the critical value with significance level $\alpha=0.05$ generally decreases in $\sigma$. The plots show that a smaller $\sigma$ induces more Type I error and a larger $\sigma$ induces more Type II error.}
\label{fig:sigma_power}
\end{figure}

As shown in Figure \ref{fig:sigma_power}, while the power of the test decreases slightly from $\sigma = 1$ to $\sigma = 20$, it increases consistently as $\sigma$ increases from $20$, with the biggest improvement occurring at $\sigma = 60$. This is consistent with our discussion above.

\subsubsection{Power curves for \texorpdfstring{$d=1$}{}}
In this section, we present the power curve for a series of perturbed martingale couplings to empirically study the power of the martingality test. For simplicity, we fix $\rho = 5$ and $\sigma = 80$ for the tests. We consider two examples.
\begin{enumerate}[(i)]
\item Let $X \sim \mathcal{N}(0,1)$, $Z \sim \mathcal{N}(0,1)$, $Y= X+Z+\epsilon$, where $\epsilon \in \{-1,-0.75,-0.5,-0.25,0,0.25, 0.5,0.75,1\}$.
\item $X \sim \mathcal{N}(0,1)$, $Y = X+H_k(X)/\sqrt{k!}$, where $k = 1, 4, 7, 10, 13, 16, 19, 22, 25$ represent different perturbation levels.\footnote{Note that for Hermite polynomials, a smaller $k$ indicates a larger perturbation as the couplings become further from the space of martingales.}
\end{enumerate}

Using 1000 observations and a replication size of 1,000, we generate the power curves shown in Figure \ref{fig:model1} for Example (i) and Figure \ref{fig:model2} for Example (ii).

\begin{figure}[H]
\centering
\begin{subfigure}[b]{.5\textwidth}
  \centering
  \includegraphics[width=\linewidth]{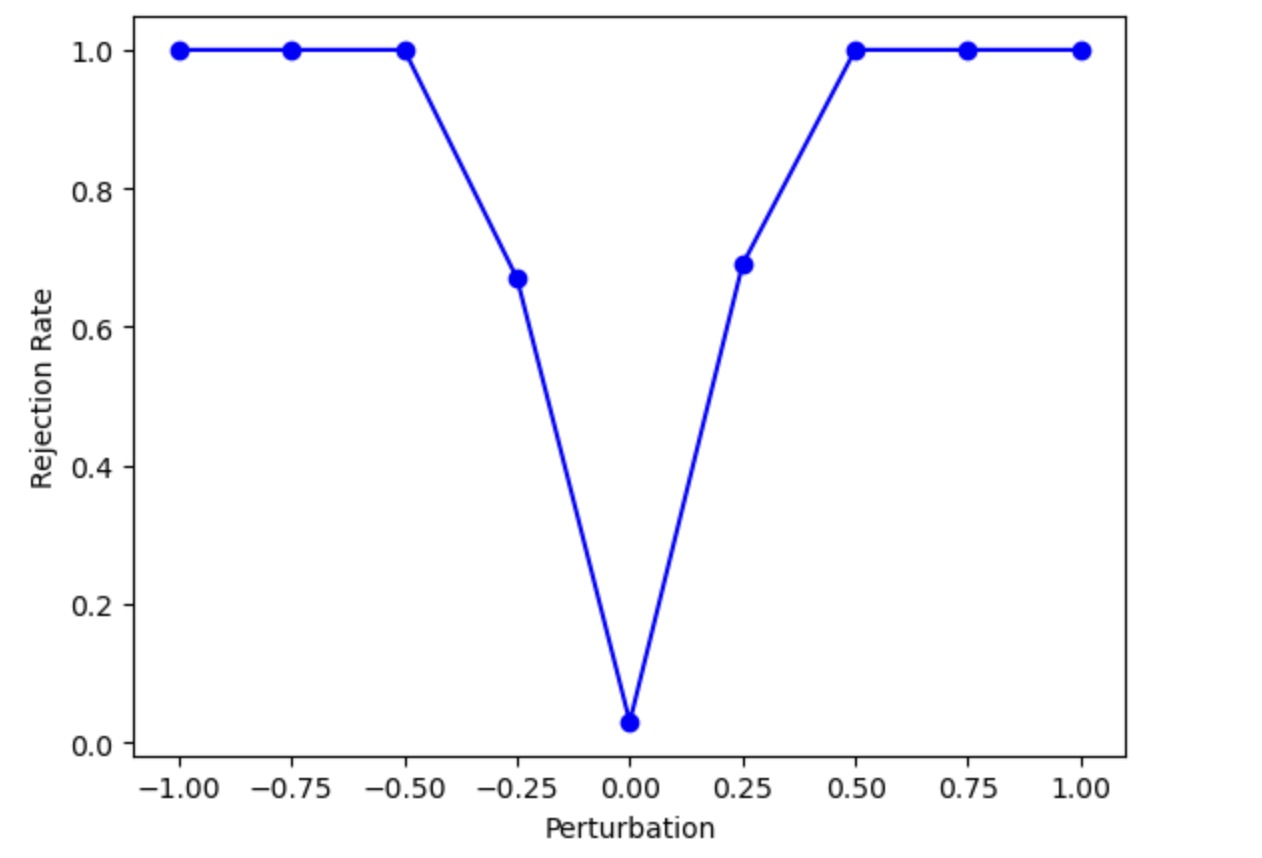}
  \caption{Rejection rate vs $\epsilon$}
\end{subfigure}%
\hfill
\begin{subfigure}[b]{.5\textwidth}
  \centering
  \includegraphics[width=\linewidth]{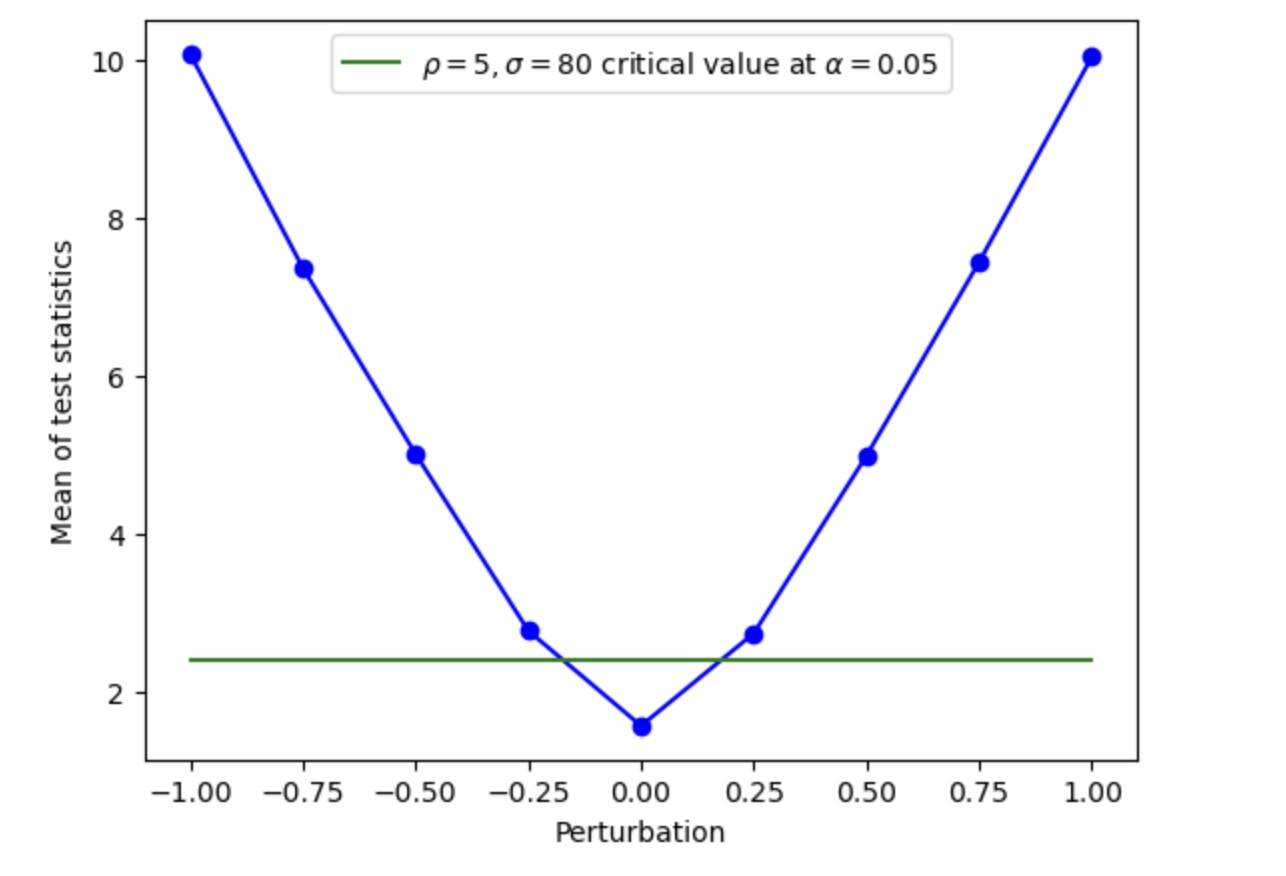}
  \caption{Mean test statistics vs $\epsilon$}
\end{subfigure}
\caption{Effects of the perturbation level $\epsilon$ on the empirical rejection rate and the mean of the test statistics over $1000$ trials in Example (i), with test conducted using $\rho = 5, \sigma = 80$ at a significance level of $\alpha = 0.05$. The green line in plot (b) represents the critical value of the test at $\alpha = 0.05$. Therefore, the test accepts any test statistics below the green line, while rejects any test statistics above the green line. The graphs show that a larger magnitude of the perturbation yields a larger mean of the test statistic and a larger rejection rate.}
\label{fig:model1}
\end{figure}

\begin{figure}[H]
\centering
\begin{subfigure}[b]{.5\textwidth}
  \centering
  \includegraphics[width=\linewidth]{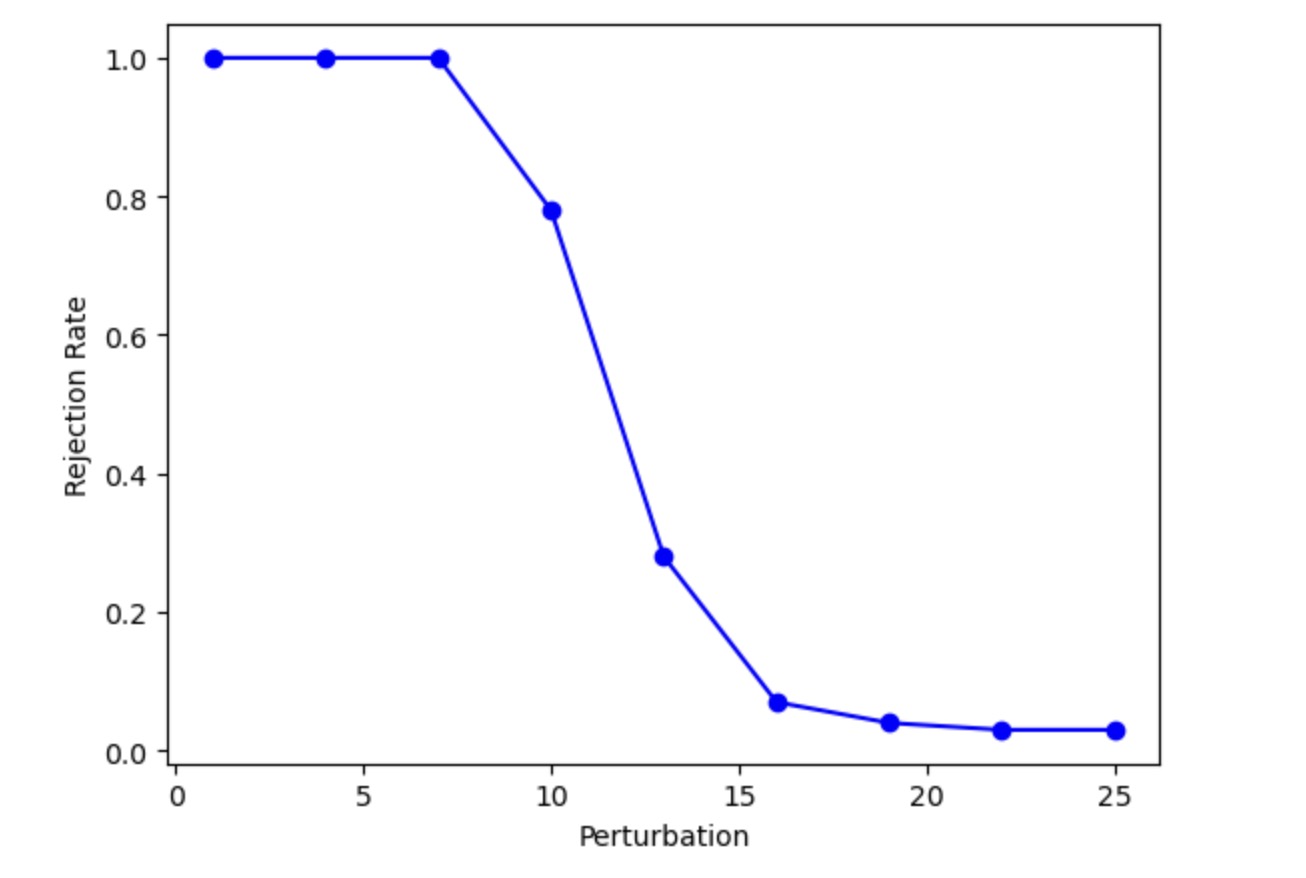}
  \caption{Rejection rate vs $k$}
\end{subfigure}%
\hfill
\begin{subfigure}[b]{.49\textwidth}
  \centering
  \includegraphics[width=\linewidth]{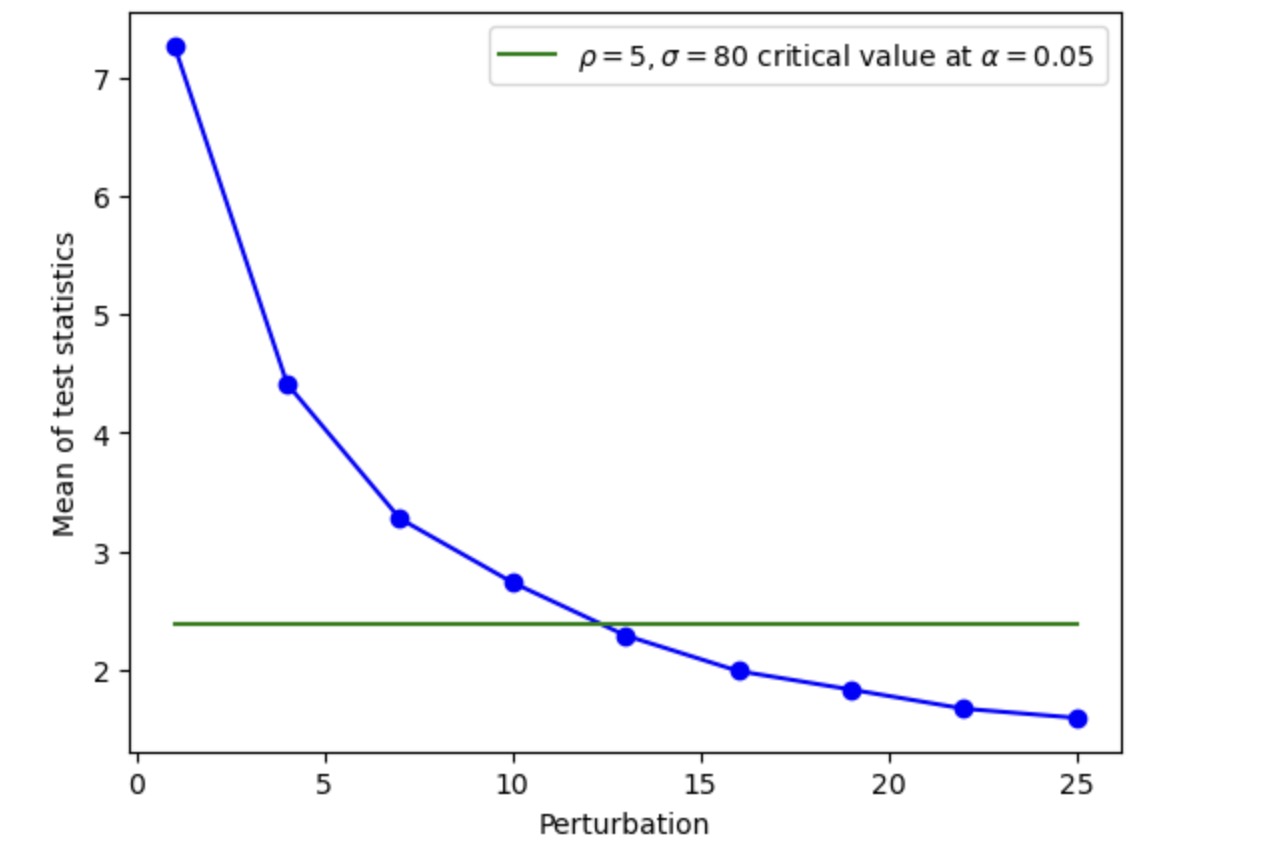}
  \caption{Mean test statistics vs $k$}
\end{subfigure}
\caption{Effects of the perturbation level $k$ on the empirical rejection rate and the mean of the test statistics over $1000$ trials in Example (ii), where $\rho = 5, \sigma = 80$ at a significance level of $\alpha = 0.05$. The green line in plot (b) represents the critical value of the test at $\alpha = 0.05$. A smaller $k$ leads to a larger mean of the test statistic and a larger rejection rate.}
\label{fig:model2}
\end{figure}

\subsection{Convergence Analysis}\label{subsec:convergence_analysis}

In this subsection, we compare the performance of $\mathrm{MPD}^{*\xi}(\mathbb{P}_n, \gamma)$ with the plugin estimator $\mathrm{MPD}(\hat{\mathbb{P}}_n, \gamma)$, where $\hat{\mathbb{P}}_n$ denotes the adapted empirical measure of \citep{backhoff2021estimatingprocessesadaptedwasserstein}.

Let us first recall that the map $\mathbb{P} \mapsto \operatorname{MPD}(\mathbb{P}, \gamma )^{1/\gamma}$ is Lipschitz wrt. $\mathcal{AW}_\gamma$. In consequence, $\mathrm{MPD}(\hat{\mathbb{P}}_n, \gamma)$ is a consistent estimator. However, its convergence rate may depend on the dimension. Indeed, assuming that $\P_0$ has Lipschitz kernels, \citep[Theorem 1.5]{backhoff2021estimatingprocessesadaptedwasserstein} states that $\mathcal{AW}_\gamma(\P_0,\hat{\P}_n)$ is bounded from above by
\begin{align}
\text{rate}(n) = 
\begin{cases}
n^{-1/3} & \text{for } d = 1; \\
n^{-1/4} \log(n + 1) & \text{for } d = 2; \\
n^{-1/(2d)} & \text{for } d \geq 3.
\end{cases}
\end{align}
In consequence, the convergence of $\mathrm{MPD}(\hat{\mathbb{P}}_n, \gamma)$ may suffer from the curse of dimensionality. In contrast, our estimator $\mathrm{MPD}^{*\xi}(\mathbb{P}_n, \gamma)$ achieves a dimension-free convergence rate of the order $O(n^{-1/2})$ under mild assumptions.
%(see Figure \ref{fig:loglog_smoothed}).

%\begin{figure}[htbp]
%    \centering
%    \begin{subfigure}[b]{0.48\linewidth}
%        \centering
%        \includegraphics[width=\linewidth]{}
%        \caption{The case of $d=1$.}
%        \label{fig:smoothed_loglog_d1}
%    \end{subfigure}
%    \hfill
%    \begin{subfigure}[b]{0.48\linewidth}
%        \centering
%        \includegraphics[width=\linewidth]{}
%        \caption{The case of $d=3$.}
%        \label{fig:smoothed_loglog_d3}
%    \end{subfigure}
%    \caption{Log-log convergence of the smoothed MPD for $d = 1$ and $d = 3$, averaged over 10 sampling trials.
%Dashed lines indicate the theoretical convergence rate $n^{-1/2}$, while solid lines show fitted regression trends derived from the empirical values.}
%    \label{fig:loglog_smoothed}
%\end{figure}

To validate our claim empirically, we first note that $\mathrm{MPD}(\hat{\mathbb{P}}_n, \gamma)$ can be computed as follows: using the definition of the adapated empirical measure \citep[Definition 1.2]{backhoff2021estimatingprocessesadaptedwasserstein} and given samples $\{(X_i, Y_i)\}_{i=1}^N \subset [0,1]^d \times [0,1]^d$, let $\varphi^N : [0,1]^d \to [0,1]^d$ be a map that sends each $X_i$ to the center of a cube in a uniform partition of $[0,1]^d$ into $N^{rd}$ sub-cubes. We then have
\begin{align}\label{eq:adapted_mpd}\begin{split}
\mathrm{MPD}(\hat{\mathbb{P}}_n, \gamma) = 2^{1 - \gamma} \sum_{g \in \mathcal{G}} &\frac{|\{j : \varphi^N(X_j) = g\}|}{N} \\
&\cdot \left| g - \frac{1}{|\{j : \varphi^N(X_j) = g\}|} \sum_{j : \varphi^N(X_j) = g} \varphi^N(Y_j) \right|_2^\gamma,
\end{split}
\end{align}
where:
\begin{itemize}
    \item $\mathcal{G} = \{\varphi^N(X_i) : i = 1,\dots,N\}$ is the set of grid centers,
    \item  $|\{i : \varphi^N(X_i) = g\}|$ samples are mapped to each $g \in \mathcal{G}$.
\end{itemize}

We call $\mathrm{MPD}(\hat{\mathbb{P}}_n, \gamma)$ the \emph{adapted empirical MPD.}

Next, we evaluate both estimators over a range of sample sizes $n \in [10^2, 10^4]$ in dimensions $d \in \{ 1,3 \}$. The data are generated by sampling $ X $ and $ Z $ independently from the uniform distribution on $ [-0.5, 0.5]^d $, and setting $ Y = X + Z $, so that $(X,Y)$ forms a martingale pair.

Figure~\ref{fig:loglog_comparison} presents log-log plots comparing the convergence of both estimators for $ d = 1 $ and $ d = 3 $. Reference curves corresponding to the theoretical convergence rates---$ O(n^{-1/2}) $ for the smoothed MPD and the corresponding dimension-dependent rates for the adapted MPD---are included for visual comparison.

As illustrated, the smoothed empirical MPD achieves the dimension-free $ O(n^{-1/2})$-rate across both settings. In contrast, the convergence of the adapted empirical MPD is considerably slower in dimension three. 

\begin{figure}[htbp]
    \centering
    \begin{subfigure}[b]{0.48\linewidth}
        \centering
        \includegraphics[width=\linewidth]{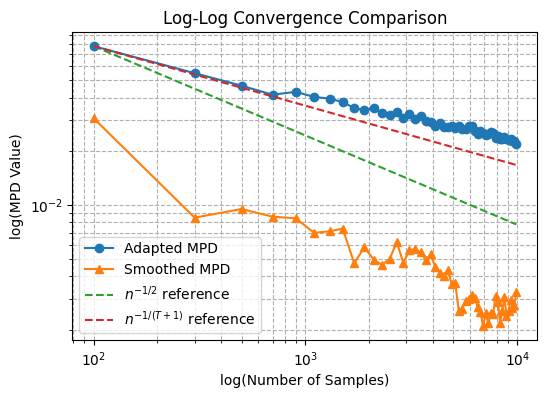}
        \caption{The case of $d=1$.}
        \label{fig:adapted_comp_loglog_d1}
    \end{subfigure}
    \hfill
    \begin{subfigure}[b]{0.48\linewidth}
        \centering
        \includegraphics[width=\linewidth]{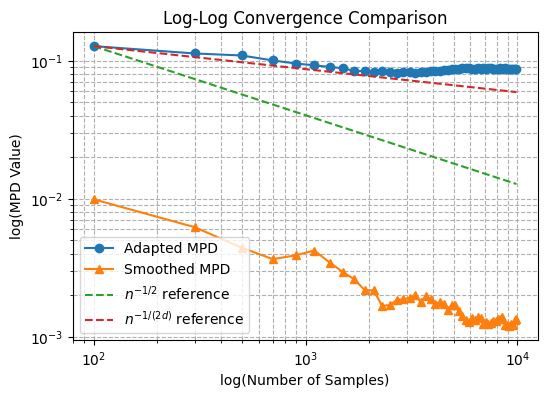}
        \caption{The case of $d=3$.}
        \label{fig:adapted_comp_loglog_d3}
    \end{subfigure}
    \caption{Log-log convergence comparison between the adapted MPD and the smoothed MPD for $d = 1$ and $d = 3$ averaged across 10 sampling trials. Reference curves for the theoretical convergence rates $n^{-1/(2d)}$ and $n^{-1/2}$ (dashed) are included for comparison.}
    \label{fig:loglog_comparison}
\end{figure}

In summary, $\hat\P_n$ yields valid plugin estimators for \emph{any} $\mathcal{AW}_\gamma$-continuous functional. However, its performance can be significantly improved for the specific task of estimating the martingale projection distance.

\subsection{Applications}\label{sec:apps}

Our hypothesis test for martingality provides valuable information in a wide range of areas of interest. For instance, it is well-known that martingales form an important pillar in financial economics and econometrics. No-arbitrage conditions are equivalent  existence of a probability measure under which discounted price processes follow martingale dynamics. Our results can therefore be used  to test the no-arbitrage hypothesis in generative AI models, as we shall illustrate in Section \ref{sec:sde} below.

In addition, another classical problem in econometrics and statistics consists of testing if a real-valued data set follows a given continuous distribution. The Kolmogorov-Smirnov statistic is a well-known non-parametric approach to testing this hypothesis. A natural generalization of this problem consists of testing if a given positive recurrent and irreducible general state-space Markov chain $\left\{  U_{n}\right\}_{n\geq0}$, with a known stationary distribution $\varphi$, follows the transition kernel $\left\{  K\left(  x,\cdot\right)  :x\in S\right\}$. This
is true if and only if for every continuous and bounded function $v\left(
\cdot\right)$ we have $\left(X_{n},Y_{n}\right) =\left(   u_{n},  
u_{n}  +v\left(  u_{n+1}\right)  -\left(  Kv\right)  \left(
u_{n}\right)  \right)$ forms a martingale pair for $\varphi$-a.e.~$u_{n}$. Therefore, this hypothesis can be tested by selecting a family of
functions $v_{1},\dots,v_{d}$ and testing the martingale property for the pair
of $d$-dimensional vectors $\left(  X_{n},Y_{n}\right)  \in \R^{d}\times \R^{d}%
$, where $X_{n}\left(  i\right)  =U_n\lawis\varphi $ and $Y_{n}\left(
i\right)  =U_n  +v_i\left(  U_{n+1}\right)  -\left(
Kv_{i}\right)  \left(  U_{n}\right)$.\footnote{The choice of $v_i$'s may depend on $K$. This is beyond the scope of our focus here.} 

The above two applications of our test will be respectively detailed in Section \ref{sec:sde}  and Appendix \ref{sec:markov}. 
There are numerous other applications of martingale pair tests in the sphere of finance, econometrics, reinforcement learning, and non-parametric regression. %We briefly discuss a few instances below. %\red{Jose: I feel the two paragraphs below are too open-ended and do not provide enough background on stochastic control. Maybe let's move it to Section 3.3 (stimulating further research) or appendix A? The referees did not complain about ``not enough applications'' so it won't be too bad I think.}

\subsubsection{Testing no-arbitrage in neural SDE-based European option calibration}\label{sec:sde}
As mentioned briefly above, one application of our results is a test for arbitrage opportunities in existing pricing models for financial derivatives. In the following, we first describe the set-up of the financial market considered and then outline our application.

\textbf{Market model.} The work of \citep{gierjatowicz2020robust}  develops a neural SDE-based European option calibration method. In their set-up, the true dynamics of the wealth process $X=\{X_t\}_{t\geq 0}$ under the risk-neutral measure $\mathbb{Q}=\mathbb{Q}(\theta)$ are given by 
\begin{align}\label{eq:wealth_sde}
\d X^\theta_t = b(t,X^\theta_t,\theta)\d t +\sigma(t,X^\theta_t,\theta)\d W_t
\end{align}
for an $n$-dimensional Brownian motion $W$, a drift function $b: \mathbb{R}_+\times \mathbb{R}^d \times \Theta \rightarrow \mathbb{R}^d$ and a diffusion function $\sigma: \mathbb{R}_+\times \mathbb{R}^d \times \Theta \rightarrow \mathbb{R}^{d\times n}$, where $\Theta \subseteq \mathbb{R}^p$ is a parameter space for some $p\in\N$. To be more specific, \citep{gierjatowicz2020robust} considers a decomposition of $X$ into (\ref{eq:wealth_sde}) into the \textit{tradable} (e.g., stocks) and \textit{non-tradable} components $S$ and $V$: 
\begin{align}\label{eq:sde}
\begin{split}
\d S^\theta_t &= rS^\theta_t\d t+\sigma^S(t,X^\theta_t,\theta)\d W_t ;\\
\d V^\theta_t &= b^V(t,X^\theta_t,\theta)\d t+\sigma^V(t,X^\theta_t,\theta)\d W_t ;\\
X^\theta_t &= (S^\theta_t,V^\theta_t).
\end{split}
\end{align}
From (\ref{eq:sde}), we define $b(t,(s,v),\theta) := (rs,b^V(t,(s,v),\theta))$ and $\sigma(t,(s,v),\theta) := (\sigma^S(t,(s,v),\theta),\sigma^V(t,(s,v),\theta))$. Then the corresponding neural SDE model is obtained simply by parameterizing $(b,\sigma)$ as feed-forward neural networks.

%In order to calibrate asset prices consistently with the real-world measure $\P(\theta)$, 
% To calibrate European option prices, \citep{gierjatowicz2020robust} introduces a feed-forward neural network trained on market data, given by function $\zeta: [0,T] \times \mathbb{R}^d \times \mathbb{R}^p \rightarrow \mathbb{R}^n$. 
\textbf{Calibration.} Having set up the market model of \citep{gierjatowicz2020robust}, we now describe the authors' proposed calibration method for European options, which are derivatives of the underlying tradable asset (stocks). To simplify notation, we use the notation $\theta, \theta^*, \hat{\theta}$ to denote a generic parameter, the true parameter, and the approximated value of the true parameter through calibration. \citep{gierjatowicz2020robust} assumes that the call option prices at time zero
\begin{align}\label{eq:authorformula}
\mathfrak{p}(K,T):=e^{-r T} \mathbb{E}^{\mathbb{Q}(\theta^*)}\left[\left(S_T-K\right)_{+} \mid S_0=1\right]
\end{align}
%\textcolor{red}{JW: this is bad notation: now your generic parameter and your true parameter are both called $\theta$, and $\theta^*$ is some kind of approximation but nobody knows how it's really defined.}
are given.
They then calibrate the stock price SDE in \eqref{eq:sde} via \eqref{eq:authorformula}, i.e., through finding $\hat\theta$ such that
\begin{align}\label{eq:author_calibration}
e^{-r T} \mathbb{E}^{\mathbb{Q}(\hat{\theta})}\left[\left(S_T-K\right)_{+} \mid S_0=1\right]\approx \mathfrak{p}(K,T).
\end{align}

Importantly, \citep{gierjatowicz2020robust} calibrates solely to call prices at time zero, disregarding additional market data. However, in practice, it is more realistic to assume that call prices can be calibrated dynamically, taking into account the stock price process $(S_t)_{t \in \{0, \dots, T\}}$. The corresponding option prices at each time $t$ are
\begin{align}\label{eq:pd}
\mathfrak{p}_t(K,T,S_t) := e^{-r(T-t)} \mathbb{E}^{\mathbb{Q}(\theta^*)}\left[\left(S_T - K\right)_{+} \mid S_t\right].  
\end{align}
% where the calibration is only done for stock price at time zero
Armed with our martingale pair test, we will check if the calibration procedure of \citep{gierjatowicz2020robust} in (\ref{eq:author_calibration}) is \textit{consistent} (in the sense of conforming to the true price model in (\ref{eq:pd})) with the additional prices $(S_t)_{t \in \{0, \dots, T\}}$ observed. In other words, given the calibrated parameter $\hat\theta$ found through calibration model in (\ref{eq:author_calibration}) we investigate the following question: $$\text{Does} \ \ \mathfrak{p}_t(K,T,S_t)\approx e^{-r (T-t)} \mathbb{E}^{\mathbb{Q}(\hat{\theta})}\left[\left(S_T-K\right)_{+} \mid S_t\right]?$$ 

\textbf{Martingale pair testing.} The above-stated problem is equivalent to testing if $$(\mathfrak{p}_t(K,T,S_t),e^{-r (T-t)} \left(S_T-K\right)_{+})$$ 
is a martingale coupling under the calibrated measure $\mathbb{Q}(\theta^*)$ obtained from procedure in (\ref{eq:author_calibration}). To tackle this problem, we carry out the following three steps:
\begin{enumerate}
\item Calibrate asset prices for each time-step following the algorithms of \citep{gierjatowicz2020robust}. Obtain $N$ stock trajectories $\{S_t\}_{t \in \{0,\dots,T\}}$ under the calibrated measure $\mathbb{Q}(\hat\theta)$.
\item Given a calibrated stock trajectory $\{S_t\}_{t \in \{0,\dots,T\}}$ from step one, use Monte Carlo simulation to obtain prices of vanilla options $\mathfrak{p}_t(K,T,S_t)$ at each $t \geq 0$ using \eqref{eq:pd}.
\item Apply the martingale pair test to check if $(\mathfrak{p}_t(K,T,S_t),e^{-r (T-t)} \left(S_T-K\right)_{+})$ is a martingale coupling under $\mathbb{Q}(\hat\theta).$
\end{enumerate}

\textbf{Implementation.} We now briefly discuss our implementation for the abovementioned martingale test; see \href{https://github.com/Ericavanee/Bicausal_Wasserstein_MtglProj}{GitHub} for the full implementation details. The work of 
\citep{gierjatowicz2020robust} uses two market models for calibration: the local volatility model (LV) and the local stochastic volatility model (LSV). For step one, we alter the training algorithm of \citep{gierjatowicz2020robust} for both LSV and LV models to return $N = 4000$ calibrated stock trajectories $\{(S_t)_{t \in \{0,\dots,T\}}^i\}_{i = 1}^N$ directly. For step two, for each stock trajectory $\{S_t\}_{t\in \{0,...,T\}}$, we use Monte Carlo simulation to generate $n = 1000$ asset price paths using a Heston model (which is assumed to describe the true market dynamics) approximated via a tamed Euler scheme at each time point $t\in \{0,...,T\}$:
\begin{align}\label{eq:heston}
\begin{split}
\d S_t &= rS_t\d t+S_t\sqrt{V_t}\d W_t, \ \ S_0 = s_0 \\
\d V_t &= \kappa(\mu-V_t)\d t+\eta\sqrt{V_t}\d B_t, \ \ V_0 = v_0 \\
\d\langle B,W \rangle_t &= \rho \d t.
\end{split}
\end{align}
We use the same set of parameters as \citep{gierjatowicz2020robust} for the assumed true market dynamics model: $\theta^* = \{x_0 = 1, r = 0.025, V_0= 0.04, \kappa = 0.78, \mu = 0.11, \eta = 0.68, \rho = 0.044\}$. We then calculate the associated discounted European option prices with each maturity $T$ and strike $K$ via (\ref{eq:pd}).
% \begin{align*}
% \mathfrak{p}_t(K,T,S_t) := e^{-rT}\E^{\mathbb{Q}(\theta)}[(S_T-K)^{+}|S_t]
% \end{align*}
% In addition, we also calculate the calibrated payoff using \citep{gierjatowicz2020robust}'s formula in \eqref{eq:authorformula}. 
The pseudocode for this algorithm is deferred to Appendix \ref{appn}.

For the final step, we conduct a martingale pair test of the coupling $(\mathfrak{p}_t(K,T,S_t),e^{-r (T-t)} \left(S_T-K\right)_{+})$ fixing $\sigma = 1, \rho = 5$ and a significance level of $\alpha = 0.05$. We adapt the testing procedures outlined in Algorithm \ref{algo:mtgl_pair_test} in Appendix \ref{appn}.

\textbf{Results.} We find that, for either LV-model-based calibration or LSV-model-based calibration, $(\mathfrak{p}_t(K,T,S_t),e^{-r (T-t)} \left(S_T-K\right)_{+})$ does not form a martingale pair. For the LV model and the LSV model, the test statistics are \texttt{17.242} and \texttt{11.714} respectively, against an $\alpha = 0.05$ critical value $c_{\alpha} =$ \texttt{4.705}.
In conclusion, \citep{gierjatowicz2020robust}'s calibration method is shown to be inconsistent with the market data available. 

We also observe that one of \citep{gierjatowicz2020robust}'s key contributions, i.e., using hedging strategy as a control variate for the calibration model, may fail to work when we examine option prices as a function of the stock price observed at each time point. 
Instead, to avoid creating arbitrage opportunities in the true option price model in \eqref{eq:pd}, we propose the following neural SDE-based option calibration method: as before, the market data is represented by discounted payoffs $\{{e^{-r(T^{j}-t)}(S_{T^j}-K^{j})_{+}}\}_{j=1}^M$ of call options for pairs $\{(K^j,T^j)\}_{j=1}^M$. We obtain their corresponding market prices $\{\mathfrak{p}_t(K^j,T^j)\}_{j=1}^M$ from \eqref{eq:pd}. We then replace the loss function in \citep[Algorithm 1]{gierjatowicz2020robust} by the  martingale projection loss criterion
\begin{align*}
    \theta^*\in \argmin_{\theta\in \Theta} \sum_{j=1}^M  \sum_{i=1}^N \int \Big( (\mathfrak{p}_t({K^j,T^j},S^\theta_{t_i})- &{e^{-r(T^j-t)}(S^\theta_{T^j}-K^j)_{+}}) \\
    &\cdot f_{\xi,\rho}(x-{{e^{-r(T^j-t)}(S^\theta_{T^j}-K^j)_{+}}})\Big)^2\, \d x,
\end{align*}
see Lemma \ref{lemma:computation}.
Pseudo-code  of the tentative new algorithm can be found in {Appendix} \ref{appn}.

\begin{appendix}

\section{Testing concurrence of a Markov chain with given transition kernel}\label{sec:markov}

We consider the problem of testing if an ergodic sequence follows a particular
Markov chain dynamics. This problem is the analog to the problem of testing if an i.i.d.~sequence follows a particular distribution. In the one-dimensional i.i.d.~setting, the Kolmogorov-Smirnov test provides a well-known approach. 

Precisely, we are interested in testing if a $\varphi$-irreducible and positive recurrent
Markov chain sequence $\left\{  U_{n}:n\geq0\right\}  $ taking values in the state-space $S$ (e.g.,~the support of $\varphi$, which may be assumed to be a maximal irreducibility measure) follows a particular
transition kernel, $\left\{  K\left(  z,\cdot\right)  :z\in S\right\}  $. Assuming for simplicity that $S\subseteq\R^m$, this
is true if and only if for all continuous and bounded functions $v:\R^m \to \R^m $ we have that 
$$
\left(  X_{n},Y_{n}\right)  =\left( u_n  ,u_n +v\left(  u_{n+1}\right)  -\left(  Kv\right)  \left(
u_{n}\right)  \right)
$$
forms a martingale pair for $\varphi$-a.e.~$u_{n}$. 
Indeed, if the ergodic chain satisfies this condition we have that for all
continuous and bounded functions $v:\R^m\to \R^m$,
$$
\left(  Kv\right)  \left(  u_{n}\right)  =E\left[  v\left(  U_{n+1}\right)
|U_{n}=u_n\right]
$$
almost everywhere with respect to the stationary measure which is a maximal
irreducibility measure (see Theorems 10.0.1 and 10.1.2 in \citep{meyn2009}). 

As an application of our results in this paper, we can select a family of
continuous and bounded functions $v_{1},\dots,v_{d}:\R^m\to \R^m$ so we can test the martingale property for the pair
of $d$-dimensional vectors $\left(  X_{n},Y_{n}\right)  \in \R^{md}\times \R^{md}%
$, where
\begin{align}
    X_{n}\left(  i\right)  =U_n  ,\text{ }Y_{n}\left(
i\right)  =U_n  +v_i\left(  U_{n+1}\right)  -\left(
Kv_{i}\right)  \left(  U_{n}\right)  .\label{eq:UUU}
\end{align}
To put the discussion into context (with $m=1$ and $d=2$ for simplicity), we consider a simple Gaussian Markov process and an example inspired by the present value process of perpetual cash flow as described in Example 2.2 of \citep{gjessing1997present}.

\begin{example}[Gaussian Markov Process]\label{ex:gaussian_markov}
Consider the simple case of an infinite state space Gaussian Markov Process as the following:
$$U_{n+1} = \kappa U_{n}+\xi_{n+1},$$ where $\kappa \in [0,1]$ and $\xi_{n+1} \sim \mathcal{N}(0,1)$ is iid. Gaussian noise. 

Choosing $v_1(x) = x$ and $v_2(x) = x\mathds{1}_{\{x>0\}}$, we generate $\{(X_n(i),Y_{n}(i))\}_{n=1}^{1000}$ with $\kappa = 0.5$ for $i=1,2$ from \eqref{eq:UUU}. For a martingale pair test with parameters $\{\rho = 5, \sigma = 1, \alpha = 0.05\}$, the loss of the series of couplings is \texttt{3.091e-23} against an asymptotic critical value $c_\alpha=\texttt{4.840}$. Hence, the test correctly accepts the series $\{(X_n,Y_{n}\}_{n=1}^{1000}$ as a martingale with 95\% confidence. 
\end{example}

\begin{example}[Adapted present value process of perpetual cash flow]\label{ex:adapted_perpetual_cash}
Consider the Markov chain
\begin{align}\label{formula:markov}
 {U}_n\ = e^{-rn} \Big(U_0+ \int_{0}^{n} e^{rs}\d {P_s}\Big)
\end{align}
where $n\in\N_0$, $r>0$, $U_0 = 0$, and $ {P}_t = \sum_{i = 1}^{N_{P,t}}S_{P,i}$ is a non-negative compound Poisson process with $N_{P,t} \sim \text{Pois}(\lambda_P)$ and $S_{P,i} \sim \Gamma(\alpha,\beta)$. Choose $r = 1$, $\lambda_P = 2$ and $(\alpha,\beta) = (2,3)$, where $\alpha$ denotes the location parameter and $\beta$ denotes the scale parameter.

To generate $\{U_n\}_{n\in \N}$, we use the observation that 
\begin{align*}
U_{n+1} = U_n + \int_{0}^{1} e^{r(s-n)}\d P_{n+s}.
\end{align*}

Choosing $v_1(x) = x$ and $v_2(x) = x\mathds{1}_{\{x>0\}}$, we generate $\{(X_n(i),Y_n(i))\}_{n=1}^{1000}$ for $i=1,2$ from \eqref{eq:UUU}. For a martingale pair test with parameters $\{\rho = 5, \sigma = 1, \alpha = 0.05\}$, the loss of the series of couplings is \texttt{1.036} against an asymptotic critical  value $c_\alpha=\texttt{4.840}$. Hence, the test correctly accepts the series $\{(X_n,Y_n\}_{n=1}^{1000}$ as a martingale with 95\% confidence. 
\end{example}

It is interesting to note that the modified version of the stochastic process \eqref{formula:markov}
\begin{align*}
U_n = \int_{0}^{n} e^{-rs}\d P_s
\end{align*}
has financial implications. In fact, $ {U}_{\infty}$ has the following density (see \citep{gjessing1997present}):
\begin{align*}
f_{ {U}_{\infty}} = \alpha^{\frac{1}{2}(1+\gamma)}\gamma^{\frac{1}{2}(1-\gamma)}e^{-\gamma}z^{\frac{1}{2}(\gamma-1)}I_{\gamma-1}(2\sqrt{\alpha \gamma z})e^{\alpha z},
\end{align*}
where $\gamma = {\lambda_P}/{r}$ and $\lambda_P$ denotes the intensity of $N_{P,t}$, and $I$ is the modified Bessel function of the first kind of order $\gamma$:
\begin{align*}
I_{\gamma}(x) = (\frac{x}{2})^\gamma \sum_{k = 0}^\infty \frac{1}{k!\Gamma(k+\gamma+1)}(\frac{x}{2})^{2k}.
\end{align*}
In an actuarial context, $ {P}_t$ is interpreted as the surplus generating process and $ {U}_\infty$ represents the present value of perpetual cash flow with respect to $ {P}_t$.

\section{Methodological development and technical proofs}\label{sec:mthd_dev}
In this section, we offer a rigorous step-by-step presentation of the methodological developments leading up to the main results in the previous two sections. We first introduce some necessary notations.

We let $C>0$ denote a large constant depending only on certain parameters, e.g., $(X,Y)$ and $f_{\xi,\rho}$.  The number $L>0$ will denote a large \emph{absolute} constant that does not depend on anything else. The numbers $L,C$ \emph{may not be the same in each occurrence}. For a class $\F$ of real-valued functions and a norm $\n{\cdot}$ on a space containing $\F$, we define the bracketing number $N(\ee,\n{\cdot},\F)$ as the smallest number of $\ee$-brackets needed to cover $\F$, where for $f,g$ the bracket $[f,g]$ is the set $\{h\in\F:f\leq h\leq g\}$, and $[f,g]$ is called an $\ee$-bracket if $\n{f-g}<\ee$. For an event $E$, $\mathds{1}_E$ denotes the indicator random variable of $E$. We will assume throughout that $(p,q)$ forms a conjugate pair, i.e., $p^{-1}+q^{-1}=1$. For a finite set $S$, we denote its cardinality by $ |S|$.
 For $x\in\R$, we denote the floor of $x$ by $\floor{x}$.

 We will also frequently use \eqref{eq:fdensity} and \eqref{eq:f general} in this section, which we recall below: $f_{\xi,\rho}(x)=C_\rho(|x|_2+1)^{-\rho}$ and 
 $$f_{\xi,\rho,\sigma}(x)=\sigma^{-d}C_\rho\left(\frac{\nn{x}_2}{\sigma}+1\right)^{-\rho}.$$
\subsection{The SE-MPD revisited}\label{subsec:deferred_mpd}
In this subsection, we walk through the theoretical developments from Section \ref{sec:mpd}. We start with an example where the classical Wasserstein distance fails to distinguish martingale laws from non-martingale laws. This is a variation of the example discussed in the Introduction. We will elaborate on this modification to illustrate why causality is important.

\begin{example}\label{ex:2}
Consider the random variables $(X,Y, X^\epsilon,Y^\epsilon)$ defined on $(\Omega, \mathcal{F}, \mu)$:
\begin{align*}
\mu( X=1, Y=2)&=\frac{1}{2}=\mu(X=1, Y=0),\\
\mu(X^\ee=1+\epsilon, Y^\ee=2)&=\frac{1}{2}=\mu(X^\ee=1-\epsilon, Y^\ee=0).
\end{align*}
Clearly $(X,Y)$ is martingale in its natural filtration, while $(X^\epsilon, Y^\epsilon)$ is not; however their laws are close in $\mathcal{W}_\gamma$: in fact it is easy to check that  $\mathcal{W}_\gamma(\P, \P^\epsilon)\le \epsilon$ for $(X,Y)\lawis \P$, $(X^\epsilon,Y^\epsilon)\lawis \P^\epsilon$.
\end{example}

What goes wrong in the above example? While the processes are adapted to their natural filtration, the couplings $\pi$ in the definition of $\mathcal{W}_\gamma$ need not be. This motivates the concept of \textit{bi-causal} coupling (see Definition \ref{def:bi-causal}) and what we define as the \textit{adapted, nested} or \textit{bi-causal Wasserstein Distance $\mathcal{A}\mathcal{W}_{\gamma}$} (see Definition \ref{def:bi-causal-wass}).

\begin{example}[Example \ref{ex:2} with causality constraint]\label{ex:3}
Consider again the random variables $(X,Y, X^\epsilon,Y^\epsilon)$ defined in Example \ref{ex:2}. From Definition \ref{def:bi-causal} one can check that any causal coupling $\pi$ from $\P$ to $\P^\epsilon$ needs to  satisfy $\text{Law}(Y|X^\epsilon)=\text{Law}(Y).$ Thus 
\begin{align*}
\mathcal{AW}_\gamma(\P, \P^\epsilon)^\gamma &\geq  \inf \Big\{ \E\big[\E[\nn{Y-Y^\epsilon}_2^\gamma|X^\epsilon]+\epsilon^\gamma\big]: \text{Law}(Y|X^\epsilon)=\text{Law}(Y)\Big\}\\
&=\frac{1}{2} \big(1+1 \big) +\epsilon^\gamma=1+\epsilon^\gamma>1
\end{align*}
for all $\epsilon>0$.
\end{example}

Using Definition \ref{def:bi-causal-wass}, we introduce the central object of our paper: the martingale projection distance (see Definition \ref{def:MPD}). In particular, the following relationship holds for MPD:
\begin{lemma}\label{lem:trivial}
For $\gamma\geq 1$, we have
\begin{align}\label{eq:trivial}
\mathrm{MPD}(\P,\gamma)=0 \quad \Longleftrightarrow \quad \P \text{ is a martingale measure}.
\end{align}
\end{lemma}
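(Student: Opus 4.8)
The plan is to establish \eqref{eq:trivial} as an easy consequence of the closed-form expression in Theorem \ref{prop:martingale}, treating the two implications separately. The forward direction is immediate: if $\P$ is a martingale measure, then $\E[Y|X]=X$ $\P$-a.s., so $\Q=\P$ is admissible in the infimum defining $\mathrm{MPD}(\P,\gamma)$ in \eqref{eq:causalcond2}, and since $\mathcal{AW}_\gamma(\P,\P)=0$ (take the diagonal coupling, which is trivially bi-causal) we conclude $\mathrm{MPD}(\P,\gamma)=0$. Alternatively, one can simply invoke \eqref{eq:n4}: $\mathrm{MPD}(\P,\gamma)=2^{1-\gamma}\E[\n{X-\E[Y|X]}_2^\gamma]=0$ since the integrand vanishes a.s.

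For the converse, I would again appeal directly to Theorem \ref{prop:martingale}. Suppose $\mathrm{MPD}(\P,\gamma)=0$. Then by \eqref{eq:n4} we have $2^{1-\gamma}\E[\n{X-\E[Y|X]}_2^\gamma]=0$, and since $\gamma\geq 1$ the prefactor $2^{1-\gamma}$ is strictly positive; hence $\E[\n{X-\E[Y|X]}_2^\gamma]=0$. As $\n{X-\E[Y|X]}_2^\gamma\geq 0$, this forces $\n{X-\E[Y|X]}_2=0$ $\P$-a.s., i.e. $\E[Y|X]=X$ $\P$-a.s., which is precisely the statement that $\P$ is a martingale measure. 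There is one technical caveat worth addressing: Theorem \ref{prop:martingale} is stated under the integrability hypothesis $(X,Y)\in L^\gamma$. If one wishes \eqref{eq:trivial} to hold without this assumption, the right-to-left direction still goes through unconditionally (the diagonal coupling argument above needs no integrability), and for the left-to-right direction one notes that $\mathrm{MPD}(\P,\gamma)=0$ together with the definition already pins down a minimizing sequence $\Q_k$ with $\E[Y|X]=X$ under each $\Q_k$ and $\mathcal{AW}_\gamma(\P,\Q_k)\to 0$; since $\mathcal{AW}_\gamma$ dominates $\mathcal{W}_\gamma$ which metrizes weak convergence, $\Q_k\Rightarrow\P$, and one would then need a lower-semicontinuity / closedness argument for the martingale constraint under $\mathcal{AW}_\gamma$ — but in the setting of the paper, where Theorem \ref{prop:martingale} is available, this subtlety does not arise and the one-line argument via \eqref{eq:n4} suffices.

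The main (and essentially only) obstacle is nothing more than bookkeeping: one must make sure the integrability assumption in Theorem \ref{prop:martingale} is in force, which it is whenever we are in the regime where the theorem applies, so the proof reduces to quoting \eqref{eq:n4} and observing that a nonnegative random variable with zero expectation vanishes a.s. In short, I would write: ``By Theorem \ref{prop:martingale}, $\mathrm{MPD}(\P,\gamma)=2^{1-\gamma}\E[\n{X-\E[Y|X]}_2^\gamma]$, and since $2^{1-\gamma}>0$ this equals zero if and only if $\E[Y|X]=X$ $\P$-a.s., i.e. if and only if $\P$ is a martingale measure.''
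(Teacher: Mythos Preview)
Your proof is correct, but it takes a different route from the paper. The paper does \emph{not} invoke Theorem~\ref{prop:martingale}; instead it argues directly from the definition. For the implication ``$\P$ martingale $\Rightarrow \mathrm{MPD}(\P,\gamma)=0$'' the paper observes (as you also do) that the identity coupling is bi-causal, so $\Q=\P$ is admissible in \eqref{eq:causalcond2}. For the converse, the paper takes a minimizing sequence $(\Q_n)$ of martingale measures with $\mathcal{AW}_\gamma(\P,\Q_n)\to 0$ and appeals to the fact that the set of martingale measures is closed in the adapted Wasserstein topology, so $\P$ itself must be a martingale measure. This is precisely the closedness argument you sketch as a fallback in your ``technical caveat'' paragraph.

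The trade-off is this: your primary argument via \eqref{eq:n4} is a clean one-liner but imports the full strength of Theorem~\ref{prop:martingale} (and its $L^\gamma$ hypothesis), whereas the paper's argument is more elementary and self-contained --- it needs only the definition of $\mathrm{MPD}$ and the topological fact that martingale couplings form an $\mathcal{AW}_\gamma$-closed set. In the paper's logical ordering, Lemma~\ref{lem:trivial} is proved \emph{before} Theorem~\ref{prop:martingale}, so the direct route is also the natural one there; your approach would work just as well once Theorem~\ref{prop:martingale} is in hand, since its proof does not rely on Lemma~\ref{lem:trivial}.
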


\begin{proof}[Proof of Lemma \ref{lem:trivial}]
Clearly, the identity coupling is adapted, so $\mathrm{MPD}(\P_0,\gamma)=0$ if $\P_0$ is a martingale measure. On the other hand, assume that 
$\mathrm{MPD}(\P_0,\gamma)=0$, i.e.,~there exists a sequence $(\Q_n)_{n\in \N}$ of martingale couplings such that $
\lim_{n\to \infty} \mathcal{AW}_\gamma(\P_0, \Q_n)=0$. As the set of martingale measures is closed in $ \mathcal{AW}_\gamma$ (see Proposition 5.7 of \citep{bartl2021wasserstein}), we conclude that $\P_0$ is a martingale measure too.
\end{proof}

Let us define the asymmetric \emph{causal Wasserstein distance} $\mathcal{CW}_\gamma(\Q,\P)$ as
\begin{align*}
\mathcal{CW}_\gamma(\Q,\P)^\gamma :=\inf\big\{\E\big[\nn{Y-Y'}_2^\gamma+\nn{X-X'}_2^\gamma\big]:~&\pi\in \mathcal{P}((\R^d)^4),\, (X,Y,X',Y')\lawis\pi,\\
&\pi \text{ is a causal coupling from $\Q$ to $\P$}\big\}.
\end{align*}

The MPD does not change, if one replaces $\mathcal{AW}_\gamma(\P, \Q)$ by the asymmetric  {causal Wasserstein distance} $\mathcal{CW}_\gamma(\Q,\P)$. 
This is stated in the next proposition:

\begin{proposition}\label{prop:causal}It holds that
\begin{align*}
\mathrm{MPD}(\P,\gamma) =\inf\{ \mathcal{CW}_\gamma(\Q, \P)^\gamma: \E[Y|X]=X \text{ for } (X,Y)\lawis \Q\}.
\end{align*}
\end{proposition}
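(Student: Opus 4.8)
The plan is to sandwich both $\mpd(\P,\gamma)$ and $\inf_\Q\mathcal{CW}_\gamma(\Q,\P)^\gamma$ between the same explicit quantity $I(\P):=2^{1-\gamma}\E_\P[\n{X-\E_\P[Y\mid X]}_2^\gamma]$ (with $(X,Y)\lawis\P$, assuming $(X,Y)\in L^\gamma$ as in Theorem~\ref{prop:martingale}); this simultaneously re-derives Theorem~\ref{prop:martingale}. One inequality is immediate: if $\pi\lawis(X,Y,X',Y')$ is a bi-causal coupling of $\P$ and a martingale $\Q$ (so $(X,Y)\lawis\P$, $(X',Y')\lawis\Q$), then the swapped measure $\mathrm{Law}(X',Y',X,Y)$ has first marginal $\Q$, second marginal $\P$, the same transport cost, and is causal from $\Q$ to $\P$ — precisely the ``causality from $\Q$ to $\P$'' clause of Definition~\ref{def:bi-causal}. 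Hence $\mathcal{CW}_\gamma(\Q,\P)^\gamma\le\mathcal{AW}_\gamma(\P,\Q)^\gamma$ for every martingale $\Q$, and taking the infimum over $\Q$ gives $\inf_\Q\mathcal{CW}_\gamma(\Q,\P)^\gamma\le\mpd(\P,\gamma)$.

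For the lower bound $\mathcal{CW}_\gamma(\Q,\P)^\gamma\ge I(\P)$, we fix a martingale $\Q$ and a causal coupling $\pi\lawis(X,Y,X',Y')$ from $\Q$ to $\P$ (now $(X,Y)\lawis\Q$, $(X',Y')\lawis\P$); we may assume the cost is finite, so all coordinates lie in $L^\gamma$. Causality from $\Q$ to $\P$ yields $\E_\pi[Y\mid X,X']=\E_\pi[Y\mid X]$, and the latter equals $X$ by the martingale property of $\Q$; hence $\E_\pi[(X-Y)-(X'-Y')\mid X']=\E_\P[Y'\mid X']-X'$. Combining the elementary bound $a^\gamma+b^\gamma\ge2^{1-\gamma}(a+b)^\gamma$ ($a,b\ge0$, $\gamma\ge1$), the triangle inequality, and conditional Jensen,
\begin{align*}
\E_\pi\big[\n{Y-Y'}_2^\gamma+\n{X-X'}_2^\gamma\big]
&\ge 2^{1-\gamma}\E_\pi\big[\n{(X-Y)-(X'-Y')}_2^\gamma\big]\\
&\ge 2^{1-\gamma}\E_\pi\big[\n{\E_\pi[(X-Y)-(X'-Y')\mid X']}_2^\gamma\big]=I(\P),
\end{align*}
and infimizing over $\pi$ and then over $\Q$ gives $\inf_\Q\mathcal{CW}_\gamma(\Q,\P)^\gamma\ge I(\P)$.

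It remains to show $\mpd(\P,\gamma)\le I(\P)$, for which we exhibit martingale measures $\tilde\Q_\epsilon$ admitting bi-causal couplings to $\P$ of cost tending to $I(\P)$. Let $m(x):=\E_\P[Y\mid X=x]$, $g(x):=\tfrac12(x+m(x))$, and for $\epsilon>0$ set $T_\epsilon(x,y):=\big(g(x)+\epsilon x,\ y+\tfrac12(x-m(x))+\epsilon x\big)$ and $\tilde\Q_\epsilon:=(T_\epsilon)_\#\P$. The $\P$-conditional mean of the second coordinate of $T_\epsilon$ given $X$ equals $g(X)+\epsilon X$, i.e.\ the first coordinate, so $\tilde\Q_\epsilon$ is a martingale; the Monge coupling induced by $T_\epsilon$ transports $\P$ to $\tilde\Q_\epsilon$ at cost $\E_\P\big[\n{\tfrac12(X-m(X))-\epsilon X}_2^\gamma+\n{\tfrac12(X-m(X))+\epsilon X}_2^\gamma\big]$, which converges to $2^{1-\gamma}\E_\P\n{X-m(X)}_2^\gamma=I(\P)$ as $\epsilon\downarrow0$ by dominated convergence (using $\E_\P\n{m(X)}_2^\gamma\le\E_\P\n{Y}_2^\gamma<\infty$). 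This Monge coupling is bi-causal as soon as $x\mapsto g(x)+\epsilon x$ is injective $\P_X$-almost surely: then its first coordinate generates, modulo $\mu$-nullsets, the same $\sigma$-algebra as $X$, and both causality clauses of Definition~\ref{def:bi-causal} hold vacuously. Finally, a.s.\ injectivity holds for all but countably many $\epsilon$, since the ``collision sets'' $\{(x_1,x_2):x_1\ne x_2,\ g(x_1)-g(x_2)=-\epsilon(x_1-x_2)\}$ are pairwise disjoint in $\epsilon$ and hence carry positive $\P_X\otimes\P_X$-mass for only countably many $\epsilon$. Choosing $\epsilon_n\downarrow0$ among the admissible values yields $\mpd(\P,\gamma)\le\mathcal{AW}_\gamma(\P,\tilde\Q_{\epsilon_n})^\gamma\le(\text{cost of }T_{\epsilon_n})\to I(\P)$.

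Chaining the three bounds through the cycle $\mpd(\P,\gamma)\le I(\P)\le\inf_\Q\mathcal{CW}_\gamma(\Q,\P)^\gamma\le\mpd(\P,\gamma)$ forces equality throughout, which is the assertion. The only non-routine step is the upper bound: the naive half-way projection ($\epsilon=0$) can fail to be bi-causal, because where $g$ is not injective the conditional law of the target's second coordinate given its first coordinate is a genuine mixture over a fibre of $g$, which a deterministic map cannot reproduce — the infinitesimal perturbation $x\mapsto x+\epsilon x$ is exactly what restores injectivity while leaving the limiting cost unchanged. (Alternatively, if one is willing to cite Theorem~\ref{prop:martingale} at this point, the reverse inequality collapses to the displayed lower bound on $\mathcal{CW}$ together with the easy inequality.)
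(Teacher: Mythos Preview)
Your easy inequality $\inf_\Q\mathcal{CW}_\gamma(\Q,\P)^\gamma\le\mpd(\P,\gamma)$ and your lower bound $\inf_\Q\mathcal{CW}_\gamma(\Q,\P)^\gamma\ge I(\P)$ are both correct; the lower bound is in fact slightly slicker than the paper's, conditioning once on the $\P$-side first coordinate and applying Jensen directly rather than iterating.

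The gap is in the upper bound $\mpd(\P,\gamma)\le I(\P)$. Your claim that a $\P_X\otimes\P_X$-null collision set for $h_\epsilon(x):=g(x)+\epsilon x$ forces $\sigma(h_\epsilon(X))=\sigma(X)$ modulo nullsets is false. Concretely, take $d=1$, $X\sim U[0,1]$, and choose $\P$ with $m(x)=\E_\P[Y\mid X{=}x]=-2x^3-x$ (so $g(x)=-x^3$) and with $\text{Law}(Y\mid X{=}x)$ genuinely depending on $x$, e.g.\ $Y=m(X)+XZ$ for $Z\sim N(0,1)$ independent of $X$. For every $\epsilon\in(0,3)$ the map $h_\epsilon(x)=-x^3+\epsilon x=x(\epsilon-x^2)$ has a strict local maximum at $\sqrt{\epsilon/3}\in(0,1)$ and is two-to-one on the interval $(0,\sqrt\epsilon)$ of positive Lebesgue measure. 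The collision set is a smooth curve in $[0,1]^2$, hence Lebesgue-null --- so your disjointness/countability argument is satisfied --- yet $X$ is not $\sigma(h_\epsilon(X))$-measurable, and the Monge coupling fails causality from $\tilde\Q_\epsilon$ to $\P$: with $U=h_\epsilon(X)$ and $V=U+XZ$, the law of $V$ given $U$ is a genuine two-component Gaussian mixture (variances $x_1(U)^2$ and $x_2(U)^2$), not equal to the law given $(X,U)$. Thus your argument leaves uncountably many bad $\epsilon$, not just countably many.

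The paper handles this step by a different perturbation (Lemma~\ref{lem:bartl}): instead of $\epsilon x$, one sets $X^\delta:=\phi_\delta(X')+\psi_\delta(X)$, where $\phi_\delta$ rounds down to the lattice $\delta\Z^d$ and $\psi_\delta:\R^d\to(0,\delta)^d$ is a Borel isomorphism. Since the $\delta$-integer and $\delta$-fractional parts of $X^\delta$ are separately recoverable, $X$ is \emph{exactly} recoverable from $X^\delta$, giving $\sigma(X^\delta)=\sigma(X)$ on the nose and bi-causality for free. Your parenthetical alternative --- cite Theorem~\ref{prop:martingale} for $\mpd(\P,\gamma)=I(\P)$ and chain with your two correct inequalities --- is logically valid as written; just note that in the paper Theorem~\ref{prop:martingale} is itself proved via this encoding lemma, so the shortcut hides the same work.
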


On the other hand, the causality constraint is essential, as the following example shows:

\begin{example}[Causality is essential]\label{ex:causality}
Consider again $(X, Y, X^\epsilon,Y^\epsilon)$ and $\P^\epsilon$ as defined in Example \ref{ex:2}.
By Theorem \ref{prop:martingale} we have
$$\mathrm{MPD}(\P_\epsilon, \gamma) = \frac{1}{2}\E[|X^\epsilon-\E[Y^\epsilon|X^\epsilon]|^2_2] = \frac{1}{2}|1-\epsilon|^2$$
and so in particular
$$\lim_{\epsilon \rightarrow 0} \mathrm{MPD}(\P_\epsilon,\gamma) = \lim_{\epsilon \rightarrow 0}\frac{1}{2}|1-\epsilon|^2 = \frac{1}{2}.$$
This is no longer true if we remove the causality constraint $\text{Law}(Y'|X,X')=\text{Law}(Y'|X')$ in the definition \eqref{eq:causalcond2} of the MPD: we have
\begin{align*}
\lim_{\epsilon\to 0} \inf\big\{&\E\big[|Y-Y'|_2^2+|X-X'|_2^2\big]:~\pi\in \mathcal{P}((\R^d)^4),(X,Y,X',Y')\lawis\pi,\,\E[Y'|X']=X'\nonumber\\
&\pi(A\times B\times \R\times \R)=\P_\ee(A\times B)\text{ for all Borel sets }A,B\subseteq \R^d \big\}\le \limsup_{\epsilon\to 0} \epsilon^2= 0.
\end{align*}
Indeed this follows by the choice $X'=1$ and $Y'=Y^\epsilon$ in the above infimum.
In particular, we have 
\begin{align*}
   \mathrm{MPD}(\P,\gamma) >\inf\{ \mathcal{CW}_\gamma(\P, \Q)^\gamma: \E[Y|X]=X \text{ for } (X,Y)\lawis \Q\}  
\end{align*}
in general.
\end{example}

 In fact, the causality constraint in the definition of MPD only plays a role through the conditional expectation, as the following corollary states:

\begin{corollary}\label{coro:simple}
We have 
\begin{align*}
\mathrm{MPD}(\P,\gamma)=\inf\big\{&\E\big[|Y-Y'|_2^\gamma+|X-X'|_2^\gamma\big]:~\pi\in \mathcal{P}((\R^d)^4), (X,Y,X',Y')\lawis\pi,\nonumber\\
&\pi(A\times B\times \R^d\times \R^d)=\P(A\times B)\mathrm{\ for\ all\ Borel\ sets\ }A,B\subseteq  \R^d,\nonumber\\
&\E[Y'|X',X]=\E[Y'|X']=X'\big\}.
\end{align*}
\end{corollary}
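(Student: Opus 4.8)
The plan is to show that the infimum in Corollary~\ref{coro:simple} is both an upper and a lower bound for $\mathrm{MPD}(\P,\gamma)$, using Proposition~\ref{prop:causal} as the bridge. Denote the right-hand side of the asserted identity by $J(\P,\gamma)$. The key observation is that $\mathcal{CW}_\gamma(\Q,\P)$ only constrains the conditional law $\text{Law}(Y'|X,X')$ to equal $\text{Law}(Y'|X')$ where $(X',Y')\lawis\Q$; when we additionally optimize over all martingale couplings $\Q$ (so that $\E[Y'|X']=X'$), the only feature of $\Q$ that matters for the cost $\E[\|Y-Y'\|_2^\gamma+\|X-X'\|_2^\gamma]$ is captured by the kernel $x'\mapsto\text{Law}(Y'|X'=x')$, which by causality coincides with $x'\mapsto\text{Law}(Y'|X=\cdot,X'=x')$. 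The point is to replace ``$\Q$ is a martingale coupling and $\pi$ is causal from $\Q$ to $\P$'' by the equivalent bookkeeping ``$\pi$ has $\P$-marginal on $(X,Y)$, and the conditional law of $Y'$ given $(X,X')$ depends only on $X'$ and has conditional mean $X'$.''

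First I would prove $J(\P,\gamma)\le\mathrm{MPD}(\P,\gamma)$. Take any martingale coupling $\Q$ and any causal coupling $\pi$ from $\Q$ to $\P$ (in the sense entering $\mathcal{CW}_\gamma(\Q,\P)$, i.e.\ $\text{Law}(Y'|X,X')=\text{Law}(Y'|X')$, with $(X,Y)\lawis\P$, $(X',Y')\lawis\Q$). Then $\pi$ has the correct $(X,Y)$-marginal, and the causality condition gives $\E[Y'|X',X]=\E[Y'|X']=X'$, the last equality because $\Q$ is a martingale law. Hence $\pi$ is feasible for $J(\P,\gamma)$ and the cost is the same, so $J(\P,\gamma)\le\mathcal{CW}_\gamma(\Q,\P)^\gamma$; taking the infimum over $\Q$ and invoking Proposition~\ref{prop:causal} yields the inequality.

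Next I would prove the reverse inequality $\mathrm{MPD}(\P,\gamma)\le J(\P,\gamma)$. Given any $\pi$ feasible for $J(\P,\gamma)$, define $\Q:=\text{Law}(X',Y')$ under $\pi$. The condition $\E[Y'|X']=X'$ says exactly that $\Q$ is a martingale coupling, so $\Q$ is admissible in the definition of $\mathrm{MPD}$. Moreover $\pi$ is then a coupling of $\P$ and $\Q$, and the condition $\E[Y'|X',X]=\E[Y'|X']$ is precisely causality from $\Q$ to $\P$ at the level of conditional means; the subtle point is that $\mathcal{CW}_\gamma$ requires equality of the full conditional \emph{laws}, not just the means, so a priori this $\pi$ need not be feasible for $\mathcal{CW}_\gamma(\Q,\P)$. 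To fix this, I would modify $\pi$ to a new coupling $\tilde\pi$ with the same $(X,X',Y')$-marginal behaviour but with $Y'$ redrawn conditionally on $X'$ alone, i.e.\ replace the kernel $\text{Law}(Y'\mid X,X')$ by $\text{Law}(Y'\mid X')$ while keeping $\text{Law}(X,X',Y)$ and the joint law of $(X,Y)$ intact; since the cost $\|X-X'\|_2^\gamma$ is unaffected and the marginal law of $(X',Y')$ is still $\Q$, one checks that this does not increase (in fact does not change, by the tower property applied to a suitable decomposition) the relevant part of the objective — here I would be careful and instead argue directly that $\E[\|Y-Y'\|_2^\gamma\mid X]$ is governed only by $\text{Law}(Y'\mid X)$ built from the kernel, so passing to $\tilde\pi$ leaves the cost unchanged. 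Then $\tilde\pi$ is causal from $\Q$ to $\P$, so $\mathcal{CW}_\gamma(\Q,\P)^\gamma\le\E_{\tilde\pi}[\|Y-Y'\|_2^\gamma+\|X-X'\|_2^\gamma]=\E_\pi[\cdots]$, and taking infima finishes the proof via Proposition~\ref{prop:causal}.

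The main obstacle is exactly this last measurability/coupling-surgery step: reconciling the ``conditional-mean only'' constraint in Corollary~\ref{coro:simple} with the ``conditional-law'' causality constraint defining $\mathcal{CW}_\gamma$. The cleanest route is probably to disintegrate $\pi$ as $\P(dx,dy)\,\kappa(x,dx')\,\lambda(x,x',dy')$, observe that $J$ and $\mathcal{CW}_\gamma$ both only see $\lambda$ through the costs involving $Y'$, and note that the condition $\E[Y'|X,X']=\E[Y'|X']$ lets one replace $\lambda(x,x',\cdot)$ by a version depending on $x'$ only without changing the $(X',Y')$-marginal or the $\gamma$-cost (for $\gamma=1$ this is transparent; for general $\gamma\ge1$ one must check the relevant conditional expectation of $\|Y-Y'\|_2^\gamma$ is unaffected, which again follows because $Y$ and $Y'$ are conditionally independent given $X$ once we condition appropriately). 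I expect this to be a short but slightly delicate argument; the two inequality directions themselves are routine once the right disintegration is set up.
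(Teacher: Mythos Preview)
Your easy inequality $J(\P,\gamma)\le\mathrm{MPD}(\P,\gamma)$ is fine (and in fact does not even need Proposition~\ref{prop:causal}: any bi-causal coupling of $\P$ with a martingale $\Q$ is automatically feasible for $J$).

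The hard direction, however, has a genuine gap. Your ``coupling surgery''---replacing $\text{Law}(Y'\mid X,X')$ by $\text{Law}(Y'\mid X')$ while keeping $\text{Law}(X,Y,X')$---does make $\tilde\pi$ causal from $\Q$ to $\P$, but it can strictly \emph{increase} the cost $\E[\|Y-Y'\|_2^\gamma]$. The function $y'\mapsto\|y-y'\|_2^\gamma$ is nonlinear for every $\gamma\ge 1$ (including $\gamma=1$, since the Euclidean norm is not linear), so equality of conditional means $\E[Y'\mid X,X']=\E[Y'\mid X']$ does not force equality of $\E[\|Y-Y'\|_2^\gamma\mid X,X']$. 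Concretely, a feasible $\pi$ could have $Y'=Y$ a.s.\ (whenever $\E[Y\mid X,X']=X'$), giving zero $Y$-cost, whereas after surgery $Y'$ becomes conditionally independent of $Y$ given $X'$ and the cost becomes positive. Your claim that ``$\E[\|Y-Y'\|_2^\gamma\mid X]$ is governed only by $\text{Law}(Y'\mid X)$'' is also incorrect: it depends on the \emph{joint} law of $(Y,Y')$ given $X$, and even the marginal $\text{Law}(Y'\mid X)$ changes under the surgery. (Your proposed disintegration $\P(dx,dy)\kappa(x,dx')\lambda(x,x',dy')$ is already too restrictive---it silently assumes $X'\perp Y\mid X$ and $Y'\perp Y\mid(X,X')$, neither of which is part of the constraints defining $J$.)

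The paper does not attempt any such lifting. Instead it sandwiches $\widetilde{\mathrm{MPD}}\le\inf_\Q\mathcal{CW}_\gamma(\Q,\P)^\gamma\le\mathrm{MPD}$ and then proves $\widetilde{\mathrm{MPD}}=\mathrm{MPD}$ by computing \emph{both} to equal the closed form $2^{1-\gamma}\E[\|X-\E[Y\mid X]\|_2^\gamma]$: the lower bound on $\widetilde{\mathrm{MPD}}$ via repeated Jensen, and the upper bound on $\mathrm{MPD}$ via an explicit near-optimal \emph{bi-causal} coupling built from $X'=\tfrac12(X+\E[Y\mid X])$, $Y'=Y-\tfrac12(\E[Y\mid X]-X)$ and then perturbed (Lemma~\ref{lem:bartl}) to make $\sigma(X^\delta)=\sigma(X)$. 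In particular, Proposition~\ref{prop:causal} is proved \emph{simultaneously} with the corollary by this route, so invoking it as an independent input is circular in spirit---but that is secondary to the fact that the surgery step itself fails.
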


To delineate the theoretical development of Proposition \ref{prop:causal} and Corollary \ref{coro:simple}, from which Theorem \ref{prop:martingale} follows, we state and prove a series of results.

\begin{proof}[Proof of Proposition \ref{prop:causal} and Corollary \ref{coro:simple}]
Let us define
\begin{align}
\begin{split}
    \widetilde{\mathrm{MPD}}(\P,\gamma) :=\inf\big\{&\E\big[|Y-Y'|_2^\gamma+|X-X'|_2^\gamma\big]:~\pi\in \mathcal{P}((\R^d)^4), (X,Y,X',Y')\lawis\pi,\\
&\pi(A\times B\times \R^d\times \R^d)=\P(A\times B)\text{ for all Borel sets }A,B\subseteq  \R^d,\\
&\E[Y'|X',X]=\E[Y'|X']=X'\big\}.
\end{split}\label{eq:tildeMPD def}
\end{align}
By definition we have
\begin{align}
\widetilde{\mathrm{MPD}}(\P,\gamma) \le \inf\{ \mathcal{CW}_\gamma(\P, \Q)^\gamma: \E[Y|X]=X \text{ for } (X,Y)\lawis \Q\}\le \mathrm{MPD}(\P,\gamma).\label{eq:tildeMPD}
\end{align}
In consequence, to prove Proposition \ref{prop:causal} and Corollary \ref{coro:simple} we only need to show that $\widetilde{\mathrm{MPD}}(\P,\gamma)=\mathrm{MPD}(\P,\gamma)$. For this, we first show the following:

\begin{lemma}\label{lemma:tildeMPD}
We have
\begin{align}\label{eq:n5}
\widetilde{\mathrm{MPD}}(\P,\gamma)=2^{1-\gamma}\E\left[\nn{X-\E[Y|X]}_2^\gamma\right].    
\end{align}
\end{lemma}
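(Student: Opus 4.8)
The plan is to prove the two inequalities $\widetilde{\mathrm{MPD}}(\P,\gamma)\le 2^{1-\gamma}\E[\n{X-\E[Y|X]}_2^\gamma]$ and $\widetilde{\mathrm{MPD}}(\P,\gamma)\ge 2^{1-\gamma}\E[\n{X-\E[Y|X]}_2^\gamma]$ separately, where the first is an explicit construction and the second is a conditional Jensen argument. For the upper bound, I would exhibit a feasible coupling: take $(X,Y)\lawis\P$ and set $X':=\tfrac12(X+\E[Y|X])$ and $Y':=\tfrac12(Y+\E[Y|X])$. One checks that $\E[Y'\mid X,X']=\tfrac12(\E[Y\mid X]+\E[Y\mid X])=X'$ (using that $X'$ is $\sigma(X)$-measurable, so conditioning on $(X,X')$ is the same as conditioning on $X$), and that $X'=\E[Y'\mid X']$ follows by the tower property from $\E[Y'\mid X]=X'$. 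Hence $\pi:=\mathrm{Law}(X,Y,X',Y')$ is admissible in the infimum defining $\widetilde{\mathrm{MPD}}$. Its cost is $\E[\n{Y-Y'}_2^\gamma+\n{X-X'}_2^\gamma]=\E[\n{\tfrac12(Y-\E[Y|X])}_2^\gamma+\n{\tfrac12(X-\E[Y|X])}_2^\gamma]$; dropping the first (nonnegative) term leaves $2^{-\gamma}\E[\n{X-\E[Y|X]}_2^\gamma]$. To get the sharp constant $2^{1-\gamma}$ I would instead keep the full expression but note that for the optimal $X'$ the two norm-differences should be balanced, or — more robustly — optimize the construction: replace the coupling above by taking $Y'=\E[Y\mid X]$ deterministically (as a function of $X$), $X'=\E[Y\mid X]$; then the cost is $\E[\n{Y-\E[Y|X]}_2^\gamma]+\E[\n{X-\E[Y|X]}_2^\gamma]$, which is worse. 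The clean way to land on $2^{1-\gamma}$: for fixed $X=x$, one is free to choose the (conditional) law of $(X',Y')$ subject to $\E[Y'\mid X']=X'$ and the cost contribution $\E[\n{Y-Y'}_2^\gamma\mid X=x]+\n{x-X'}_2^\gamma$ — taking $X'=Y'$ a.s. equal to a point $z(x)$ makes this $\E[\n{Y-z(x)}_2^\gamma\mid X=x]+\n{x-z(x)}_2^\gamma$, and the convexity-type bound below shows $z(x)=\tfrac12(x+\E[Y\mid X=x])$ is (essentially) optimal, giving exactly $2^{1-\gamma}\n{x-\E[Y\mid X=x]}_2^\gamma$ after applying the power-mean inequality $\tfrac12 a^\gamma+\tfrac12 b^\gamma\ge (\tfrac{a+b}{2})^\gamma$ in reverse via Jensen on the conditional expectation of $Y$.

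For the lower bound, let $\pi$ be any admissible coupling with $(X,Y)\lawis\P$ and $\E[Y'\mid X',X]=X'$. Condition on $X$: writing $\E_X[\cdot]:=\E[\cdot\mid X]$, the cost is $\E\big[\E_X[\n{Y-Y'}_2^\gamma]+\E_X[\n{X-X'}_2^\gamma]\big]$. By conditional Jensen (using convexity of $\n{\cdot}_2^\gamma$ on $\R^d$ for $\gamma\ge1$), $\E_X[\n{Y-Y'}_2^\gamma]\ge \n{\E_X[Y-Y']}_2^\gamma=\n{\E[Y\mid X]-\E_X[Y']}_2^\gamma$ and $\E_X[\n{X-X'}_2^\gamma]\ge\n{X-\E_X[X']}_2^\gamma$. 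Now the key identity: $\E_X[Y']=\E[\E[Y'\mid X',X]\mid X]=\E[X'\mid X]=\E_X[X']$. Denote this common value $W:=\E_X[X']=\E_X[Y']$. Then the conditional cost is at least $\n{\E[Y\mid X]-W}_2^\gamma+\n{X-W}_2^\gamma$. Minimizing the scalar function $w\mapsto \n{a-w}_2^\gamma+\n{b-w}_2^\gamma$ over $w\in\R^d$: by convexity and symmetry the minimum is attained at a point on the segment $[a,b]$, and along that segment the problem reduces to the one-dimensional $t\mapsto (s\lambda)^\gamma+(s(1-\lambda))^\gamma$ with $s=\n{a-b}_2$, whose minimum over $\lambda\in[0,1]$ is $2^{1-\gamma}s^\gamma$ at $\lambda=1/2$ (this is exactly where the constant $2^{1-\gamma}$ comes from, via $\inf_\lambda(\lambda^\gamma+(1-\lambda)^\gamma)=2^{1-\gamma}$ for $\gamma\ge1$). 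Hence the conditional cost is $\ge 2^{1-\gamma}\n{\E[Y\mid X]-X}_2^\gamma$, and taking $\E[\cdot]$ gives $\widetilde{\mathrm{MPD}}(\P,\gamma)\ge 2^{1-\gamma}\E[\n{X-\E[Y\mid X]}_2^\gamma]$.

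Combining the two bounds yields \eqref{eq:n5}. I would also note the integrability bookkeeping: $(X,Y)\in L^\gamma$ ensures $\E[Y\mid X]\in L^\gamma$ (conditional expectation is an $L^\gamma$-contraction), so all quantities above are finite, and the constructed coupling in the upper bound has finite cost. The main obstacle — really the only nontrivial point — is correctly extracting the constant $2^{1-\gamma}$ rather than the crude $2^{-\gamma}$: this requires recognizing that the optimal choice of the common value $W$ (equivalently the target point for $X'=Y'$) is the midpoint $\tfrac12(X+\E[Y\mid X])$, not $\E[Y\mid X]$ itself, and then invoking the sharp scalar inequality $\inf_{\lambda\in[0,1]}(\lambda^\gamma+(1-\lambda)^\gamma)=2^{1-\gamma}$. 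Everything else is a routine application of conditional Jensen and the tower property; in particular the identity $\E[Y'\mid X]=\E[X'\mid X]$, which is forced by the constraint $\E[Y'\mid X,X']=X'$, is what makes the lower bound match the upper bound exactly.
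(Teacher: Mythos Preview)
Your lower bound is correct and, modulo reorganization, matches the paper's argument: you apply Jensen to each term separately and then minimize $\|a-w\|_2^\gamma+\|b-w\|_2^\gamma$ over $w$, whereas the paper first combines via $a^\gamma+b^\gamma\ge 2^{1-\gamma}(a+b)^\gamma$ and then applies Jensen to $\|Y-Y'\|_2+\|X-X'\|_2$. Both routes use the same two ingredients (triangle inequality and convexity of $t\mapsto t^\gamma$) and both hinge on the identity $\E[Y'\mid X]=\E[X'\mid X]$.

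Your upper bound, however, has a genuine gap. With $X'=\tfrac12(X+\E[Y\mid X])$ and $Y'=\tfrac12(Y+\E[Y\mid X])$ one gets
\[
\E[Y'\mid X,X']=\E[Y'\mid X]=\tfrac12\bigl(\E[Y\mid X]+\E[Y\mid X]\bigr)=\E[Y\mid X],
\]
which equals $X'=\tfrac12(X+\E[Y\mid X])$ only when $\E[Y\mid X]=X$; so this coupling is not admissible. Your fallback, taking $X'=Y'=z(X)$ deterministic, is admissible but cannot yield the constant $2^{1-\gamma}$: the cost contribution $\E\bigl[\|Y-z(X)\|_2^\gamma\,\big|\,X\bigr]$ depends on the whole conditional law of $Y$ given $X$, not just its mean, and Jensen goes the wrong way here (you cannot ``reverse'' it to bound this term above by $2^{-\gamma}\|X-\E[Y\mid X]\|_2^\gamma$). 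For instance, if $\E[Y\mid X]=X$ your target is zero, yet $\E[\|Y-z(X)\|_2^\gamma\mid X]>0$ whenever $Y$ is nondegenerate given $X$.

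The fix --- and this is what the paper does --- is to keep $X'=\tfrac12(X+\E[Y\mid X])$ but set
\[
Y'=Y+\tfrac12\bigl(X-\E[Y\mid X]\bigr),
\]
i.e.\ translate $Y$ by a $\sigma(X)$-measurable shift rather than averaging it. Then $\E[Y'\mid X]=\tfrac12(\E[Y\mid X]+X)=X'$, and crucially $Y-Y'=\tfrac12(\E[Y\mid X]-X)=X'-X$, so both $\|X-X'\|_2$ and $\|Y-Y'\|_2$ equal $\tfrac12\|X-\E[Y\mid X]\|_2$ \emph{pointwise}, giving total cost exactly $2\cdot 2^{-\gamma}\E[\|X-\E[Y\mid X]\|_2^\gamma]=2^{1-\gamma}\E[\|X-\E[Y\mid X]\|_2^\gamma]$. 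The point is that $Y'$ must retain the randomness of $Y$ (shifted, not averaged) so that $Y-Y'$ becomes $\sigma(X)$-measurable.
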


\begin{proof}
We first show the $\le$-inequality in \eqref{eq:n5}.
For this we define
\begin{align}
    X'=X+\frac{1}{2}(\E[Y|X]-X),\quad Y'=Y+\frac{1}{2}(X-\E[Y|X]).\label{eq:X'Y'}
\end{align}
We note that $X'$ is  $\sigma(X)$-measurable, and $Y'$ is $\sigma(X,Y)$-measurable. Thus $\sigma(X,X')=\sigma(X)$ and we compute
\begin{align}\label{eq:n1}
\E[Y'|X',X]=\E[Y'|X]=\frac{1}{2} (\E[Y|X]+ X)=X'.
\end{align}
Furthermore by construction
\begin{align*}
    X'-X=\frac{1}{2} (\E[Y|X]-X) =Y-Y'
\end{align*}
and thus
\begin{align}\label{eq:n3}
 \E\big[|Y-Y'|_2^\gamma+|X-X'|_2^\gamma\big]= 2^{1-\gamma}\E\left[\nn{X-\E[Y|X]}_2^\gamma\right]. 
\end{align}
Using the tower property we also compute
\begin{align*}
\E[Y'|X']= \E[\E[Y'|X,X']|X']\stackrel{\eqref{eq:n1}}{=}\E[X'|X']=X',
\end{align*}
so the martingale property holds.

Therefore, it suffices to prove
$$\widetilde{\mathrm{MPD}}(\P,\gamma)\geq 2^{1-\gamma}\E\left[\nn{X-\E[Y|X]}_2^\gamma\right].$$
Consider a coupling $\pi$ where $(X,Y)\lawis\P$ and $\E[Y'|X',X]=X'$. First, using the elementary inequality $(a^\gamma+b^\gamma)\ge 2^{1-\gamma} (a+b)^\gamma$ for $a,b\ge 0$ and then applying Jensen's inequality we obtain
\begin{align*}
\E\big[|Y-Y'|_2^\gamma+|X-X'|_2^\gamma\big]&=\E\big[\E\big[|Y-Y'|_2^\gamma+|X-X'|_2^\gamma|X\big]\big]\\
&\ge  2^{1-\gamma} \E\big[\E\big[(|Y-Y'|_2+|X-X'|_2)^\gamma|X\big]\big]\\
&\geq 2^{1-\gamma}\E\big[\E\big[|Y-Y'|_2+|X-X'|_2|X\big]^\gamma\big].
\end{align*}
Note that by assumption we have $\E[Y'|X',X]=\E[Y'|X']=X'$. By repeated use of Jensen's inequality, the inner conditional expectation can be bounded by
\begin{align*}
\E[\nn{Y-Y'}_2+\nn{X-X'}_2|X] &\geq
\E[| X-X'+Y'-Y|_{2}|X] \\ &=
\E[\E[| X-X'+Y'-Y|_{2}|X',X]|X] \\ &\geq
\E\left[\nn{\E[X-X'+Y'-Y|X,X']}_2|X\right]\\
&=\E\left[\nn{X-\E[Y|X,X']}_2|X\right]\\
&\geq \nn{\E\left[X-\E[Y|X,X']|X\right]}_2\\
&=\nn{X-\E[Y|X]}_2.
\end{align*}
Combining the two estimates above yields
$$\widetilde{\mathrm{MPD}}(\P,\gamma)\geq 2^{1-\gamma}\E\big[\E\big[|Y-Y'|_2+|X-X'|_2|X\big]^\gamma\big]\geq 2^{1-\gamma}\E\big[|X-\E[Y|X]|_2^\gamma\big],$$
as required.
\end{proof}

\begin{remark}
    The proof of Lemma \ref{lemma:tildeMPD} indicates that the infimum in the definition \eqref{eq:tildeMPD def} of $\widetilde{\mathrm{MPD}}(\P,\gamma)$ is always attained. On the other hand, this is not true for the infimum in \eqref{eq:causalcond2} for $\mathrm{MPD}(\P,\gamma)$. A simple counterexample is given by the following: take $\gamma=1$, $X$ uniform on $\{0,1\}$, $Y|X=0$ uniform on $\{0,1,-1\}$, and $Y|X=1$ uniform on $\{0,-2\}$. The infimum in $\eqref{eq:tildeMPD def}$ is uniquely attained by the coupling $(X,Y,X',Y')$ where $(X',Y')=(X,Y)$ on the event $\{X=0\}$, and $(X',Y')=(X-1,Y+1)$ on the event $\{X=1\}$. It is straightforward to verify that $\P(Y'=0| X'=0)>0$ but $\P(Y'=0| X'=0,X=1)=0$. Therefore, the coupling $(X,Y,X',Y')$ is not bi-causal, and hence the infimum in \eqref{eq:causalcond2} for $\mathrm{MPD}(\P,\gamma)$ is not attained as the infimum value taken over a subset is the same.
\end{remark}

\begin{lemma}\label{lemma:tildeMPD2}
We have
\begin{align*}
\widetilde{\mathrm{MPD}}(\P,\gamma)=\mathrm{MPD}(\P,\gamma).
\end{align*}
\end{lemma}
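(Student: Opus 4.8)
The plan is to show the reverse inequality to \eqref{eq:tildeMPD}, namely $\mathrm{MPD}(\P,\gamma)\le\widetilde{\mathrm{MPD}}(\P,\gamma)$, after which equality follows. By Lemma \ref{lemma:tildeMPD} we already know the value of $\widetilde{\mathrm{MPD}}(\P,\gamma)$, and the $\le$-direction of that proof produced an explicit coupling $\pi^\star$ attaining it. So the task reduces to verifying that this particular coupling is admissible in the definition of $\mathrm{MPD}(\P,\gamma)$, i.e.~that it arises from a \emph{bi-causal} coupling between $\P$ and a genuine martingale law $\Q$. I would therefore revisit the construction \eqref{eq:X'Y'}: set $X'=X+\tfrac12(\E[Y|X]-X)$ and $Y'=Y+\tfrac12(X-\E[Y|X])$, let $\Q:=\mathrm{Law}(X',Y')$, and check the three conditions of Definition \ref{def:bi-causal} for the joint law of $(X,Y,X',Y')$.

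The key steps, in order, are: (i) Recall from \eqref{eq:n1} and the tower-property computation in the proof of Lemma \ref{lemma:tildeMPD} that $\E[Y'|X']=X'$, so $\Q$ is indeed a martingale law and hence admissible as a competitor in \eqref{eq:causalcond2}. (ii) Verify causality from $\P$ to $\Q$: since $X'$ is $\sigma(X)$-measurable and $Y'$ is $\sigma(X,Y)$-measurable, we have $\sigma(X,X')=\sigma(X)$, and $\mathrm{Law}(Y\,|\,X,X')=\mathrm{Law}(Y\,|\,X)$ holds trivially. (iii) Verify causality from $\Q$ to $\P$: we must show $\mathrm{Law}(Y'\,|\,X,X')=\mathrm{Law}(Y'\,|\,X')$; but again $\sigma(X,X')=\sigma(X)$, so this amounts to $\mathrm{Law}(Y'\,|\,X)=\mathrm{Law}(Y'\,|\,X')$, and since $X'$ is a deterministic (measurable) function of $X$, this is immediate once one observes that in this coupling $Y'$ and $X$ are conditionally related only through $X'$ — more carefully, $Y'=Y+\tfrac12(X-\E[Y|X])$ and conditioning on $X$ versus on $X'=g(X)$ need not coincide in general, so here I would instead argue directly that $\pi^\star$ is bi-causal by noting that $X'$ is $\sigma(X)$-measurable forces the $\Q$-to-$\P$ causality to reduce to a statement about $\mathrm{Law}(Y'|X)$, which is what we want, \emph{provided} $g$ is injective; if $g$ fails to be injective one perturbs. (iv) Conclude $\mathcal{AW}_\gamma(\P,\Q)^\gamma\le\E[\|Y-Y'\|_2^\gamma+\|X-X'\|_2^\gamma]=2^{1-\gamma}\E[\|X-\E[Y|X]\|_2^\gamma]$ by \eqref{eq:n3}, hence $\mathrm{MPD}(\P,\gamma)\le 2^{1-\gamma}\E[\|X-\E[Y|X]\|_2^\gamma]=\widetilde{\mathrm{MPD}}(\P,\gamma)$, which combined with \eqref{eq:tildeMPD} gives the claim (and simultaneously proves Theorem \ref{prop:martingale}).

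The main obstacle is step (iii): the asymmetric causality condition from $\Q$ to $\P$ is the delicate one, because $X'=g(X)$ may collapse information when $g$ is not injective, and then $\mathrm{Law}(Y'|X')$ is a genuine average over the fibers of $g$ while $\mathrm{Law}(Y'|X)$ is not. I expect the clean fix is to enlarge the coupling: introduce an independent uniform randomization $U$ and redefine $\widetilde X':=(X',U)$ (or directly couple through $X$ itself as the "middle" variable), so that $\sigma(\widetilde X')=\sigma(X)$ genuinely and both causality conditions become tautological, while the cost is unchanged since the extra randomness does not affect $\|X-X'\|_2$ or $\|Y-Y'\|_2$. Alternatively, one can bypass bi-causality entirely: Proposition \ref{prop:causal} (whose proof is what we are completing) lets us work with the \emph{one-sided} causal distance $\mathcal{CW}_\gamma(\Q,\P)$, for which only causality from $\Q$ to $\P$ is needed, and with the middle variable taken to be $X$ this is immediate; so in the write-up I would first establish the $\mathcal{CW}_\gamma$-version and then note that $\mathcal{AW}_\gamma\ge\mathcal{CW}_\gamma$ together with \eqref{eq:tildeMPD} pins everything down. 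Everything else is bookkeeping: the inequality $a^\gamma+b^\gamma\ge 2^{1-\gamma}(a+b)^\gamma$ and the Jensen steps are already carried out in Lemma \ref{lemma:tildeMPD}.
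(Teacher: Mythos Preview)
Your overall strategy and your diagnosis of the obstacle are both correct: the coupling \eqref{eq:X'Y'} need not be bi-causal because $X'=g(X)$ with $g(x)=\tfrac12(x+\E[Y\mid X=x])$ may collapse information, and then $\mathrm{Law}(Y'\mid X)\neq\mathrm{Law}(Y'\mid X')$ in general. However, neither of your proposed fixes closes the gap as written.

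For fix (a): appending an \emph{independent} uniform $U$ to form $\widetilde X'=(X',U)$ does not achieve $\sigma(\widetilde X')=\sigma(X)$. Since $U\perp X$ and $X'=g(X)$, you only get $\sigma(\widetilde X')=\sigma(X')\vee\sigma(U)$, which still fails to contain $\sigma(X)$ when $g$ is non-injective. Moreover $\widetilde X'$ no longer takes values in $\R^d$, so $(\widetilde X',Y')$ is not an admissible competitor in \eqref{eq:causalcond2}. The correct implementation of your intuition is to encode $X$ \emph{inside} an $\R^d$-valued $\delta$-perturbation of $X'$, and this is precisely what the paper does via Lemma \ref{lem:bartl}: set $X^\delta=\phi_\delta(X')+\psi_\delta(X)$, where $\phi_\delta$ rounds to the lattice $\delta\Z^d$ and $\psi_\delta:\R^d\to(0,\delta)^d$ is a Borel isomorphism, and $Y^\delta=Y'+(X^\delta-X')$. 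Then $X$ is recoverable from $X^\delta$, so $\sigma(X^\delta)=\sigma(X)$ and bi-causality is tautological; the martingale property $\E[Y^\delta\mid X^\delta]=X^\delta$ survives; and the cost is within $O(\delta)$ of \eqref{eq:n3}. Letting $\delta\downarrow 0$ yields $\mathrm{MPD}(\P,\gamma)\le\widetilde{\mathrm{MPD}}(\P,\gamma)$.

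For fix (b): this route is both circular and oriented the wrong way. It is circular because Proposition \ref{prop:causal} is \emph{deduced from} the present lemma in the paper, so it cannot be invoked here. It is oriented the wrong way because $\mathcal{AW}_\gamma\ge\mathcal{CW}_\gamma$ only gives $\mathrm{MPD}(\P,\gamma)\ge\inf_\Q\mathcal{CW}_\gamma(\Q,\P)^\gamma$, which is already part of \eqref{eq:tildeMPD} and contributes nothing toward the needed \emph{upper} bound on $\mathrm{MPD}$. Finally, ``causality from $\Q$ to $\P$'' is exactly the condition $\mathrm{Law}(Y'\mid X,X')=\mathrm{Law}(Y'\mid X')$ that you flagged as problematic in (iii); it is not rendered immediate by any choice of ``middle variable''.
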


\begin{proof}
We recall from \eqref{eq:tildeMPD} that
\begin{align*}
\widetilde{\mathrm{MPD}}(\P,\gamma)\le \mathrm{MPD}(\P,\gamma).
\end{align*}
It thus suffices to show the reverse inequality. We will do this by constructing a sequence of bi-causal couplings $(\pi^\delta)$, which achieve $\eqref{eq:n5}$ for $\delta\downarrow 0$: we construct $(X^\delta, Y^\delta)$ according to Lemma \ref{lem:bartl} below and set $\pi^\delta\lawis (X,Y, X^\delta, Y^\delta)$. It is now easy to show that $\pi^\delta$ achieves \eqref{eq:n4} for $\delta\downarrow 0$: we simply write 
\begin{align*}
    \E\big[|Y-Y^\delta|_2^\gamma+| X-X^\delta|_2^\gamma\big]
    &= \E\big[|Y-Y'+(Y'-Y^\delta)|_2^\gamma +|X-X'+(X'-X^\delta)|_2^\gamma\big].
\end{align*}
Taking $\delta\downarrow 0$, the $\le$-inequality in \eqref{eq:n4} then follows from \eqref{eq:n3} and $|Y^\delta-Y'|_2=|X^\delta-X'|_2\le d\delta$. 
It remains to note that $\pi^\delta$ is bi-causal, as $\sigma(X, X^\delta)=\sigma(X)=\sigma(X^\delta)$. This concludes the proof.
\end{proof}

We have used the following lemma, which is a slight extension of Lemma 3.1 in \citep{bartl2023sensitivity}:
\begin{lemma}\label{lem:bartl}
Let $(X,Y, X',Y')$ be as in \eqref{eq:X'Y'}. For each $\delta>0$ there exist random variables $X^\delta, Y^\delta$ such that the following hold:
\begin{itemize}
    \item $X^\delta$ is $\sigma(X)$-measurable and $Y^\delta$ is $\sigma(X,Y)$-measurable,
    \item $X$ is $\sigma\left(X^\delta\right)$-measurable,
    \item $|Y^\delta-Y'|_2 =|X^\delta-X'|_2 \leq d\delta$,
    \item $\E[Y^\delta| X, X^\delta]=\E[Y^\delta | X^\delta]=X^\delta.$
\end{itemize}
\end{lemma}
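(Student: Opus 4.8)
The plan is to obtain $X^\delta$ as an \emph{injective} Borel perturbation of $X'$ (so that $X$ can be recovered from $X^\delta$) and then to choose $Y^\delta$ so that the one-step martingale identity is restored exactly. Recall from \eqref{eq:X'Y'} that $X'=g(X)$ for the Borel map $g(x)=\tfrac12(x+m(x))$, where $m$ is a fixed Borel version of $x\mapsto\E[Y\mid X=x]$, and that \eqref{eq:n1} gives $\E[Y'\mid X]=X'$. Suppose we have produced a Borel map $\phi\colon\R^d\to\R^d$ with (i) $\|\phi(x)-g(x)\|_2\le\delta$ for all $x$ and (ii) $\phi$ injective on $\R^d$. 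Then I would set $X^\delta:=\phi(X)$ and $Y^\delta:=Y'+(X^\delta-X')$. The first bullet of Lemma~\ref{lem:bartl} is immediate since $\phi,g,m$ are Borel, and $Y^\delta-Y'=X^\delta-X'=\phi(X)-g(X)$ yields the third bullet with $\|Y^\delta-Y'\|_2=\|X^\delta-X'\|_2\le\delta\le d\delta$. Because $X^\delta$ is $\sigma(X)$-measurable we have $\sigma(X,X^\delta)=\sigma(X)$, so, using $\E[Y'\mid X]=X'$ and that $X^\delta-X'$ is $\sigma(X)$-measurable,
\[
\E[Y^\delta\mid X,X^\delta]=\E[Y^\delta\mid X]=\E[Y'\mid X]+(X^\delta-X')=X^\delta,
\]
whence $\E[Y^\delta\mid X^\delta]=\E[\E[Y^\delta\mid X]\mid X^\delta]=X^\delta$ by the tower property; this is the fourth bullet. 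Finally, an injective Borel map between standard Borel spaces has Borel image and Borel inverse on it (Lusin--Souslin), so $X=\phi^{-1}(X^\delta)$; hence $X$ is $\sigma(X^\delta)$-measurable, which is the second bullet, and in fact $\sigma(X)=\sigma(X^\delta)$.

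The only genuine difficulty is the construction of $\phi$. A naive choice such as $\phi=g$ fails (ii): $g$ may even be constant, e.g.\ if $m(x)=-x$. On the other hand, simply adding to $g$ a small globally injective function need not restore injectivity. I would resolve this with a scale-splitting device. Fix any Borel injection $\tau\colon\R^d\to(0,1)$ (one exists since $\R^d$ is an uncountable standard Borel space) and put
\[
\phi^1(x):=\delta\,\big\lfloor\tfrac{g^1(x)}{\delta}\big\rfloor+\delta\,\tau(x),\qquad \phi(x):=\big(\phi^1(x),g^2(x),\dots,g^d(x)\big).
\]
Then $\phi$ is Borel; writing $g^1(x)=(\lfloor g^1(x)/\delta\rfloor+f)\delta$ with $f\in[0,1)$ gives $|\phi^1(x)-g^1(x)|=\delta|\tau(x)-f|\le\delta$, hence $\|\phi(x)-g(x)\|_2\le\delta$, which is (i). For (ii), if $\phi(x_1)=\phi(x_2)$ then in particular $\phi^1(x_1)=\phi^1(x_2)$, i.e.\ (dividing by $\delta$) $n_1+u_1=n_2+u_2$ with $n_i:=\lfloor g^1(x_i)/\delta\rfloor\in\Z$ and $u_i:=\tau(x_i)\in(0,1)$; since $n_1-n_2=u_2-u_1\in(-1,1)$ is an integer, $n_1=n_2$ and $u_1=u_2$, so $\tau(x_1)=\tau(x_2)$ and injectivity of $\tau$ forces $x_1=x_2$.

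I expect this scale-splitting step to be the crux: it reconciles global injectivity with staying $\delta$-close to $g$, the point being that $g^1$ only fixes which length-$\delta$ interval $\phi^1(x)$ falls into, leaving the fractional part free to carry a faithful encoding of $x$ through $\tau$. The remainder is routine bookkeeping---one always works with fixed Borel versions of the conditional expectations, and letting $\delta\downarrow0$ is exactly how this lemma feeds into the proof of Lemma~\ref{lemma:tildeMPD2}. This argument is the promised slight extension of \citep[Lemma~3.1]{bartl2023sensitivity}: the new ingredient relative to that reference is precisely the injective perturbation, which is what delivers ``$X$ is $\sigma(X^\delta)$-measurable'' and hence the two-sided identity $\sigma(X)=\sigma(X^\delta)$ needed for bi-causality in Lemma~\ref{lemma:tildeMPD2}.
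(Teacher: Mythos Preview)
Your proof is correct and follows essentially the same scale-splitting idea as the paper: both encode $X$ exactly in the sub-$\delta$ ``fractional part'' of $X^\delta$ via a Borel injection while keeping the ``integer part'' close to $X'$, then set $Y^\delta=Y'+(X^\delta-X')$ and invoke \eqref{eq:n1}. The only cosmetic difference is that the paper perturbs all $d$ coordinates via $X^\delta=\phi_\delta(X')+\psi_\delta(X)$ with a Borel isomorphism $\psi_\delta\colon\R^d\to(0,\delta)^d$, whereas you perturb only the first coordinate---which incidentally gives you the sharper bound $\|X^\delta-X'\|_2\le\delta$ rather than $d\delta$.
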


\begin{proof}
For $\delta>0$ we consider the Borel mappings
$$
\begin{aligned}
& \psi_\delta: \mathbb{R}^d \rightarrow(0, \delta)^d \text { and } \\
& \phi_\delta: \mathbb{R}^d \rightarrow \delta \mathbb{Z}^d:=\{\delta k: k \in \mathbb{Z}^d\},
\end{aligned}
$$
where $\psi_\delta$ is a (Borel-)isomorphism and\footnote{The constraint $z\leq x$ below is imposed to guarantee uniqueness of the argmin.} $$\phi_\delta(x):=\text{argmin} \{|x-z|_2: z\in \delta\mathbb{Z}^d, z\le x\}.$$ We set
\begin{align*}
X^\delta&:=\phi_\delta\left(X'\right)+\psi_\delta\left(X\right),\\
Y^\delta&:= Y' +(X^\delta-X') .
\end{align*}
By definition,
$X^\delta$ is $\sigma(X)=\sigma(X,X')$-measurable, 
$X$ is $\sigma\left(X^\delta\right)$-measurable, $Y^\delta$ is $\sigma(X,Y)=\sigma(X, X^\delta, Y')$-measurable, 
and $$|Y^\delta-Y'|_2=|X^\delta-X'|_2  \leq 2d\delta.$$
The martingale property follows from 
\begin{align*}
\E[Y^\delta | X,X^\delta]=\E[Y^\delta | X] 
&= \E[Y'+(X^\delta-X') | X]\stackrel{\eqref{eq:n1}}{=} X' +(X^\delta-X')=X^\delta,
\end{align*}
recalling that $\sigma(X)=\sigma(X^\delta)\supseteq \sigma(X').$ This concludes the proof.
\end{proof}

Combining Lemmas \ref{lemma:tildeMPD} and \ref{lemma:tildeMPD2} yields Proposition \ref{prop:causal} and Corollary \ref{coro:simple}. Theorem \ref{prop:martingale} also follows immediately.
\end{proof}

While we have derived the closed-form formula for MPD in Theorem \ref{prop:martingale}, note that if $X$ has a continuous distribution under $\P_0$ and $\E_n$ denotes the empirical distribution, $\E_n[Y|X] = Y$ generally does not converge to $X$ as the sample size increases, as we argued at the beginning of Section \ref{subsection:smoothed_MPD}. To overcome this, we have applied a smoothing technique and introduced the SE-MPD (see Definition \ref{def:smooth}) in Section \ref{subsection:smoothed_MPD}. We have motivated this by Proposition \ref{prop:martingale}, which states that under mild conditions, $\P_0$ is martingality-preserving (see Definition \ref{def:martingality-preserving}). However, this is not always true, as shown by the following example:

\begin{example}\label{ex:smoothing}
    We provide a counterexample where $(X+\xi,Y+\xi)$ is a martingale but $(X,Y)$ is not, and $(X,Y,\xi)\lawis \P\otimes\P_\xi$ with $X,Y,\xi$ real-valued and absolutely continuous. For $f\in L^1(\R)$ we denote by $\sF f$ its Fourier transform. Recall two facts from Fourier analysis:
    \begin{itemize}
        \item If $f\in L^1(\R)$ is nonnegative and $\sF f\geq 0$, then $\sF f\in L^1(\R)$. This is Corollary 8.7 of \citep{chandrasekharan2012classical}.
        \item Fourier inversion: if $f,\sF f\in L^1(\R)$, then $\sF \sF f(x)=f(-x)$.
    \end{itemize}
Together with \citep{tuck2006positivity}, it follows from the above facts that there exists a function $f_\xi$ such that $f_\xi\geq 0$, $\int f_\xi=1$, and that $\sF f_\xi(x)=0$ for $|x|\geq 1$. For example, let $f_\xi=\sF \Phi$ where $\Phi(x)=\max(1-|x|,0)$. 
Similarly, using Fourier inversion, we may construct a function $\psi$ that is $O(|x|^{-3})$ at $|x|\to\infty$  and such that $\sF\psi(x)=0$ for $|x|\leq 1$. For example, this can be done by taking $\psi=\sF\Psi$ where $\Psi$ vanishes on $[-1,1]$ and satisfies $\Psi',\Psi'',\Psi'''\in L^1(\R)$. In particular, there exists a probability density $f_X$ of an integrable random variable on $\R$ such that $h:=\psi/f_X\in L^\infty(\R)$. Now suppose that $X$ and $\xi$ have marginal densities given by $f_X$ and $f_\xi$ constructed above, and consider any  coupling $(X,Y)$ satisfying that $\E[Y|X]=X+h(X)$. Since $h$ is bounded and not identically zero, $(X,Y)$ is integrable and is not a martingale. We check that $(X+\xi,Y+\xi)$ is a martingale. It follows from our construction that for each $x\in \R$ (for convenience we work with regular conditional probabilities),
\begin{align*}
    \E[Y+\xi\mid X+\xi=x]&=\int \frac{f_\xi(u)f_X(x-u)}{\int f_\xi(v)f_X(x-v)\d v}(u+\E[Y|X=x-u])\d u\\
    &=\frac{\int f_\xi(u)f_X(x-u)(x+h(x-u))\d u}{\int f_\xi(v)f_X(x-v)\d v}\\
    &=x+\frac{\int f_\xi(x-u)\psi(u)\d u}{\int f_\xi(v)f_X(x-v)\d v}.
\end{align*}
Note that the Fourier transform of the numerator $\int f_\xi(x-u)\psi(u)\d u$ is equal to $\sF f_\xi\sF \psi$, which is identically zero by our construction. Hence, $\E[Y+\xi\mid X+\xi=x]=x$ for all $x\in\R$, proving that $(X+\xi,Y+\xi)$ is a martingale.    
\end{example}

To characterize laws that are martingality-preserving, we presented Proposition \ref{prop:martingale2}. We now give a detailed proof of this result. 

\begin{proof}[Proof of Proposition \ref{prop:martingale2}]
We first note that the martingale condition 
$$\E[(Y+\xi)-(X+\xi)| X+\xi] =0$$
can be rewritten as
$$\E[(Y-X)| X+\xi] =0.$$
Now we define the functions $m,n:\R^d\to \R^d$ via $m(a):=\E[(Y-X)\mathds{1}_{\{X\leq a-\xi\}}]$ and $n(a):=\E[(Y-X)\mathds{1}_{\{X\leq a\}}]$ for $a\in\R^d$, where $\leq$ denotes the lexicographical order in $\R^d$. Using a monotone class argument, it then suffices to prove that $m(a)=0$ for all $a\in\R^d$ if and only if $n(a)=0$ for all $a\in\R^d$.
It follows from the triangle inequality that $m,n$ are uniformly bounded. Furthermore, using a change of variable,
$$m(a)=\int_{\R^d}n(a-x)f_\xi (x)\d x=\int_{\R^d}n(x)f_\xi (a-x)\d x.$$ In particular, it follows immediately that $n(a)=0$ for all $a\in\R^d$ implies $m(a)=0$ for all $a\in\R^d$. To see the converse we argue as follows:  since the Fourier transform of $f_\xi$ has no real zeros, we can apply Wiener's Tauberian theorem (see \citep[Theorem 8]{wiener1988fourier}) to conclude that the linear span of the set of translates $\{f_\xi(a-\cdot): a\in \R^d\}$ is dense in $L^1(\R^d)$.
In particular, $m(a)=0$ for all $a\in\R^d$  only if $n(x)=0$ for a.e.~$x\in\R^d$. Since $n$ is right-continuous we thus have $n(a)=0$ for all $a\in\R^d$. This concludes the proof.
\end{proof}

\begin{remark}
    With the same proof as above, the following also holds: suppose that $(\xi,\xi')$ is a martingale independent of $(X,Y)$, where $(X,Y)$ and $\xi$ satisfy the conditions in Proposition \ref{prop:martingale2}. Then $(X,Y)$ is a martingale if and only if $(X+\xi,Y+\xi')$ is a martingale. Our theory of smoothed MPD and martingality testing thus extends to smoothing by a martingale pair whose first marginal satisfies the conditions in Proposition \ref{prop:martingale2}. 
\end{remark}

An example of a martingality-preserving smoothing law that we use for this work is given by (\ref{eq:fdensity}). To confirm that it is indeed martingality-preserving, we note that by \citep[Theorem 2.2]{joarder1996characterization} we have
\begin{align*}
\int_{\R^d}f_\xi (x)e^{i\langle t, x \rangle}\d x = \int_0^\infty F_{0,1}\Big(\frac{d}{2}, \frac{(r|t|_2)^2}{4}\Big)\,H(\d r),      
\end{align*}
where $H$ is the cdf of $|\xi|_2$ and $F_{0,1}$ is the generalized hypergeometric function. As $(r|\cdot|_2)^2\ge 0$ we have $$F_{0,1}\Big(\frac{d}{2}, \frac{(r|t|_2)^2}{4}\Big)\ge 1>0.$$
Therefore, laws with a density of the form \eqref{eq:fdensity} are martingality-preserving. A standard scaling argument also yields that the densities of the form \eqref{eq:f general} also qualify.  

In addition to (\ref{eq:fdensity}), there are other smoothing measures that are martingality-preserving. We now give a few examples of measures $\P_\xi$, which satisfy the assumptions of Proposition \ref{prop:martingale2}.

\begin{example}\label{ex:1}
Assume that $f_\xi$ is of the form \begin{align}f_\xi(x_1,\dots,x_d)=\prod_{j=1}^d f_j(x_j)\label{eq:phi}\end{align}for some probability density functions $f_j,~1\leq j\leq d$ that satisfy $\int |x|f_j(x)\d x<\infty$ and either one of the following conditions holds:
\begin{enumerate}[(i)]
    \item $f_j$ is symmetric and strictly convex on $(0,\infty)$;\label{casei}
    \item $f_j$ is the density of an infinitely divisible distribution.\label{caseii}
\end{enumerate}
Then the assumptions of Proposition \ref{prop:martingale2} are satisfied. To see this, note that by \eqref{eq:phi} we have
$$\int_{\R^d}f_\xi (x)e^{i\langle t, x \rangle}\d x=\prod_{j=1}^d\int_\R f_j(x_j)e^{it_jx_j}\d x_j,\quad t=(t_1,\dots,t_d)\in\R^d.$$ Therefore, it suffices to show that the Fourier transform of each $\psi_j$ has no real zeros. Now (\ref{casei}) follows from \citep{tuck2006positivity}, while (\ref{caseii}) is a consequence of \citep[Lemma 7.5]{sato1999levy}. We also remark that a further sufficient condition for (\ref{caseii}) above is the complete monotonicity of the densities; see \citep[Theorem 51.6]{sato1999levy}.
\end{example}

\begin{example}\label{ex:smoothex}
    
We recall that a (centered) multivariate Student's $t$-distribution with a degree of freedom $\nu>0$ and scaling matrix $\Sigma$ (that is symmetric positive definite) has pdf given by 
$$C(\Sigma)\Big(1+\frac{\langle x\Sigma^{-1},x\rangle}{\nu}\Big)^{-(\nu+d)/2},$$
where $C(\Sigma)$ is an appropriate normalizing constant. The multivariate Student's $t$-distribution is known to be infinitely divisible (\citep{grigelionis2013student}), and hence the assumptions of Proposition \ref{prop:martingale2} are satisfied. That is, multivariate Student's $t$-distributions are martingality-preserving.
\end{example}

\subsection{Finite-sample rates for \texorpdfstring{$\gamma =1$}{} revisited}
\label{sec:tech_dev_finite_and_general}
In this section, we prove Proposition \ref{prop:finitesample}. The arguments here are also fundamental towards proving Theorem \ref{thm:limitd simple}. Before delving into the proofs, we offer a convenient expression for the smoothed MPD.
 Recall that $\E_n^{*\xi}$ denotes the expectation of  $(X,Y)\lawis \P_n^{*\xi}$.
Theorem \ref{prop:martingale} implies that
$$\mathrm{MPD}^{*\xi}(\P_n, \gamma)=2^{1-\gamma} \E_n^{*\xi}[|{ X -\E_n^{*\xi}[Y|X]}|_2^\gamma].$$
The inner expectation can be computed more explicitly, as the next lemma shows:

\begin{lemma}\label{lemma:computation}
Suppose that $(X,Y)\lawis {\P}_n^{*\xi}$. Then we have
\begin{align*}
\E_n^{*\xi}[Y-X|X]&=\frac{\sum_{i=1}^n(Y_i-X_i)f_\xi(X-X_i)}{\sum_{i=1}^nf_\xi(X-X_i)},
\end{align*}
where we recall that $f_\xi$ is the density of $\P_\xi$.
\end{lemma}

\begin{proof}
We will prove the claim by checking that for each Borel set $S$,
\begin{align}\label{eq:cond_exp_def}
\E_n^{*\xi}[(Y-X)\mathds{1}_{\{X\in S\}}] = \E_n^{*\xi}\bigg[\frac{\sum_{i=1}^n(Y_i-X_i)f_\xi(X-X_i)}{\sum_{i=1}^nf_\xi(X-X_i)}\mathds{1}_{\{X\in S\}} \bigg].
\end{align}
To check this, we first recall that ${\P}_n =1/n \sum_{i=1}^n \delta_{(X_i,Y_i)}$ is the empirical measure of the observations, and that $\xi$ has density $f_\xi$. Furthermore,  assume for notational simplicity that the observations $X_1, \dots, X_n$ are pairwise distinct. Recall that we fixed a probability space $(\Omega, \F, \mu)$, on which all random variables are defined. By the law of total probability we have for any Borel sets $S,T\subseteq \R^d$ and $(X,Y, \xi)\lawis \P_n\otimes \P_\xi$,
\begin{align*}
&\hspace{0.5cm} \P^{*\xi}_n\left(S\times T\right)\\
&= \mu \left(X+\xi \in S, Y+\xi\in T\right)\\
&= \sum_{i=1}^n \mu \left(X+\xi \in S, Y+\xi\in T \mid (X,Y)=(X_i,Y_i)\right) \mu\left((X,Y)=(X_i,Y_i)\right)  \\
&= \frac{1}{n}  \sum_{i=1}^n \mu \left(\xi \in S-X_i, \xi\in T-Y_i\right),
\end{align*}
where the last equality follows from independence of $(X,Y)$ and $\xi$. Following the same arguments,
\begin{align*}
\E_n^{*\xi}[(Y-X)\mathds{1}_{\{X\in S\}}]  &= \frac{1}{n} \sum_{i=1}^n \E[(Y+\xi-(X+\xi))\mathds{1}_{\{X+\xi\in S\}} | (X,Y)=(X_i,Y_i)]\\
&=\frac{1}{n} \sum_{i=1}^n \int_{S} (Y_i-X_i)  f_\xi(x-X_i)\,\d x.
\end{align*}
On the other hand,
\begin{align*}
\P_n^{*\xi}\left(S\right)= \frac{1}{n}  \sum_{j=1}^n \mu \left(\xi \in S-X_j\right)=\frac{1}{n}  \sum_{j=1}^n \int_S f_\xi(x-X_j)\,\d x,
\end{align*}
and thus
\begin{align*}
\E_n^{*\xi}[(Y-X)\mathds{1}_{\{X\in S\}}]
&= \frac{1}{n} \sum_{i=1}^n \int_S (Y_i-X_i)  f_\xi(x-X_i)\,\d x \\
&=\frac{1}{n} \sum_{j=1}^n  \int_S \frac{\sum_{i=1}^n(Y_i-X_i)f_\xi(x-X_i)}{\sum_{i=1}^nf_\xi(x-X_i)} f_\xi(x-X_j)\,\d x\\
&=\E_n^{*\xi}\bigg[\frac{\sum_{i=1}^n(Y_i-X_i)f_\xi(X-X_i)}{\sum_{i=1}^nf_\xi(X-X_i)}\mathds{1}_{\{X\in S\}} \bigg].
\end{align*}
This proves \eqref{eq:cond_exp_def}.
\end{proof}

We will also use the following discrepancy bound to control the fluctuation of empirical processes:
\begin{lemma}[Corollary 14.1.2 of \citep{talagrand2022upper}]\label{lemma:empiricalbound}There exists a universal constant $L>0$ such that the following holds: consider a measure space $(\Omega,\nu)$ and an i.i.d.~sequence $\{Z_n\}_{n\in \N}$ sampled from $\nu$. Let $\F\subseteq L^2(\nu)$ be a countable set with $0\in\F$. For $\S\subseteq\F$ define the function $h_\S$ via
$$h_\S(\omega):=\sup_{f,f'\in \S}|f(\omega)-f'(\omega)|.$$
Then
$$\E\Big[\sup_{f\in\F}\Big|\sum_{i=1}^n(f(Z_i)-\E[f(Z)])\Big|\Big]\leq L\sqrt{n}\inf\sup_{f\in\F}\sum_{l\geq 0}2^{l/2}\n{h_{A_l(f)}}_2,$$
where $Z\lawis \nu$ and the infimum is taken among all sequences of refining partitions $\{\A_l\}$ of $\F$ such that $|\A_0|=1,~|\A_l|\leq N_l:=2^{2^l}$, and $\A_l(f)$ is the set in the partition $\A_l$ that contains $f$.
\end{lemma}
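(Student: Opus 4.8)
The plan is to recognize Lemma~\ref{lemma:empiricalbound} as a form of Talagrand's generic chaining bound for empirical processes, and to obtain it in four moves: symmetrization, conditioning to expose sub-Gaussian increments, chaining against the sample-dependent $\ell^2$-metric using the prescribed partitions, and finally a comparison of the resulting random chaining functional with its deterministic $L^2(\nu)$ surrogate.

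First I would symmetrize. By the standard symmetrization inequality,
$$\E\left[\sup_{f\in\F}\left|\sum_{i=1}^n(f(Z_i)-\E[f(Z)])\right|\right]\le 2\,\E\left[\sup_{f\in\F}\left|\sum_{i=1}^n\ee_if(Z_i)\right|\right],$$
where $\{\ee_i\}_{i\le n}$ are i.i.d.\ Rademacher signs independent of $\{Z_i\}$, so it suffices to bound the Rademacher process $X_f:=\sum_{i=1}^n\ee_if(Z_i)$, $f\in\F$. Conditionally on $\{Z_i\}$, Hoeffding's inequality shows that $X_f$ has sub-Gaussian increments with respect to the random metric
$$d_Z(f,f'):=\Big(\sum_{i=1}^n(f(Z_i)-f'(Z_i))^2\Big)^{1/2},$$
in the sense that $\P_\ee(|X_f-X_{f'}|>t)\le 2\exp(-t^2/(2\,d_Z(f,f')^2))$.

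Next I would run the generic chaining with a given admissible sequence $\{\A_l\}$ as in the statement. Choosing a distinguished point $\pi_l(f)\in\A_l(f)$ for each level, with $\pi_0\equiv 0$ (legitimate since $0\in\F$ and $\A_0=\{\F\}$), telescoping $X_f-X_0$ along the chain from $0$ to $f$ and union-bounding the level-$l$ jump over the at most $N_{l+1}=2^{2^{l+1}}$ possible values of $(\pi_l(f),\pi_{l-1}(f))$ against the sub-Gaussian tail yields the chaining bound
$$\E_\ee\left[\sup_{f\in\F}|X_f|\right]\le L\,\sup_{f\in\F}\sum_{l\ge 0}2^{l/2}\,\mathrm{diam}_{d_Z}(\A_l(f)).$$
Since $(f(Z_i)-f'(Z_i))^2\le h_A(Z_i)^2$ whenever $f,f'\in A$, the level-$l$ diameter is bounded by $(\sum_{i=1}^n h_{\A_l(f)}(Z_i)^2)^{1/2}$, so that, writing $S(\{Z_i\}):=\sup_{f\in\F}\sum_{l\ge 0}2^{l/2}(\sum_{i=1}^n h_{\A_l(f)}(Z_i)^2)^{1/2}$, we obtain $\E_\ee[\sup_f|X_f|]\le L\,S(\{Z_i\})$.

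The final and genuinely hard step is to take the expectation over $\{Z_i\}$ while pulling the supremum over $f$ out, i.e.\ to show $\E_Z[S(\{Z_i\})]\le L\sqrt n\,\sup_{f}\sum_{l\ge 0}2^{l/2}\n{h_{\A_l(f)}}_2$. For a single fixed $f$ this is immediate: Jensen's inequality for the concave square root together with $\E[\sum_{i=1}^n h_A(Z_i)^2]=n\n{h_A}_2^2$ gives $\E_Z[(\sum_i h_{\A_l(f)}(Z_i)^2)^{1/2}]\le\sqrt n\,\n{h_{\A_l(f)}}_2$, and one sums over $l$. The difficulty is the \emph{uniformity} over $f$: the random sums $\sum_i h_A(Z_i)^2$ fluctuate around their means and may, for some cells, be far larger. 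This is where Talagrand's partition surgery is needed — at each level one subdivides every cell $A\in\A_l$ into the part on which $\sum_i h_A(Z_i)^2\lesssim n\n{h_A}_2^2$ and its complement, passes to the refined (still admissible, up to an $O(1)$ shift of the index $l$, since the cardinalities only inflate by a bounded factor) sequence, and absorbs the rare ``bad'' contributions via a union bound over the $N_l=2^{2^l}$ cells against the tails of $\sum_i h_A(Z_i)^2$; reassembling gives the displayed inequality. I would invoke \citep[Chapter~14]{talagrand2022upper} for this portion rather than redo it. Combining the four steps and then taking the infimum over admissible sequences $\{\A_l\}$ — the only quantity still free on the right-hand side — produces the asserted bound with a universal constant $L$.
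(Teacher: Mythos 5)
The paper cites this result as Corollary 14.1.2 of \citep{talagrand2022upper} and offers no in-text proof, so the comparison is purely against your proposal on its own merits.

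Your first three moves are correct and standard: symmetrization to a Rademacher process, Hoeffding's inequality conditionally on $\{Z_i\}$ to obtain sub-Gaussian increments for the random metric $d_Z$, and generic chaining against $d_Z$ to get $\E_\ee\big[\sup_f |X_f|\big]\le L\,S(\{Z_i\})$ with $S(\{Z_i\})=\sup_f\sum_{l}2^{l/2}\big(\sum_i h_{\A_l(f)}(Z_i)^2\big)^{1/2}$. You also correctly locate the real difficulty in passing from $\E_Z[S(\{Z_i\})]$ to the deterministic functional $\sqrt n\,\sup_f\sum_l 2^{l/2}\|h_{\A_l(f)}\|_2$.

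The gap is in your sketch of that last step, and it is a genuine one. You propose splitting each cell $A\in\A_l$ according to whether $\sum_i h_A(Z_i)^2\lesssim n\|h_A\|_2^2$ and handling the ``bad'' events by a union bound over the $N_l=2^{2^l}$ cells against the tails of $\sum_i h_A(Z_i)^2$. But under the hypothesis $\F\subseteq L^2(\nu)$ the envelope $h_A$ is only in $L^2$, so $h_A(Z_i)^2$ is merely integrable and the only available tail bound is Markov: $\P\big(\sum_i h_A(Z_i)^2> Cn\|h_A\|_2^2\big)\le 1/C$. A union bound over $2^{2^l}$ cells at level $l$ then forces $C\gtrsim 2^{2^l}$, a doubly-exponential loss that destroys the bound; with only second moments on $h_A$ there is no concentration mechanism that makes the union bound affordable. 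This is why the quantitative Ossiander/bracketing maximal inequality is not proved by chaining against the empirical $\ell^2$-metric and then comparing random to deterministic diameters. The standard route (see for instance the bracketing maximal inequalities in van der Vaart--Wellner, which parallel Talagrand's argument) chains with Bernstein/Bennett tails directly: one truncates each cell envelope $h_A$ at a level-dependent threshold $a_l$, controls the increment over the bounded part via Bernstein's inequality, with the $L^\infty$ (i.e.\ $a_l$-weighted) contribution absorbed by tuning $a_l$ against $2^{l/2}$ and the cardinality $N_l$, and controls the unbounded part deterministically through the $L^2$-tail $\E[h_A\mathds{1}_{\{h_A>a_l\}}]\le \|h_A\|_2^2/a_l$. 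Deferring the core estimate to \citep{talagrand2022upper} is perfectly reasonable, but the sublemma you would be citing is not the ``random-to-deterministic comparison'' you describe; it is the truncated Bernstein chaining, so your reduction points at a step the source does not take and that does not hold in the form you sketch.
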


We make a few conventions to shorten notation. For $k\in\Z\setminus\{0\}$, define the interval $I_k$ to be $[k-1,k]$ if $k<0$ and $[k,k+1]$ if $k> 0$.  For $k=(k^1, \dots, k^d) \in \Z^d$ we define the multi-interval \begin{align}
    I_k=I_{k^1}\times I_{k^2}\times \cdots I_{k^d}.\label{eq:Ik}
\end{align}
Next we define the class of functions $\F_k=\{f_a\}_{a\in I_k}\cup\{0\}$, where
\begin{align}
    f_a(x,y):=(y-x)f_\xi(a-x),\qquad x,y\in\R^d.\label{eq:fk}
\end{align} Denote by $f^j_a$ the $j$-th coordinate of $f_a$. We also define  
\begin{align}
     \xi_n(x):=\frac{1}{n}\sum_{i=1}^n(Y_i-X_i)f_\xi\left(x-X_i\right),\qquad x\in\R^d.\label{eq:xindef}
 \end{align} 
Furthermore we write $$p_n(x):= \frac{1}{n} \sum_{i=1}^n f_\xi(x-X_i).$$We will write $\E_n^{*\xi}$ for the expectation of  $(X,Y)\lawis \P_n^{*\xi}$.  Combining Theorem \ref{prop:martingale} and Lemma \ref{lemma:computation} leads to the following representation of the MPD:
\begin{align}
\begin{split}
    \mpd^{*\xi}(\P_n,\gamma)&=2^{1-\gamma}\E_n^{*\xi}\left[\nn{X-\E_n^{*\xi}[Y|X]}_2^\gamma\right]\\
&=2^{1-\gamma}\E_n^{*\xi}\left[\left|\frac{\sum_{i=1}^n(Y_i-X_i)f_\xi(X-X_i)}{\sum_{i=1}^nf_\xi(X-X_i)}\right|_2^\gamma\right]\\
&=2^{1-\gamma}\int p_n(x)\left|\frac{\xi_n(x)}{p_n(x)}\right|_2^\gamma\d x\\
&=2^{1-\gamma}\int \frac{|\xi_n(x)|_2^\gamma}{p_n(x)^{\gamma-1}}\,\d x.
\end{split}\label{eq:En expression}
\end{align}
Note that since $(X,Y)\lawis \P$ is a martingale pair we have 
\begin{align}
    \xi_n(x)\to \E\left[(Y-X)f_\xi\left(x-X\right)\right]=0\in\R^d \qquad\mu\text{-a.s.}\label{eq:slln}
\end{align}
for each $x\in\R$ by the strong law of large numbers. From \eqref{eq:En expression}, we may deduce the proof of Proposition \ref{prop:2k}.

\begin{proof}[Proof of Proposition \ref{prop:2k}] 
    Recall that $f_{\xi,\rho,\sigma}$ denotes the density of $\sigma\xi$ for $\xi$ having density $f_{\xi,\rho}$. It follows that $f_{\xi,\rho,\sigma}(x)=\sigma^{-1}f_{\xi,\rho}(x/\sigma)$ for any $\sigma>0$. By \eqref{eq:En expression} applied with $\gamma=1$ and a change of variable, we have
    \begin{align*}
        \sqrt{n}\,\mpd^{*\xi}_{\sigma_n}(\P_n,1)&=\int \Big|\frac{1}{\sqrt{n}}\sum_{i=1}^n(Y_i-X_i)f_{\xi,\rho,\sigma_n}(x-X_i)\Big| \d x\\
        &=\int \Big|\frac{1}{\sqrt{n}}\sum_{i=1}^n(Y_i-X_i)f_{\xi,\rho}(u-\frac{X_i}{\sigma_n})\Big| \d u.
    \end{align*}
Suppose first that the alternative holds. By Taylor's Theorem, for each $k\in\N$, we may write
\begin{align*}
    f_{\xi,\rho}(u-\frac{X_i}{\sigma_n})&=\sum_{j=0}^k\frac{f_{\xi,\rho}^{(j)}(u)}{j!}(-\frac{X_i}{\sigma_n})^j+O\Big(|\frac{X_i}{\sigma_n}|^{k+1}\sup_{u-1\leq t\leq u+1}|f_{\xi,\rho}^{(k+1)}(t)|\Big)\\
    &=\sum_{j=0}^k\frac{f_{\xi,\rho}^{(j)}(u)}{j!}(-\frac{X_i}{\sigma_n})^j+O((\sigma_n)^{-(k+1)}).%\label{eq:taylor:phi}
\end{align*}
Therefore, we have
\begin{align*}
    \sqrt{n}\,\mpd^{*\xi}_{\sigma_n}(\P_n,1)
    &=\int \Big|\frac{1}{\sqrt{n}}\sum_{i=1}^n(Y_i-X_i)\sum_{j=0}^k\frac{f_{\xi,\rho}^{(j)}(u)}{j!}(-\frac{X_i}{\sigma_n})^j\Big| \d u+O(\sqrt{n}(\sigma_n)^{-(k+1)}).
    %&=\int|\frac{1}{\sqrt{n}}\sum_{i=1}^n(Y_i-X_i)f_{\xi,\rho}(u)|\d u+O\left(\int\frac{1}{\sigma_n\sqrt{n}}\sum_{i=1}^n|(Y_i-X_i)X_i|\sup_{u-1\leq t\leq u+1}|f_{\xi,\rho}'(t)|\d u\right)\\
    %&=|\frac{1}{\sqrt{n}}\sum_{i=1}^n(Y_i-X_i)|\int|f_{\xi,\rho}(u)|\d u+o(1).
\end{align*}
By the central limit theorem, for each fixed $j\in\{0,\dots,k-1\}$, under both the null and the alternative,
$$\Big|\frac{1}{\sqrt{n}}\sum_{i=1}^n(Y_i-X_i)\frac{f_{\xi,\rho}^{(j)}(u)}{j!}(-\frac{X_i}{\sigma_n})^j\Big|=O((\sigma_n)^{-j}).$$
It then follows from the triangle inequality that
$$\sqrt{n}\,\mpd^{*\xi}_{\sigma_n}(\P_n,1)=\frac{\sqrt{n}(\sigma_n)^{-k}}{k!} \Big|\frac{1}{{n}}\sum_{i=1}^n(Y_i-X_i)X_i^k\Big|\int f_{\xi,\rho}^{(k)}(u)\d u+O(1+\sqrt{n}(\sigma_n)^{-(k+1)}).$$
We then conclude from the strong law of large numbers that under the alternative hypothesis and the assumptions on the sequence $(\sigma_n)_{n\in\N}$, \eqref{eq:alt} holds, and that if $n^{1/(2k)}=o(\sigma_n)$, the sequence $\{\sqrt{n}\,\mpd^{*\xi}_{\sigma_n}(\P_n,1)\}_{n\in\N}$ is tight under the alternative hypothesis.

On the other hand, under the null hypothesis, a direct computation of $\E[\sqrt{n}\,\mpd^{*\xi}_{\sigma_n}(\P_n,1)]$ using the central limit theorem and the boundedness of $(X,Y)$ yields that 
$$\E[\sqrt{n}\,\mpd^{*\xi}_{\sigma_n}(\P_n,1)]=O\bigg(\sum_{j=0}^\infty(\sigma_n)^{-j}\bigg)=O(1),$$
where we used the assumption $\sigma_n\to\infty$. This implies that $\{\sqrt{n}\,\mpd^{*\xi}_{\sigma_n}(\P_n,1)\}_{n\in\N}$ is tight.
\end{proof}

In view of \eqref{eq:En expression}, it is natural to investigate upper bounds for $\E[\nn{\xi_n(x)}_2]$ in order to determine finite-sample rates of  $\E[\mpd^{*\xi}(\P_n,1)]$. This is the goal of the following lemma, which also plays a crucial role when proving Theorem \ref{thm:limitd}.

\begin{lemma}
    \label{lemma:Esup when gamma=1} 
    Assume $(X,Y)\in L^{2p}$ for some $p>1$ and that the density of $\xi$ is given by \eqref{eq:f general}, i.e.,
    $$f_{\xi,\rho,\sigma}(x)=\sigma^{-d}C_\rho\left(\frac{\nn{x}_2}{\sigma}+1\right)^{-\rho}.$$
    Then we have for all $k\in\Z^d$ satisfying $\nn{k}_2\geq \sigma$,\footnote{The supremum over $x\in I_k$ is understood as an essential supremum, which is equivalent to the supremum over all rationals $x$ in $I_k$ given that $\xi_n(x)$ is a.s.~continuous in $x\in\R^d$, a consequence of the continuity of $f_{\xi,\rho}$ we impose in \eqref{eq:f general}. The same applies for the suprema over $f_a\in\F_k$ below, etc. Therefore, we may apply Lemma \ref{lemma:empiricalbound} directly by constructing the partitions for $\F_k$. }
    \begin{align*}
        \E\big[\sup_{x\in I_k} |\xi_n(x)|_2 \big]&\leq Ln^{-1/2}\frac{dC_\rho}{\sigma^{d+1}}\Bigg((\sqrt{d}\rho+\sigma)\E[\nn{X}_2^{2p}]^{1/(2q)}(\frac{\nn{k}_2}{2})^{-(p-1)}\\
    &\hspace{3cm}+(\sqrt{d}\rho+\nn{k}_2)(\frac{\nn{k}_2}{2\sigma})^{-\rho-1}\Bigg)\sum_{j=1}^d\n{X^j-Y^j}_{2p}.
    \end{align*}
   In particular, for all $(X,Y)\in L^{2p}$  and $\nn{k}_2\geq\sigma$ there exists $C>0$ independent of $k$ and $n$ such that
    $$\E\big[\sup_{x\in I_k} |\xi_n(x)|_2 \big]\leq Cn^{-1/2}(|k|_2^{1-p}+|k|_2^{-\rho}).$$
\end{lemma}

\begin{proof}
Let us fix $k\in \Z^d$, $|k|_2\geq\sigma$. We aim to bound $\E\left[\sup_{x\in I_k}|\xi_n(x)|_2\right]$ using Lemma \ref{lemma:empiricalbound}. 
 Writing $\xi_n(x)= (\xi^1_n(x), \dots, \xi^d_n(x))$ we have 
\begin{align}
|\xi_n(x)|_2\le \sum_{j=1}^d |\xi_n^j(x)|\label{eq:j}
\end{align}
by the triangle inequality. It is thus sufficient to fix $j\in \{1, \dots, d\}$ and bound $$\E\big[\sup_{x\in I_k} |\xi_n^j(x)|\big]=\frac{1}{n}\E\big[\sup_{a\in I_k}|\sum_{i=1}^nf_a^j(X_i,Y_i)|\big]=\frac{1}{n}\E\big[\sup_{f_a\in \F_k}|\sum_{i=1}^nf_a^j(X_i,Y_i)|\big],$$
where we recall the class of functions $\F_k$ defined in \eqref{eq:fk} above.

To this end we take $\Omega=\R^d\times \R^d$ and let $\nu$ be the conditional distribution of $(X,Y)$ given $(X_1, Y_1, \dots, X_n,Y_n)$ in Lemma \ref{lemma:empiricalbound}. 
In order to apply Lemma \ref{lemma:empiricalbound} we first have to construct the partitions $\A_l,\,l\geq 0$ of $\F_k$. We proceed as follows: set $\A_0:=\{\F_k\}$.  For $l\geq 1$, we divide the box $I_k$ uniformly in the $d$ directions into $\max(1,(\floor{N_l^{1/d}}-1)^d)$ many smaller boxes $I_{k,s}$ indexed by $\{s\in \N^d: 1\leq s^j<\max(2,\floor{N_l^{1/d}})$ for $1\leq j\leq d\}$, where we also recall $N_l=2^{2^l}$.  Define 
$$\A_l:=\{\{0\}\}\cup\Bigg(\bigcup_{s\in \N^d: 1\leq s<\max(2,\floor{N_l^{1/d}})}\left\{\{f_a\}_{a\in I_{k,s}}\right\}\Bigg).$$
It follows that $|\A_l|\leq N_l$ for $l\geq 0$. 
For $d$-dimensional vectors $A,B\in \R^d$ with $A<B$ we identify $\{f_a\}_{a\in[A,B]}$ with $[A,B]:=[A_1,B_1]\times [A_2,B_2]\times \cdots\times [A_d, B_d]$ in the following. Recall \eqref{eq:f general}, from which we compute
\begin{align}
    \nn{\nabla f_{\xi,\rho,\sigma}(x)}_2=\rho\sigma^{-(d+1)}C_\rho \left(\nn{\frac{x}{\sigma}}_2+1\right)^{-(\rho+1)}.\label{eq:gradient of f}
\end{align}Consider $A,B\in I_k$. For $a\in [A,B]$ and  $\nn{x}_2\leq\nn{k}_2/2$, we have $\nn{a-x}_2\geq\nn{a}_2-\nn{x}_2\geq \nn{k}_2/2$. 
By the mean-value theorem, using that $f_{\xi,\rho,\sigma}$ is radially symmetric, we have 
\begin{align}\label{eq:h}
\begin{split}&\hspace{0.5cm} \sup_{A\le a\le b\le B} |f_a^j(x,y)-f_b^j(x,y)|\\
&= |x^j-y^j| \sup_{A\leq a<b\leq B}|f_{\xi,\rho,\sigma}(a-x)-f_{\xi,\rho,\sigma}(b-x)|\\
&\leq\begin{cases}
 |A-B|_2 |x^j-y^j|  |\nabla f_{\xi,\rho,\sigma}(k/2)|_2&\text{  if } |x|_2 \le \nn{k}_2/2,\\
  |A-B|_2 |x^j-y^j|\sup_{{x\in \R^d}} \nn{\nabla f_{\xi,\rho,\sigma}(x)}_2&\text{ otherwise,}
\end{cases}\\
&\leq\begin{cases}
 |A-B|_2 |x^j-y^j|  \rho\sigma^{-(d+1)}C_\rho \left(\nn{\frac{k}{2\sigma}}_2+1\right)^{-(\rho+1)} &\text{  if } |x|_2 \le \nn{k}_2/2,\\
 |A-B|_2 \rho\sigma^{-(d+1)}C_\rho|x^j-y^j| &\text{ otherwise,}
\end{cases}\\
&=:|A-B|_2\tilde{h}^j_k(x,y).
\end{split}
\end{align}
In addition,
\begin{align}\label{eq:h2}
\begin{split}
\sup_{a\in I_k}|f^j_a(x,y)|&=|x^j-y^j| \sup_{a\in I_k}|f_{\xi,\rho,\sigma}(a-x)|\\
&\leq\begin{cases}|x^j-y^j| |f_{\xi,\rho,\sigma}(|k|_2/2)|&\text{ if } |x|_2 \le |k|_2/2,\\
C_\rho\sigma^{-d} |x^j-y^j| &\text{ otherwise,}
\end{cases}\\
&=:\tilde{g}_k^j(x,y).
\end{split}
\end{align}
Following Lemma \ref{lemma:empiricalbound} we write $h_{\S}^j:=\sup_{f,g\in \S}|f^j-g^j|$ for a class $\S$ of functions. Therefore, for $A=k$ and $B=k+1$, and recalling that $\F_k=\{f_a\}_{a\in I_k}\cup\{0\}$, we obtain
\begin{align*}
\|h_{\F_k}^j\|_2&=\big\|\sup_{a,b\in I_k}|f_a^j(X,Y)-f_b^j(X,Y)|\vee (\sup_{a\in I_k}|f^j_a(X,Y)|)\big\|_2\leq 2\|\tilde{g}_k^j(X,Y)\|_2.
\end{align*} 
As $\{0\}\in \mathcal{A}_l$ for all $l\ge 1$, we thus have for $f=0$,
$$\sum_{l\geq 0}2^{l/2}\|h^j_{A_l(f)}\|_2=\sum_{l\geq 0}2^{l/2}\|h^j_{A_l(0)}\|_2=\|h^j_{\F_k}\|_2\leq 2\|\tilde{g}_k^j(X,Y)\|_2.$$
For $f\neq 0$ we again use \eqref{eq:h}, this time with adjusted bounds $A\le B$ such that $B-A=(\max(1,\floor{N_l^{1/d}}-1))^{-1}\cdot\bf{1}$, to obtain  
$$\sum_{l\geq 0 }2^{l/2}\|h^j_{A_l(f)}\|_2\leq \sum_{l\geq 0}2^{l/2}(\max(1,\floor{N_l^{1/d}}-1))^{-1}\|\tilde{h}_k^j(X,Y)\|_2.$$
Note that 
\begin{align*}
    \sum_{l\geq 0}2^{l/2} (\max(1,\floor{N_l^{1/d}}-1))^{-1}&\leq \sum_{l\leq \log_2 d}2^{l/2}+L\sum_{l>\log_2 d}2^{l/2-2^l/d}\\
    &\leq L2^{(\log_2 d)/2}+L\sum_{l\geq 0}2^{(l+\log_2 d)/2-2^l}\\
    &\leq L\sqrt{d}+L\sqrt{d}\sum_{l\geq 0}2^{l/2-2^l}\leq L\sqrt{d}.
\end{align*}
This yields
$$\sum_{l\geq 0 }2^{l/2}\|h_{A_l(f)}\|_2\leq L\sqrt{d}\|\tilde{h}_k^j(X,Y)\|_2.$$ 
It now follows from \eqref{eq:slln} and Lemma \ref{lemma:empiricalbound} that  for $|k|_2>\sigma$, 
\begin{align}
\E\big[\sup_{x\in I_k} |\xi_n^j(x)| \big]&= \frac{1}{n}\E\left[\sup_{f\in\F_k}\Big|\sum_{i=1}^n(f^j(X_i,Y_i)-\E[f^j(X,Y)])\Big|\right]\nonumber \\
&\leq Ln^{-1/2}\left(\sqrt{d}\|\tilde{h}_k^j(X,Y)\|_2+\|\tilde{g}_k^j(X,Y)\|_2\right).\label{eq:xinub}\end{align}
Combining with \eqref{eq:j} yields that  for $|k|_2>\sigma$, 
\begin{align}
    \E\big[\sup_{x\in I_k} |\xi_n(x)|_2 \big]\leq Ln^{-1/2}\sum_{j=1}^d\left(\sqrt{d}\|\tilde{h}_k^j(X,Y)\|_2+\|\tilde{g}_k^j(X,Y)\|_2\right).\label{eq:inse}
\end{align}
It then remains to bound $\|\tilde{h}_k^j(X,Y)\|_2$ and $\|\tilde{g}_k^j(X,Y)\|_2$.

Denote by $q$ the conjugate of $p>1$, so that $p/q=p-1$. Recalling the definition of $\tilde g_k^j$ given in \eqref{eq:h2}, an application of Markov's inequality leads to 
\begin{align*}
\E\left[\frac{\tilde{g}^j_k(X,Y)^{2q}}{|X^j-Y^j|^{2q}}\right]^{1/(2q)}&\leq \left(\P_0(\nn{X}_2\geq\nn{k}_2/2)(\frac{C_\rho}{\sigma^d})^{2q}+|f_{\xi,\rho,\sigma}(k/2)|^{2q}\right)^{1/(2q)}\\
&\leq (\frac{C_\rho}{\sigma^d})\E[\nn{X}_2^{2p}]^{1/(2q)}(\frac{\nn{k}_2}{2})^{-p/q}+|f_{\xi,\rho,\sigma}(k/2)|\\
&\leq \frac{C_\rho}{\sigma^d}\left(\E[\nn{X}_2^{2p}]^{1/(2q)}(\frac{\nn{k}_2}{2})^{-(p-1)}+(\frac{\nn{k}_2}{2\sigma})^{-\rho}\right).
\end{align*}
By H\"{o}lder's inequality for $1/2=1/(2p)+1/(2q)$ and since $\nn{X}_2\in L^{2p}$ we thus conclude from the above that
\begin{align}
\E[\tilde{g}_k^j(X,Y)^2]^{1/2}&\leq \n{X^j-Y^j}_{2p}\E\left[\frac{\tilde{g}^j_k(X,Y)^{2q}}{|X^j-Y^j|^{2q}}\right]^{1/(2q)}\nonumber\\
&\leq \n{X^j-Y^j}_{2p}\frac{C_\rho}{\sigma^d}\left(\E[\nn{X}_2^{2p}]^{1/(2q)}(\frac{\nn{k}_2}{2})^{-(p-1)}+(\frac{\nn{k}_2}{2\sigma})^{-\rho}\right).\label{eq:h3}
\end{align}
Similarly,
$$\E[\tilde{h}_k^j(X,Y)^2]^{1/2}\leq \n{X^j-Y^j}_{2p}\frac{\rho C_\rho}{\sigma^{d+1}}\left(\E[\nn{X}_2^{2p}]^{1/(2q)}(\frac{\nn{k}_2}{2})^{-(p-1)}+(\frac{\nn{k}_2}{2\sigma})^{-\rho-1}\right).$$
Inserting the above estimates into \eqref{eq:inse} leads to 
 \begin{align*}
        \E\big[\sup_{x\in I_k} |\xi_n(x)|_2 \big]&\leq Ln^{-1/2}\sum_{j=1}^d\n{X^j-Y^j}_{2p}\\
        &\hspace{0.5cm}\cdot\Bigg(\frac{\sqrt{d}\rho C_\rho}{\sigma^{d+1}}\left(\E[\nn{X}_2^{2p}]^{1/(2q)}(\frac{\nn{k}_2}{2})^{-(p-1)}+(\frac{\nn{k}_2}{2\sigma})^{-\rho-1}\right)\\
        &\hspace{2cm}+\frac{C_\rho}{\sigma^d}\left(\E[\nn{X}_2^{2p}]^{1/(2q)}(\frac{\nn{k}_2}{2})^{-(p-1)}+(\frac{\nn{k}_2}{2\sigma})^{-\rho}\right)\Bigg).
    \end{align*}
Rearranging the terms completes the proof. 
\end{proof}

\begin{proof}[Proof of Proposition \ref{prop:finitesample}]
Summing the estimate in Lemma \ref{lemma:Esup when gamma=1} over $k\in\Z^d$ satisfying $|k|_2\geq\sigma$, we obtain the contribution
\begin{align}\label{eq:tt1}
    \begin{split}
    &\hspace{0.5cm}\sum_{|k|_2\geq\sigma}\E\big[\sup_{x\in I_k} |\xi_n(x)|_2 \big]\\
&\leq Ln^{-1/2}\sum_{j=1}^d\n{X^j-Y^j}_{2p}C_\rho\Bigg[ {d^{3/2}\rho  }\left(\E[\nn{X}_{2}^{2p}]^{1/(2q)}2^{p-1}\frac{\sigma^{-p}}{p-1-d}+2^{\rho+1}\frac{\sigma^{-1}}{\rho+1-d}\right)\\
&\hspace{1cm}+ d\left(\E[\nn{X}_{2}^{2p}]^{1/(2q)}2^{p-1}\frac{\sigma^{-(p-1)}}{p-1-d}+2^{\rho}\frac{1}{\rho-d}\right)\Bigg].
\end{split}
\end{align}

It remains to estimate the contribution over $k\in\Z^d$ satisfying $|k|_2<\sigma$, where we use a similar argument as in the proof of Lemma \ref{lemma:Esup when gamma=1}. 
  Observe that one can replace $\tilde{h}_k^j(x,y)$ in \eqref{eq:h} by $\sup|\nabla f_{\xi,\rho,\sigma}||x-y|$ and $\tilde{g}_k^j(x,y)$ by $ \sup|f_{\xi,\rho,\sigma}||x-y|$.
  Inserting these bounds into \eqref{eq:xinub}, one finds that for $k\in\Z^d$ satisfying $|k|_2<\sigma$ and $j\in\{1,\dots,d\}$,  
   \begin{align*}
       \E\left[\sup_{x\in I_k}|\xi^j_n(x)|\right]&\leq Ln^{-1/2}({ \sup|\nabla f_{\xi,\rho,\sigma}|+\sup| f_{\xi,\rho,\sigma}|})\n{X-Y}_2\\
       &\leq Ln^{-1/2}(C_\rho(\sqrt{d}\rho\sigma^{-(d+1)}+\sigma^{-d}))\n{X-Y}_{2p}.
   \end{align*}
   Since $\#\{k\in\Z^d:|k|_2<\sigma\}\leq L(\sigma^d+1)$, we have
\begin{align}
    \sum_{|k|_2<\sigma}\E\left[\sup_{x\in I_k}|\xi_n(x)|\right]\leq Ln^{-1/2}\sum_{j=1}^d\n{X^j-Y^j}_{2p}C_\rho(\sqrt{d}\rho\sigma^{-(d+1)}+\sigma^{-d})(\sigma^d+1).\label{eq:tt2}
\end{align}
Combining \eqref{eq:tt1} and \eqref{eq:tt2} yields the proof.
\end{proof}

\subsection{Asymptotic distribution of the SE-MPD revisited}\label{sec:thm5}
In Section \ref{sec:asymp}, we state the asymptotic distribution of the SE-MPD for both the i.i.d.~case and the stationary $\alpha$-mixing case. For the i.i.d.~case, our main result was stated in Theorem \ref{thm:limitd simple}, which we rephrase with further details below. 

\begin{theorem}\label{thm:limitd}
Let $\gamma\geq 1$ and for $\rho>\gamma+d$, consider the density   $f_{\xi,\rho}$ given by \eqref{eq:fdensity}, i.e., $f_{\xi,\rho}(x)=C_\rho(|x|_2+1)^{-\rho}$. Suppose that there exists $\tilde{\delta}>0$  such that the $\R^d\times\R^d$-valued martingale coupling $(X,Y)\in L^{m+\tilde{\delta}}$ and one of the following holds:
\begin{enumerate}[(i)]
    \item $m=2(\gamma+d+\rho(\gamma-1))$,
    \item $m=2(d+1+\rho-\frac{\rho-1}{\gamma})$, and $\rho>\gamma d+1$.
\end{enumerate}
 Then $f_{\xi,\rho}$ is a martingality-preserving law and we have the convergence in distribution 
\begin{align}
n^{\gamma/2}\mpd^{*\xi}({\P}_n,\gamma)\ddd 2^{1-\gamma}\int_{\R^d}\frac{\nn{G_x}_2^\gamma}{\E[f_{\xi,\rho}(x-X)]^{\gamma-1}}\,\d x, \qquad n\to \infty,\label{eq:limitd}
\end{align}
where  $\{G_x\}$ is a centered $\R^d$-valued Gaussian random field with covariance 
\begin{align}\label{eq:Gx}
    \E[G_xG_y^\top]=\E[(Y-X) f_{\xi,\rho}(x-X)f_{\xi,\rho}(y-X)(Y-X)^\top],~x,y\in\R^d.
\end{align}
In particular, the sequence $\{n^{\gamma/2}\mpd^{*\xi}({\P}_n,\gamma)\}_{n\in \N}$ is tight.
\end{theorem}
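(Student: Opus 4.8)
The plan is to pass through the closed-form representation \eqref{eq:En expression}, namely
$$\mpd^{*\xi}(\P_n,\gamma)=2^{1-\gamma}\int_{\R^d}\frac{\n{\xi_n(x)}_2^\gamma}{p_n(x)^{\gamma-1}}\,\d x,$$
and show that after rescaling by $n^{\gamma/2}$ the numerator $n^{1/2}\xi_n$ converges as a process (in an appropriate function space) to the Gaussian random field $\{G_x\}$ with covariance \eqref{eq:Gx}, while the denominator $p_n(x)$ converges a.s.\ and uniformly (on compacts) to the deterministic quantity $\E[f_\xi(x-X)]$. The target functional $F(\phi):=\int \n{\phi(x)}_2^\gamma/\E[f_\xi(x-X)]^{\gamma-1}\,\d x$ is continuous on the relevant space of $\R^d$-valued functions decaying fast enough at infinity, so the continuous mapping theorem would finish the argument — \emph{provided} tightness and the tail control are handled carefully, which is where the real work lies.

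First I would establish the finite-dimensional convergence $n^{1/2}(\xi_n(x_1),\dots,\xi_n(x_m))\ddd (G_{x_1},\dots,G_{x_m})$, which is an immediate application of the multivariate CLT for i.i.d.\ sums: each $\xi_n(x)$ is a normalized sum of the i.i.d.\ centered vectors $(Y_i-X_i)f_\xi(x-X_i)$ (centered because $\P_0$ is a martingale law, cf.~\eqref{eq:slln}), and the finiteness of the covariance \eqref{eq:Gx} follows from $(X,Y)\in L^{m+\tilde\delta}$ and the boundedness of $f_\xi$. Next I would upgrade this to process-level convergence. For the behavior on a fixed large ball $\{\n{x}_2\le R\}$, the bracketing/chaining estimate in Lemma \ref{lemma:empiricalbound} (already used to prove Lemma \ref{lemma:Esup when gamma=1}) gives asymptotic equicontinuity of $n^{1/2}\xi_n$ — the class $\{f_a:a\in I_k\}$ has a controlled bracketing integral with an $L^{2p}$ envelope — so Donsker-type functional convergence holds locally. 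The denominator $p_n(x)$ is handled by the same kind of uniform LLN (or directly, since $f_\xi$ is Lipschitz with an integrable modulus), giving $\sup_{\n{x}_2\le R}|p_n(x)-\E[f_\xi(x-X)]|\to 0$ a.s., and crucially $\E[f_\xi(x-X)]$ is bounded below on compacts.

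The main obstacle — and the reason for the two alternative moment hypotheses (i) and (ii) — is controlling the contribution of the tail region $\{\n{x}_2>R\}$ to the integral, uniformly in $n$, so that taking $R\to\infty$ is legitimate. Here is where Lemma \ref{lemma:Esup when gamma=1} is essential: it gives, for $\n{k}_2\ge\sigma$, the bound $\E[\sup_{x\in I_k}\n{\xi_n(x)}_2]\le Cn^{-1/2}(\n{k}_2^{1-p}+\n{k}_2^{-\rho})$, hence
$$\E\Big[\,\sum_{\n{k}_2>R}\int_{I_k}n^{\gamma/2}\frac{\n{\xi_n(x)}_2^\gamma}{p_n(x)^{\gamma-1}}\,\d x\,\Big]$$
can be summed over the lattice provided the exponents are large enough; one must also lower-bound $p_n(x)^{\gamma-1}$ in the tail (when $\gamma>1$), which costs roughly a factor $\E[f_\xi(x-X)]^{-(\gamma-1)}\asymp \n{x}_2^{\rho(\gamma-1)}$ and explains the appearance of $\rho(\gamma-1)$ in exponent (i) and the condition $\rho>\gamma d+1$ in (ii). The bookkeeping of exactly which moment $m$ makes the lattice sum $\sum_k \n{k}_2^{d-1}(\text{tail bound})^\gamma$ converge — taking into account the two competing tail terms $\n{k}_2^{1-p}$ and $\n{k}_2^{-\rho}$ and the denominator penalty — is the delicate computation that produces the two cases; one optimizes over the choice of $p$ (equivalently the moment order) in Lemma \ref{lemma:Esup when gamma=1}. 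Finally, I would assemble the pieces: on compacts, continuous mapping applied to $(n^{1/2}\xi_n, p_n)\ddd (G,\,\E[f_\xi(\cdot-X)])$ gives convergence of the truncated functional $\int_{\n{x}_2\le R}(\cdot)$; the uniform-in-$n$ tail estimate plus a matching tail estimate for the limit $\int_{\n{x}_2>R}\n{G_x}_2^\gamma/\E[f_\xi(x-X)]^{\gamma-1}\,\d x$ (finite by the same covariance/decay bounds together with Gaussian moment bounds) let $R\to\infty$, yielding \eqref{eq:limitd}; tightness of $\{n^{\gamma/2}\mpd^{*\xi}(\P_n,\gamma)\}$ is then automatic. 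The martingality-preserving claim for $f_\xi$ was already verified via \citep[Theorem 2.2]{joarder1996characterization} following Proposition \ref{prop:martingale2}.
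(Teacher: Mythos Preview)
Your plan is the same as the paper's: split the integral into $[-K,K]^d$ and its complement, obtain convergence on the compact part via a Donsker/Glivenko--Cantelli/Slutsky argument (Lemma~\ref{lemma:convergeK}), control the tail uniformly in $n$ (Lemma~\ref{lemma:limsup}), and let $K\to\infty$. The outline is sound, but two steps in the tail control are not as you describe them and would cause the argument to fail if carried out literally.

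First, for $\gamma>1$ you write that Lemma~\ref{lemma:Esup when gamma=1} gives $\E[\sup_{x\in I_k}\n{\xi_n(x)}_2]\le Cn^{-1/2}(\dots)$ and then sum $(\text{tail bound})^\gamma$ over the lattice. But Lemma~\ref{lemma:Esup when gamma=1} bounds only the \emph{first} moment of the supremum; Jensen goes the wrong way, so $\E[\sup\n{\xi_n}_2^\gamma]$ is not controlled by $\E[\sup\n{\xi_n}_2]^\gamma$. The paper bridges this with the moment inequality for suprema of empirical processes of \citep{lederer2014new} (Lemma~\ref{lemma:momentbound}), combined with an envelope bound (Lemma~\ref{lemma:M}), to obtain \eqref{eq:xinub2}: $\E[\sup_{x\in I_k}\n{\xi_n(x)}_2^\gamma]\le Cn^{-\gamma/2}(\n{k}_2^{\gamma-(m+\tilde\delta)/2}+\n{k}_2^{-\gamma\rho})$. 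This extra ingredient is essential and is where the moment order $m$ in case~(i) actually enters.

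Second, the denominator $p_n(x)^{\gamma-1}$ is random, and you cannot simply replace it by $\E[f_\xi(x-X)]^{\gamma-1}\asymp\n{x}_2^{-\rho(\gamma-1)}$ inside an expectation: $p_n(x)^{-(\gamma-1)}$ may not even be integrable. The paper instead introduces the event $A_k=\bigcap_n\{|\{i\le n:\n{X_i}_2\le\n{k}_2/2\}|>n/2\}$, on which the \emph{deterministic} lower bound $\inf_{x\in I_k}p_n(x)\ge c\n{k}_2^{-\rho}$ holds for all $n$; a short binomial argument (Lemma~\ref{lemma:binomial}) shows $\mu(A_k)\to1$. Case~(i) then bounds $\E[\mathds{1}_{A_{K_1}}\int_{\R^d\setminus[-K,K]^d}(\dots)]$ directly, while case~(ii) does not bound an expectation at all: it applies Markov's inequality to each $\zeta_{n,k}=\sup_{x\in I_k}\n{\sqrt{n}\xi_n(x)}_2$ and uses a union bound over $k$, which is why the exponent arithmetic in~(ii) differs from~(i). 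Your sketch conflates these two mechanisms into a single ``lattice sum of $(\text{tail bound})^\gamma$'' computation.
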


\begin{remark}\label{remark:momentchoice}
    
As a consequence of Theorem \ref{thm:limitd}, the following holds:
\begin{enumerate}[(i)]
\item If $(X,Y)\in L^{2\min(\gamma d+2,\gamma+d+(\gamma-1)\max(d,2))+\tilde{\delta}}$ (in particular, $\in L^{2(\gamma d+\min(\gamma,2))}$ if $d\geq 2$) for some $\tilde{\delta}>0$, then there exists $\rho$ such that  \eqref{eq:limitd} holds.

\item If $(X,Y)\in L^{2(d+1)+\tilde{\delta}}$ for some $\tilde{\delta}>0$, then \eqref{eq:limitd} holds for $\gamma=1$ and any $\rho>\gamma+d$.

\item If all moments of $(X,Y)$ exist, then \eqref{eq:limitd} holds for any  $\gamma\geq 1$ and $\rho>\gamma+d$.
\end{enumerate}

\end{remark}

Let us also point out the positive dependence on $\gamma$ of the number of moments of $(X,Y)$. In particular, if $\gamma=1$, then the moment condition does not depend on $\rho$ (as it suffices to consider case (i)). In other words, the class $(X,Y)$ of ``permissible'' martingale couplings shrinks in size as $\gamma$ increases, and $\gamma=1$ is the optimal choice. For this reason, the case $\gamma=1$ is the most widely applicable (and turns out the most technically tractable as well), hence deserves a thorough study.

Using Lemma \ref{lemma:computation} we can provide the proof of Theorem \ref{thm:limitd}.

\begin{proof}[Proof of Theorem \ref{thm:limitd}] 
As $\rho> d+1$, Example \ref{ex:main} immediately implies that $f_{\xi,\rho}$ is martingality-preserving. 
Recall \eqref{eq:En expression}.
We fix $\ee>0$ and $y\in\R$. We first observe that
$$\lim_{K\to \infty} \int_{[-K,K]^d}\frac{\nn{G_x}_2^\gamma}{\E[f_{\xi,\rho}(x-X)]^{\gamma-1}}\,\d x = \int\frac{\nn{G_x}_2^\gamma}{\E[f_{\xi,\rho}(x-X)]^{\gamma-1}}\,\d x$$ in $L^1$, and thus also in distribution. Next, Lemma \ref{lemma:limsup} below implies
\begin{align*}
\int_{\R^d\setminus[-K,K]^d}\frac{\nn{\frac{1}{\sqrt{n}}\sum_{i=1}^n(Y_i-X_i)f_{\xi,\rho}\left(x-X_i\right)}_2^\gamma}{(\frac{1}{n}\sum_{i=1}^nf_{\xi,\rho}\left(x-X_i\right))^{\gamma-1}}\,\d x \ddd 0
\end{align*}
for $n\to \infty.$ The claim now follows from the above together with Lemma \ref{lemma:convergeK} below by taking limits $n\to \infty$ and then $K\to \infty.$
\end{proof}

We have used the following lemmas:

\begin{lemma}\label{lemma:convergeK}
    Suppose that $(X,Y)\in L^{1}$.  For each $K>0$,
    $$\int_{[{-K},K]^d} \frac{\nn{\frac{1}{\sqrt{n}}\sum_{i=1}^n(Y_i-X_i)f_{\xi,\rho}\left(x-X_i\right)}_2^\gamma}{(\frac{1}{n}\sum_{i=1}^nf_{\xi,\rho}\left(x-X_i\right))^{\gamma-1}}\,\d x\ddd \int_{[{-K},K]^d} \frac{\nn{G_x}_2^\gamma}{\E[f_{\xi,\rho}(x-X)]^{\gamma-1}}\,\d x.$$
\end{lemma}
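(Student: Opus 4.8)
The plan is to recognise the left-hand side as $\Phi(G_n,p_n)$ for a fixed sup-norm-continuous functional $\Phi$ applied to two random fields on the compact cube $[-K,K]^d$: the rescaled numerator $G_n(x):=\sqrt n\,\xi_n(x)=\frac1{\sqrt n}\sum_{i=1}^n(Y_i-X_i)f_\xi(x-X_i)$ from \eqref{eq:xindef}, and the denominator $p_n(x)=\frac1n\sum_{i=1}^nf_\xi(x-X_i)$; then to pass to the limit via the continuous mapping theorem. Since $\P_0$ is a martingale measure, $\E[(Y-X)f_\xi(x-X)]=0$ for every $x$, so $G_n$ is a genuine centred empirical process, and $p_n(x)\to p(x):=\E[f_\xi(x-X)]$.

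First I would establish the joint weak convergence $(G_n,p_n)\ddd(G,p)$ in $C([-K,K]^d;\R^d)\times C([-K,K]^d;\R_{>0})$. For the denominator this is a uniform law of large numbers: the family $\{x\mapsto f_\xi(x-z):z\in\R^d\}$ is uniformly bounded by $C_\rho$ and uniformly Lipschitz on $[-K,K]^d$ (as $\nabla f_\xi$ is bounded), so a Glivenko--Cantelli/equicontinuity argument yields $\sup_{x\in[-K,K]^d}|p_n(x)-p(x)|\to0$ almost surely; moreover $p$ is continuous and $\inf_{[-K,K]^d}p>0$ because $f_\xi>0$, hence $\inf_xp_n(x)>0$ eventually a.s. For the numerator I would invoke a functional central limit theorem: for each coordinate $j$ the index class $\{(x',y')\mapsto((y')^j-(x')^j)f_\xi(x-x'):x\in[-K,K]^d\}$ is Lipschitz in the parameter $x$ over a bounded $d$-dimensional set, with (Lipschitz-)envelope of order $C_\rho(1+\|X\|_2+\|Y\|_2)$, hence of finite bracketing entropy and $\P_0$-Donsker; this gives $G_n\ddd G$ in $C([-K,K]^d;\R^d)$, with $G$ a centred Gaussian field of covariance \eqref{eq:Gx} and continuous sample paths (the latter from the $L^2$-continuity of $x\mapsto(Y-X)f_\xi(x-X)$). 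Joint convergence with $p_n$ is then immediate by Slutsky, since $p_n\to p$ in probability to a deterministic limit.

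Next I would apply the continuous mapping theorem to the functional
\begin{align*}
\Phi(g,q):=\int_{[-K,K]^d}\frac{\n{g(x)}_2^\gamma}{q(x)^{\gamma-1}}\,\d x,
\end{align*}
defined on pairs with $g\in C([-K,K]^d;\R^d)$, $q\in C([-K,K]^d;\R)$ and $\inf q>0$. On this set $\Phi$ is sup-norm continuous: if $g_n\to g$, $q_n\to q$ uniformly and $\inf q=c>0$, then $\inf q_n\ge c/2$ eventually, the integrand converges pointwise and is dominated by $(\sup_n\n{g_n}_\infty)^\gamma(c/2)^{1-\gamma}$ on the finite-measure set $[-K,K]^d$, so dominated convergence gives $\Phi(g_n,q_n)\to\Phi(g,q)$. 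The limit $(G,p)$ lies almost surely in this continuity set, and $\Phi(G_n,p_n)$ equals the left-hand side of the claim on the event $\{\inf_xp_n(x)>0\}$, whose probability tends to $1$; so the continuous mapping theorem yields $\Phi(G_n,p_n)\ddd\Phi(G,p)=\int_{[-K,K]^d}\n{G_x}_2^\gamma/p(x)^{\gamma-1}\,\d x$, which is the assertion.

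The hard part will be the functional CLT for $G_n$ on $[-K,K]^d$: verifying the Donsker property of the parametric class above (equivalently, asymptotic tightness/equicontinuity of $G_n$ in the sup-norm) and identifying the continuous Gaussian limit with covariance \eqref{eq:Gx}. This is the only step that genuinely uses the square-integrability of $(Y-X)f_\xi(x-X)$, which is available because $f_\xi$ is bounded and decays polynomially, and in any case under the moment hypotheses in force in Theorem~\ref{thm:limitd}. The uniform LLN for $p_n$, the continuity of $\Phi$, and the Slutsky step are all routine.
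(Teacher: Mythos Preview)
Your proposal is correct and follows essentially the same route as the paper: both establish the Donsker property of the numerator class $\{(x',y')\mapsto(y'-x')f_\xi(x-x'):x\in[-K,K]^d\}$ via the Lipschitz-in-parameter criterion (Example~19.7 of \citep{van2000asymptotic}), the Glivenko--Cantelli property for the denominator class, combine them by Slutsky, and finish with the continuous mapping theorem applied to the integral functional. Your remark that the Donsker step actually needs an $L^2$ envelope (hence $(X,Y)\in L^2$ rather than merely $L^1$) is a fair observation; the paper's proof implicitly relies on this through its appeal to Example~19.7, and the lemma is only invoked under the stronger moment assumptions of Theorem~\ref{thm:limitd}.
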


\begin{proof}
Recall $f_a(x,y)=(y-x)f_{\xi,\rho}(a-x).$ As $[-K,K]^d$ is bounded, it follows from  \citep[Example 19.7]{van2000asymptotic}, that $\{f_a\}_{a\in [-K,K]^d}$ is Donsker. By the continuous mapping theorem,
$$\Big\{\big|\frac{1}{\sqrt{n}}\sum_{i=1}^n(Y_i-X_i)f_{\xi,\rho}\left(x-X_i\right)\big|_2^\gamma\Big\}_{x\in[-K,K]^d}\ddd\left\{ \nn{G_x}_2^\gamma\right\}_{x\in[-K,K]^d}$$
weakly in $L^\infty([-K,K]^d)$.
On the other hand, define $g_a(x):=f_{\xi,\rho}(a-x),~x\in\R^d$. Recall from \eqref{eq:gradient of f} that $f_{\xi,\rho}$ is Lipschitz, so again \citep[Example 19.7]{van2000asymptotic} implies that the bracketing number
$N_{[\,]}(\ee,\{g_a\}_{a\in[-K,K]^d}, L^1(\mu))$ is finite for every $\ee>0.$
Hence the class $\{g_a\}_{a\in[-K,K]^d}$ is Glivenko-Cantelli by \citep[Theorem 19.4]{van2000asymptotic}. Furthermore, 
the set $\{\E[f_{\xi,\rho}(x-X)]^{\gamma-1}:{x\in [-K,K]^d}\}$ is bounded away from zero. Combining these results and using Slutsky's theorem, this leads to
$$\frac{|\frac{1}{\sqrt{n}}\sum_{i=1}^n(Y_i-X_i)f_{\xi,\rho}\left(x-X_i\right)|_2^\gamma}{(\frac{1}{n}\sum_{i=1}^nf_{\xi,\rho}\left(x-X_i\right))^{\gamma-1}}\ddd \frac{\nn{G_x}_2^\gamma}{\E[f_{\xi,\rho}(x-X)]^{\gamma-1}}$$
weakly in $L^\infty([-K,K]^d)$. Applying the continuous mapping theorem yields the desired convergence in distribution.
\end{proof}

\begin{lemma}\label{lemma:limsup}
In the setting of Theorem \ref{thm:limitd}, for any $\ee,\eta>0$ there exists $K_0>0$ such that for any $K>K_0$,

$$\limsup_{n\to\infty} \mu\Big(\int_{\R^d\setminus[-K,K]^d}\frac{|\frac{1}{\sqrt{n}}\sum_{i=1}^n(Y_i-X_i)f_{\xi,\rho}\left(x-X_i\right)|_2^\gamma}{(\frac{1}{n}\sum_{i=1}^nf_{\xi,\rho}\left(x-X_i\right))^{\gamma-1}}\,\d x>\eta \Big)<\ee.$$
\end{lemma}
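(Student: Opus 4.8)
Write $A_K:=\R^d\setminus[-K,K]^d$. The plan is to cover $A_K$ by the unit cubes $I_k$ from \eqref{eq:Ik} with $\n{k}_2$ large, to bound on each cube the numerator $\n{\tfrac{1}{\sqrt n}\sum_i(Y_i-X_i)f_\xi(x-X_i)}_2^\gamma=n^{\gamma/2}\n{\xi_n(x)}_2^\gamma$ in expectation via Lemma \ref{lemma:Esup when gamma=1} (and a $\gamma$-th moment strengthening of it), to bound the denominator $p_n(x)^{\gamma-1}$ from below on a high-probability event, and then to sum over $k$ and finish with Markov's inequality. When $\gamma=1$ the denominator is identically $1$ and the estimate of Lemma \ref{lemma:Esup when gamma=1} together with its summability already suffices; the steps below are what the case $\gamma>1$ requires in addition.

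First I would produce a deterministic lower bound for $p_n$ on an event of probability tending to $1$. Fix $R>0$ with $\P_0(\n{X}_2\le R)>1/2$ and let $G_n$ be the event that at least $n/2$ of $X_1,\dots,X_n$ lie in $\{\n{\cdot}_2\le R\}$; by the strong law of large numbers $\mu(G_n)\to 1$. On $G_n$, since $f_\xi(z)=C_\rho(\n{z}_2+1)^{-\rho}$ and $\n{x-X_i}_2+1\le(R+1)(\n{x}_2+1)$ whenever $\n{X_i}_2\le R$, we obtain $p_n(x)\ge\tfrac{C_\rho}{2}(R+1)^{-\rho}(\n{x}_2+1)^{-\rho}$ for all $x$, hence $p_n(x)^{-(\gamma-1)}\le c_R(\n{x}_2+1)^{\rho(\gamma-1)}$ with $c_R=c_R(R,\rho,d,\gamma)$.

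Next, on $G_n$, using $\n{x}_2\asymp\n{k}_2$ on $I_k$ and $|I_k|=1$,
\begin{align*}
\int_{A_K}\frac{n^{\gamma/2}\n{\xi_n(x)}_2^\gamma}{p_n(x)^{\gamma-1}}\,\d x\le L\,c_R\,n^{\gamma/2}\sum_{k:\,\n{k}_2\gtrsim K}\n{k}_2^{\rho(\gamma-1)}\sup_{x\in I_k}\n{\xi_n(x)}_2^\gamma.
\end{align*}
Taking expectations and invoking the $\gamma$-th moment version of Lemma \ref{lemma:Esup when gamma=1}, obtained by the same generic chaining argument applied with higher-moment increment bounds (this is also the content of Lemma \ref{lemma:numerator}), one has, uniformly in $n$, $n^{\gamma/2}\E[\sup_{x\in I_k}\n{\xi_n(x)}_2^\gamma]\le C(\n{k}_2^{1-p}+\n{k}_2^{-\rho})^\gamma$ for $\n{k}_2\ge 1$, where $2p=m+\tilde\delta$. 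Hypotheses (i) and (ii) of Theorem \ref{thm:limitd} are precisely what makes the series $\sum_{k\in\Z^d}\n{k}_2^{\rho(\gamma-1)}(\n{k}_2^{1-p}+\n{k}_2^{-\rho})^\gamma$ converge, so its tail over $\{\n{k}_2\gtrsim K\}$ vanishes as $K\to\infty$, uniformly in $n$. Combining through $\mu(\int_{A_K}(\cdots)\,\d x>\eta)\le\mu(G_n^{c})+\eta^{-1}\E[\mathds{1}_{G_n}\int_{A_K}(\cdots)\,\d x]$, the first term tends to $0$ and the second is at most $\eta^{-1}$ times the tail series, so given $\ee,\eta>0$ one picks $K_0$ with that tail below $\eta\ee$ for $K>K_0$.

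The genuine obstacle is the $\gamma$-th moment bound on $\sup_{x\in I_k}\n{\xi_n(x)}_2$: upgrading the $L^1$ estimate of Lemma \ref{lemma:Esup when gamma=1} to an $L^\gamma$ estimate with enough polynomial decay in $\n{k}_2$ to survive the weight $\n{k}_2^{\rho(\gamma-1)}$ requires chaining with $\psi$-type (not merely $L^2$) control of the increments, and the precise balance between the moments of $(X,Y)$, the tail exponent $\rho$, and the dimension $d$ is exactly what forces the two alternative moment conditions (i), (ii). A secondary subtlety is that the numerator and denominator of the integrand are dependent, which is why the argument detours through $G_n$, on which $p_n$ admits a lower bound decoupled from $\xi_n$.
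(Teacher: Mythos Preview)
Your strategy—cover by unit cubes, lower-bound $p_n$ on a high-probability event, upper-bound the $\gamma$-th moment of $\sup_{x\in I_k}\n{\xi_n(x)}_2$, sum, Markov—is exactly the paper's route for case~(i), and your handling of $\gamma=1$ and of the denominator is fine. Two points for $\gamma>1$.

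First, the $L^\gamma$ bound you state is too strong. The paper does not obtain $n^{\gamma/2}\E[\sup_{x\in I_k}\n{\xi_n(x)}_2^\gamma]\le C(\n{k}_2^{1-p}+\n{k}_2^{-\rho})^\gamma$; what it gets, via the concentration inequality of Lemma~\ref{lemma:momentbound} (Lederer--van de Geer) combined with the envelope bound Lemma~\ref{lemma:M}, is
\[
n^{\gamma/2}\E\Big[\sup_{x\in I_k}\n{\xi_n(x)}_2^\gamma\Big]\le C\big(\n{k}_2^{\gamma-(m+\tilde\delta)/2}+\n{k}_2^{-\gamma\rho}\big),
\]
whose first exponent is $\gamma-p$, not $\gamma(1-p)$. (Your reference to Lemma~\ref{lemma:numerator} is misplaced—that is the mixing lemma.) The mechanism is not ``chaining with $\psi$-type increments'' but rather the $L^1$ chaining bound for $\E[\sup]$ plus a separate moment inequality for $(\sup-2\E[\sup])_+$, which brings in $M_{\xi,2\gamma}$; it is this envelope term that degrades the exponent from $\gamma(1-p)$ to $\gamma-p$.

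Second, and more substantively: with the correct bound, the series $\sum_k\n{k}_2^{\rho(\gamma-1)}(\n{k}_2^{\gamma-p}+\n{k}_2^{-\gamma\rho})$ converges only under hypothesis~(i), where $p>\gamma+d+\rho(\gamma-1)$. Under hypothesis~(ii) that series diverges, and the paper abandons the expectation route entirely: it allocates a budget $\eta\n{k}_2^{-1-\delta}$ to each cube and bounds $\mu\big(\zeta_{n,k}^\gamma/\n{k}_2^{-\rho(\gamma-1)}>\eta\n{k}_2^{-1-\delta}/C\big)$ by Markov applied to $\zeta_{n,k}=\sqrt{n}\sup_{x\in I_k}\n{\xi_n(x)}_2$ directly (only the first-moment estimate of Lemma~\ref{lemma:Esup when gamma=1} is used), and then sums those tail probabilities. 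The extra constraint $\rho>\gamma d+1$ in~(ii) is precisely what makes that tail sum finite. So the two hypotheses do not stem from a single moment inequality; they correspond to two genuinely distinct arguments.
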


We now detail the proof of Lemma \ref{lemma:limsup}, which requires a few preliminary results. 
We introduce the quantities
$$S_\xi=S_\xi(k):=\sup_{x\in I_k}\n{(Y-X)f_{\xi,\rho}(x-X)}_2$$
and $$M_{\xi,p}=M_{\xi,p}(k):=\big\|\sup_{x\in I_k}(Y-X)f_{\xi,\rho}(x-X)\big\|_p,~p\geq 1.$$
Note that $S_\xi\leq M_{\xi,2}$.

The next lemma is \citep[Theorem 3.1]{lederer2014new} applied with $\ee=1$ and $\ell=p/2$ therein.
\begin{lemma}\label{lemma:momentbound}
    Suppose that $M_{\xi,p}<\infty$ for some $p\geq 2$. It holds for $\nn{k}_2>5$ that
    $$\big\|\big(\sup_{x\in I_k}|\xi_n(x)|-2\E\big[\sup_{x\in I_k}|\xi_n(x)|\big]\big)_+\big\|_{p/2}\leq \frac{55\sqrt{p}}{\sqrt{n}}M_{\xi,p}(k)+\frac{3\sqrt{p}}{\sqrt{n}}S_\xi(k).$$
\end{lemma}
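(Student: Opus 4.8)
The inequality is the specialization of the moment bound for suprema of centered empirical processes in \citep[Theorem 3.1]{lederer2014new} to the process \eqref{eq:xindef}, so the plan is to recast $\sup_{x\in I_k}\n{\xi_n(x)}_2$ in the exact form handled by that theorem and then read off the two data-dependent quantities that enter its bound. First I would pass to a scalar, centered empirical process: since $\n{\xi_n(x)}_2=\sup_{v\in\mathbb S^{d-1}}\langle v,\xi_n(x)\rangle$ and $f_\xi$ is Lipschitz (see \eqref{eq:gradient of f}), the suprema over $x\in I_k$ and over $v\in\mathbb S^{d-1}$ may be restricted to a countable dense set without changing their value $\mu$-a.s., so that
\begin{equation*}
n\sup_{x\in I_k}\n{\xi_n(x)}_2=\sup_{(x,v)}\Big|\sum_{i=1}^n g_{x,v}(X_i,Y_i)\Big|,\qquad g_{x,v}(X,Y):=\langle v,(Y-X)f_\xi(x-X)\rangle.
\end{equation*}
Because $\P_0$ is a martingale law we have $\E[g_{x,v}(X,Y)]=\langle v,\E[(Y-X)f_\xi(x-X)]\rangle=0$ by \eqref{eq:slln}, so this empirical process is already centered and \citep[Theorem 3.1]{lederer2014new} applies to $W:=n\sup_{x\in I_k}\n{\xi_n(x)}_2$.

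Next I would identify the envelope and the weak variance appearing in that theorem. The envelope of $\{g_{x,v}\}$ at $(X_i,Y_i)$ equals $\sup_{x\in I_k}\n{(Y_i-X_i)f_\xi(x-X_i)}_2$, so using $\n{\max_{i\le n}T_i}_p\le n^{1/p}\n{T_1}_p$ with $p\ge2$,
\begin{equation*}
\Big\|\max_{1\le i\le n}\sup_{(x,v)}|g_{x,v}(X_i,Y_i)|\Big\|_p\le n^{1/p}M_{\xi,p}(k)\le\sqrt n\,M_{\xi,p}(k),
\end{equation*}
and, since the sample is i.i.d.,
\begin{equation*}
\bar\sigma^2:=\sup_{(x,v)}\sum_{i=1}^n\E\big[g_{x,v}(X_i,Y_i)^2\big]\le n\sup_{x\in I_k}\E\big[\n{(Y-X)f_\xi(x-X)}_2^2\big]\le n\,\E\big[S_\xi^2\big]=n\,M_{\xi,2}(k)^2,
\end{equation*}
so $\bar\sigma\le\sqrt n\,M_{\xi,2}(k)$, equivalently $\sqrt n\,S_\xi(k)$ in the notation of the statement (this is where the crude relation $S_\xi\le M_{\xi,2}$ is invoked).

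Finally, \citep[Theorem 3.1]{lederer2014new} bounds $\n{(W-2\E W)_+}_{p/2}$ by a term proportional to $\sqrt p$ times the $L^p$-norm of the envelope plus a term proportional to $\sqrt p$ times $\bar\sigma$, with the explicit absolute constants $55$ and $3$ recorded in the statement; inserting the two estimates above and dividing through by $n$ gives exactly the claimed inequality. The standing assumption $\n{k}_2>5$ is inherited from this subsection and plays no role in this particular argument.

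The main point requiring care is keeping the numerical constants \emph{dimension-free} through the passage from the $\R^d$-valued $\xi_n$ to the scalar class $\{g_{x,v}\}$: adjoining the unit-sphere parameter $v$ leaves both the envelope and the weak variance unchanged, since each is controlled by $\sup_{x\in I_k}\n{(Y-X)f_\xi(x-X)}_2$, so no factor of $d$ is introduced. A secondary, routine point is the admissibility/separability of the enlarged index set, handled by the continuity remark above; alternatively one may invoke \citep[Theorem 3.1]{lederer2014new} directly in an $\R^d$-valued form and omit the $v$-reduction. Everything else is bookkeeping with the constants of that theorem.
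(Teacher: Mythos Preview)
Your proposal is essentially correct and in fact more detailed than the paper, which simply records this lemma as a direct quotation of \citep[Theorem~3.1]{lederer2014new} with no accompanying argument; the reduction to a scalar centered empirical process via the unit-sphere parameter and the identification of the envelope and weak-variance terms is exactly the intended specialization.

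One small point to clean up: in your weak-variance step you pass from $n\sup_x\E[\|(Y-X)f_\xi(x-X)\|_2^2]$ to $n\,\E[S_\xi^2]=nM_{\xi,2}(k)^2$ and then claim this is ``equivalently $\sqrt n\,S_\xi(k)$'' by invoking $S_\xi\le M_{\xi,2}$. That inequality goes the wrong way for what you need. The fix is immediate: with the paper's interpretation $S_\xi(k)=\sup_{x\in I_k}\|(Y-X)f_\xi(x-X)\|_2$ (the $\|\cdot\|_2$ here being the $L^2$-norm of the random vector, consistent with the remark $S_\xi\le M_{\xi,2}$ just above the lemma), you already have $\bar\sigma^2=n\sup_x\E[\|(Y-X)f_\xi(x-X)\|_2^2]=nS_\xi(k)^2$ directly, with no detour through $M_{\xi,2}$. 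With that correction the two terms on the right of the cited theorem match $M_{\xi,p}(k)$ and $S_\xi(k)$ exactly, and your remark that the condition $\|k\|_2>5$ plays no role in this particular bound is accurate.
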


\begin{lemma}\label{lemma:binomial}

    Let $\{Z_n\}_{n\in\N}$ be a  sequence of i.i.d.~random variables with $Z_n\lawis \mathrm{Ber}(p)$. Then
    $$\lim_{p\to 1}\mu\big(\sum_{i=1}^n Z_i>\frac{n}{2}\text{ for all }n\in \N \big)=1.$$
\end{lemma}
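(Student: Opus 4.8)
The plan is to bound the probability of the complementary ``bad'' event and show it vanishes as $p\to1$. Writing $q:=1-p$ and $W_i:=1-Z_i$, the variables $\{W_i\}_{i\in\N}$ are i.i.d.~$\mathrm{Ber}(q)$ and $\sum_{i=1}^n Z_i=n-\sum_{i=1}^nW_i$; hence the event $\{\sum_{i=1}^nZ_i>n/2$ for all $n\in\N\}$ fails precisely when $\sum_{i=1}^nW_i\ge n/2$ for some $n\in\N$. So it suffices to prove that $\mu\big(\exists\, n:\ \sum_{i=1}^nW_i\ge n/2\big)\to0$ as $q\to0$.

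First I would control each fixed-$n$ term by a crude binomial estimate. Since $q^j\le q^{n/2}$ whenever $j\ge n/2$, and $\sum_{j=0}^n\binom nj=2^n$, dropping the factors $(1-q)^{n-j}\le1$ gives
\[
\mu\Big(\sum_{i=1}^nW_i\ge \tfrac n2\Big)=\sum_{j\ge n/2}\binom nj q^j(1-q)^{n-j}\le q^{n/2}\sum_{j=0}^n\binom nj=(4q)^{n/2}.
\]
(A Chernoff bound would yield the sharper rate $(4q(1-q))^{n/2}$, but the crude bound above already suffices.)

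Then I would apply countable subadditivity over $n\in\N$: for $q<1/4$ the geometric series converges and
\[
\mu\Big(\exists\, n:\ \sum_{i=1}^nW_i\ge \tfrac n2\Big)\le \sum_{n=1}^\infty(4q)^{n/2}=\frac{\sqrt{4q}}{1-\sqrt{4q}}\ \longrightarrow\ 0\qquad(q\to0).
\]
Translating back, $\mu\big(\sum_{i=1}^nZ_i>n/2$ for all $n\in\N\big)=1-\mu(\text{bad event})\to1$ as $p\to1$, which is the claim.

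The only mild subtlety — the ``main obstacle'', such as it is — is that the event constrains infinitely many partial sums at once, so a single pointwise-in-$n$ concentration bound is not by itself enough. The point is that the per-$n$ failure probability decays geometrically in $n$ with base $\sqrt{4q}$, and that base itself tends to $0$ as $p\to1$; this makes the union bound both summable and uniformly small, and no measure-theoretic care beyond countable subadditivity is required.
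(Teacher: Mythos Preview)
Your proof is correct. The approach differs from the paper's in a clean and instructive way.

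The paper splits the union over $n$ at the threshold $(1-p)^{-1/2}$: for small $n$ it observes that failure forces some $Z_i=0$ and bounds that by $\sqrt{1-p}$ via a trivial union bound, while for large $n$ it invokes Hoeffding's inequality $\mu(\mathrm{Bin}(n,p)\le n/2)\le\exp(-2n(p-\tfrac12)^2)$ and sums the resulting geometric series starting at $n\approx(1-p)^{-1/2}$. Your argument avoids both the case split and Hoeffding entirely: the crude estimate $\mu(\sum W_i\ge n/2)\le(4q)^{n/2}$ is uniform in $n$, and because the geometric ratio $\sqrt{4q}$ itself tends to $0$, a single union bound already does the job. Your route is more elementary and shorter; the paper's route would in principle yield sharper constants for $p$ bounded away from $1$, but that is irrelevant here since only the limit $p\to1$ is claimed.
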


\begin{proof}Let us introduce the events  
\begin{align*}
A &:=\{|\{i\in \{1, \dots, n\}: Z_i=1\}|>n/2\text{ for all }n\},\\
A_n &:= \{|\{i\in \{1, \dots, n\}: Z_i=1\}|>n/2\}.
\end{align*}
A union bound yields
\begin{align*}
    \mu(A^c)&\le \mu\bigg(\bigcup_{n=1}^{(1-p)^{-1/2}} A_n^c\bigg)+\mu\big(\bigcup_{n=\floor{(1-p)^{-1/2}}}^{\infty} A_n^c\big)\\
    &\leq \mu(Z_n=0 \text{ for some } 1\le n\le (1-p)^{-1/2}) +\sum_{n=\floor{(1-p)^{-1/2}}}^\infty \mu\big(\sum_{i=1}^n Z_i\leq \frac{n}{2}\big)\\
    &\leq \sqrt{1-p}+\sum_{n=\floor{(1-p)^{-1/2}}}^\infty \mu\big(\eta_{n,p}\leq \frac{n}{2}\big),
\end{align*}
where $\eta_{n,p}\lawis \mathrm{Bin}(n,p)$. 
As we are interested in the limit $p\to 1$ we can assume without loss of generality that $p>3/4$. Let us also recall 
Hoeffding's inequality, which states that for all $t> 0$ we have 
$$\mu (np-\eta_{n,p}\ge t)\le \exp\big(-\frac{2t^2}{n}\big).$$
We thus obtain for $t=n(p-1/2)$, 
$$\mu\big(\eta_{n,p}\leq \frac{n}{2}\big) \le
\mu\big(np-\eta_{n,p}\ge n(p-\frac{1}{2})\big)
\leq \exp\big(-2n(p-\frac{1}{2})^2\big).$$
Combining the above and using the geometric sum formula leads to
\begin{align*}
    \mu(A^c)&\leq\sqrt{1-p}+\sum_{n=\floor{(1-p)^{-1/2}}}^\infty \exp\big(-2n(p-\frac{1}{2})^2\big)\\
    &=\sqrt{1-p}+\exp\big(- 2\floor{(1-p)^{-1/2}}(p-\frac{1}{2})^2\big)\sum_{n=0}^\infty \exp\big(-2n(p-\frac{1}{2})^2\big)\\
    &\leq \sqrt{1-p}+\exp\big(-\frac{\floor{(1-p)^{-1/2}}}{8}\big)\sum_{n=0}^\infty e^{-n/8}.
\end{align*}
As $p\to 1$, the right-hand side tends to $0$. This shows $\mu(A)\to 1$ as $p\to 1$.
\end{proof}

\begin{lemma}\label{lemma:M}
    We have 
    $$M_{\xi,2\gamma}^\gamma\leq C(|k|_2^{\gamma-(m+\tilde{\delta})/2}+|k|_2^{-\gamma\rho}). $$
\end{lemma}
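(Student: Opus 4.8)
The quantity to control is $M_{\xi,2\gamma}^\gamma=\E\big[\sup_{x\in I_k}\n{(Y-X)f_\xi(x-X)}_2^{2\gamma}\big]^{1/2}$, so the plan is to estimate the expectation inside and then take a square root. Since $f_\xi(z)=C_\rho(\n{z}_2+1)^{-\rho}$ is radially decreasing, I would first rewrite $\sup_{x\in I_k}\n{(Y-X)f_\xi(x-X)}_2=\n{Y-X}_2\,C_\rho(\dist(X,I_k)+1)^{-\rho}$. The key geometric observation is that every $x\in I_k$ satisfies $\n{x}_2\ge\n{k}_2$ (by the definition \eqref{eq:Ik} of $I_k$, each coordinate obeys $|x^j|\ge|k^j|$), so on the event $\{\n{X}_2\le\n{k}_2/2\}$ one has $\dist(X,I_k)\ge\n{k}_2/2$, whereas on the complement I would simply use the crude bound $f_\xi(x-X)\le f_\xi(0)=C_\rho$.

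Accordingly I would split $\E[\sup_{x\in I_k}\n{(Y-X)f_\xi(x-X)}_2^{2\gamma}]$ into the contributions of $\{\n{X}_2\le\n{k}_2/2\}$ and of its complement. On the first event, $\sup_{x\in I_k}f_\xi(x-X)\le C_\rho(\n{k}_2/2)^{-\rho}$, so this term is bounded by $(2^\rho C_\rho)^{2\gamma}\n{k}_2^{-2\gamma\rho}\,\E[\n{Y-X}_2^{2\gamma}]=O(\n{k}_2^{-2\gamma\rho})$, using that $(X,Y)\in L^{2\gamma}$ (which follows from the moment hypothesis of Theorem~\ref{thm:limitd}, as $2\gamma<m$). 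On the complementary event, after replacing $f_\xi(x-X)$ by $C_\rho$ I would apply H\"older's inequality with conjugate exponents $r=(m+\tilde{\delta})/(2\gamma)$ and $r'$: the factor $\E[\n{Y-X}_2^{m+\tilde{\delta}}]^{1/r}$ is a finite constant, while Markov's inequality gives $\mu(\n{X}_2>\n{k}_2/2)\le C\n{k}_2^{-(m+\tilde{\delta})}$, so the second factor contributes $\big(\n{k}_2^{-(m+\tilde{\delta})}\big)^{1/r'}=C\n{k}_2^{-(m+\tilde{\delta}-2\gamma)}$, i.e.\ this term is $O(\n{k}_2^{2\gamma-(m+\tilde{\delta})})$. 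Combining the two bounds and using $\sqrt{a+b}\le\sqrt{a}+\sqrt{b}$ yields $M_{\xi,2\gamma}^\gamma\le C(\n{k}_2^{-\gamma\rho}+\n{k}_2^{\gamma-(m+\tilde{\delta})/2})$, as required.

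The only step that genuinely has to be checked is that H\"older's inequality is applicable, i.e.\ that $r>1$, equivalently $m+\tilde{\delta}>2\gamma$. This I would verify directly from the two admissible values of $m$ in Theorem~\ref{thm:limitd}: in case (i), $m-2\gamma=2(d+\rho(\gamma-1))\ge 2d>0$; in case (ii), using $\rho>\gamma d+1$ one obtains $m-2\gamma>2(2+\gamma(d-1))>0$. I do not expect any real obstacle here --- the statement is essentially a tail/localization estimate, and the constant $C$ (depending on $d,\rho,\gamma$ and the moments of $(X,Y)$, and uniform in $k$ with $\n{k}_2$ bounded below) absorbs the remaining bookkeeping.
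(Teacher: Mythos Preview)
Your proposal is correct and follows essentially the same route as the paper: the paper bounds $M_{\xi,2\gamma}^\gamma$ by $\n{\tilde g_k(X,Y)}_{2\gamma}^\gamma$ via the envelope \eqref{eq:h2}, which is precisely your two-case split on $\{\n{X}_2\le \n{k}_2/2\}$, and then uses H\"older with exponent $p=(m+\tilde\delta)/(2\gamma)$ together with Markov's inequality for the tail piece, exactly as you do. Your explicit verification that $m+\tilde\delta>2\gamma$ in both cases of Theorem~\ref{thm:limitd} is a nice addition that the paper leaves implicit.
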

\begin{proof}
    Recall from \eqref{eq:h2} in  the proof of Lemma \ref{lemma:Esup when gamma=1} that 
    $$M_{\xi,2\gamma}^\gamma=\E\big[\sup_{a\in I_k}|f_a(X,Y)|^{2\gamma}\big]^{1/2}\leq \n{\tilde{g}_k(X,Y)}_{2\gamma}^{\gamma}.$$
    We use a similar argument as in Lemma \ref{lemma:Esup when gamma=1} to bound the right-hand side using H\"{o}lder's and Markov's inequalities. Assume that $(X,Y)\in L^{2\gamma p}$, where $p=(m+\tilde{\delta})/(2\gamma)>1$. Using our definition \eqref{eq:h2},
    \begin{align*}
    \E[\tilde{g}_k^j(X,Y)^{2\gamma}]&\leq C(\E[|Y^j-X^j|^{2\gamma}\mathds{1}_{\{\nn{X}_2\geq\nn{k}_2/2\}}]+\nn{k}_2^{-2\rho\gamma}\E[|Y^j-X^j|^{2\gamma}])\\
        &\leq C\big(\mu(\nn{X}_2\geq \frac{\nn{k}_2}{2})^{1/q}+\nn{k}_2^{-2\rho\gamma}\big)\\
        &\leq C(\nn{k}_2^{-2\gamma(p-1)}+\nn{k}_2^{-2\rho\gamma}).
    \end{align*}
    The claim thus follows.
\end{proof}

\begin{proof}[Proof of Lemma \ref{lemma:limsup}]Recall our notation \eqref{eq:Ik}. 
 We first  bound $\E\left[\sup_{x\in I_k}|\xi_n(x)|_2^\gamma\right]$, where $\gamma> 1$.   By Jensen's inequality,  $S_\xi\leq M_{\xi,2}\leq M_{\xi,2\gamma}$. Using the triangle inequality and Lemmas \ref{lemma:Esup when gamma=1} (applied with $\sigma=1$)  and \ref{lemma:momentbound}, we obtain
\begin{align*}
    \big\|\sup_{x\in I_k} |\xi_n(x)|_2 \big\|_{\gamma}&\leq 2\E\big[\sup_{x\in I_k} |\xi_n(x)|_2 \big]+\big\|\big(\sup_{x\in I_k} |\xi_n(x)|_2-2\E[\sup_{x\in I_k}|\xi_n(x)|_2]\big)_+\big\|_{\gamma}\\
    &\leq \frac{C}{\sqrt{n}}(|k|_2^{1-(m+\tilde{\delta})/2}+|k|_2^{-\rho})+\frac{C}{\sqrt{n}}M_{\xi,2\gamma}.
\end{align*}
Lemma \ref{lemma:M} states that
\begin{align}
    \E\big[\sup_{x\in I_k}  |\xi_n(x)|_2^\gamma\big]&\leq Cn^{-\gamma/2}(|k|_2^{\gamma-(m+\tilde{\delta})/2}+|k|_2^{-\gamma\rho}).\label{eq:xinub2}
\end{align}

 We now bound from below the quantity $$p_n(x)^{\gamma-1}= \Big(\frac{1}{n} \sum_{i=1}^n f_{\xi,\rho}(x-X_i)\Big)^{\gamma-1}.$$ We consider the event 
\begin{align*}
A_k&:=\bigcap_{n=1}^\infty A_{n,k}:=\bigcap_{n=1}^\infty\left\{|\{i\in \{1, \dots,n\}: |X_i|_2  \le |k|_2 /2\}|>\frac{n}{2}\right\},\qquad |k|_2 >5.
\end{align*} 
For $|x|_2 >5/2$ we have $(1+|x|_2)^{-\rho}\geq |x|_2^{-\rho}/C$. As $$|x-X_i|_2\ge |x|_2-|X_i|_2,$$ the inequality
 \begin{align}\label{eq:denominator lb}
    \inf_{x\in I_k}\frac{1}{n}\sum_{i=1}^nf_{\xi,\rho}(x-X_i)\geq \frac{1}{C}(1+|k|_2/2)^{-\rho}\geq \frac{|k|_2 ^{-\rho}}{C} 
\end{align}holds on the event $A_k$ 
for any $n\in\N$.
In addition, since $p_k:=\mu(|X|_2  \leq k)\to 1$ as $k\to\infty$ and $\{X_n\}$ are independent, Lemma \ref{lemma:binomial} yields  $\mu(A_k)\to 1$ as $\nn{k}_2 \to\infty$.

 For $\ee>0$, pick $K_1$ large such that for all $k\geq K_1$, $\mu[A_{k}^c]<\ee/2$. We now distinguish the two cases stated in the theorem:\\

\emph{Case I}: take $m=2(\gamma+d+\rho(\gamma-1))$, where $\rho>d+\gamma$.\footnote{Let us recall that the lower bound for $\rho$ is needed from Theorem \ref{prop:martingale}.}  Next, for $K>5$ we have by using \eqref{eq:denominator lb} and \eqref{eq:xinub2},
\begin{align*}
    \E\left[\mathds{1}_{A_{K_1}}\int_{\R^d\setminus[-K,K]^d}\frac{\nn{\xi_n(x)}_2^\gamma}{p_n(x)^{\gamma-1}}\d x\right]&\leq C\sum_{\nn{k}_2 \geq K}\frac{\E[\sup_{x\in I_k}\nn{\xi_n(x)}_2^\gamma]}{\nn{k}_2  ^{-\rho(\gamma-1)}}\\
    &\leq Cn^{-\gamma/2}\sum_{\nn{k}_2 \geq K}\frac{\nn{k}_2  ^{\gamma-(m+\tilde{\delta})/2}+\nn{k}_2  ^{-\gamma\rho}}{\nn{k}_2  ^{-\rho(\gamma-1)}}\\
    &\leq Cn^{-\gamma/2}(K^{\gamma-(m+\tilde{\delta})/2+d+\rho(\gamma-1)}+K^{d-\rho}),
\end{align*}which converges to $0$ as $K\to\infty$ for fixed $n$. By Markov's inequality, there exists $K_2>K_1$ such that for all $K\geq K_2$,
$$\mu\Bigg(\Bigg\{\int_{\R^d \setminus[-K,K]^d}\frac{|\frac{1}{\sqrt{n}}\sum_{i=1}^n(Y_i-X_i)f_{\xi,\rho}\left(x-X_i\right)|_2^\gamma}{(\frac{1}{n}\sum_{i=1}^nf_{\xi,\rho}\left(x-X_i\right))^{\gamma-1}}\,\d x>\eta\Bigg\}\cap A_{K_1}\Bigg)<\frac{\ee}{2}.$$
The desired statement is then immediate.\\

\emph{Case II}: take $m=2(d+1+\rho-(\rho-1)/\gamma)$. Recall that $(X,Y)\in L^{m+\tilde{\delta}}$   and  $\rho>\gamma d+1$.  
Define the random variables
$$\zeta_{n,k}:= \sup_{x\in I_k}\nn{\sqrt{n}\xi_n(x)}_2=\sup_{x\in I_k}\big|\frac{1}{\sqrt{n}}\sum_{i=1}^n(Y_i-X_i)f_{\xi,\rho}\left(x-X_i\right)\big|_2,~k\in\Z^d,~n\in\N.$$
It follows from Markov's inequality and Lemma \ref{lemma:Esup when gamma=1} that for any $\alpha>0$,
\begin{align}
    \mu(\zeta_{n,k}\geq \alpha)\leq C \alpha^{-1}\left(\nn{k}_2  ^{-2\rho}+\nn{k}_2  ^{2-(m+\tilde{\delta})}\right)^{1/2}.\label{eq:znk}
\end{align}
Therefore, by the union bound and   \eqref{eq:denominator lb}, there exists $\delta>0$ small enough such that for any $\ee>0$, for $n$ large we have
\begin{align*}\mu\left(\int_{\R^d\setminus[-K,K]^d}\frac{\nn{\xi_n(x)}_2^\gamma}{p_n(x)^{\gamma-1}}\,\d x>\eta n^{-\gamma/2} \right)    &\leq \mu\left(\sum_{\nn{k}_2  \geq K}\frac{\sup_{x\in I_k}\nn{\xi_n(x)}_2^\gamma}{\inf_{x\in I_k}p_n(x)^{\gamma-1}}>\eta n^{-\gamma/2} \right)\\
    &\leq \sum_{\nn{k}_2  \geq K}\mu\left(\frac{\zeta_{n,k}^\gamma}{\nn{k}_2  ^{-\rho(\gamma-1)}}>\frac{\eta \nn{k}_2  ^{-1-\delta}}{C} \right)+\ee\\
    &\leq C\sum_{\nn{k}_2  \geq K}\frac{(\nn{k}_2  ^{-2\rho}+\nn{k}_2  ^{2-(m+\tilde{\delta})})^{1/2}}{(\eta \nn{k}_2  ^{-(\rho(\gamma-1)+1+\delta)})^{1/\gamma}}+\ee.
\end{align*}
Since $m=2(d+1+\rho-\frac{\rho-1}{\gamma})$ and $\rho>\gamma d+1$, we have for $\delta>0$ small enough that \begin{align*}
    \sum_{\nn{k}_2  \geq K}&\frac{(\nn{k}_2  ^{-2\rho}+\nn{k}_2  ^{2-(m+\tilde{\delta})})^{1/2}}{(\eta \nn{k}_2  ^{-(\rho(\gamma-1)+1+\delta)})^{1/\gamma}}  \\
    &\leq {C{{\eta}^{-1/\gamma}} (K^{d-({\rho-1-\delta})/{\gamma}}+K^{d+1-(m+\tilde{\delta})/2+(\rho(\gamma-1)+1+\delta)/\gamma})}\to 0
\end{align*}as $K\to\infty$. Thus the claim follows.
\end{proof}

In fact, the expectation of the limit distribution in Theorem \ref{thm:limitd} exerts asymptotic behavior as $\sigma$ approaches infinity. This observation is stated in Theorem \ref{thm:expectation}. 

\begin{proof}[Proof of Theorem \ref{thm:expectation}]
Observe first that by Fubini's theorem and standard properties for the normal distribution,
\begin{align}
    \E\left[\int\nn{G_x}_2\,\d x\right]&=\int \E[\nn{G_x}_2]\d x   =\int \E\left[\sqrt{\sum_{j=1}^d\lambda_j Z_j^2}\right]\d x,\label{eq:limitexpectation}
\end{align}
where $\lambda_1(x)\geq\lambda_2(x)\geq\dots\geq\lambda_d(x)$ are the (non-negative) eigenvalues of the covariance matrix $\E[G_xG_x^\top]$, and $Z_1,\dots,Z_d$ are i.i.d.~standard Gaussian. 
Let us   recall two facts from linear algebra that give upper and lower bounds on the largest eigenvalue: 
\begin{enumerate}[(a)]
    \item Since $\mathrm{Tr}(\E[G_xG_x^\top])=\sum_{j=0}^d\lambda_j(x)$,
    \begin{align}
        \lambda_1(x)\geq \frac{1}{d}\mathrm{Tr}(\E[G_xG_x^\top])=\frac{1}{d}\sum_{1\leq j\leq d}\E[(G_x)_j^2]=\frac{1}{d}\E\left[\sum_{i=1}^d \big(Y_i-X_i\big)^2f_{\xi,\rho,\sigma}(x-X)^2\right].\label{eq:linalg1}
    \end{align}
    \item By the  Gershgorin circle theorem (Theorem 6.1.1 of \citep{horn2012matrix}), 
    \begin{align}
        \lambda_1(x)\leq \max_{1\leq i\leq d}\sum_{1\leq j\leq d}|\E[(G_x)_i(G_x)_j]|\leq \E\left[\Big(\sum_{i=1}^d |Y_i-X_i|\Big)^2f_{\xi,\rho,\sigma}(x-X)^2\right].\label{eq:linalg2}
    \end{align}
\end{enumerate}

We first prove the lower bound, where it follows from \eqref{eq:limitexpectation} and \eqref{eq:linalg1} that 
$$ \E\left[\int\nn{G_x}_2\,\d x\right]\geq \int\E[\sqrt{\lambda_1(x)}|Z_1|]\d x\geq \frac{1}{C}\int \sqrt{\E\left[\sum_{i=1}^d \big(Y_i-X_i\big)^2f_{\xi,\rho,\sigma}(x-X)^2\right]}\d x.$$Note that there exists $C>0$ such that $f_{\xi,\rho}(x/\sigma)\ge C (|x|_2/\sigma)^{-\rho}$  and $(x-X)^{-2\rho}\mathds{1}_{\{-\sigma<X<\sigma\}}\ge \sigma^{-2\rho}/C$ for all $x\in(2\sigma,3\sigma)$. 
By \eqref{eq:sigma} we then have for $\sigma\ge 1$ and $\nn{x}_2\in(2\sigma,3\sigma)$,
\begin{align*}
    \E\left[\sum_{i=1}^d \big(Y_i-X_i\big)^2f_{\xi,\rho,\sigma}(x-X)^2\right]&\geq \E\left[\sum_{i=1}^d \big(Y_i-X_i\big)^2f_{\xi,\rho,\sigma}(x-X)^2\mathds{1}_{\{-\sigma<\nn{X}_2<\sigma\}}\right]\\
    &\geq \frac{\sigma^{2(\rho-d)}}{C}\E\left[\sum_{i=1}^d \big(Y_i-X_i\big)^2\nn{x-X}_2^{-2\rho}\mathds{1}_{\{-\sigma<\nn{X}_2<\sigma\}}\right]\\
    &\geq \frac{\sigma^{-2d}}{C}\E\left[\sum_{i=1}^d \big(Y_i-X_i\big)^2 \mathds{1}_{\{-\sigma<\nn{X}_2<\sigma\}}\right]\geq \frac{\sigma^{-2d}}{C},
\end{align*}where the last step follows from our assumption that   $\sum_{i=1}^d(Y_i-X_i)^2$ is not a constant zero. 
Therefore,
\begin{align*}
    \E\left[\int\nn{G_x}_2\,\d x\right]&\geq \frac{1}{C}\int_{2\sigma\leq \nn{x}_2\leq 3\sigma}\sqrt{\E\left[\sum_{i=1}^d \big(Y_i-X_i\big)^2f_{\xi,\rho,\sigma}(x-X)^2\right]}\d x\\
    &\ge \sigma^d \sqrt{\frac{\sigma^{-2d}}{C}}\geq \frac{1}{C}.
\end{align*}
For the upper bound, note that by \eqref{eq:limitexpectation} and \eqref{eq:linalg2},
$$ \E\left[\int\nn{G_x}_2\,\d x\right]\leq \int\E[\sqrt{\lambda_1(x)}\nn{Z}_2]\d x\leq C\int\sqrt{\E\left[\Big(\sum_{i=1}^d |Y_i-X_i|\Big)^2f_{\xi,\rho,\sigma}(x-X)^2\right]}\d x.$$
By \eqref{eq:sigma},
\begin{align}
   \E\bigg[\Big(\sum_{i=1}^d |Y_i-X_i|\Big)^2f_{\xi,\rho,\sigma}(x-&X)^2\bigg]\leq C \sigma^{-2d}\E\left[\Big(\sum_{i=1}^d |Y_i-X_i|\Big)^2\mathds{1}_{\{\nn{x-X}_2<\sigma\}}\right]\nonumber\\
    &+\sigma^{2(\rho-d)}\E\left[\Big(\sum_{i=1}^d |Y_i-X_i|\Big)^2f_{\xi,\rho}(x-X)^2\mathds{1}_{\{\nn{x-X}_2>\sigma\}}\right].\label{eq:phisigma}
\end{align}
First, if $|x|_2<2\sigma$,
\begin{align*}
    &\hspace{0.5cm}\sigma^{-2d}\E\bigg[\Big(\sum_{i=1}^d |Y_i-X_i|\Big)^2\mathds{1}_{\{\nn{x-X}_2<\sigma\}}\bigg]\\
    &\hspace{3cm}+\sigma^{2(\rho-d)}\E\bigg[\Big(\sum_{i=1}^d |Y_i-X_i|\Big)^2f_{\xi,\rho}(x-X)^2\mathds{1}_{\{\nn{x-X}_2>\sigma\}}\bigg]\\
    &\leq C \bigg(\sigma^{-2d}\E\bigg[\Big(\sum_{i=1}^d |Y_i-X_i|\Big)^2\bigg]+\sigma^{2(\rho-d)}\E\bigg[\Big(\sum_{i=1}^d |Y_i-X_i|\Big)^2\nn{x-X}_2^{-2\rho}\mathds{1}_{\{\nn{x-X}_2>\sigma\}}\bigg]\bigg)\\
    &\leq C\sigma^{-2d}.
    \end{align*}
This gives \begin{align}
    \int_{\{\nn{x}_2<2\sigma\}} \sqrt{\E\left[\Big(\sum_{i=1}^d |Y_i-X_i|\Big)^2f_{\xi,\rho,\sigma}(x-X)^2\right]}\d x\leq C \int_{\{\nn{x}_2<2\sigma\}}\,\d x\sigma^{-d}\leq C.\label{eq:int1}
\end{align}
Second, consider $\nn{x}_2>2\sigma$.
Suppose that $(X,Y)\in L^{2p}$ and $p^{-1}+q^{-1}=1$, where we may assume $p>2$. By H\"{o}lder's inequality,
\begin{align*}
    \E\left[\Big(\sum_{i=1}^d |Y_i-X_i|\Big)^2\mathds{1}_{\{\nn{x-X}_2<\sigma\}}\right]&\leq  \E\left[\Big(\sum_{i=1}^d |Y_i-X_i|\Big)^{2p}\right]^{1/p}\P_0(\nn{X}_2>\nn{x}_2-\sigma)^{1/q}\\
    &\leq C(\nn{x}_2-\sigma)^{-2p/q}\leq C\nn{x}_2^{-2p/q},
\end{align*}where we used that $\nn{X}_2\in L^{2p}$ and $\sum|Y_i-X_i|\in L^{2p} $.  Again by H\"{o}lder's inequality,
\begin{align*}
\E\left[\Big(\sum_{i=1}^d |Y_i-X_i|\Big)^2f_{\xi,\rho}(x-X)^2\mathds{1}_{\{\nn{x-X}_2>\sigma\}}\right]^{q}&\leq C\E[f_{\xi,\rho}(x-X)^{2q}\mathds{1}_{\{\nn{x-X}_2>\sigma\}}]\\
    &\leq C \E[\nn{x-X}_2^{-2q\rho}\mathds{1}_{\{\nn{x-X}_2>\sigma\}}].
    \end{align*}
    In addition,
    \begin{align*}
    \E[\nn{x-X}_2^{-2q\rho}\mathds{1}_{\{\nn{x-X}_2>\sigma\}}]&= \int_0^{\sigma^{-2q\rho}} \P_0(\nn{x-X}_2^{-2q\rho} \ge y)\,\d y\\
    &=\int_0^{\sigma^{-2q\rho}}\P_0(\nn{x-X}_2\leq y^{-1/(2q\rho)})\,\d y\\
    &\leq  \int_{2\nn{x}_2^{-2q\rho}}^{\sigma^{-2q\rho}}\P_0(\nn{X}_2\geq \nn{x}_2-y^{-1/(2q\rho)})\,\d y+{2\nn{x}_2^{-2q\rho}}\\
    &\leq C(\nn{x}_2^{-2p}\sigma^{-2q\rho}+\nn{x}_2^{-2q\rho}).
\end{align*}
We conclude using \eqref{eq:phisigma} that 
$$\E\left[\Big(\sum_{i=1}^d |Y_i-X_i|\Big)^2f_{\xi,\rho,\sigma}(x-X)^2\right]\leq C (\sigma^{-2d}\nn{x}_2^{-2p/q}+\sigma^{2(\rho-d)}\nn{x}_2^{-2\rho}).$$
Recall we assumed $q<2<p$. This then yields
\begin{align}\begin{split}
   &\hspace{0.5cm}\int_{\{\nn{x}_2>2\sigma\}} \sqrt{\E\left[\Big(\sum_{i=1}^d |Y_i-X_i|\Big)^2f_{\xi,\rho,\sigma}(x-X)^2\right]}\d x\\
   &\leq C \int_{\{\nn{x}_2>2\sigma\}}(\sigma^{-d}\nn{x}_2^{-p/q}+\sigma^{\rho-d}\nn{x}_2^{-\rho})\,\d x\leq C (\sigma^{-p/q}+1)\leq C.
\end{split}\label{eq:int2}
\end{align}
    Combining \eqref{eq:int1} and \eqref{eq:int2} gives the upper bound
    $$\int \E\left[\sqrt{\sum_{j=1}^d\lambda_j Z_j^2}\right]\d x\leq {C},$$and hence finishing the proof by \eqref{eq:limitexpectation}.
\end{proof}

A comparable limit distribution result also exists for stationary $\alpha$-mixing sequences. In our proof of Theorem \ref{thm:mixingconvergence}, we follow a similar path to the proof of Theorem \ref{thm:limitd}, starting from an empirical bound. And similar to our proof for finite-sample rates in Appendix \ref{sec:tech_dev_finite_and_general}, we start by considering \eqref{eq:Ik} and \eqref{eq:xindef}. For $k\in\R^d$, we let $\F_k=\{f_a\}_{a\in I_k}\cup\{0\}$ and $\F_k^j=\{f_a^j\}_{a\in I_k}\cup\{0\}$, where $f_a(x,y)=(y-x)f_{\xi,\rho}(a-x),~x,y\in\R^d$ and $f_a^j$ denotes the $j$-th component of $f_a$ for $1\leq j\leq d$. The following lemma parallels Lemma \ref{lemma:Esup when gamma=1} in Appendix \ref{sec:tech_dev_finite_and_general}. 

\begin{lemma}\label{lemma:numerator}
    Suppose that  for parameters $\lambda >2$, $p>1$, $r\in(1,p\lambda-1)$, and $s\in[2,\infty)\cap((dr-2)/(r-1),\infty)$, it holds that    
    $\{(X_i,Y_i)\}_{i\in\N}$ forms a stationary $\alpha$-mixing sequence in $L^{p\lambda }$ with mixing coefficients $\{\alpha_n\}_{n\in\N}$ satisfying $A_{\alpha,\lambda }<\infty$.
    Then for some constant $C(s,r)$ depending on $s,r$ only  and $C(s)$ depending on $s$ only, we have for each $k\in\Z^d$ such that $\nn{k}_2\geq\sigma$, 
\begin{align}
&\E\left[\sup_{x\in I_k}\nn{\xi_n(x)}_2\right]\nonumber\leq d\Big(LC_1(k)+C(s)C_2(k)^{1/s}+(1-2^{d/s-1})^{-1}C_3(k)^{1/d}C_2(k)^{(1/d-1/s)/2}\nonumber\\
    &\hspace{10cm}+C(s,r)C_4(k)^{1/(1+\beta)}\Big),\label{eq:mixingfinitesample}
\end{align}  where the quantities  $C_j(k),\,1\leq j\leq 4$ are explicitly defined in \eqref{eq:C2}, \eqref{eq:deltajk}, \eqref{eq:Nk}, and \eqref{eq:ed} below respectively. 
    In particular, there exists $C>0$ independent of $k$ and $n$ such that for each $\nn{k}_2\geq\sigma$, 
    \begin{align}
        \E\left[\sup_{x\in I_k}\nn{\xi_n(x)}_2\right]\leq C\nn{k}_2^{-\min(c_1,c_2,c_3)},\label{eq:mixingC}
    \end{align}
    where:\begin{itemize}
    \item $c_1:=\min(\rho(r+1),(p\lambda-r-1))$ is the negative exponent of $\nn{k}_2$ for the term $C_1(k)$; see \eqref{eq:C2};
        \item $c_2:=\min(\rho/s,(p-1)/s)$ is the negative exponent for $C_2(k)$; see \eqref{eq:deltajk};
        
        \item $c_3:=(\min(\rho+1,p-1)+(\frac{1}{2d}-\frac{v}{2s})\min(\rho,p-1))/(1+\beta)$ is the negative exponent for $C_4(k)^{1/(1+\beta)}$; see \eqref{eq:ed};\footnote{The remaining term $C_3(k)^{1/d}C_2(k)^{(1/d-1/s)/2}$ carries a  larger negative exponent in $\nn{k}_2$ than the term $C_4(k)^{1/(1+\beta)}$.}
        \item $\beta:=-(1/r-1)(1-2/s)\in[0,1)$;
        \item $v:=(1-\frac{1}{r}(1-\frac{2}{s}))^{-1}\in[1,\infty)$.
    \end{itemize}
\end{lemma}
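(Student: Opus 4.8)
The plan is to follow the architecture of the proof of Lemma~\ref{lemma:Esup when gamma=1}, replacing the i.i.d.\ empirical-process input (Lemma~\ref{lemma:empiricalbound}) by a maximal inequality adapted to stationary $\alpha$-mixing sequences. First I would reduce to a single coordinate: by the triangle inequality $\n{\xi_n(x)}_2\le\sum_{j=1}^d|\xi_n^j(x)|$, so it suffices to bound, for each $1\le j\le d$, the quantity $\E[\sup_{a\in I_k}|\tfrac1n\sum_{i=1}^n(f_a^j(X_i,Y_i)-\E[f_a^j(X,Y)])|]$, where the centering is legitimate because $(X,Y)\lawis\P_0$ is a martingale, cf.~\eqref{eq:slln}. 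This is the empirical process of the mixing sequence $\{(X_i,Y_i)\}$ indexed by $\F_k^j$, and its envelope and within-cell oscillation are governed by the functions $\tilde g_k^j$ and $\tilde h_k^j$ already computed in \eqref{eq:h2} and \eqref{eq:h}.

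Next I would chain over $\F_k^j$ using the same nested dyadic partitions $\{\A_l\}$ of the box $I_k$ as in Lemma~\ref{lemma:Esup when gamma=1} (splitting each coordinate into $\asymp N_l^{1/d}$ pieces, $N_l=2^{2^l}$), so that the oscillation of $f_a^j$ over a level-$l$ cell is at most the cell diameter times $\tilde h_k^j$ via the Lipschitz estimate \eqref{eq:gradient of f}. The one genuinely new ingredient is the increment bound: for a fixed bracket function one invokes a Rosenthal-type $L^s$ moment inequality for $\alpha$-mixing partial sums (see \citep{rio2017asymptotic}), whose constant is controlled by $A_{\alpha,\lambda}$ together with the $L^s$- and $L^{p\lambda}$-norms of the summand, and the maximum over the $|\A_l|\le N_l$ cells of a level is absorbed by a union bound giving an $N_l^{1/s}$ factor. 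Summing the resulting series over $l$ produces the four contributions in \eqref{eq:mixingfinitesample}, which one would identify respectively as: the coarsest-level/pointwise-envelope term $LC_1(k)$ (absolute constant, from the $\sqrt{\lambda}$-type dependence of the Rosenthal bound), the fixed-function $L^s$-variance term $C(s)C_2(k)^{1/s}$, an intermediate covering-number-weighted chain term $C_3(k)^{1/d}C_2(k)^{(1/d-1/s)/2}$ (the prefactor $(1-2^{d/s-1})^{-1}$ being finite precisely because the hypothesis on $s$ forces $s>d$), and the dominant chaining term $C(s,r)C_4(k)^{1/(1+\beta)}$. Here $\beta=-(1/r-1)(1-2/s)\in[0,1)$ is exactly the exponent balancing the metric-entropy growth $\log|\A_l|\asymp 2^l$ against the $L^s$-to-$L^{p\lambda}$ interpolation of the increments, with $v=(1-\tfrac1r(1-\tfrac2s))^{-1}$ the associated conjugate weight, and the standing constraints $r\in(1,p\lambda-1)$ and $s\ge\max(2,(dr-2)/(r-1))$ are what make every dyadic series converge. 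The explicit expressions \eqref{eq:C2}, \eqref{eq:deltajk}, \eqref{eq:Nk}, \eqref{eq:ed} for $C_1(k),\dots,C_4(k)$ simply record the result of this bookkeeping.

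To pass to the polynomial bound \eqref{eq:mixingC}, I would substitute the concrete forms of $\tilde g_k^j$ and $\tilde h_k^j$ for $\n{k}_2\ge\sigma$, exactly as in Lemma~\ref{lemma:Esup when gamma=1}: use the radial decay \eqref{eq:gradient of f} of $f_\xi$ on $\{\n{x}_2\le\n{k}_2/2\}$ and the global bound elsewhere, and then apply H\"older's inequality together with Markov's inequality on the event $\{\n{X}_2\ge\n{k}_2/2\}$, using $(X,Y)\in L^{p\lambda}$. Each $C_j(k)$ then becomes an explicit combination of powers of $\n{k}_2$ and $\sigma$ carrying the $n^{-1/2}$ central-limit scaling, and reading off the slowest-decaying power of $\n{k}_2$ in each yields the exponents $c_1,c_2,c_3$ in the statement; a routine comparison of exponents (the footnote) shows that $C_3(k)^{1/d}C_2(k)^{(1/d-1/s)/2}$ decays strictly faster than $C_4(k)^{1/(1+\beta)}$, so it does not enter the minimum. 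As $n^{-1/2}\le 1$, the estimate is uniform in $n$, which gives \eqref{eq:mixingC}.

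The main obstacle is the mixing maximal inequality itself: one must produce an $L^s$ moment bound for partial sums of each (unbounded, merely $L^{p\lambda}$) bracket function whose constant depends on the dependence only through $A_{\alpha,\lambda}$, and then thread it through the chaining so that the dyadic sum still converges---this is precisely what dictates the admissible ranges of $\lambda,p,r,s$ and the definitions of $\beta$ and $v$. A secondary but delicate point is that, since the envelope $\tilde g_k^j$ is not bounded, every bracket estimate must be combined with a truncation/H\"older step on $\{\n{X}_2\ge\n{k}_2/2\}$, and one has to verify that the truncation level remains compatible with the mixing term; this is where the assumption $\n{k}_2\ge\sigma$ and the fact that $f_\xi$ has heavier tails than $\P_0$ are used.
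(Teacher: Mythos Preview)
Your high-level strategy---reduce to one coordinate, control the oscillation of $f_a^j$ over $I_k$ via the Lipschitz estimate \eqref{eq:gradient of f}, then invoke a mixing-adapted maximal inequality---is correct in spirit, but the paper does \emph{not} rebuild the chaining from scratch as you propose. Instead it invokes a ready-made uniform bound for $\alpha$-mixing empirical processes (stated as Lemma~\ref{lemma:alpharate}, from \citep{hariz2005uniform}), whose input is the bracketing number of $\F_k^j$ in the weighted norm
\[
\n{f}_{2,X}=\sqrt{\int_0^1\alpha^{-1}(u)\,Q_f(u)^2\,\d u},
\]
with $Q_f$ the quantile function of $|f(X_1,Y_1)|$. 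The work in the paper is therefore to bound $Q_{f_a^j-f_b^j}(u)$ via Markov/H\"older (this is where $A_{\alpha,\lambda}$ enters), deduce $N(x,\n{\cdot}_{2,X},\F_k)\le x^{-d}C_3(k)$, and then read off the four terms in Lemma~\ref{lemma:alpharate} as $C_1,\dots,C_4$. Your plan of ``Rosenthal inequality per bracket $+$ union bound over $N_l$ cells $+$ sum over $l$'' would, if carried out carefully, reproduce a bound of the same qualitative shape, but this is essentially re-proving Hariz's result, and it would not automatically yield the \emph{exact} four-term expression the lemma records.

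There is also a concrete misidentification. You describe $C_1(k)$ as the ``coarsest-level/pointwise-envelope term \dots\ from the $\sqrt{\lambda}$-type dependence of the Rosenthal bound.'' In the paper $C_1(k)$ is the bound on the \emph{truncation} term $\sqrt{n}\,\E[F\mathds{1}_{\{F>n^{1/(2r)}\}}]$ of Lemma~\ref{lemma:alpharate}, obtained (Lemma~\ref{lemma:r}) by showing $\E[|F|^{r+1}]\le C_1(k)$ for the envelope $F=\sup_{f\in\F_k^j}|f|$; the hypothesis $r<p\lambda-1$ is exactly what makes $F\in L^{r+1}$. Likewise $\beta$ and $v$ are not artifacts of balancing a dyadic chain against interpolation as you describe, but are parameters built into the statement of Lemma~\ref{lemma:alpharate}; the paper simply inherits them. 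Your description of how the polynomial decay \eqref{eq:mixingC} is extracted from the explicit $C_j(k)$ is, on the other hand, accurate.
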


\begin{remark}\label{remark:solution}
    For each fixed $d$ and $\lambda$, it is possible to solve (at least numerically) for the feasible pairs of 
    \begin{align}
        (p,\rho,r,s)~~~~~\mathrm{s.t.}~~~~~\min(c_1,c_2,c_3)>d.\label{eq:system of ineqs}
    \end{align} It is also elementary to check that for each $\rho>2d^2$, 
     with the choices $p=2d^2+2,~r=2,~s=2d$, $(p,\rho,r,s)$ forms a feasible pair.  In other words, in the setting of Theorem \ref{thm:mixingconvergence}, we may replace \eqref{eq:mixingC} by 
     \begin{align}
         \E\left[\sup_{x\in I_k}\nn{\xi_n(x)}_2\right]\leq C\nn{k}_2^{-(d+\delta)}\label{eq:mixingC2}
     \end{align}for some $\delta>0$.
\end{remark}

To establish Lemma \ref{lemma:numerator}, the following empirical bound serves as the analogue of Lemma \ref{lemma:empiricalbound} for $\alpha$-mixing sequences. This arises from a more general result proven along the way in \citep[Theorem 2 and Corollary 1]{hariz2005uniform}, which applies to the special case of $\alpha$-mixing sequences; see also \citep{rio2017asymptotic} for relevant literature.

\begin{lemma}[\citep{hariz2005uniform,rio2017asymptotic}]\label{lemma:alpharate}
Let $\{X_i\}_{i\in\N}$ be a stationary  $\alpha$-mixing sequence, and $\F$ be a class of  real-valued  functions. For $f\in\F$, define the norm
$$\n{f}_{2,X}:=\sqrt{\int_0^1\alpha^{-1}(u)Q_f(u)^2\d u},$$
    where $Q_f$ is the quantile of $|f(X_1)|$. Suppose that $F:=\sup_{f\in\F}|f|\in L^{r+1}$ for some $r>1$ and $$\int_0^1 N(x,\n{\cdot}_{2,X},\F)^{v/s}\d x<\infty,$$
    where $s\geq 2$ and  $v=(1-\frac{1}{r}(1-\frac{2}{s}))^{-1}$. 
Then {for all $\delta>0$},
\begin{align}
    &\E\left[\sup_{f,g\in\F,\,\n{f-g}_{2,X}<\delta}\left|\frac{1}{\sqrt{n}}\sum_{i=1}^n (f(X_i)-g(X_i))\right|\right]\nonumber\\
    \begin{split}
        &\leq C(s,r)\ee(\delta)^{\frac{1}{1+\beta}}+C(s)\delta^{1/s}+C(s)\sum_{\ell=q_0+1}^{q+1}N(2^{-\ell},\n{\cdot}_{2,X},\F)^{1/s}2^{-\ell}\\
    &\hspace{3cm}+L\sqrt{n}\,\E[F\mathds{1}_{F>n^{1/(2r)}}],
    \end{split}\label{eq:est}
\end{align}
where {we have used the following definitions:}
\begin{itemize}
    \item $q_0=q_0(\delta)$ is the largest integer with $N(2^{-q_0},\n{\cdot}_{2,X},\F)\leq 1/\sqrt{\delta}$;
    \item $\ee(\delta)=\int_0^{2^{-q_0}}{N(x,\n{\cdot}_{2,X},\F)}^{v/s}\d x$;
    \item $q=\lfloor \log(n\ee(\delta)^{1/(1+\beta)})/(2\log 2)\rfloor+1$;
    \item $\beta=-(1/r-1)(1-2/s)$.
\end{itemize} 
\end{lemma}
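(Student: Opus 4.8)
The plan is to obtain \eqref{eq:est} by specializing the general weak-dependence chaining bound of \citep[Theorem 2 and Corollary 1]{hariz2005uniform} to stationary $\alpha$-mixing sequences (see also \citep{rio2017asymptotic} for the relevant covariance and coupling inequalities); below I describe the reduction and the three ingredients that a self-contained argument would require, in the order I would carry them out.

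\textbf{Step 1: Truncation.} First I would reduce to a bounded class. Set $M:=n^{1/(2r)}$ and decompose each $f\in\F$ as $f=f\mathds{1}_{\{|f|\le M\}}+f\mathds{1}_{\{|f|>M\}}$. Since $|f|\le F$ and $\{X_i\}$ is stationary, the empirical process indexed by the second pieces is bounded in $L^1$ by $2\sqrt{n}\,\E[F\mathds{1}_{\{F>M\}}]$, which is finite because $F\in L^{r+1}$; this contributes the last term of \eqref{eq:est}. It therefore suffices to bound the expected supremum of the empirical process over the truncated class, whose members are bounded by $M$ and whose $\n{\cdot}_{2,X}$-distances are no larger than before.

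\textbf{Step 2: Deviation inequality for $\alpha$-mixing partial sums.} The analytic heart of the proof is a Fuk--Nagaev/Bernstein-type tail bound for $\sum_{i=1}^n g(X_i)$ when $g$ is centered and bounded by $M$: a sub-Gaussian regime of the form $\exp(-ct^2/(n\n{g}_{2,X}^2))$ plus a polynomial tail in $M$, $n$ and $t$. The variance-proxy role of $\n{g}_{2,X}^2$ comes from Rio's covariance inequality, which gives $\mathrm{Var}(\sum_{i=1}^n g(X_i))\le L\,n\,\n{g}_{2,X}^2$; the full tail bound then follows from the blocking technique together with a coupling lemma for $\alpha$-mixing sequences. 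The second, heavy tail is exactly what forces the truncation level $n^{1/(2r)}$ and, upon optimizing the split between the two regimes of the bound, produces the exponent $\beta=-(1/r-1)(1-2/s)$.

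\textbf{Step 3: Generic chaining.} For each scale $\ell\ge q_0$ I would fix an $\n{\cdot}_{2,X}$-net of $\F$ of cardinality $N(2^{-\ell},\n{\cdot}_{2,X},\F)$, with $q_0=q_0(\delta)$ calibrated so that at the coarsest relevant scale the net size is at most $1/\sqrt{\delta}$ --- matching that the supremum is taken over a set of $\n{\cdot}_{2,X}$-diameter $<\delta$. Writing each $f$ as a telescoping sum of successive net approximations up to level $q=\floor{\log(n\ee(\delta)^{1/(1+\beta)})/(2\log 2)}+1$, beyond which the discretization residual is negligible against $1/\sqrt{n}$, I would apply the bound of Step 2 to each link and union bound over the at most $N(2^{-\ell},\n{\cdot}_{2,X},\F)^2$ links at scale $\ell$. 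Summing over scales yields the remaining three terms of \eqref{eq:est}: the entropy integral $\ee(\delta)^{1/(1+\beta)}$ from the scales below $q_0$ (where the heavy-tail part of the deviation bound governs), the term $\delta^{1/s}$ from the diameter of the supremum set, and the dyadic entropy sum $\sum_{\ell=q_0+1}^{q+1}N(2^{-\ell},\n{\cdot}_{2,X},\F)^{1/s}2^{-\ell}$ from the intermediate scales.

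\textbf{Main obstacle.} The delicate step is Step 2: establishing the deviation inequality with the sharp dependence on both $\n{\cdot}_{2,X}$ and the truncation level $M$, with constants uniform over the class, which requires a careful choice of block lengths inside Rio's coupling machinery. The rest --- optimizing the two-regime bound to extract $\beta$, calibrating $q_0$ and $q$, and summing the chaining contributions --- is technical bookkeeping. Because precisely this programme is executed in \citep[Theorem 2 and Corollary 1]{hariz2005uniform} within a weak-dependence framework that contains stationary $\alpha$-mixing sequences as a special case, and the supporting inequalities are collected in \citep{rio2017asymptotic}, the lemma follows by specialization and a translation of notation.
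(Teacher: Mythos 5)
Your proposal follows the same approach as the paper: Lemma \ref{lemma:alpharate} is not proved from scratch there either, but is obtained by specializing \citep[Theorem 2 and Corollary 1]{hariz2005uniform} (with background from \citep{rio2017asymptotic}) to stationary $\alpha$-mixing sequences, which is exactly what you do. Your sketch of the underlying mechanism---truncation at level $n^{1/(2r)}$ producing the $\sqrt{n}\,\E[F\mathds{1}_{F>n^{1/(2r)}}]$ term, a Rio-type variance/Fuk--Nagaev deviation bound in terms of $\n{\cdot}_{2,X}$, and a chaining argument over scales calibrated by $q_0(\delta)$ and $q$---is a faithful description of what those references execute, so the proposal is correct and essentially matches the paper.
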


We also record a preliminary estimate for the last term in \eqref{eq:est}, proving that it has power decay in $k$ and is $O(1)$ in $n$. Eventually, we will show the same for the right-hand side of \eqref{eq:est}.
\begin{lemma}\label{lemma:r}
    In the above setting, suppose that $\F=\F_k^j$ and $(X,Y)\in L^{p\lambda}$, then for $1<r<p\lambda-1$, there exists $C>0$ independent of $n,k$ such that $\sup_{f\in\F_k^j}|f|\in L^{r+1}$ and 
    $$\sqrt{n}\,\E[F\mathds{1}_{F>n^{1/(2r)}}]\leq C_1(k)\leq C\left( \nn{k}_2^{-\rho(r+1)}+\nn{k}_2^{-(p\lambda-r-1)}\right),$$where $F=F_k^j=\sup_{f\in\F_k^j}|f|$ and $C_1(k)$ is defined in \eqref{eq:C2} below.
\end{lemma}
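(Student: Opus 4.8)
The plan is to reduce the claim to a single moment estimate on the envelope $F=F_k^j:=\sup_{a\in I_k}|f_a^j|=|Y^j-X^j|\sup_{a\in I_k}f_\xi(a-X)$, and then to bound that moment by splitting according to whether $X$ lies in the ball of radius $\n{k}_2/2$. First I would record a deterministic domination: for $a\in I_k$ one has $\n{a}_2\ge\n{k}_2$, so $\n{a-X}_2\ge\n{k}_2/2$ on $\{\n{X}_2\le\n{k}_2/2\}$, and combining this with $f_\xi(a-X)\le f_\xi(0)=C_\rho$ in general recovers the envelope \eqref{eq:h2} (specialized to the relevant case $\sigma=1$):
\begin{align*}
F\le|Y^j-X^j|\Bigl(C_\rho 2^\rho\n{k}_2^{-\rho}\,\mathds{1}_{\{\n{X}_2\le\n{k}_2/2\}}+C_\rho\,\mathds{1}_{\{\n{X}_2>\n{k}_2/2\}}\Bigr)\le C_\rho|Y^j-X^j|.
\end{align*}
Since $r+1<p\lambda$ and $(X,Y)\in L^{p\lambda}$ (this is the only place the hypotheses enter — no mixing is used, only the common marginal law), the right-hand side lies in $L^{r+1}$, which settles the integrability claim $\sup_{f\in\F_k^j}|f|\in L^{r+1}$.

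Next I would dispose of the truncated expectation with the elementary bound $F\mathds{1}_{\{F>t\}}\le t^{-r}F^{r+1}$, valid because $\{F>t\}$ forces $F^r/t^r>1$. Taking $t=n^{1/(2r)}$, so that $t^{-r}=n^{-1/2}$, this gives
\begin{align*}
\sqrt n\,\E\bigl[F\mathds{1}_{\{F>n^{1/(2r)}\}}\bigr]\le\E\bigl[F^{r+1}\bigr]=:C_1(k),
\end{align*}
which is exactly the first inequality asserted in the lemma, with $C_1(k)$ the quantity written out explicitly in \eqref{eq:C2}; the key point is that the factor $n^{1/2}$ cancels exactly, so $C_1(k)$ carries no $n$-dependence.

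It then remains to estimate $\E[F^{r+1}]$ using the envelope above, which splits into a ``bulk'' and a ``tail'' part. The bulk part is at most $C\n{k}_2^{-\rho(r+1)}\E[|Y^j-X^j|^{r+1}]\le C\n{k}_2^{-\rho(r+1)}$, using once more $r+1<p\lambda$. For the tail part I would apply Hölder's inequality with exponents $p\lambda/(r+1)$ and its conjugate, followed by Markov's inequality $\P(\n{X}_2>\n{k}_2/2)\le C\n{k}_2^{-p\lambda}$:
\begin{align*}
\E\bigl[|Y^j-X^j|^{r+1}\mathds{1}_{\{\n{X}_2>\n{k}_2/2\}}\bigr]\le\n{Y^j-X^j}_{p\lambda}^{r+1}\,\P\bigl(\n{X}_2>\n{k}_2/2\bigr)^{1-\frac{r+1}{p\lambda}}\le C\n{k}_2^{-(p\lambda-r-1)}.
\end{align*}
Adding the two contributions yields $C_1(k)\le C\bigl(\n{k}_2^{-\rho(r+1)}+\n{k}_2^{-(p\lambda-r-1)}\bigr)$, the second inequality of the lemma.

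The computation is routine and there is no genuine obstacle; the only points that actually use the hypotheses are the choice of truncation level $n^{1/(2r)}$ (forced by wanting $\sqrt n\,t^{-r}=1$) and the constraint $r<p\lambda-1$, which is precisely what makes the tail exponent $p\lambda-r-1$ strictly positive so that $C_1(k)$ genuinely decays in $\n{k}_2$. The only mild care needed is to keep all constants uniform in both $n$ and $k$, which is transparent here because the $n$-power cancels and the envelope is explicit.
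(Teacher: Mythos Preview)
Your proof is correct and follows essentially the same approach as the paper: both bound the truncated expectation by $\E[F^{r+1}]$ and then control that moment via the envelope \eqref{eq:h2} together with H\"older's and Markov's inequalities. The only cosmetic differences are that the paper reaches the first bound via H\"older--Markov on the tail probability rather than your more direct pointwise inequality $F\mathds{1}_{\{F>t\}}\le t^{-r}F^{r+1}$, and that in the paper $C_1(k)$ denotes the explicit expression in \eqref{eq:C2} (an upper bound for $\E[F^{r+1}]$) rather than $\E[F^{r+1}]$ itself.
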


\begin{proof}
We first estimate $\E[|F|^{r+1}]$. Recall from \eqref{eq:h2} that $|F(x,y)|\leq \tilde{g}_k^j(x,y)$ and hence using H\"{o}lder's and Markov's  inequalities with $q'=p\lambda/(r+1)$ and $p'$ being conjugates (similarly as the derivation of \eqref{eq:h3}), 
\begin{align}
   &\hspace{0.5cm} \E[|F|^{r+1}]\nonumber\\
   &\leq \E\left[\Big(\frac{\tilde{g}_k^j(X,Y)}{|X^j-Y^j|}\Big)^{(r+1)p'}\right]^{1/p'}\E[|X^j-Y^j|^{(r+1)q'}]^{1/q'}\nonumber\\
    &\leq \left(|f_{\xi,\rho}(k/2)|^{(r+1)p'}+(C_\rho\sigma^{-d})^{(r+1)p'}\E[\nn{X}_2^{\lambda p}](\nn{k}_2/2)^{-\lambda p}\right)^{1/p'}\E[|X^j-Y^j|^{p\lambda}]^{1/q'}\nonumber\\
    &\leq \left((\frac{\nn{k}_2}{2\sigma}+1)^{-\rho(r+1)}+\E[\nn{X}_2^{\lambda p}]^{1/p'}(\frac{\nn{k}_2}{2})^{-(p\lambda-r-1)}\right)(C_\rho\sigma^{-d})^{r+1}\E[|X^j-Y^j|^{p\lambda}]^{1/q'}\nonumber\\
    &=:C_1(k).\label{eq:C2}
\end{align}
In particular, this shows $F\in L^{r+1}$. Now, using again H\"{o}lder's and Markov's  inequalities,
\begin{align*}
    \sqrt{n}\,\E[F\mathds{1}_{F>n^{1/(2r)}}]&\leq \sqrt{n}\n{F}_{r+1}\P(F>n^{1/(2r)})^{r/(r+1)}\\
    &\leq \sqrt{n}\n{F}_{r+1}n^{-1/2}\E[|F|^{r+1}]^{r/(r+1)}\\
    &\leq \E[|F|^{r+1}]\leq C_1(k).
\end{align*}
   It follows from definition that $$C_1(k)\leq C\left( \nn{k}_2^{-\rho(r+1)}+\nn{k}_2^{-(p\lambda-r-1)}\right).$$ The proof is complete.
\end{proof}

\begin{proof}[Proof of Lemma \ref{lemma:numerator}]
 First, by the mean-value theorem, for any $a,b\in \R^d$ and $1\leq j\leq d$ we have
$$|f_a^j(x,y)-f_b^j(x,y)|\leq C_5|x^j-y^j|\nn{a-b}_2\sup_{t\in[a,b]}\left(\frac{\nn{t-x}_2}{\sigma}+1\right)^{-(\rho+1)},$$where $C_5=\rho C_\rho \sigma^{-(d+1)}$. 
By Markov's and H\"{o}lder's inequalities, with $(p,q)$ denoting a conjugate pair, for  $\nn{k}_2>10$, $a,b\in I_k$, and $\lambda >0$,
\begin{align*}
   &\hspace{0.5cm}\P[|f_a^j(X,Y)-f_b^j(X,Y)|>s]\\
   &\leq \P\left[C_5\nn{a-b}_2|X^j-Y^j|\sup_{t\in I_k}(\nn{t-X}_2+1)^{-(\rho+1)}>s\right]\\
    &\leq C_5^\lambda \nn{a-b}_2^\lambda  s^{-\lambda }\E\left[|X^j-Y^j|^\lambda \sup_{t\in I_k}\left(\frac{\nn{t-X}_2}{\sigma}+1\right)^{-(\rho+1)\lambda}\right]\\
    &\leq C_5^\lambda\nn{a-b}_2^\lambda  s^{-\lambda }\E[|X^j-Y^j|^{p\lambda }]^{1/p}\E\left[\sup_{t\in I_k}\left(\frac{\nn{t-X}_2}{\sigma}+1\right)^{-(\rho+1)q\lambda}\right]^{1/q}\\
    &\leq C_5^\lambda\nn{a-b}_2^\lambda  s^{-\lambda }\n{X^j-Y^j}_{p\lambda}^\lambda\left((\frac{\nn{k}_2}{2\sigma})^{-(\rho+1)\lambda }+\E[\nn{X}_2^{p\lambda}]^{1/q}(\frac{\nn{k}_2}{2})^{-p\lambda /q}\right),
\end{align*}where the last step follows in a similar way as the derivation of \eqref{eq:h3}, given that $(X,Y)\in L^{p\lambda }$. Recall that $Q_f$ is the quantile of $|f(X_1)|$. 
 As a consequence, for $a,b\in I_k$,
\begin{align*}
    &\hspace{0.5cm}Q_{f_a^j-f_b^j}(u)\\
    &\leq  C_5\nn{a-b}_2\n{X^j-Y^j}_{p\lambda}u^{-1/\lambda }\left((\frac{\nn{k}_2}{2\sigma})^{-(\rho+1)\lambda }+\E[\nn{X}_2^{p\lambda}]^{1/q}(\frac{\nn{k}_2}{2})^{-p\lambda /q}\right)^{1/\lambda}.
\end{align*}
In particular, for $A,B\in I_k$ with $A<B$,
\begin{align}\label{eq:ab}
    &\sup_{a,b\in [A,B]}Q_{f_a^j-f_b^j}(u)\nonumber\\
    &\hspace{0.5cm}\leq C_5\nn{a-b}_2\n{X^j-Y^j}_{p\lambda}u^{-1/\lambda }\left((\frac{\nn{k}_2}{2\sigma})^{-(\rho+1)\lambda }+\E[\nn{X}_2^{p\lambda}]^{1/q}(\frac{\nn{k}_2}{2})^{-p\lambda /q}\right)^{1/\lambda}.
\end{align}
Moreover, for $a\in I_k$,
\begin{align*}
    &\hspace{0.5cm}\P[|f_a^j(X,Y)|>s]\\
    &\leq\P\left[C_\rho\rho^{-d}|X^j-Y^j|\sup_{t\in I_k}\left(\frac{\nn{t-X}_2}{\sigma}+1\right)^{-\rho}>s\right]\\
    &\leq  s^{-\lambda }\E\left[(C_\rho\rho^{-d})^\lambda|X^j-Y^j|^\lambda \sup_{t\in I_k}\left(\frac{\nn{t-X}_2}{\sigma}+1\right)^{-\rho\lambda }\right]\\
    &\leq (C_\rho\rho^{-d})^\lambda \E[|X^j-Y^j|^{p\lambda}]^{1/p}s^{-\lambda }\E\left[\sup_{t\in I_k}\left(\frac{\nn{t-X}_2}{\sigma}+1\right)^{-\rho q\lambda }\right]^{1/q}\\
    &\leq (C_\rho\rho^{-d})^\lambda \E[|X^j-Y^j|^{p\lambda}]^{1/p}s^{-\lambda }\left((\frac{\nn{k}_2}{2\sigma})^{-\rho\lambda }+\E[\nn{X}_2^{p\lambda}]^{1/q}(\frac{\nn{k}_2}{2})^{-p\lambda /q}\right).
\end{align*}This implies (recalling that $\F_k=\{f_a\}_{a\in I_k}\cup\{0\}$)
\begin{align*}
    &\hspace{0.5cm}\sup_{f,g\in \F_k}Q_{f^j-g^j}(u)\\
    &\leq LC_\rho\rho^{-d}\n{X^j-Y^j}_{p\lambda}\left((\frac{\nn{k}_2}{2\sigma})^{-\rho\lambda }+\E[\nn{X}_2^{p\lambda}]^{1/q}(\frac{\nn{k}_2}{2})^{-p\lambda /q}\right)^{1/\lambda}u^{-1/\lambda }.
\end{align*}
 Since $A_{\alpha,\lambda }<\infty$, we have by definition,
\begin{align}
    \delta_k^j&:=\sup_{f,g\in\F_k}\n{f^j-g^j}_{2,X}\nonumber\\
    &\leq LC_\rho\rho^{-d}\n{X^j-Y^j}_{p\lambda}\left((\frac{\nn{k}_2}{2\sigma})^{-\rho\lambda }+\E[\nn{X}_2^{p\lambda}]^{1/q}(\frac{\nn{k}_2}{2})^{-p\lambda /q}\right)^{1/\lambda}A_{\alpha,\lambda }=C_2(k).\label{eq:deltajk}
\end{align}
In addition, \eqref{eq:ab} implies that for $A,B\in I_k$ with $A<B$,
\begin{align*}
    &\sup_{a,b\in[A,B]}\n{f_a^j-f_b^j}_{2,X}\\
    &\leq C_5\nn{a-b}_2\n{X^j-Y^j}_{p\lambda}A_{\alpha,\lambda }\left((\frac{\nn{k}_2}{2\sigma})^{-(\rho+1)\lambda }+\E[\nn{X}_2^{p\lambda}]^{1/q}(\frac{\nn{k}_2}{2})^{-p\lambda /q}\right)^{1/\lambda}.
\end{align*}
Therefore,
\begin{align}
    \label{eq:Nk}&N(x,\n{\cdot}_{2,X},\F_k)\nonumber\\
    &\le L\left(x^{-1}\sqrt{d}C_5 \n{X^j-Y^j}_{p\lambda}A_{\alpha,\lambda }\left((\frac{\nn{k}_2}{2\sigma})^{-(\rho+1)\lambda }+\E[\nn{X}_2^{p\lambda}]^{1/q}(\frac{\nn{k}_2}{2})^{-p\lambda /q}\right)^{1/\lambda}\right)^d\nonumber\\
    &=:x^{-d}C_3(k).
\end{align}
 In particular, $\int_0^1 N(x,\n{\cdot}_{2,X},\F_k)^{v/s}\,\d x<\infty$ for $s>dv$.

 Next we compute $q_0${$(\delta_k^j)$} and $\ee(\delta_k^j)$. First, by definition of $q_0${$(\delta_k^j)$}, $N(2^{-q_0},\n{\cdot}_{2,X},\F_k)\leq (\delta_k^j)^{-1/2}\leq N(2^{-q_0-1},\n{\cdot}_{2,X},\F_k)$.  Using \eqref{eq:Nk}, we have
$$(\delta_k^j)^{-1/2}\leq {N(2^{-q_0-1},\n{\cdot}_{2,X},\F_k)}\leq 2^{d(q_0+1)}C_3(k).$$
This in turn yields
\begin{align}
    2^{q_0+1}\geq (C_3(k)(\delta_k^j)^{1/2})^{-1/d}\label{eq:q_0}
\end{align}
and thus
$$2^{-q_0} \le C_3(k)^{1/d} (\delta_k^j)^{1/(2d)}.$$
Using \eqref{eq:deltajk}, \eqref{eq:Nk}, and \eqref{eq:q_0}, we obtain
\begin{align}
    \ee(\delta_k^j)&=\int_0^{2^{-q_0}} N(x,\n{\cdot}_{2,X},\F_k)^{v/s}\,\d x\\
    &\hspace{-0.1cm}\stackrel{\eqref{eq:Nk}}{\leq }\int_0^{2^{-q_0}}(x^{-d}C_3(k)))^{v/s}\d x\nonumber\\
    &= C_3(k)^{v/s}(1-\frac{dv}{s})^{-1}2^{-q_0(1-(dv)/s)}\nonumber\\
    &\hspace{-0.1cm}\stackrel{\eqref{eq:q_0}}{\leq } (1-\frac{dv}{s})^{-1}C_3(k)^{1/d}(\delta_k^j)^{\frac{1}{2d}(1-\frac{dv}{s})}\nonumber\\
    &\hspace{-0.1cm}\stackrel{\eqref{eq:deltajk}}{\leq }(1-\frac{dv}{s})^{-1}C_3(k)^{1/d}C_2(k)^{\frac{1}{2d}-\frac{v}{2s}}=:C_4(k).\label{eq:ed}
\end{align}
In addition, we compute using \eqref{eq:Nk} and \eqref{eq:q_0} that 
\begin{align}
\sum_{\ell=q_0+1}^{q+1}N(2^{-\ell},\n{\cdot}_{2,X},\F)^{1/s}2^{-\ell}&\hspace{-0.1cm}\stackrel{\eqref{eq:Nk}}{\leq } \sum_{\ell=q_0+1}^{q+1}C_3(k)^{1/s}2^{-\ell+\ell d/s}\nonumber\\
&\hspace{-0.1cm}\stackrel{\eqref{eq:q_0}}{\leq } (1-2^{d/s-1})^{-1}C_3(k)^{1/s}(C_3(k)(\delta_k^j)^{1/2})^{1/d-1/s}\nonumber\\
&\hspace{-0.1cm}\stackrel{\eqref{eq:deltajk}}{\leq }(1-2^{d/s-1})^{-1}C_3(k)^{1/d}C_2(k)^{(1/d-1/s)/2}.\label{eq:combine}
\end{align}
Combining \eqref{eq:deltajk}, \eqref{eq:ed}, \eqref{eq:combine}, Lemma \ref{lemma:r}, and choosing $\delta=\delta_k^j$ in Lemma \ref{lemma:alpharate} yields
\begin{align*}
    &\hspace{0.5cm}\E\left[\sup_{f,g\in\F_k}\left|\frac{1}{\sqrt{n}}\sum_{i=1}^n (f^j(X_i,Y_i)-g^j(X_i,Y_i))\right|\right]\\
    &\leq C(s,r)C_4(k)^{1/(1+\beta)}+C(s)C_2(k)^{1/s}+(1-2^{d/s-1})^{-1}C_3(k)^{1/d}C_2(k)^{(1/d-1/s)/2}+LC_1(k),
\end{align*}
The proof of \eqref{eq:mixingfinitesample} is then complete by noting that  
\begin{align*}
    \E\left[\sup_{x\in I_k}|\xi_n^j(x)|\right]&=\E\left[\sup_{f\in\F_k}\left|\frac{1}{\sqrt{n}}\sum_{i=1}^n (f^j(X_i,Y_i)-\E[f^j(X_i,Y_i)])\right|\right]\\
    &\leq \E\left[\sup_{f,g\in\F_k}\left|\frac{1}{\sqrt{n}}\sum_{i=1}^n (f^j(X_i,Y_i)-g^j(X_i,Y_i))\right|\right],
\end{align*}which follows since   $0\in\F_k$. That \eqref{eq:mixingC} is straightforward to verify by choosing the smallest (negative) powers of $\nn{k}_2$ in \eqref{eq:mixingfinitesample}, and noting that $p/q=p-1$.\end{proof}

\begin{proof}[Proof of Theorem \ref{thm:mixingconvergence}]
The proof mimics that of case (i) of Theorem \ref{thm:limitd}, so we only give a sketch here. Note first that Lemma \ref{lemma:convergeK} generalizes to the $\alpha$-mixing case using the main result of \citep{doukhan1995invariance}, given our assumption $A_{\alpha,\lambda}<\infty$. On the other hand, the analogue of Lemma \ref{lemma:limsup} (with $\gamma=1$) follows by replacing Lemma \ref{lemma:Esup when gamma=1} by \eqref{eq:mixingC} of Lemma \ref{lemma:numerator} along with \eqref{eq:mixingC2} of Remark \ref{remark:solution}.
 Therefore, 
Theorem \ref{thm:mixingconvergence} then follows from the analogues of Lemmas \ref{lemma:convergeK} and \ref{lemma:limsup} just as in Section \ref{sec:asymp}.
\end{proof}

\subsection{Generalized smoothing kernel revisited}

We will revisit the proof of our main results for both the finite-sample case and the asymptotic case for the general $f_\xi$. We begin with a walk through the methodological development of Proposition \ref{prop:general_xi} for the finite-sample case. We start with the following lemma:

\begin{lemma}\label{lem:finite_sample}
Under Assumption \ref{ass:1} there exists $C=C(\beta, \delta, d, r, D)$ such that  $$\log N_{[\,]}(\epsilon,\F^r_j, L_2(\mu))\le C \Big(\frac{1}{\epsilon}\Big)^{2-\delta}.$$
\end{lemma}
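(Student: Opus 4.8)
The idea is to embed $\F^r_j$ into a weighted $C^\beta$‑type class whose bracketing entropy is classical, and then to absorb the non‑compactness of the parameter space $\R^d$ by truncation. Writing $\phi_a(x):=(1\vee\|a\|_2^r)f_\xi(a-x)$, we have $(f^r_a)_j(x,y)=(y^j-x^j)\phi_a(x)$. The first step is to observe that \eqref{eq:derivative} together with the elementary inequality
$$\frac{1\vee\|a\|_2^r}{1\vee\|a-x\|_2^r}\le 2^r\bigl(1+\|x\|_2^r\bigr)\qquad(a,x\in\R^d),$$
which is proved by splitting on whether $\|a-x\|_2\ge\|x\|_2$, yields the uniform bound $\max_{|\alpha|\le\beta}\sup_x |\partial^\alpha_x\phi_a(x)|\,/(1+\|x\|_2^r)\le 2^rD$. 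Hence $\F^r_j\subseteq\mathcal G:=\{(x,y)\mapsto(y^j-x^j)\phi(x):\phi\in\mathcal H\}$, where $\mathcal H$ is the weighted ball $\{\phi:\max_{|\alpha|\le\beta}\sup_x|\partial^\alpha\phi(x)|/(1+\|x\|_2^r)\le 2^rD\}$, and it records at the same time the $L_2(\mu)$‑envelope $\bar F(x,y):=2^rD\,|y^j-x^j|(1+\|x\|_2^r)$ of $\mathcal G$, which is in $L_2(\mu)$ since $s>2(r+1)$. So it suffices to bound $\log N_{[\,]}(\epsilon,\mathcal G,L_2(\mu))$.

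The second step is truncation. For a radius $\Lambda\ge 1$ to be chosen, split $(y^j-x^j)\phi(x)=(y^j-x^j)\phi(x)\mathds 1_{\{\|x\|_2\le\Lambda\}}+(y^j-x^j)\phi(x)\mathds 1_{\{\|x\|_2>\Lambda\}}$; the second summand is dominated by $\bar F\mathds 1_{\{\|X\|_2>\Lambda\}}$, and H\"older's and Markov's inequalities (using $(X,Y)\in L^s$) give $\|\bar F\mathds 1_{\{\|X\|_2>\Lambda\}}\|_{L_2(\mu)}\le C\Lambda^{-\kappa}$ with $\kappa=\tfrac12(s-2(r+1))$. Choosing $\Lambda=\Lambda(\epsilon):=(C/\epsilon)^{1/\kappa}$ makes this $\le\epsilon/2$, so, enlarging brackets by $\pm\bar F\mathds 1_{\{\|X\|_2>\Lambda\}}$, it remains to bracket $\{(y^j-x^j)\phi(x)\mathds 1_{\{\|x\|_2\le\Lambda\}}:\phi\in\mathcal H\}$ to precision $\epsilon/2$. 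On $\{\|x\|_2\le\Lambda\}$ one has $|(y^j-x^j)(\phi(x)-\tilde\phi(x))|\le|y^j-x^j|\,\|\phi-\tilde\phi\|_{L^\infty(B(0,\Lambda))}$, so an $L^\infty(B(0,\Lambda))$‑net of $\mathcal H|_{B(0,\Lambda)}$ of mesh $\eta\asymp\epsilon/\|X^j-Y^j\|_{L_2(\mu)}$ produces $L_2(\mu)$‑brackets of width $\le\epsilon/2$.

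The third step invokes the classical metric entropy estimate for balls in $C^\beta$ (e.g. as in \citep{van2000asymptotic}). On $B(0,\Lambda)$ the functions in $\mathcal H$ satisfy $\max_{|\alpha|\le\beta}\|\partial^\alpha\phi\|_{L^\infty(B(0,\Lambda))}\le C\Lambda^{r}$, and rescaling $z\mapsto z/\Lambda$ identifies $\mathcal H|_{B(0,\Lambda)}$ with a subset of the $C^\beta(B(0,1))$‑ball of radius $\asymp\Lambda^{\beta+r}$, whose $\eta$‑covering number in $L^\infty(B(0,1))$ is $\lesssim(\Lambda^{\beta+r}/\eta)^{d/\beta}$. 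Substituting $\eta\asymp\epsilon$ and then $\Lambda=(C/\epsilon)^{1/\kappa}$ gives $\log N_{[\,]}(\epsilon,\F^r_j,L_2(\mu))\le C\,\epsilon^{-(d/\beta+(d+rd/\beta)/\kappa)}$. Since $\beta=\lceil 2d/(2-\delta)\rceil\ge 2d/(2-\delta)$ we have $d/\beta\le(2-\delta)/2$, and $s>4(2d+1+r)$ forces $\kappa>4d+1+r>2d/(2-\delta)+r$; a direct computation then shows $d/\beta+(d+rd/\beta)/\kappa\le 2-\delta$, which is the claim for small $\epsilon$ (for $\epsilon$ bounded away from $0$ the single bracket $[-\bar F,\bar F]$ suffices). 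The main obstacle is exactly this last bookkeeping: because $\|(f^r_a)_j\|_{L_2(\mu)}$ does \emph{not} vanish as $\|a\|_2\to\infty$, the truncation radius must grow polynomially in $1/\epsilon$, and one must verify that the moment budget $s>4(2d+1+r)$ (which sets $\kappa$) together with the prescribed smoothness $\beta$ is precisely enough to keep the combined exponent at $2-\delta$; the two contributions balance, each equal to $(2-\delta)/2$ at the boundary case $d/\beta=(2-\delta)/2$, $\kappa=2d/(2-\delta)+r$.
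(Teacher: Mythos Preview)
Your argument is correct and reaches the same conclusion as the paper, but by a genuinely different route. The paper does not truncate at a moving radius $\Lambda(\epsilon)$; instead it treats $(f^r_a)_j$ as a function on the full $2d$-dimensional space $\R^d\times\R^d$, computes all partial derivatives in $(x,y)$ up to order $\beta$, shows they are bounded by $C(1+\|(x,y)\|_2^{r+1})$, and then invokes Corollary~2.7.4 of \citep{van1996} directly. That corollary handles non-compact domains by partitioning $\R^d\times\R^d$ into unit cubes $I_j$, bounding the $C^\beta$-norm on each cube by $S_j$, and summing $\sum_j S_j^{2V/(V+2)}\mu(I_j)^{V/(V+2)}$ with $V=(2d)/\alpha=2-\delta$; the moment condition $s>4(2d+1+r)$ is exactly what makes this series converge. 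Your approach, by contrast, exploits the product structure $(f^r_a)_j(x,y)=(y^j-x^j)\phi_a(x)$ to strip off the linear $y$-factor, works with $\phi_a$ as a smooth function on $\R^d$ only, and replaces the weighted-sum machinery of Corollary~2.7.4 by an explicit truncation plus the classical compact-domain $C^\beta$ entropy bound (Kolmogorov--Tikhomirov/Theorem~2.7.1 of \citep{van1996}). What this buys you is a more elementary argument that avoids the somewhat specialized non-compact corollary and makes transparent how the two ingredients---smoothness budget $d/\beta\le(2-\delta)/2$ and tail budget $(d+rd/\beta)/\kappa\le(2-\delta)/2$---combine; what the paper's approach buys is brevity and no need to track the interaction between $\Lambda$ and $\epsilon$. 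Your final bookkeeping (checking $\kappa>4d+1+r>2d/(2-\delta)+r$ so that the combined exponent stays below $2-\delta$) is correct.
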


\begin{proof}
We want to apply Corollary 2.7.4 of  \citep{van1996}   with $\alpha= 2d/(2-\delta)$ for $\delta\in(0,1)$, $V= 2-\delta\ge (2d)/\beta$.

We first calculate derivatives: note that for $i_1,j=1,\dots, d$
\begin{align*}
\nabla_{y_{i_1}} f_{a}^r(x,y)_j &=  (1\vee |a|_2^r)f_\xi(a-x)\mathds{1}_{\{i_1=j\}};\\
\nabla_{x_{i_1}} f_{a}^r(x,y)_j &=  -(1\vee |a|_2^r) [f_\xi(a-x)\mathds{1}_{\{i_1=j\}} +(y_j-x_j)\nabla_{x_{i_1}} f_\xi(a-x)].
\end{align*}
In particular, since all the other derivatives are identically zero, we only need to consider
\begin{align}\label{eq:deriv}
\begin{split}
\nabla_{x_{i_2}\dots x_{i_{k+1}}}^k \nabla_{y_{i_1}} f_{a}^r(x,y)_j &= (1\vee |a|_2^r) (-1)^k \nabla_{x_{i_2}\dots x_{i_{k+1}}}^k f_\xi(a-x)\mathds{1}_{\{i_1=j\}};  \\
\nabla_{x_{i_1}\dots x_{i_{k+1}}}^{k+1} f_{a}^r(x,y)_j &= (-1)^{k+1} (1\vee |a|_2^r)\Big(\nabla_{x_{i_2}\dots x_{i_{k+1}}}^{k} f_\xi(a-x)\mathds{1}_{\{i_1=j\}} \\
&+(y_j-x_j)\nabla_{x_{i_1}\dots x_{i_{k+1}}}^{k+1} f_\xi(a-x)\\
&+\sum_{i_l=j} \nabla_{x_{i_1}\dots x_{i_{l-1}} x_{i_{l+1}} x_{i_{k+1}}} f_\xi(a-x)\Big)
\end{split}
\end{align}
for $k\ge 0$, $i_1, \dots, i_{k+1},j\in \{1, \dots, d\}$. 
Next, as $|a|_2^r\le 2^{r-1}(|a-x|_2^r+|x|_2^r)$ we conclude that
\begin{align*}
(1\vee |a|_2^r)\le 2^{r-1}(1\vee  |a-x|_2^r +1\vee |x|_2^r).
\end{align*}
Using the above we obtain for $j=1,\dots,d$, 
\begin{align*}
|f_a^r(x,y)_j| &\le (1\vee |a|_2^r)|y_j-x_j| |f_\xi(a-x)| \\
&\le 2^{r-1}(1\vee  |a-x|_2^r +1\vee |x|_2^r) |y_j-x_j| |f_\xi(a-x)|.
\end{align*}
Recall that by Assumption \ref{ass:1} we have 
\begin{align*}
  |a-x|_2^r   |f_\xi(a-x)| \vee |f_\xi(a-x)| \le C
\end{align*}
for all $x,a\in \R^d$. Thus
\begin{align*}
2^{r-1}(1\vee  |a-x|_2^r +1\vee |x|_2^r) |y_j-x_j| |f_\xi(a-x)|
&\le C (1+|x|_2^r)  |y_j-x_j|.
\end{align*}
In conclusion,
\begin{align}
|f_a^r(x,y)_j|\le  C (1+|(X,Y)|_2^{r+1}).\label{eq:fra}
\end{align}
Using \eqref{eq:deriv} and similar arguments we obtain
\begin{align}\label{eq:details}
\begin{split}
\|\nabla^k_{x,y} f_a^r(x,y)_j\|_\infty &\le C(1\vee  |a-x|_2^r +1\vee |x|_2^r) \max_{1\le l\le k} \|\nabla^l_{x,y} f_\xi(a-x)\|_\infty (1+|y_j-x_j|)   \\
&\le C (1+|x|_2^r) (1+|y_j-x_j|)\\
&\le C(1+|(X,Y)|_2^{r+1})
\end{split}
\end{align}
for all $k\ge 1$ and $j\in \{1,\dots, d\}.$  

We now compute the bracketing number. Consider a partition $\mathbb{R}^d\times \R^d=\cup_{j=1}^{\infty} I_j$ into cubes of side length one. Let $K_m$ be the collection of $j\ge 1$ such that $I_j$ is in the annulus $\{x\in \mathbb{R}^d\times \R^d: \ m-1\le |x|_\infty \le m\}$. Then $$|K_m|=(2m)^{2d}-(2(m-1))^{2d}\le 4d(2m)^{2d-1}.$$ Next, if $j\in K_m$, then by \eqref{eq:details} all derivatives above are bounded by $S_j:=C(1+m^{1+r})$. Furthermore, if $j\in K_m$, then by Markov's inequality for $m>1$
\begin{align*}
\mu(I_j)\le \mu( \, |(X,Y)|_\infty\ge m-1)\le  \frac{\mathbb{E}[|(X,Y)|_\infty^s]}{(m-1)^s}.
\end{align*}
Recall that by Corollary 2.7.4 of \citep{van1996} we have
\begin{align*}
&\hspace{0.5cm}\log N_{[\,]}(\epsilon,\F^r_j, L_2(\mu))\\
&\le   C(\alpha, V) \Big( \frac{1}{\epsilon} \Big)^V \Big( \sum_{j=1}^\infty S_j^{\frac{2V}{V+2}} \mu(I_j)^{\frac{V}{V+2}} \Big)^{\frac{V+2}{2}}\\
&\le   C(\alpha, \delta,d) \Big( \frac{1}{\epsilon} \Big)^{2-\delta}
\Big(1+ \sum_{m=2}^\infty  (2m)^{2d-1} (1+m^{1+r})^{\frac{4-2\delta}{4-\delta}} \Big(\frac{\mathbb{E}[|(X,Y)|_\infty^s]}{(m-1)^s}\Big)^{\frac{2-\delta}{4-\delta}} \Big)^{\frac{4-\delta}{2}}.
\end{align*}
Noting that $(4-2\delta)/(4-\delta)\le 1$ and $(2-\delta)/(4-\delta)\ge 1/2-\delta/4$, we conclude that the above is bounded by 
\begin{align*}
 C(\alpha, \delta,d) \Big( \frac{1}{\epsilon} \Big)^{2-\delta} \Big(1+ \sum_{j=1}^\infty m^{2d-1+1+r-s(1/2-\delta/4)}\Big)^{\frac{4-\delta}{2}}.
\end{align*}
It thus suffices to check when 
\begin{align*}
    \sum_{j=1}^\infty m^{2d-1+1+r-s(1/2-\delta/4)}<\infty;
\end{align*}
this is the case if $$s>\frac{1+2d+r}{1/2-\delta/4}>4(1+2d+r).$$ This concludes the proof in view of Assumption \ref{ass:1}.
\end{proof}

\begin{corollary}\label{cor:finite_sample}
For any $n\in \N$,
\begin{align*}
\sqrt{n} \,\E\Big[\sup_{f\in \mathcal{F}_j^r} |\E_{\P_n}[f]-\E_{\P_0}[f]|\Big] &\le  C\int_0^{C\big(1+\E\big[|(X,Y)|_2^{2(r+1)}\big]\big)} \sqrt{\log N_{[\,]} (\epsilon,\F^r_j, L_2(\mu))}\,\d\epsilon\\
&\le  C\int_0^{C\big(1+\E\big[|(X,Y)|_2^{2(r+1)}\big]\big)} \Big(\frac{1}{\epsilon}\Big)^{1-\delta/2}\,\d\epsilon,
\end{align*}
where $C=C(\beta, \delta, d,r,D).$
\end{corollary}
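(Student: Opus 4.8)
The plan is to derive both inequalities from a single bracketing-entropy maximal inequality for the empirical process, with the entropy estimate supplied by Lemma~\ref{lem:finite_sample}.

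First I would invoke a standard chaining bound: for a pointwise-measurable class $\F$ of real-valued functions with a measurable envelope $F$, one has, uniformly in $n\in\N$,
\begin{equation*}
\sqrt{n}\,\E\Big[\sup_{f\in\F}\big|\E_{\P_n}[f]-\E_{\P_0}[f]\big|\Big]\ \le\ L\int_0^{\|F\|_{L_2(\mu)}}\sqrt{1+\log N_{[\,]}(\epsilon,\F,L_2(\mu))}\ \d\epsilon,
\end{equation*}
see, e.g., \citep[Chapter 19]{van2000asymptotic} or \citep[Lemma 3.4.2]{van1996}. Since $\sqrt{1+t}\le 1+\sqrt{t}$ and the upper limit is finite (checked below), the additive ``$1$'' only perturbs the right-hand side by a constant, which is what separates the maximal inequality from the first displayed bound of the corollary once constants are absorbed into $C=C(\beta,\delta,d,r,D)$.

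Second I would exhibit an envelope for $\F^r_j$. In the course of proving Lemma~\ref{lem:finite_sample} it was shown that $|f_a^r(x,y)_j|\le C(1+\|(x,y)\|_2^{r+1})$ for every $a\in\R^d$ and $j$, so $F(x,y):=C(1+\|(x,y)\|_2^{r+1})$ is an envelope for $\F^r_j$. Then $\|F\|_{L_2(\mu)}^2\le 2C^2\big(1+\E[\|(X,Y)\|_2^{2(r+1)}]\big)<\infty$, finiteness holding because Assumption~\ref{ass:1} gives $(X,Y)\in L^s$ with $s>4(2d+1+r)>2(r+1)$. Using that $\sqrt{t}\le t$ whenever $t\ge 1$ and that $1+\E[\|(X,Y)\|_2^{2(r+1)}]\ge 1$, I may replace the upper limit $\|F\|_{L_2(\mu)}$ by the larger quantity $C\big(1+\E[\|(X,Y)\|_2^{2(r+1)}]\big)$ without losing validity; this yields the first inequality of the corollary. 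The second inequality then follows at once by inserting the bound of Lemma~\ref{lem:finite_sample}, namely $\log N_{[\,]}(\epsilon,\F^r_j,L_2(\mu))\le C(1/\epsilon)^{2-\delta}$, into the integrand and taking square roots: $\sqrt{\log N_{[\,]}(\epsilon,\F^r_j,L_2(\mu))}\le \sqrt{C}\,(1/\epsilon)^{1-\delta/2}$, with $\sqrt{C}$ absorbed into $C$.

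There is no genuine obstacle here; the content is bookkeeping once Lemma~\ref{lem:finite_sample} is available. The two points that require attention are (i) confirming the envelope has a finite $L_2(\mu)$-norm, which is exactly the role of the moment assumption in Assumption~\ref{ass:1}, and (ii) matching the precise form of the maximal inequality one cites — some versions also carry a lower-order tail term of the form $\sqrt{n}\,\E[F\mathds{1}_{\{F>n^{1/2}\|F\|_{L_2(\mu)}\}}]$, which must be verified to be $O(1)$ in $n$; under the stated integrability this is routine and, being independent of $\epsilon$, does not alter the stated integral bound. One should also record that $\F^r_j$ is pointwise-measurable, since it is indexed by $a\in\R^d$ with $a\mapsto f_a^r$ continuous, so all the outer expectations above are ordinary expectations.
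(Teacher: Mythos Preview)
Your proposal is correct and follows essentially the same route as the paper's proof: both apply a bracketing-entropy maximal inequality (the paper cites Corollary~19.35 of \citep{van1996}), use the envelope $F(x,y)=C(1+\|(x,y)\|_2^{r+1})$ established in \eqref{eq:details} during the proof of Lemma~\ref{lem:finite_sample}, verify $\|F\|_{L_2(\mu)}<\infty$ via the moment condition $2(r+1)<s$ from Assumption~\ref{ass:1}, and then insert the entropy bound of Lemma~\ref{lem:finite_sample}. Your additional remarks on pointwise measurability and possible tail terms are not needed for the version of the maximal inequality the paper invokes, but they do no harm.
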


\begin{proof}
This follows from combining Lemma \ref{lem:finite_sample} and Corollary 19.35 of \citep{van1996}, noting in particular that by \eqref{eq:fra},
\begin{align*}
   \sup_{f\in \mathcal{F}_j^r} |f(X,Y)|^2 \le C (1+|(X,Y)|_2^{r+1})^2
\end{align*}
and thus 
\begin{align*}
    \E\Big[\sup_{f\in \mathcal{F}_j^r} |f(X,Y)|^2\Big] \le C\E\Big[(1+|(X,Y)|_2^{r+1})^2\Big]\le C\Big(1+\E\Big[|(X,Y)|_2^{2(r+1)}\Big]\Big)<\infty
\end{align*}
as $2(r+1)<s.$
\end{proof}

We are now in a position for the proof of Proposition \ref{prop:general_xi}.

\begin{proof}[Proof of Proposition \ref{prop:general_xi}]
Recall from \eqref{eq:En expression} that
\begin{align*}
\sqrt{n}\,\E_n^{*\xi}\left[|X-\E_n^{*\xi}[Y|X]|_2\right] = \int_{\R^d}  \big|\frac{1}{\sqrt{n}} \sum_{i=1}^n(Y_i-X_i)f_\xi\left(x-X_i\right)\big|_2\,\d x.
\end{align*}
Using Tonelli's theorem and Corollary \ref{cor:finite_sample} we conclude
\begin{align*}
&\hspace{0.5cm} \sqrt{n}\,\E\big[\E_n^{*\xi}\left[|X-\E_n^{*\xi}[Y|X]|_2\right]\big]\\
&=  \E\left[ \int_{\R^d}  \big|\frac{1}{\sqrt{n}} \sum_{i=1}^n(Y_i-X_i)f_\xi\left(x-X_i\right) (1\vee |x|_2^r)\big|_2 (1\vee |x|_2^r)^{-1}\,\d x\right]\\
 &= \int_{\R^d} \E\Big[\big|\frac{1}{\sqrt{n}} \sum_{i=1}^n(Y_i-X_i)f_\xi\left(x-X_i\right) (1\vee |x|_2^r)\big|_2\Big] (1\vee |x|_2^r)^{-1}\,\d x\\
 &\le L \sup_{1\le j\le d} \E\Big[\sqrt{n}\sup_{f\in \mathcal{F}_j^r} |\E_{\P_n}[f]-\E_{\P_0}[f]|\Big] \int_{\R^d} (1\vee |x|_2^r)^{-1}\,\d x\\
 &\le C \int_0^{C\big(1+\E\big[|(X,Y)|_2^{2(r+1)}\big]\big)} \Big(\frac{1}{\epsilon}\Big)^{1-\delta/2}\,\d\epsilon.
\end{align*}
This completes the proof.
\end{proof}

We now walk through the proof of our main asymptotic result, Proposition \ref{prop:general_xi2}, for general $f_\xi$. We will first prove a lemma that follows from a classical Donsker theorem.

\begin{lemma}\label{lem:asymptotic}
Under Assumption \ref{ass:1}, the class $\mathcal{F}^r_j$ is Donsker for $j\in \{1, \dots, d\}$.
\end{lemma}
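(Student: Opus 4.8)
The plan is to obtain this directly from a bracketing central limit theorem, feeding in the entropy estimate already established in Lemma \ref{lem:finite_sample}. Recall that a class $\F$ of measurable functions whose bracketing entropy integral $\int_0^1\sqrt{\log N_{[\,]}(\ee,\F,L_2(\mu))}\,\d\ee$ is finite is automatically $\P_0$-Donsker (see e.g.\ \citep[Theorem 19.5]{van2000asymptotic}); in particular such a class admits a square-integrable envelope. So the proof reduces to verifying this single entropy condition for $\F=\mathcal{F}^r_j$, which is immediate given Lemma \ref{lem:finite_sample}, and for concreteness I will also exhibit the envelope explicitly.

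For the envelope, the bound \eqref{eq:details} derived in the proof of Lemma \ref{lem:finite_sample} shows $|f(x,y)|\le C(1+\|(x,y)\|_2^{r+1})$ for every $f\in\mathcal{F}^r_j$, so $F_j:=C(1+\|(X,Y)\|_2^{r+1})$ is a measurable envelope; since Assumption \ref{ass:1} gives $(X,Y)\in L^s$ with $s>4(2d+1+r)>2(r+1)$, we have $F_j\in L^2(\mu)$. For the entropy integral, Lemma \ref{lem:finite_sample} yields a constant $C=C(\beta,\delta,d,r,D)$, independent of $\ee$, with $\log N_{[\,]}(\ee,\mathcal{F}^r_j,L_2(\mu))\le C\,\ee^{-(2-\delta)}$, whence
\begin{align*}
\int_0^1\sqrt{\log N_{[\,]}(\ee,\mathcal{F}^r_j,L_2(\mu))}\,\d\ee\le\sqrt{C}\int_0^1\ee^{-(1-\delta/2)}\,\d\ee=\frac{2\sqrt{C}}{\delta}<\infty,
\end{align*}
where finiteness crucially uses $\delta\in(0,1)$, so that $1-\delta/2<1$ and the integrand is integrable at the origin. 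Invoking the bracketing CLT then gives that $\mathcal{F}^r_j$ is Donsker, and since all constants depend only on $(\beta,\delta,d,r,D)$ the argument is uniform over $j\in\{1,\dots,d\}$.

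I do not expect a genuine obstacle here: the substantive analytic work—bounding the bracketing numbers by $(1/\ee)^{2-\delta}$ with exponent \emph{strictly below} $2$, which is exactly what makes the entropy integral converge at the origin—has already been carried out in Lemma \ref{lem:finite_sample}. What remains is the routine passage from a bracketing entropy bound to the Donsker property, plus the bookkeeping point that the reference metric throughout is $L_2(\mu)$ with $\mu=\mathrm{Law}_{\P_0}(X,Y)$ (consistently with Lemma \ref{lem:finite_sample}), so that the empirical process indexed by $\mathcal{F}^r_j$ is the one driven by the sampling distribution $\P_0$.
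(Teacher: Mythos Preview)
Your proof is correct, but it takes a different route from the paper's. You feed the bracketing entropy bound of Lemma~\ref{lem:finite_sample} (namely $\log N_{[\,]}(\ee,\mathcal{F}^r_j,L_2(\mu))\le C\ee^{-(2-\delta)}$) into the standard bracketing Donsker theorem, observing that the entropy integral converges because $1-\delta/2<1$. The paper instead starts fresh and verifies the hypothesis of Example~2.10.25 in \citep{van2000asymptotic} directly: partitioning $\R^d\times\R^d$ into unit cubes $I_j$, bounding the local smoothness constants $S_j$ and the masses $\mu(I_j)$ as in the proof of Lemma~\ref{lem:finite_sample}, and checking that $\sum_j S_j\,\mu(I_j)^{1/2}<\infty$, which reduces to $\sum_m m^{2d+r-s/2}<\infty$ and hence needs only $s>2(2d+1+r)$. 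Your argument is more economical---it simply reuses Lemma~\ref{lem:finite_sample} rather than reproving a variant of it---while the paper's argument is self-contained and in fact shows the Donsker property under a moment assumption half as strong as the one in Assumption~\ref{ass:1} (though this gain is not exploited, since the lemma is stated under Assumption~\ref{ass:1} anyway).
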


\begin{proof}
We apply Example 2.10.25 of \citep{van2000asymptotic}. We use the same notation as in the proof of Lemma \ref{lem:finite_sample}. We need to check that for $\beta> d/2$ we have $$\sum_{j=1}^\infty S_j \mu(I_j)^{1/2}<\infty,$$ where we recall that $S_j=C(1+m^{1+r})$ and 
\begin{align*}
\mu(I_j)\le \mu( |(X,Y)|_\infty\ge m-1)\le  \frac{\mathbb{E}[|(X,Y)|_\infty^s]}{(m-1)^s}.
\end{align*}
Arguing as in the proof of Lemma \ref{lem:finite_sample} we need to have
\begin{align*}
\sum_{j=1}^\infty m^{2d-1+1+r-s/2}<\infty;
\end{align*}
this is satisfied for $s>2(2d+1+r)$. The claim follows.
\end{proof}

We are now ready to prove Proposition \ref{prop:general_xi2}.

\begin{proof}[Proof of Proposition \ref{prop:general_xi2}]
Recall that
\begin{align*}
\sqrt{n}\,\E_n^{*\xi}\left[|X-\E_n^{*\xi}[Y|X]|_2\right] = \int_{\R^d}  |\frac{1}{\sqrt{n}} \sum_{i=1}^n(Y_i-X_i)f_\xi\left(x-X_i\right)|_2\,\d x.
\end{align*}
Define the functional $I:C_b(\R^d)\to \R$ via
\begin{align*}
I(g)=\int_{\R^d} g(x) (1\vee |x|_2^r)^{-1}\,\d x.
\end{align*}
Taking two bounded continuous functions $f,g$ we have
\begin{align*}
|I(f)-I(g)|\le \|f-g\|_\infty \int_{\R^d} (1\vee |x|_2^r)^{-1}\d x\le L\|f-g\|_{\infty}
\end{align*}
by H\"older's inequality; in conclusion, $I$ is continuous in $L^\infty$-norm.
Writing 
\begin{align*}
 &\sqrt{n}\,\E_n^{*\xi}\left[|X-\E_n^{*\xi}[Y|X]|\right] \\
 &\hspace{1cm}=   \int_{\R^d}  \big|\frac{1}{\sqrt{n}} \sum_{i=1}^n(Y_i-X_i)f_\xi\left(x-X_i\right) (1\vee |x|_2^r)\big|_2 (1\vee |x|_2^r)^{-1}\,\d x,
\end{align*}
the claim follows from the continuous mapping theorem and the fact that $\mathcal{F}^r_j$ is Donsker, see Lemma \ref{lem:asymptotic}. This concludes the proof.
\end{proof}

\section{Deferred algorithms}\label{appn}
%\subsection{Algorithms from Section \ref{subsec:power}}\label{subsec:algo5}
\begin{algorithm}[H]
\caption{Simulate Asymptotic Distribution given $\{\rho, \sigma\}$}
\begin{flushleft}
\textbf{Input}: total number of grid points $n$ per dimension $d$, total number of simulations $N$, domain of integration $[x_{\min},x_{\max}]^d$, samples of martingale couplings $(X,Y)$, martingale projection parameters $\{\rho, \sigma\}$, smoothing kernel $f_{\sigma,\xi,\rho}$ as defined in \eqref{eq:fdensity}.
\newline
\textbf{Initialization}: initialize grid $g = \{g_1,\dots,g_{n^d}\}$ based on domain of integration $[x_{\min},x_{\max}]^d$ and number of grid points $n$.
\begin{algorithmic}
\State Given samples $(X,Y)$, grid $g$, and $f_{\sigma,\xi,\rho}$:
\For{index $i$ $1:n^{d}$}
    \For{index $j$ $1:n^{d}$}
        \State Generate each entry $(i,j)$ of the covariance matrix $M_{n \times n}$ of Gaussian random field $\{G_{x}\}$:  $$\E[G_{g_i}G^{T}_{g_j}] = \E[(Y-X)f_{\xi,\rho,\sigma}(g_i-X)f_{\xi,\rho,\sigma}(g_j-X)(Y-X)^T].$$
    \EndFor
\EndFor
\For{index $i$ $1:N$}
    \State Compute a sample $$ S_i = \sum_{x \in g} |G_x|_2\d x.$$
\EndFor

\State \Return $\{S_i\}_{i=1}^{N}$.
\end{algorithmic}
\end{flushleft}
\end{algorithm}

\begin{algorithm}[H]
\caption{Martingale Pair Test}
\begin{flushleft}
\textbf{Input}: Simulated asymptotic distribution $\{S_i\}_{i=1}^{N}$, $\rho$, $n$ testing samples $\{(X_i,Y_i)\}_{i=1}^n$, lower bound $l$ and upper bound $u$ for integration, significance level $\alpha$, smoothing kernel $f_{\sigma,\xi,\rho}$ as defined in \eqref{eq:fdensity}.
\newline
\textbf{Initialization}: Compute critical value $c_{\alpha}$ by taking the $(1-\alpha)$-th percentile of $\{S_i\}_{i=1}^{N}$.
\begin{algorithmic}
\State Compute martingale projection error: $$M_e = \frac{1}{n} \int_l^u|\sum_{i=1}^n(Y_i-X_i)f_{\sigma,\xi,\rho}(x-X_i)|_2\d x.$$
\If{$\sqrt{n}M_e \leq c_\alpha$}
    \State \Return $M_e,$ \texttt{True}
\Else
    \State \Return $M_e,$ \texttt{False}
\EndIf
\end{algorithmic}
\end{flushleft}
\label{algo:mtgl_pair_test}
\end{algorithm}

%\subsection{Algorithms from Section \ref{sec:apps}} \label{subsec:deferred_apps_algo}
\begin{algorithm}[H]
\caption{Monte Carlo-based option pricing}\label{alg:cap}
\begin{flushleft}
\textbf{Input}: calibrated stock trajectories $\{(S_t)_{t \in \{0,\dots,T\}}^i\}_{i = 1}^N$, number of inner MC trials $n$, true Heston model parameters $\theta^*=\{x_0, r, V_0, \kappa, \mu, \eta, \rho\}$, a list of call options strike prices $\textbf{K}$, a list of sorted maturities $\textbf{T} = \{T_{\text{min}},\dots, T_{\text{max}}\}$, number of time steps $N_{\text{steps}}$.
\newline 
\textbf{Initialization}: time increment $dt = 1/N_{\text{steps}}$
\begin{algorithmic}
\For{stock path $i$ 1: $N$} 
    \For {time $t$ 1: $T_{\text{max}}$}
        \State Generate $n$ Heston model-based stock trajectories $\{(S^{\theta^*}_{u})_{u\in \{0,\dots,T_{\text{max}}-j\}}^p\}_{p = 1}^n$ originating from $S_t$ at time $t$.
        \For{maturity $T \in \textbf{T}$}
            \If{$t<T$}
                \For{strike $K \in \textbf{K}$}
                    \State Price the corresponding vanilla option with maturity $T$ and strike $K$ at time $t$ by taking the mean over $n$ Monte Carlo trials: $$
\mathfrak{p}_t(K,T,S_j)=e^{-r (T-t)} \frac{1}{n}\sum_{p = 1}^{n} ((S_{T}^{\theta^*})^p-K)_{+}$$
                \EndFor
            \EndIf
        \EndFor
    \EndFor
\EndFor
\For{maturity $T \in \textbf{T}$}
    \For{strike $K \in \textbf{K}$}
        \State Price the corresponding vanilla option using $$
\mathfrak{p'}_t(K,T)=e^{-r (T-t)} \frac{1}{n}\sum_{p = 1}^{n} ((S_{T})^i-K)_{+}$$
    \EndFor
\EndFor
\State \Return $(\mathfrak{p},\mathfrak{p'})$
\end{algorithmic}
\end{flushleft}
\end{algorithm}

\begin{algorithm}[H]
\caption{New Calibration to Market European Vanilla Option Prices}
\begin{flushleft}
\textbf{Input}: Discounted option payoffs $\{{e^{-r(T^{j}-t)}(S_{T^j}-K^{j})_{+}}\}_{j=1}^M$, their corresponding market prices $\{\mathfrak{p}_t(K^j,T^j)\}_{j=1}^M$, total number of training epoch $N_{\text{epoch}}$, total number stock trajectories generated $N$, total number of time steps $N_{\text{steps}}$, smoothing kernel $f_{\sigma,\xi,\rho}$ as defined in \eqref{eq:fdensity}.
\newline
\textbf{Initialization}: $\theta =\{x_0, r, V_0, \kappa, \mu, \eta, \rho\}$, time increment $dt = 1/N_{\text{steps}}$.
\begin{algorithmic}
\For{epoch 1: $N_{\text{epochs}}$}
    \State Generate $N$ sample paths $(x^{\pi,\theta,i}_{t_n})^{N_{\text{steps}}}_{n=0}$ for $i = 1,\dots,N$ using tamed Euler scheme on Heston model.
    \State \textbf{During one epoch:} use Adam to update $\theta$, where
    \begin{align*}
    \theta^* = \argmin_{\theta\in \Theta} \sum_{j=1}^M  \sum_{i=1}^N \int \Big( (\mathfrak{p}({K^j,T^j},S^\theta_{t_i})- &{e^{-r(T^j-t)}(S^\theta_{T^j}-K^j)_{+}}) \\
    &\cdot f_{\sigma,\xi,\rho}(x-{{e^{-r(T^j-t)}(S^\theta_{T^j}-K^j)_{+}}})\Big)^2\, \d x.
    \end{align*}
\EndFor
\State \Return $\theta$ for all payoffs.
\end{algorithmic}
\end{flushleft}
\end{algorithm}

\end{appendix}

\section*{Acknowledgement}
We are grateful to two anonymous referees for their valuable feedback that has significantly improved this work. 
The material in this paper is based upon work supported by the Air Force Office of Scientific Research under award number FA9550-20-1-0397. Additional support is gratefully acknowledged from NSF 1915967, 2118199, 2229012, 2312204, 2345556.

\bibliography{main}
\bibliographystyle{imsart-number}

\end{document}